\tikzstyle xyax=[thin]
\tikzstyle mlin=[thick]
\tikzstyle slin=[]
\numberwithin{equation}{section}
\newtheorem{theorem}{Theorem}[section]
\newtheorem{lemma}[theorem]{Lemma}
\newtheorem{proposition}[theorem]{Proposition}
\theoremstyle{definition}
\newtheorem{definition}[theorem]{Definition}
\newtheorem{remark}[theorem]{Remark}
\newcommand{\hel} {
\hskip2.5pt{\vrule height7pt width.5pt depth0pt}
\hskip-.2pt\vbox{\hrule height.5pt width7pt depth0pt}
\, }
\newcommand{\restr}{\hel}
\newcommand{\R}{\mathbf{R}}
\newcommand{\N}{\mathbf{N}}
\renewcommand{\S}{\mathbf{S}}
\newcommand\F{\mathcal{F}}
\def\HH{\mathcal{H}}
\def\LL{\mathcal{L}}
\newcommand\G{\mathcal{G}}
\newcommand{\eps}{\varepsilon}
\newcommand{\wto}{\rightharpoonup}
\newcommand{\Om}{\Omega}
\renewcommand{\phi}{\varphi}
\newcommand\lt{\left}
\newcommand\rt{\right}
\def\sdist{\mathbf{sd}}
\title[Curvature penalization of strongly anisotropic interfaces models]{Curvature penalization of  strongly anisotropic interfaces models and their phase-field approximation}
\author{J.-F. Babadjian, B. Buet and M. Goldman}
\address[J.-F. Babadjian]{Universit\'e Paris Saclay, CNRS,  Laboratoire de math\'ematiques d'Orsay, 91405, Orsay, France}
\email{jean-francois.babadjian@universite-paris-saclay.fr}
\address[B. Buet]{Universit\'e Paris Saclay, INRIA, CNRS,  Laboratoire de math\'ematiques d'Orsay, 91405, Orsay, France}
\email{blanche.buet@universite-paris-saclay.fr}
\address[M. Goldman]{CMAP, CNRS, \'Ecole polytechnique, Institut Polytechnique de Paris, 91120 Palaiseau, France}
\email{michael.goldman@cnrs.fr}
\begin{document}

\begin{abstract}
This paper studies the effect of anisotropy on sharp or diffuse interfaces models. When the surface tension is a convex function of the normal to the interface, the anisotropy is said to be weak. This usually ensures the lower semicontinuity of the associated energy. If, however, the surface tension depends on the normal in a nonconvex way, this so-called strong anisotropy may lead to instabilities related to the lack of lower semicontinuity of the functional. We investigate the regularizing effects of adding a  higher order term of Willmore type to the energy. We consider  two types of problems. The first one is an anisotropic nonconvex generalization of the perimeter, and the second one is an anisotropic nonconvex Mumford-Shah functional. In both cases,  lower semicontinuity properties of the energies with respect to a natural mode of convergence are established, as well as $\Gamma$-convergence type results by means of a phase field approximation. In comparison with related results for curvature dependent energies, one of the original aspects of our work is that, in the  context of free discontinuity problems, we are able to consider singular structures such as crack-tips or multiple junctions.
\end{abstract}
\maketitle

\tableofcontents

\section{Introduction}

Anisotropic sharp interface models involve a surface tension density depending on the orientation of the unknown interface. It thus contains preferable directions,  leading to crystalline surfaces due to the formation of facets, corner or wrinkling. When the surface tension is a convex function of the normal to the interface, the anisotropy is said to be weak, and the associated variational problem or the underlying PDE is in general well-behaved. If however, the anisotropy is strong, which means that the surface tension is a non convex function of the normal, the problem becomes unstable since several directions might be forbidden. From a mathematical standpoint, it corresponds to a loss of ellipticity of the underlying set of PDEs, or a lack of lower semicontinuity of the associated energy. It naturally arises in many applications  such as phase transitions or epitaxial growth  \cite{GurtJabb,fonseca2015motion,Lowen,philippe2022regularized}. In the context of fracture mechanics, tearing experiments on brittle thin sheets with strongly anisotropic surface energy lead to observed crack path which happen to be inconsistent with those analytically predicted (see e.g. \cite{BinMau} and references therein).

From a variational point of view, which is the one adopted in the present work, a generic and formal way to formulate strongly anisotropic interfacial problems consists in minimizing an energy functional of the form
$$\Gamma \mapsto \mathcal E(\Gamma):=\int_\Gamma \phi(\nu_\Gamma)\, d\HH^{d-1},$$
among all possible $(d-1)$-dimensional hypersurfaces $\Gamma \subset \Om$ (say without boundary) subjected to suitable external constraints. In the previous expression, $\Om \subset \R^d$ is a bounded open set with $d=2$, $3$, $\HH^{d-1}$ stands for the $(d-1)$-dimensional Hausdorff measure, $\nu_\Gamma$ is a normal vector to $\Gamma$ (which is well defined provided $\Gamma$ is smooth enough) and $\phi:\R^d \to \R^+$ is a continuous, $1$-homogeneous, even but not necessarily convex surface tension. Minimizing this kind of interfacial energies might lead to the formation of microstructures due to fast oscillations of minimizing sequences $\{\Gamma_n\}_{n \in \N}$. Indeed, it might be energetically convenient to allow the normals $\nu_{\Gamma_n}$ to highly oscillate  between finitely many fixed directions, in such a way that the sequence of sets $\{\Gamma_n\}_{n \in \N}$  `weakly converges' to a limit (generalized) set $\Gamma$ whose energy is strictly lower than the value of the infimum. This is a typical behavior of variational problems lacking lower semicontinuity.

A possible remedy consists in relaxing the original energy $\mathcal E$, i.e., replacing it by its lower semicontinuous envelope $\overline{\mathcal E}$ with respect to a natural topology for which the energy is coercive. In the formal framework described above, it consists in replacing $\phi$ by its convex envelope $\phi^{**}$ (see e.g. \cite{Lussardi} when $\Gamma$ is the boundary of a set) so that  
$$\overline{\mathcal E}(\Gamma)=\int_\Gamma \phi^{**}(\nu_\Gamma)\, d\HH^{d-1}.$$
The so-called relaxed problem now becomes well-posed in a suitable mathematical framework, in the sense that minimizing sequences for $\mathcal E$ converges to (generalized) solutions of $\overline{\mathcal E}$ and $\inf \mathcal E=\min \overline{\mathcal E}$. From a mathematical point of view, the relaxation procedure is perfectly satisfactory because high oscillations of minimizing sequences are ruled out. However, it has the drawback of modifying the underlying physics by suppressing the possible
nucleation of new facets orientated within the nonconvex region.

Another possibility is to add a higher order term of curvature type (the so-called Willmore energy). This penalizes spatial oscillations and regularizes the problem. It leads to a regularized energy functional as in \cite{GurtJabb}
$$\Gamma \mapsto \mathcal E^{(\lambda)}(\Gamma):=\int_\Gamma \phi(\nu_\Gamma)\, d\HH^{d-1} +\lambda \int_\Gamma |H_\Gamma|^2\, d\HH^{d-1},$$
where $\lambda>0$ is a  (fixed) weight parameter, and $H_\Gamma=({\rm div^\Gamma}\nu_\Gamma) \nu_{\Gamma}$ is the mean curvature vector of $\Gamma$. Although not  obvious, it is  expected that the presence of the higher order term $H_\Gamma$ improves the compactness properties of minimizing sequences $\{\Gamma_n\}_{n \in \N}$ for $ \mathcal E^{(\lambda)}$, so that $\Gamma_n$ should now converge in a stronger way to some (generalized) set $\Gamma$,  making it possible to pass to the limit in the nonconvex term, without appealing to any type of relaxation. Let us point out that the behavior of (volume-constrained) minimizers for small $\lambda$ has been investigated in \cite{spencer2004asymptotic,BraiMal,moser2012towards,moser2015singular}. See also \cite{GNRWill} for the application of this idea in a different context.

\medskip

The first objective of the present work is to investigate the lower semicontinuity properties of this type of problems when $\Gamma=\partial E$ is the boundary of a set $E$. Set
\begin{equation}\label{eq:F(E)}
\F(E):= \mathcal E^{(1)}(\partial^* E)=\int_{\partial^* E} \phi(\nu_E)\, d\HH^{d-1}+\int_{\partial^* E}|H_E|^2\, d\HH^{d-1}.
\end{equation}
Postponing precise definitions to the next sections, our first main result is the following lower semi-continuity result,
see Theorem \ref{BBG0}. 
\begin{theorem}\label{BBG0}
Let $\{E_n\}_{n \in \N}$ be a sequence of sets of finite perimeter in $\Om$ converging  in $L^1$ to a set $E$ of finite perimeter in $\Om$. If  $V_E$ has bounded first variation in $L^2_{\mu_E}(\Om;\R^d)$, then
\[
 \liminf_{n\to \infty} \F(E_n)\ge \F(E).
 \]
\end{theorem}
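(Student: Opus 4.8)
The plan is to prove the two terms of $\F$ are each lower semicontinuous along the sequence, exploiting the hypothesis that $V_E$ has bounded first variation in $L^2_{\mu_E}$, which is precisely the regularity needed to give meaning to the curvature term of the limit and to run a varifold compactness argument. First I would reduce to the case $\liminf_n \F(E_n) < +\infty$ and pass to a subsequence (not relabeled) realizing the liminf as a limit, along which $\sup_n \F(E_n) =: C < \infty$. In particular $\sup_n \HH^{d-1}(\partial^* E_n) < \infty$ (using that $\phi \ge c_0 |\cdot|$ for some $c_0 > 0$ by continuity, $1$-homogeneity and positivity on the sphere — this should be available from the earlier setup), so the associated integral varifolds $V_{E_n}$ have uniformly bounded mass. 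The key point is that the uniform bound on $\int_{\partial^* E_n} |H_{E_n}|^2 \, d\HH^{d-1}$ together with the mass bound gives, via Allard's compactness theorem for integral varifolds with locally bounded first variation (or the $L^2$-curvature version), a subsequence along which $V_{E_n} \to V$ in the sense of varifolds, with $V$ an integral varifold having $L^2$ first variation; and since $E_n \to E$ in $L^1$, one identifies $V = V_E$ (the varifold associated to $\partial^* E$), using that $L^1$ convergence of sets forces the associated currents/varifolds to converge and that $\partial^* E$ carries the reduced-boundary measure. This is where the hypothesis "$V_E$ has bounded first variation in $L^2_{\mu_E}$" is used: it guarantees the limit is of the same structural type so that $\F(E)$ is well-defined, and it should already be a consequence of the compactness step, so the hypothesis is really there to make the statement self-contained.

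With varifold convergence $V_{E_n} \to V_E$ in hand, the first term passes to the limit by lower semicontinuity of anisotropic varifold energies: one writes $\int_{\partial^* E_n} \phi(\nu_{E_n})\, d\HH^{d-1} = \int \Phi \, dV_{E_n}$ for the $1$-homogeneous integrand $\Phi(x,S)$ built from $\phi$ evaluated at the normal to the plane $S$, and invokes the standard Reshetnyak-type lower semicontinuity for varifolds — crucially this requires no convexity of $\phi$ in the normal, only continuity and the $1$-homogeneity/parity, because the relevant convexity is automatic in the Grassmannian variable when the integrand comes from a norm-like $\phi$ (this is the well-known phenomenon that makes curvature regularization bypass relaxation, and I expect the paper to have an auxiliary lemma for exactly this). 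For the second term, I would use that the first variations $\delta V_{E_n}$ converge weakly-$*$ to $\delta V_E$ (as vector measures), that they are represented by $-H_{E_n} \, \mu_{E_n}$ and $-H_E \, \mu_E$ respectively, and that the weights $\mu_{E_n} \to \mu_E$; then lower semicontinuity of the $L^2$ norm of a density with respect to weak-$*$ convergence of the measures $H_{E_n}\mu_{E_n} \wto H_E \mu_E$ together with $\mu_{E_n} \wto \mu_E$ (a standard convexity/Jensen argument, e.g. via Ioffe's theorem or a duality formula $\int |H_E|^2 d\mu_E = \sup$ over test vector fields of $2\int H_E\cdot g\, d\mu_E - \int |g|^2 d\mu_E$) yields $\int |H_E|^2 d\mu_E \le \liminf_n \int |H_{E_n}|^2 d\mu_{E_n}$. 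Summing the two liminf inequalities gives $\F(E) \le \liminf_n \F(E_n)$.

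The main obstacle I anticipate is the identification of the varifold limit and, hand in hand with it, the correct handling of possible multiplicity and hidden boundary: when $\partial^* E_n$ oscillates, the varifolds $V_{E_n}$ can in the limit develop regions of multiplicity $\ge 2$ (folds) that are invisible to $\partial^* E$, so a priori $V$ could be strictly larger than $V_E$. The resolution is exactly the $L^2$ curvature bound: a varifold arising as a limit of boundaries $\partial^* E_n$ with $E_n \to E$ in $L^1$ and with equi-bounded $L^2$ curvature cannot have odd-multiplicity sheets away from $\partial^* E$ and the even-multiplicity sheets contribute nonnegatively to both terms, so the inequality $\F(E) \le \int \Phi\, dV + \int |H|^2 dV \le \liminf_n \F(E_n)$ still goes through after discarding the extra mass. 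Making this rigorous — controlling the density of the limit varifold and its relation to $\mathbf 1_E$ — will be the technical heart; I would handle it by combining the constancy theorem / Gauss–Green structure for sets of finite perimeter with the fact that an integral varifold with $L^2$ first variation has a well-defined notion of approximate tangent plane a.e., so that its "reduced boundary part" is forced to agree with $\partial^* E$ up to the sign of the normal. Everything else — the lower semicontinuity of the two integral functionals under varifold and weak-$*$ convergence — is, modulo the auxiliary lemmas the paper will presumably have set up, routine.
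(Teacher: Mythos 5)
Your proposal correctly identifies the skeleton of the argument (uniform bounds, Allard compactness, split into two terms, lower semicontinuity of each), and you rightly flag at the end that the limit varifold $V$ may carry more mass than $V_E$. But the main body of the proof is written as if $V=V_E$, which is false in general (think of a shrinking thin slab with $E_n\to\emptyset$ in $L^1$: $V_{E_n}$ converges to a nontrivial multiplicity-two varifold, not the zero varifold of $\emptyset$), and the resolution you sketch in the last paragraph does not supply the ingredient that actually closes the gap. Two concrete errors. First, you assert the hypothesis that $V_E$ has bounded first variation in $L^2_{\mu_E}$ ``should already be a consequence of the compactness step.'' It is not: Allard compactness gives $L^2$-integrable curvature for $V$, not for $V_E$; since $\mu_V$ can strictly exceed $\mu_E$, and since the generalized curvature of $V_E$ is defined independently of $V$, there is no automatic transfer of regularity. (This is why the paper imposes the hypothesis and explicitly remarks, just after the statement, that the compactness statement is not self-contained.) Second, your proposed fix --- constancy theorem plus Gauss--Green structure to align tangent planes --- addresses only the tangent data, not the curvature. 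The argument you sketch yields $\int|H_V|^2\,d\mu_V \le \liminf_n\int|H_{E_n}|^2\,d\mu_{E_n}$, but to conclude $\int_{\partial^*E}|H_E|^2\,d\HH^{d-1}\le\int|H_V|^2\,d\mu_V$ you need the \emph{locality of the generalized mean curvature}: after establishing $\mu_V\ge\mu_E=|D\mathbf{1}_E|$ (from weak-$*$ convergence of $D\mathbf{1}_{E_n}$) and using the hypothesis together with Menne/Sch\"atzle to deduce that $\partial^*E$ is $\mathcal{C}^2$-rectifiable, one applies the locality theorem to conclude $H_V=H_E$ $\HH^{d-1}$-a.e.\ on $\partial^*E$. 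Without this step, $H_V$ and $H_E$ are unrelated on $\partial^*E$ and the curvature term does not pass to the limit.

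A secondary imprecision: for the anisotropic perimeter term, what defeats the nonconvexity of $\phi$ is not a Reshetnyak-type lower semicontinuity theorem (those require joint convexity), but the fact that $\int\zeta(x)\tilde\phi(S)\,dV_n(x,S)$ is \emph{continuous} under varifold convergence, because $\zeta\tilde\phi$ is a bounded continuous function on the Grassmannian bundle; lower semicontinuity enters only afterward, when passing from $\int\tilde\phi(T_xM)\,d\mu_V$ to $\int_{\partial^*E}\phi(\nu_E)\,d\HH^{d-1}$ via $\mu_V\ge\mu_E$ and the locality of approximate tangent planes. Your intuition that ``the relevant convexity is automatic'' is pointing at the right phenomenon, but the mechanism is the strength of the varifold topology, not any hidden convexity of $\tilde\phi$.
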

\begin{remark}
 Let us point out that this result, just like Theorem \ref{BBG1} and Theorem \ref{BBG2}, comes with a compactness statement which is transparent from the proof. However since we need to assume a priori some regularity of the limit set we do not include it.
\end{remark}

Another important question for such sharp interface variational problems is the approximation by means of phase field models. This has a long history, going back to  \cite{ModMort} where a so-called Modica-Mortola (or Allen-Cahn) approximation is proposed to approximate the classical perimeter functional in the sense of $\Gamma$-convergence. Regarding the  variational approximation of the sum of the isotropic perimeter and Willmore functionals, a phase field approximation result has been established in \cite{RogSchat,Nagase-Tonegawa}  answering a long standing conjecture of De Giorgi (see also \cite{Lussardi} in the case of weakly anistropic surface energies). Following  \cite{Lowen,Lussardi}, we investigate here the case of strongly anisotropic surface energies. Let $\phi_\eps$ be a suitable smooth approximation of $\phi$, see \eqref{defphieps}. We introduce the phase field energy functional $\F_\eps:H^2(\Om) \to \R$ by
\begin{equation}\label{Fepsintro} \F_\eps(v)=\frac{1}{c_0} \int_{\Om} \phi_\eps\lt(\nabla v\rt)\lt( \frac{\eps}{2}|\nabla v|^2 + \frac{1}{\eps} W(v)\rt) dx + \frac{1}{c_0 } \int_{\Omega} \frac1\eps\lt( -\eps \Delta v+\frac{1}{\eps} W'(v)\rt)^2 dx.\end{equation}
Here $W(z)=(1-z^2)^2$ is the standard double-well potential and $c_0=\int_{-1}^1 \sqrt{2W(z)}\, dz$. In the absence of the second term, and if the surface tension $\phi \equiv 1$, the first contribution of the energy corresponds exactly to the classical Modica-Mortola  $\Gamma$-convergence approximation of the perimeter functional. 
 It is established in \cite{Lussardi} that the first term alone gives rise in the limit to the relaxed anisotropic perimeter
$$\int_{\partial^* E} \phi^{**}(\nu_E)\, d\HH^{d-1},$$
where $\varphi^{**}$ is the convex hull of $\varphi$. The second contribution in the energy functional $\F_\eps$ is exactly the same as in \cite{RogSchat}. It corresponds to a phase field approximation of the Willmore energy
$$\int_{\partial^* E}|H_E|^2\, d\HH^{d-1}.$$
 Again in \cite{Lussardi}, it is established that if $\varphi$ is convex, $\mathcal F_\eps$ $\Gamma$-converges to $\F$ provided $\partial E$ is $\mathcal C^2$.

 Let us point out that following \cite{Lowen}, in the first term of $\F_\eps$, the anisotropy is carried out by multiplying the Modica-Mortola energy density by $\phi_\eps(\nabla v)$. It is argued in  \cite{Lowen} that this is crucial in order to have  an interfacial layer independent of the orientation which is then compatible with the optimal profile of the curvature term. This is in contrast with other phase field models such as \cite{ratz2006higher,wise2007solving}. See also  \cite{bretin2015phase} for a thorough review of phase field approximations of curvature energies.

 Our second main result, Theorem \ref{BBG1}, corresponds to a generalization of the result \cite{Lussardi} to general anisotropies $\varphi$.
 \begin{theorem}\label{BBG1}
Let $d=2,$ or $d=3$, let $\Omega$ be a bounded open subset of $\R^d$ and let $E$ be a set of finite perimeter in $\Om$ such that $V_E$ has bounded first variation in $L^2_{\mu_E}(\Om;\R^d)$.

(i) Lower bound: If $\{v_\eps\}_{\eps>0} \subset H^2(\Om)$ is such that $v_\eps \to {\bf 1}_E-{\bf 1}_{\Om \setminus E}$ in $L^1(\Om)$, then
$$\F(E) \leq \liminf_{\eps \to 0} \F_\eps(v_\eps).$$

(ii) Upper bound: If additionally $\partial E$ is a $\mathcal C^2$-hypersurface, there exists a sequence $\{\bar v_\eps\}_{\eps>0}$ in $H^2(\Om)$ such that $\bar v_\eps \to {\bf 1}_E-{\bf 1}_{\Om \setminus E}$ in $L^1(\Om)$ and
$$\F(E)= \lim_{\eps \to 0} \F_\eps(\bar v_\eps).$$
\end{theorem}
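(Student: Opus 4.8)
The plan is to split into the two asserted inequalities and treat them by rather different machinery. For the lower bound (i), the main idea is to decouple the two terms of $\F_\eps$ using the non-negativity of the Willmore term, and to handle each separately. The first step is to reduce to the case where $\liminf_\eps \F_\eps(v_\eps)<\infty$ and to extract a subsequence realizing the liminf along which both $\int_\Om \phi_\eps(\nb v_\eps)\bigl(\tfrac\eps2|\nb v_\eps|^2+\tfrac1\eps W(v_\eps)\bigr)\,dx$ and the phase-field Willmore term are bounded. The Modica–Mortola part, after using $\phi_\eps\ge$ (a lower bound close to) $\phi$ and the usual equipartition-of-energy argument via the pointwise inequality $\tfrac\eps2|\nb v_\eps|^2+\tfrac1\eps W(v_\eps)\ge|\nb v_\eps|\sqrt{2W(v_\eps)}$, produces in the limit a diffuse surface measure; the key technical input is that, since $\phi$ is merely even and $1$-homogeneous (not convex), one cannot directly use Reshetnyak lower semicontinuity on the Modica–Mortola term alone. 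This is precisely where the Willmore bound must be exploited: it forces the approximate normals of the level sets to converge in a strong (e.g. varifold) sense rather than merely weak-$*$, so that the nonconvex integrand $\phi$ passes to the limit. I would organize this by passing to the associated varifolds $V_{v_\eps}$ built from the diffuse level sets, invoke the Röger–Schätzle/Nagase–Tonegawa analysis to get that $V_{v_\eps}$ converges (up to subsequence) to an integral varifold with $L^2$ mean curvature whose weight dominates $c_0^{-1}$ times $\mu_E$ and whose limiting Willmore energy is at least $\int_{\partial^*E}|H_E|^2$, and then use the hypothesis that $V_E$ has bounded first variation in $L^2_{\mu_E}$ together with Theorem \ref{BBG0}-type lower semicontinuity to conclude $\F(E)\le\liminf_\eps\F_\eps(v_\eps)$. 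In fact the cleanest route is: show the diffuse varifolds converge to (a multiple of) $V_E$, deduce from the first-variation hypothesis and the varifold convergence that $\F(E)\le\liminf$, reusing the argument of Theorem \ref{BBG0}.

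For the upper bound (ii) I would construct the recovery sequence by the classical optimal-profile ansatz adapted to the anisotropy. Since $\partial E$ is a $\mathcal C^2$ hypersurface, the signed distance function $\d$ to $\partial E$ is $\mathcal C^2$ in a tubular neighborhood. Let $q$ be the optimal one-dimensional profile associated to the double well, i.e. the heteroclinic solution of $q''=W'(q)$ with $q(\pm\infty)=\pm1$, normalized so that $\tfrac12 q'^2=W(q)$. The standard choice $\bar v_\eps(x):=q(\d(x)/\eps)$ (suitably cut off and made constant $\pm1$ away from the interface, then mollified if needed to lie in $H^2$) has the property that $|\nb\bar v_\eps(x)|=\tfrac1\eps|q'(\d/\eps)|$ and $\nb\bar v_\eps/|\nb\bar v_\eps|=\nb\d=\nu_E$ on $\partial E$ (extended along normals). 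The first step is then a coarea/Fubini computation: writing the first term of $\F_\eps$ in tubular coordinates $(y,s)$ with $y\in\partial E$, $s$ the normal variable, the anisotropic density factor becomes $\phi_\eps(\nb\bar v_\eps)=\tfrac{|q'(s/\eps)|}{\eps}\phi_\eps(\nu_E(y)+O(s))$ by $1$-homogeneity, and as $\eps\to0$ the $s$-integral of $\tfrac1{c_0}|q'|^2$ against $2W(q)$-type factors collapses to $\int_{\partial E}\phi(\nu_E)\,d\HH^{d-1}$ provided $\phi_\eps\to\phi$ uniformly (which is \eqref{defphieps}). The second step is the Willmore term: the well-known computation (as in \cite{RogSchat,Lussardi}) shows that with this profile $-\eps\Delta\bar v_\eps+\tfrac1\eps W'(\bar v_\eps)=-q'(\d/\eps)\Delta\d=-q'(\d/\eps)\,(\mathrm{div}^{\partial E}\nu_E)+O(\eps)$, so that $\tfrac1{c_0}\int_\Om\tfrac1\eps(\cdots)^2\,dx\to\int_{\partial E}|H_E|^2\,d\HH^{d-1}$. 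Summing, $\F_\eps(\bar v_\eps)\to\F(E)$, and combined with (i) this gives the matching limit (so in particular $\liminf$ can be upgraded to $\lim$).

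The main obstacle is the lower bound (i), specifically passing the nonconvex integrand $\phi$ to the limit. Without the curvature term one only gets $\phi^{**}$ in the limit, as in \cite{Lussardi}; the whole point is that the Willmore bound supplies extra compactness. Making this rigorous requires showing that the diffuse varifolds $V_{v_\eps}$ (with weight $\tfrac1{c_0}(\tfrac\eps2|\nb v_\eps|^2+\tfrac1\eps W(v_\eps))$ and orientation $\nb v_\eps/|\nb v_\eps|$) have uniformly bounded first variation in $L^2$, hence subconverge to an integral varifold $V$ with $L^2$-bounded first variation; that the underlying measure of $V$ is $\ge\tfrac1{c_0}\mu_E$ (no loss of mass, using that along $\partial^*E$ the profile is asymptotically optimal, via a careful lower estimate of the Modica–Mortola term that already incorporates $\phi$ because the diffuse normal and $\nu_E$ nearly coincide where the energy concentrates); and finally that, because $V_E$ itself has bounded first variation in $L^2_{\mu_E}$, the limiting varifold $V$ restricted to a varifold supported on $\partial^*E$ has weight exactly $\tfrac1{c_0}\mu_E$ and curvature $H_E$, giving the two desired inequalities simultaneously. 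This delicate interplay — controlling both the location of the diffuse mass and its orientation via the curvature bound — is the technical heart, and I expect it to mirror, at the phase-field level, the proof of Theorem \ref{BBG0}.
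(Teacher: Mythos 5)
Your overall blueprint matches the paper's: the lower bound passes through the diffuse varifolds of R\"oger--Sch\"atzle (Theorem \ref{theo_RogSchat}), and the upper bound uses an optimal-profile ansatz composed with the signed distance. Two points in your sketch would, taken literally, introduce gaps.

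For (i), you cannot show that the diffuse varifolds converge ``to a multiple of $V_E$'' nor that the limiting varifold $V$ has weight exactly $c_0^{-1}\mu_E$ on $\partial^*E$: the limit may carry extra mass off $\partial^*E$, or higher integer multiplicity on it. The paper only establishes the one-sided inequality $\mu_V\ge\HH^{d-1}\restr\partial^*E$ (from Modica--Mortola), and the step your sketch leaves implicit is \emph{locality of the generalized mean curvature}: the $L^2$ first-variation hypothesis on $V_E$ yields $\mathcal C^2$-rectifiability of $\partial^*E$, and \cite[Corollary 4.3]{SchatzLower} then gives $H_V=H_E$ $\HH^{d-1}$-a.e.\ on $\partial^*E$, while locality of approximate tangent spaces gives $\nu_V=\pm\nu_E$ there. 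Together these turn the one-sided measure inequality into the two desired integral inequalities, without any identification of $V$ with $V_E$. Your first sentence of the lower-bound paragraph describes this correct route; the ``cleanest route'' sentence does not.

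For (ii), $\phi_\eps$ does \emph{not} converge to $\phi$ uniformly on $\R^d$; by \eqref{eq:LussardiPhirVsPhi} the pointwise error is $O(r_\eps/|z|)$, which blows up near $z=0$. The construction works because on the interfacial layer $|\nb\bar v_\eps|\sim 1/\eps$, so the error integrates to $O(\eps\,r_\eps)\to 0$ under the scaling assumption on $r_\eps$. Moreover the cut-off of the one-dimensional profile must be taken at the specific scale $\delta_\eps=\lambda\eps|\log\eps|$ (cf.\ Lemma \ref{lem:epsp}), not an unspecified mollification, so that the Willmore contribution from the cut-off region is of order $\eps^{p}$ and does not pollute the limit.
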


For both Theorem \ref{BBG0} and Theorem \ref{BBG1}, we strongly rely on the existing results on lower semi-continuity and phase field approximations \cite{SchatzLower,RogSchat} which are naturally expressed in the language of varifolds. With these results at hand, the proofs of both theorems are quite short (and might be seen as warm ups for the free-discontinuity counterparts, see below). The main observation we are making and which seems to have remained unnoticed so far,  see e.g. \cite{Lussardi}, is that varifold convergence together with rectifiability of the limit imply convergence of the anisotropic perimeter term. This is the  analog of the property that for sequences of functions, strong convergence is equivalent to the fact that the generated Young measures are Dirac masses, see \cite{rindler2018calculus}. The lower bound in Theorem \ref{BBG1} rests on a locality property for the mean curvature of varifolds in order to compare the curvature of $\partial E$ with the curvature of the limit varifold (which has natural semi-continuity properties), see \cite{AmbMas,SchatzLower,leonardi2009locality}. Moreover, as can already be seen for $d=2$ in \cite{BellMugnai}, the relaxation of $\F$ must be non-local and a good understanding of its structure for $d=3$ is still missing. The restriction on the regularity on $\partial E$ in the upper bound of  Theorem \ref{BBG1} comes from the difficulty of approximating varifolds in energy  by smooth sets. This is a common limitation in this setting, see \cite{RogSchat,merlet2015highly}. See however \cite{rupp2024global} for some recent progress on this question.

\bigskip

The second type of sharp interface models with curvature penalization we are interested in, are motivated by applications to fracture mechanics involving brittle materials with strongly anisotropic surface energies, see \cite{BinMau}.

In a two-dimensional antiplane setting ($d=2$), the variational approach to fracture introduced in \cite{FranMar,BFM} is based on the global minimization of a Mumford-Shah type functional
\begin{equation}\label{MSu}u \mapsto MS(u):=\int_{\Om \setminus J_u} |\nabla u|^2\, dx + \int_{J_u} \phi(\nu_{J_u})\, d\HH^{1},\end{equation}
where $u\in SBV^2(\Om)$ is a scalar function corresponding to an antiplane displacement, and the crack is assimilated to the jump set $J_u$ of $u$. In the isotropic case, $\phi \equiv 1$, we recover the classical Mumford-Shah energy. Under external loadings and/or boundary conditions, the direct method in the calculus of variations easily leads to the existence of minimizers for $MS$ in the case of weak anisotropy, i.e. if $\phi$ is a convex function. In addition, a phase field approximation result due to Ambrosio-Tortorelli has been established in \cite{Ambrosio-Tortorelli,Foc}.

If however, $\phi$ is nonconvex, the problem might develop spatial oscillations as explained above. Therefore, the existence of minimizers for $MS$ and the variational approximation by means of an Ambrosio-Tortorelli type functional is not guaranteed anymore because of a lack of lower semicontinuity of $MS$. Once more, a possible remedy is to penalize spatial oscillations by introducing a curvature term inside the free energy. Contrary to the previously described problem, the crack $J_u$ has no reason to be a boundaryless $1$-dimensional set so that one has to account both for possible endpoints (the so-called crack-tips) as well as multiple junctions in $J_u$.  The following curvature dependent Mumford-Shah functionals has been introduced in \cite{Coscia,BraiMar} in the context of image reconstruction (in the  isotropic case),

\begin{equation}\label{eq:Gset}
\widehat{\mathcal G}^{(\gamma)}(u,C,P):=\int_{\Om \setminus C } |\nabla u|^2\, dx +  \int_{C}(\varphi(\nu_{C}) + |H_{C}|^2)\, d\HH^1 + \gamma \HH^0(P).
\end{equation}
Here $\gamma>0$ is a weight parameter, $C$ is a finite union of curves with endpoints $P$ and  $u\in H^1(\Om\setminus C)$. Notice in particular that this implies that if we consider $u\in SBV^2(\Om)$, then $J_u\subset C$. This formulation is closer in spirit to the orginal strong formulation of the Mumford-Shah functional in \cite{mumford1989optimal} than its weak counterpart \eqref{MSu}. Existence of minimizers for $\widehat{\mathcal G}^{(\gamma)}$ (with an extra fidelity term) has been obtained in   \cite{Coscia} and a phase field approximation has been proposed in \cite{BraiMar}. However, as opposed to the phase field approximation in \eqref{Fepsintro}, the curvature was approximated by the more nonlinear term ${\rm div}\big(\frac{\nabla v}{|\nabla v|}\big)$. While this greatly simplifies the analysis, the question of an approximation through a phase field model more similar to \eqref{Fepsintro} was raised in \cite[Section 7]{BraiMar}. One of the main objectives of this paper is to answer positively this question. Since we will build upon \cite{SchatzLower,RogSchat}, it is more natural for us to work in the $SBV$ framework. We thus introduce the sharp interface energy,

\begin{equation*}
\mathcal G^{(\gamma)}(u):=\int_{\Om \setminus J_u } |\nabla u|^2\, dx +  \int_{J_u}(\varphi(\nu_{J_u}) + |H_{J_u}|^2)\, d\HH^1 + \gamma \HH^0(\mathcal P_{J_u}),
\end{equation*}
We will restrict ourselves to functions $u\in SBV^2(\Om)$ with a jump set $J_u$  made of a finite  family of curves. Here  $\mathcal P_{J_u}$ denotes the finite set of endpoints of the curves in $J_u$ (counting both multiple junctions and true endpoints).   We first prove in Theorem \ref{thm:lsc} below a lower semicontinuity result for $\mathcal G^{(\gamma)}$. To some extent it  can be  seen as an alternative proof, but  under a more restrictive set of assumptions, of the results from \cite{Coscia,BraiMar}. While we include it mostly for presentation purposes, as it illustrates in a simpler setting some of the ideas behind the proof of the lower bound for the phase field approximation, let us make the following observation.  Since, in the notation of \eqref{eq:Gset} we assume that $J_u=C$, such a lower semicontinuity property can only hold provided the coefficient $\gamma$ is small enough (as in the statement of Theorem \ref{thm:lsc}). Here smallness depends only on the smoothness of $J_u\backslash \mathcal P_{J_u}$,  the mutual distances between the points in  $\mathcal P_{J_u}$ as well as the distance of $\mathcal P_{J_u}$ from $\partial \Omega$. Indeed, if two points in $\mathcal P_{J_u}$ are very close to each other, it can be energetically favorable to add a small jump set connecting these points (and thus paying the distance between them) rather than having two crack-tips (which come with a fixed cost of $2$). In the language of \eqref{eq:Gset} this amounts to $C\neq J_u$. This is somewhat related to the relaxation phenomenon for the elastica studied in \cite{BellMugnai} and already mentioned above. Besides \cite{SchatzLower} the main ingredient for this lower semicontinuity results is Lemma \ref{lem:pointX} which states that if a varifold with curvature in $L^2$ contains a set with singular curvature, then either its length or its Willmore energy should be bounded from below. Interestingly, the proof relies mostly on the monotonicity formula even though the energy we consider is anisotropic.

We present now the phase field approximation of $\mathcal G^{(\gamma)}$ that we consider. It is an anisotropic variant of the one proposed in \cite[Section 7]{BraiMar}.  The idea is to have a first phase field variable $v$ as in the Ambrosio-Tortorelli model and a second phase field variable $w$ to approximate the point energy $\HH^0(\mathcal P_{J_u})$. We first point out that as observed in \cite{BraiMar}, since the optimal profile of the classical Ambrosio-Tortorelli functional is not $H^2$, it is not suited for the approximation of curvature energies. Following \cite{BraiMar}, we thus consider a Modica-Mortola approximation with a penalization of the size of the set $\{v=-1\}$. In the language of \eqref{eq:F(E)} this
amounts to require that $|E|\to 0$. Another possible solution would be to detect the jump set of $u$ through the derivative of the optimal profile for the Modica-Mortola energy as very recently suggested in \cite{BreChamMa}. Let us now discuss the point energy. For this we repeat the heuristics from \cite{BraiMal,BraiMar}. For every $\beta>0$, let us consider the functional
\[
 \mathcal{F}^\beta(E)=\int_{\partial E} (\beta^{-1} + \beta |H_E|^2) d\HH^1.
\]
By the Cauchy-Schwarz inequality and Gauss-Bonnet formula, we have
\begin{equation}\label{F4pi}
 \mathcal{F}^\beta(E)\ge 2\int_{\partial E} |H_E|d\HH^1\ge 4\pi
\end{equation}
with equality if and only if $E$ is a ball of radius $\beta$. We thus consider for $\beta_\eps\to 0$  a phase field approximation of $\mathcal{F}^{\beta_\eps}$ as in \eqref{Fepsintro} and set
\[
 G_{\eps,\beta}(w)=\frac{1}{c_0 \beta}\int_{\Omega}\lt(\frac{\eps}{2}|\nabla w|^2 + \frac{1}{\eps} W(w)\rt)dx + \frac{\beta}{c_0 \eps}\int_{\Omega} \left(-\eps \Delta w+\frac{1}{\eps} W'(w)\right)^2dx.
\]
For all $(u,v,w) \in H^1(\Om) \times [H^2(\Om)]^2$, we then introduce the following energy functional
\begin{eqnarray}
 \G^{(\gamma)}_\eps(u,v,w) & = & \frac14\int_\Om (1+v)^2 |\nabla u|^2 \, dx+  \frac{1}{2c_0}\int_\Om  \phi_\eps\lt(\nabla v\rt)\lt(\frac{\eps}{2}|\nabla v|^2+ \frac{1}{\eps} W(v)\rt)dx\nonumber\\
&&  + \frac{1}{8c_0}\int_\Om \frac{1}{\eps} \left( -\eps \Delta v+\frac{1}{\eps} W'(v)\right)^2 (1+w)^2\,dx \label{eq:Geps}\\
&& +\frac{\gamma}{4\pi}G_{\eps,\beta_\eps}(w)+\frac{1}{\eta_\eps}\int_\Omega (1-v)^2\, dx + \frac{1}{\eta_\eps}\int_\Omega (1-w)^2\, dx,\nonumber
\end{eqnarray}
where $\eps>0$, $\eta_\eps>0$ and $\beta_\eps>0$ are  infinitesimal parameters which satisfy the following scaling laws as in \cite{BraiMar}
$$\lim_{\eps \to 0} \frac{\eps|\log \eps|}{\beta_\eps}=\lim_{\eps \to 0} \frac{\beta_\eps}{\eta_\eps}= 0.$$
Notice that just like $\nabla u$ is not penalized in the regions where  $v=-1$ (as in the classical Ambrosio-Tortorelli approximation), the curvature of $J_u$, which is approximated by the term on the second line of \eqref{eq:Geps}, is not penalized in the regions where $w=-1$. This is essential in order to approximate jump sets with crack tips or multiple junctions.

\medskip

As in the case of the perimeter, we need to assume some regularity of the admissible jump sets $J_u$.  We assume here that $J_u$ is the finite disjoint union of $\mathcal C^2$ curves possibly intersecting at their endpoints $\mathcal P_{J_u}$. At each of these end points, $J_u$ has an infinite curvature. We further assume that at multiple junctions the  sum of the tangent vectors vanishes. This means that  the generalized curvature (in the sense of varifolds) has a Dirac mass at all points of $\mathcal P_{J_u}$. Unfortunately, the case of multiple junctions with zero curvature is more delicate and remains out of reach at the moment, see Remark \ref{rem:point-zero-curv}. Our main result is then (see Theorem \ref{BBG2} and \eqref{defA} for the definition of $\mathcal A(\Om)$)
\begin{theorem}\label{BBG2}
Let $u \in \mathcal A(\Om)$.

(i) {\bf Lower bound.} There exists $\gamma_0=\gamma_0(u)>0$ such that for all $\gamma \in (0,\gamma_0)$ the following property is satisfied:
for all $\{(u_\eps,v_\eps,w_\eps)\}_{\eps>0} \subset H^1(\Om) \times [H^2(\Om)]^2$ such that $(u_\eps,v_\eps,w_\eps) \to (u,1,1)$ in $[L^1(\Om)]^3$ we have
$$\mathcal G^{(\gamma)}(u) \leq \liminf_{\eps \to 0} \mathcal G^{(\gamma)}_\eps(u_\eps,v_\eps,w_\eps).$$

(ii) {\bf Upper bound.} There exists a sequence $\{(\bar u_\eps,\bar v_\eps,\bar w_\eps)\}_{\eps>0}$ in $H^1(\Om) \times [H^2(\Om)]^2$ such that  $(\bar u_\eps,\bar v_\eps,\bar w_\eps) \to (u,1,1)$ in $[L^1(\Om)]^3$,
$$\mathcal G^{(\gamma)}(u)= \lim_{\eps \to 0} \mathcal G^{(\gamma)}_\eps(\bar u_\eps,\bar v_\eps,\bar w_\eps).$$
\end{theorem}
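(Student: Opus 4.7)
The plan is to combine the varifold-based analysis of Theorem \ref{BBG1} with Ambrosio-Tortorelli techniques for the elastic-plus-perimeter contribution, and a diffuse Gauss-Bonnet argument for the point energy. Throughout, I work with the diffuse varifold $V_\eps$ associated with $v_\eps$ as in \cite{RogSchat}, extract a subsequential limit varifold $V$, and consider the diffuse sets $F_\eps:=\{w_\eps\le 0\}$, whose measures tend to $0$ since $w_\eps \to 1$ in $L^1$.

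\textbf{Lower bound.} After a standard truncation to put $u_\eps$ in $L^\infty$, the term $\tfrac14\int(1+v_\eps)^2|\nabla u_\eps|^2\,dx$ together with the anisotropic Modica-Mortola term yield the bulk-plus-perimeter portion by the usual Ambrosio-Tortorelli reduction (see \cite{Ambrosio-Tortorelli,Foc}), while the penalty $\eta_\eps^{-1}\int(1-v_\eps)^2\,dx$ forces the region $\{v_\eps\approx 1\}$ to exhaust $\Om$. Following the strategy of Theorem \ref{BBG1}, the varifold $V_\eps$ converges to a rectifiable varifold supported on $J_u$, and the Young-measure principle identified in the discussion after Theorem \ref{BBG1} upgrades the anisotropic density from $\phi^{**}(\nu_{J_u})$ to $\phi(\nu_{J_u})$. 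For the curvature term, the new feature is the weight $(1+w_\eps)^2$: away from $F_\eps$ this weight is close to $4$, so the diffuse Willmore lower bound of \cite{RogSchat} combined with the locality of varifold curvature \cite{AmbMas,leonardi2009locality} yields
\[
\int_{J_u\setminus\cup_i B(p_i,r_\eps)}|H_{J_u}|^2\,d\HH^1 \;\le\; \liminf_{\eps\to 0}\; \tfrac{1}{8c_0}\int\tfrac{1}{\eps}\bigl(-\eps\Delta v_\eps+\tfrac{1}{\eps}W'(v_\eps)\bigr)^2(1+w_\eps)^2\,dx,
\]
where the $B(p_i,r_\eps)$ enclose the connected components of $F_\eps$. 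To recover $\gamma\,\HH^0(\mathcal P_{J_u})$, I apply the diffuse analogue of \eqref{F4pi} to each component of $F_\eps$ via $G_{\eps,\beta_\eps}(w_\eps)$, obtaining $\tfrac{\gamma}{4\pi}G_{\eps,\beta_\eps}(w_\eps)\ge\gamma\cdot\#\{\text{components of }F_\eps\}+o(1)$. It remains to show that every $p\in\mathcal P_{J_u}$ is captured by some component of $F_\eps$: if not, the weight $(1+w_\eps)^2$ would be close to $4$ near $p$ and the Willmore integral would have to bound the Dirac mass of the generalized curvature of $J_u$ at $p$, contradicting its finiteness. The smallness condition $\gamma<\gamma_0(u)$ enters via Lemma \ref{lem:pointX} to prevent the converse pathology in which two distinct points of $\mathcal P_{J_u}$ are captured by a single component of $F_\eps$ connected by an additional arc of jump set of comparable length, which would otherwise undercut the point count (the relaxation phenomenon of \cite{BellMugnai}).

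\textbf{Upper bound.} I refine the recovery sequence from Theorem \ref{BBG1}. Set $w_\eps=1$ outside a collection of balls and build $v_\eps$ using the optimal one-dimensional profile of the anisotropic Modica-Mortola-Willmore energy in a tubular neighborhood of $J_u\setminus\mathcal P_{J_u}$, glued to the constant $1$ outside; this already contributes the correct $\int_{J_u}\phi(\nu_{J_u})+|H_{J_u}|^2\,d\HH^1$ on the regular part. At each $p_i\in\mathcal P_{J_u}$, I paste inside $B(p_i,\beta_\eps)$ a radially symmetric Modica-Mortola profile to reproduce the Dirac mass of the generalized curvature at $p_i$; the mismatch with the neighboring branches of $J_u$ is interpolated in an annulus of width $\eps|\log\eps|$. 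Simultaneously I set $w_\eps=-1$ on $B(p_i,\beta_\eps)$ and return to $w_\eps=1$ via the optimal one-dimensional profile of the Willmore approximation, so that by the equality case of \eqref{F4pi} each point contributes exactly $\gamma$. For $u_\eps$, the standard Ambrosio-Tortorelli construction of a truncation tied to $v_\eps$ handles the elastic term. The scalings $\eps|\log\eps|/\beta_\eps\to 0$ and $\beta_\eps/\eta_\eps\to 0$ ensure that the interpolation layers and the $L^2$-penalty terms contribute only $o(1)$.

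The hard part will be the lower-bound matching of the point energy: one must simultaneously identify each crack tip or multiple junction with at least one connected component of $F_\eps$, account for the Dirac parts of the generalized curvature of $J_u$ there without double-counting them in the smooth Willmore integral, and rule out the relaxation phenomenon by which two nearby points in $\mathcal P_{J_u}$ could be linked by an extra small arc of jump set. The locality of varifold curvature, the quantitative statement of Lemma \ref{lem:pointX}, and the threshold $\gamma_0(u)$ encoding the mutual distances between the points of $\mathcal P_{J_u}$ are designed precisely to close this last gap.
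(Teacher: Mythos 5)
Your overall architecture is the right one and closely parallels the paper's: an Ambrosio-Tortorelli lower bound for the bulk term, varifold convergence for the anisotropic surface term, a diffuse Willmore bound with the weight $(1+w_\eps)^2$ and the locality of varifold curvature for the curvature term, the contribution $4\pi$ from the $w_\eps$-energy at each singular point, and Lemma \ref{lem:pointX} together with the threshold $\gamma_0(u)$ to rule out the $\mathcal P_{J_u}$-pair pathology. The upper bound you describe is essentially the paper's construction, up to one small misstatement: you do not need to ``reproduce the Dirac mass'' of the generalized curvature at $p_i$ by a radially symmetric profile for $\bar v_\eps$; it suffices that $\bar w_\eps=-1$ on $B_{R_\eps^-}(p_i)$, so that the weight $(1+\bar w_\eps)^2$ switches off the curvature penalization there, while $\bar v_\eps$ can simply follow the optimal one-dimensional profile through the signed distance to the offset $\partial U_{2\delta_\eps}$.

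There are, however, two genuine gaps in the lower bound. First, the step you treat as a one-liner, namely that ``$G_{\eps,\beta_\eps}(w_\eps)\ge\gamma\cdot\#\{\text{components of }F_\eps\}+o(1)$'', is precisely the hard core of the proof and is not a ``diffuse analogue of \eqref{F4pi}'' that one can invoke. The paper isolates this as Proposition \ref{prop:dichotomie}: if $w_\eps$ is not uniformly close to $1$ on $B_{r/2}(x_0)$, then $\liminf_\eps G_{\eps,\beta_\eps}(w_\eps,B_r(x_0))\ge 4\pi$. Its proof is a two-scale blow-up producing an integral $1$-varifold $\hat V$ with $\mu_{\hat V}(\R^2)>0$ and $\int(1+|H_{\hat V}|^2)\,d\mu_{\hat V}\le\liminf G_{\eps,\beta_\eps}$; the non-triviality of $\hat V$ is obtained by an $\eps$-level monotonicity argument modelled on \cite{pozzetta} but carried out on the Röger-Schätzle diffuse monotonicity inequality, which is already delicate. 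Even more, the final step $2\int|H_{\hat V}|\,d\mu_{\hat V}\ge 4\pi$ is a Gauss-Bonnet inequality for $1$-rectifiable integral varifolds in $\R^2$ with $L^2$ curvature, which is \emph{not} in the literature (the relevant parameter range $p=2$, $n=2$, $m=1$ is explicitly excluded from Menne-Scharrer), and the paper has to prove it in a dedicated Appendix via a Lusin-type property of the Gauss map. Your write-up neither states nor signals this.

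Second, you omit the even-multiplicity statement, Proposition \ref{prop:even}: $\theta\in 2\N^*$ $\HH^1$-a.e.\ on $\mathrm{Supp}(\mu)$. This is not optional bookkeeping. The Modica-Mortola and Willmore terms of $\mathcal G^{(\gamma)}_\eps$ carry a factor $\tfrac12$ (resp.\ $\tfrac18$), and the Ambrosio-Tortorelli lower bound only yields $\mu\ge 2\HH^1\restr J_u$, so a naive application of Lemma \ref{lem:pointX} to $V$ would return only half the desired energy. It is the evenness of $\theta$ that allows one to apply Lemma \ref{lem:pointX} to the integral varifold with halved multiplicity and recover the full density $\phi(\nu_{J_u})+|H_{J_u}|^2$ on $J_u$. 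Without this step the lower bound does not close.
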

The restriction on $\gamma$ in Theorem \ref{BBG2} has its root in the use of Lemma \ref{lem:pointX} just as in the derivation of Theorem \ref{thm:lsc}, see the discussion above. The main idea in the proof of Theorem \ref{BBG2} is contained in Proposition \ref{prop:dichotomie}.  It states  that if the phase field variable $w_\eps$ is not uniformly close to $1$ (say in a ball $B$), then the phase field energy involving $w_\eps$ inside $B$
$$G_{\eps,\beta_\eps}(w_\eps,B)=\frac{1}{c_0 \beta_\eps}\int_{B}\lt(\frac{\eps}{2}|\nabla w_\eps|^2 + \frac{1}{\eps} W(w_\eps)\rt)dx + \frac{\beta_\eps}{c_0 \eps}\int_{B} \left(-\eps \Delta w_\eps+\frac{1}{\eps} W'(w_\eps)\right)^2dx$$
must be bounded from below by $4\pi$.  This is sharp as seen from \eqref{F4pi}. The difficulty actually lies in extending the geometric argument leading to \eqref{F4pi} to the phase field energy $ G_{\eps,\beta}$. The argument rests on a blow-up argument around a point $x_0 \in \Om$ such that $w_\eps(x_0)$ is far away from $1$. Using the results of \cite{RogSchat}, one shows that
\begin{equation}\label{lowerGepsintro}\liminf_{\eps \to 0} G_{\eps,\beta_\eps}(w_\eps,B) \geq 2 \int_{\R^2} |H_V|\, d\mu_V,\end{equation}
for some $1$-rectifiable integral varifold $V$ in $\R^2$, with square-integrable first variation. Using the monotonicity formula as  in \cite{pozzetta}, it is not hard to bound the right-hand side from below by $4$ which is unfortunately not sharp as already noticed in \cite{pozzetta}. This is related to derivation of  Li-Yau inequalities for varifolds through monotonicity formula which are sharp for $d=3$ but not for $d=2$, see \cite{kuwert2004removability,pozzetta,muller2023li}. Let us point out that in order to prove that the limit varifold $V$ is non-trivial, we use in {\sf Step 2} of the proof of Proposition \ref{prop:dichotomie} a variant of the argument from \cite{pozzetta} but at the $\eps-$level.  Going back to the optimal lower bound in \eqref{lowerGepsintro} the question is thus to generalize the  Gauss-Bonnet formula for   $1$-rectifiable integral varifolds in $\R^2$ with square-integrable first variation. Although such objects are expected to enjoy good geometric properties (see e.g. \cite{AllardAlmgren} where it is proved that $1$-rectifiable, integral stationary varifolds are a locally finite sum of segments with constant multiplicities inside each of the segments), the generalization of the Gauss-Bonnet formula to varifolds seems to be a serious issue. In the 2014 preprint \cite{menne2023sharp}, a sharp lower bound on the mean curvature for $m$-rectifiable integral varifolds in $\R^n$ with $p$-integrable first variation is obtained under the critical assumption $2 \leq p=m=n-1$ (see \cite[Theorem 24.1]{Menne}). Unfortunately, our case does not fit within this set of assumptions since $p=2$, $n=2$ and $m=1$. For that reason, we dedicate an Appendix in which we derive the Gauss-Bonnet formula (Theorem \ref{thm:Gauss-Bonnet}) needed for our purpose by closely following and adapting the arguments from \cite{menne2023sharp} to our specific situation. The main point is to prove a  Lusin type property of the Gauss map, see Proposition \ref{prop:lusin} and  \cite{Santilli,santilli2021second} for related results. Since our goal is much more modest than in \cite{menne2023sharp} (restricting in particular to $1-$dimensional varifolds), our proofs are often shorter and the notation lighter than those from  \cite{menne2023sharp}. We hope that our Appendix could also help shed light on  \cite{menne2023sharp} which seems to have remain relatively unnoticed, see e.g. \cite{pozzetta}.

\bigskip

The paper is organized as follows. In Section 2, we introduce notation and the mathematical background needed for our subsequent analysis. Section 3 is devoted to the study of strongly anisotropic perimeters with curvature penalization. Theorem \ref{BBG0} provides a lower semicontinuity property of the functional $\mathcal F$, while Theorem \ref{BBG1} shows a $\Gamma$-convergence type approximation result by means of the phase field energy $\mathcal F_\eps$. In Section 4, we carry out the analysis of the strongly anisotropic Mumford-Shah energy with curvature penalization. Theorem \ref{thm:lsc} shows a lower semicontinuity result of $\mathcal G^{(\gamma)}$ for small enough weight parameter $\gamma>0$.  Theorem \ref{BBG2} gives a $\Gamma$-convergence type results of $\mathcal G^{(\gamma)}$ by means of the phase field energy $\mathcal G^{(\gamma)}_\eps$. Eventually, the Appendix is devoted to show the generalized Gauss-Bonnet formula, Theorem \ref{thm:Gauss-Bonnet}, for varifolds.

\section{Preliminaries}

\subsection{Radon measures}

The Lebesgue measure in $\R^d$ is denoted by $\LL^d$, and the $k$-dimensional Hausdorff measure by $\HH^k$. We will write $\omega_k$ for the $\LL^k$-measure of the $k$-dimensional unit ball in $\R^k$.

Let $X$ be a locally compact metric space and $Y$ be an Euclidean space. We denote by $\mathcal M(X;Y)$ (or simply $\mathcal M(X)$ if $Y=\R$) the space of {\it bounded Radon measures} in $X$. By the Riesz representation Theorem, $\mathcal M(X;Y)$ can be identified with the topological dual of $\mathcal C_0(X;Y)$, the space of all continuous functions $f :X \to Y$ such that $\{|f|\geq \eps\}$ is compact for all $\eps>0$. Similarly, the space of {\it Radon measures} $\mathcal M_{\rm loc}(X;Y)$ (or simply $\mathcal M_{\rm loc}(X)$ if $Y=\R$) is identified with the topological dual of the space $\mathcal C_c(X;Y)$ of all continuous functions $f:X \to Y$ compactly supported in $X$.

Let $\Om \subset \R^d$ be an open set, $\mu \in \mathcal M_{\rm loc}(\Om)$ be a non negative Radon measure and $1 \leq k \leq d$. The $k$-dimensional density of $\mu$ at $x \in \Om$ is defined by 
$$\Theta^k(\mu,x):=\lim_{\varrho \to 0}\frac{\mu(B_\varrho(x))}{\omega_k \varrho^k}$$
provided the limit exists.

\subsection{Rectifiable sets}

A set $M \subset \R^d$ is {\it countably $\HH^k$-rectifiable} if there exist countably many Lipschitz functions $f_i :\R^k \to \R^d$, $i \in \N$, such that
$$\HH^k\left(M \setminus\left( \bigcup_{i \in \N} f_i(\R^k)\right)\right)=0.$$
If $\HH^k(M)<\infty$, then $M$ admits an approximate tangent space, denoted by $T_x M$, at $\HH^k$-a.e. $x \in M$, see \cite[Theorem 2.83]{AFP}. We denote by $\Theta^k(M,x):=\Theta^k(\HH^k\restr M,x)$ the $k$-dimensional density of $M$ at $x$ and recall the Besicovitch-Mastrand-Mattila Theorem which states that $\Theta^k(M,\cdot)=1$ $\HH^k$-a.e. in $M$, while $\Theta^k(M,\cdot)=0$ $\HH^k$-a.e. in $\R^d \setminus M$, see  \cite[Theorems 2.56  and 2.63]{AFP}.

It is known that the previous definition of rectifiability is equivalent to require that $\HH^k$-almost every $M$ can be covered by a countable union of $k$-dimensional $\mathcal C^1$-submanifolds. If one further assume the submanifolds to be of class $\mathcal C^2$, then the set $M$ is said to be $\mathcal C^2$-rectifiable.

\subsection{Sets of finite perimeter}

Let $\Om \subset \R^d$ be an open set. A Lebesgue measurable set $E \subset \R^d$ is said to have {\it finite perimeter} in $\Om$ if $D{\bf 1}_E \in \mathcal M(\Om;\R^d)$. In that case, we have
$$D{\bf 1}_E=\nu_E \HH^{d-1}\restr \partial^* E,$$
    where $\partial^* E$ is the reduced boundary of $E$ (a countably $\HH^{d-1}$-rectifiable set), and $\nu_E$ is the approximate inner unit normal to $E$ (see \cite[Theorem 3.59]{AFP}). We denote by $P(E,\Om)=|D{\bf 1}_E|(\Om)=\HH^{d-1}(\partial^* E \cap \Om)$ the perimeter of $E$ in $\Om$.

\subsection{Varifolds}

Let us recall several basic ingredients of the theory of (co-dimension $1$) varifolds (see \cite{Simon_GMT} for a detailed description). We denote by $\mathbf G_{d-1}$ the Grassmannian manifold of all $(d-1)$-dimensional linear subspaces of $\R^d$. The set $\mathbf G_{d-1}$ is identified with the set of all orthogonal projection matrices onto $(d-1)$-dimensional linear subspaces of $\R^d$, i.e., $d \times d$ symmetric matrices $A$ such that $A^2=A$ and ${\rm tr}(A)=d-1$, in other words, matrices of the form $A={\rm Id}-e\otimes e$
for some $e \in \S^{d-1}$.

A {\it $(d-1)$-varifold} in $\Om$ is a Radon measure on $\mathbf G_{d-1}(\Om):=\Om\times \mathbf G_{d-1}$. The class of $(d-1)$-varifolds in $\Om$ is denoted by $\mathbf V_{d-1}(\Om)$. The weight measure of $V\in \mathbf V_{d-1}(\Om)$ is  the nonnegative measure  $\mu_V \in \mathcal M_{\rm loc}(\Om)$ defined by $\mu_V(B)=V(B \times \mathbf G_{d-1})$ for all Borel sets $B \subset \Om$.  We define the first variation of a \((d-1)\)-varifold in $V$ in \(\Om\) as the first order distribution
$$\delta V(\zeta)=\int_{\mathbf{G}_{d-1}(\Om)} D \zeta(x)\cdot S \,d V(x,S) \quad \text{ for } \zeta\in \mathcal C^1_c(\Om;\R^d)\,.$$
The varifold $V$ is said to have bounded first variation in $\Om$, if the distribution $\delta V$ extends to a bounded Radon measure in $\Om$. In that case, the Besicovitch differentiation Theorem shows that $\delta V$ splits into
$$\delta V= -H_V \mu_V + \sigma,$$
where $H_V \in L^1_{\mu_V}(\Om;\R^d)$ is the generalized mean curvature of $V$ and $\sigma \in \mathcal M(\Om;\R^d)$ is singular with respect to $\mu_V$. We say that $V$ has bounded first variation in $L^p_{\mu_V}(\Om;\R^d)$, $p\geq 1$, if $\sigma=0$ and  $H_V \in L^p_{\mu_V}(\Om;\R^d)$.

\medskip

Let $M \subset \Om$ be a countably $\HH^{d-1}$-rectifiable set, and $\theta:M \to \R^+$ be a locally $\HH^{d-1}$-integrable (multiplicity) function. A particular class of varifolds is that of $(d-1)$-rectifiable varifolds $V=\mathbf v(M,\theta)$ which can be written as
$$\int_{ G_{d-1}(\Om)}\Phi(x,S)\, dV(x,S):=\int_{M} \Phi(x,T_x M)\theta(x)\, d\HH^{d-1}(x) \quad \text{ for }\Phi \in \mathcal C_c(\mathbf G_{d-1}(\Om)).$$
In that case, the weight measure is given by $\mu_{V}=\theta \HH^{d-1}\restr M$. We will denote by $\nu_V$ an approximate unit normal to the rectifiable set $M$. Let us point out that since we mostly work in the framework of (unoriented) varifolds, all quantities involved will not depend on the choice of the normal. If further $\theta \in \mathbf N$ $\HH^{d-1}$-a.e. in $M$, then $V$ is called $(d-1)$-rectifiable integral varifold.  We will sometimes denote by $V_M=\mathbf v(M,1)$, $\mu_M=\mu_{V_M}$ and, if $V_M$ has bounded first variation in $\Om$, by $H_M:=H_{V_M}$ the generalized mean curvature of $V_M$.

If $M=\partial^* E$ for some set $E$ of finite perimeter in $\Om$, we write $V_E=V_{\partial^* E}$, $\mu_E=\mu_{\partial^* E}$, and $H_E=H_{\partial^* E}$.

\medskip

Let us recall the following monotonicity formula, see \cite[Theorem 2.2]{pozzetta}, which will  be used throughout the paper.

\begin{lemma}\label{lem:monoton}
Let $\Om \subset \R^2$ be an open set and $V \in \mathbf V_1(\Om)$ be a (non zero) $1$-rectifiable integral varifold with bounded first variation in $L^p_{\mu_V}(\Om;\R^2)$, with $p>1$. Then, for every $x_0 \in \Om$ and $r>0$ with $\overline B_r(x_0) \subset \Om$, $\Theta^1(\mu_V,x_0) $ exists and
\begin{equation}\label{eq:monotonicity-form}
\Theta^1(\mu_V,x_0) \leq  \frac{\mu_V(B_r(x_0))}{2r} +\frac12\int_{B_r(x_0)}|H_V|\,d\mu_V.
\end{equation}
Moreover, if $\Om=\R^2$ and $\mu_V(\R^2)<\infty$, then
\begin{equation}\label{eq:monotonicity-formBis}
\frac{\mu_V(B_r(x_0))}{r} \leq \int_{\R^2} |H_V|\,d\mu_V.
\end{equation}
\end{lemma}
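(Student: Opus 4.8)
The plan is to follow the classical Simon/Allard monotonicity argument, adapted to the $1$-dimensional setting, and then specialize. First I would establish the basic monotonicity identity. For $x_0\in\Om$ and $0<s<r$ with $\overline B_r(x_0)\subset\Om$, I test the first variation $\delta V$ against a radial vector field of the form $\zeta(x)=\gamma(|x-x_0|)(x-x_0)$ for a suitable smooth cutoff $\gamma$, approximating $\gamma(t)=\big(\max(t,s)\big)^{-1}-r^{-1}$ (truncated near $t=r$). Computing $D\zeta\cdot S$ for $S\in\mathbf G_1$ and using that $S$ is a rank-one projection $\tau\otimes\tau$ onto the tangent line, the tangential part of the radial field produces the derivative of the mass ratio, while the normal part has a sign. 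Carrying this out in dimension $m=1$ and letting the cutoff converge, one obtains, for a.e. $0<s<r$, the differential inequality
\begin{equation*}
\frac{d}{ds}\left(\frac{\mu_V(B_s(x_0))}{s}\right)\ge -\frac{1}{s}\int_{B_s(x_0)}|H_V|\,d\mu_V + \frac{d}{ds}\left(\text{nonnegative term}\right),
\end{equation*}
or in integrated form, for $0<s<r$,
\begin{equation*}
\frac{\mu_V(B_s(x_0))}{s}\le \frac{\mu_V(B_r(x_0))}{r} + \int_{B_r(x_0)}|H_V|\,d\mu_V.
\end{equation*}
Here the exponent $1$ is what makes $s\mapsto \mu_V(B_s(x_0))/s$ the relevant monotone-up-to-error quantity; the $L^p$ assumption with $p>1$ is used, via Hölder, to control $\int_{B_s}|H_V|\,d\mu_V \le \|H_V\|_{L^p_{\mu_V}}\,\mu_V(B_s)^{1-1/p}$, which vanishes faster than linearly as $s\to0$ and hence guarantees that the limit $\Theta^1(\mu_V,x_0)=\lim_{s\to0}\mu_V(B_s(x_0))/(2s)$ exists (the error term is summable near $0$). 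Sending $s\to0$ in the integrated inequality then yields \eqref{eq:monotonicity-form}, after keeping track of the factor $\omega_1=2$ in the definition of the density. One must be slightly careful that when $V$ has bounded first variation only in $L^p_{\mu_V}$ the singular part $\sigma$ vanishes, so only the $H_V\mu_V$ term appears; that is part of the hypothesis.

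For the second, global statement, I would apply \eqref{eq:monotonicity-form} with $\Om=\R^2$: since $\overline B_r(x_0)\subset\R^2$ for every $r$, it gives
\begin{equation*}
\Theta^1(\mu_V,x_0)\le \frac{\mu_V(B_r(x_0))}{2r} + \frac12\int_{\R^2}|H_V|\,d\mu_V
\end{equation*}
for all $r>0$. Now I let $r\to\infty$: because $\mu_V(\R^2)<\infty$, the term $\mu_V(B_r(x_0))/(2r)\to0$, so $\Theta^1(\mu_V,x_0)\le \tfrac12\int_{\R^2}|H_V|\,d\mu_V$ at every point. Since $V$ is a nonzero $1$-rectifiable \emph{integral} varifold, at $\HH^1$-a.e.\ point of $M=\supp$ of the multiplicity one has $\Theta^1(\mu_V,x_0)\ge1$; in particular the density is $\ge1$ somewhere, whence $\int_{\R^2}|H_V|\,d\mu_V\ge2$. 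To upgrade this to the stated $\mu_V(B_r(x_0))/r\le\int_{\R^2}|H_V|\,d\mu_V$ for every $x_0$ and $r$, I would revisit the monotonicity inequality between two radii $s<R$ and let $R\to\infty$ (using finiteness of $\mu_V(\R^2)$ to kill the boundary term at $R$), obtaining $\mu_V(B_s(x_0))/s\le\int_{\R^2}|H_V|\,d\mu_V$ for all $s$, which is the claim; alternatively one combines the integral form with the density lower bound $\Theta^1\ge1$ valid at accumulation points.

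The main obstacle is the careful derivation of the first-variation identity with the sign of the normal term and the correct bookkeeping of constants in dimension $1$ — i.e.\ justifying the differential inequality rigorously when $V$ only has $L^p$ (not continuous) curvature, which requires the cutoff approximation and a density/absolute-continuity argument, together with checking that the $p>1$ integrability is exactly what is needed for the error $\int_{B_s}|H_V|\,d\mu_V$ to be $o(s)$ as $s\to0$ so that the density limit genuinely exists. Since this is entirely classical (it is the content of \cite[Theorem 2.2]{pozzetta} and ultimately of Allard's and Simon's monotonicity formulas), in the paper one would simply cite it; the only nonstandard point to stress is that the argument is insensitive to the anisotropy because $\phi$ plays no role in the first variation of the varifold.
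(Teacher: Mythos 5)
Your proposal follows essentially the same route as the paper: both extract the two inequalities from the Simon/Allard-type monotonicity identity (quoted in the paper as \cite[(18)]{pozzetta}), taking $s\to 0$ to produce \eqref{eq:monotonicity-form} and $R\to\infty$, using $\mu_V(\R^2)<\infty$, to produce \eqref{eq:monotonicity-formBis}. The integrated inequality you state,
\[
\frac{\mu_V(B_s(x_0))}{s}\le \frac{\mu_V(B_r(x_0))}{r}+\int_{B_r(x_0)}|H_V|\,d\mu_V \qquad (0<s<r),
\]
is correct and indeed delivers both claims; the point to be careful about (and which the paper handles by working directly with \cite[(18)]{pozzetta}) is that it comes from grouping the two curvature error terms so that the contribution on $B_s$ has coefficient $|x-x_0|(\tfrac1s-\tfrac1r)\le 1$ and the contribution on $B_r\setminus B_s$ has coefficient $1-\tfrac{|x-x_0|}{r}\le 1$; since these live on disjoint sets they add up to at most $\int_{B_r}|H_V|\,d\mu_V$, yielding the sharp constant. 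By contrast, the differential form you wrote, $\frac{d}{ds}(\mu_V(B_s)/s)\ge -\frac1s\int_{B_s}|H_V|\,d\mu_V+\frac{d}{ds}(\text{nonneg})$, does not by itself integrate to the displayed inequality (it would produce a factor $\log(r/s)$), so the argument should be carried out on the integrated identity rather than the differential one. With that minor correction, your derivation coincides with the paper's.
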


\begin{proof}
The fact that $\Theta^1(\mu_V,x_0) $ exists  for any $x_0$ follows from \cite[Corollary 17.8]{Simon_GMT}. Formulas  \eqref{eq:monotonicity-form} and \eqref{eq:monotonicity-formBis} can  be inferred from the proof of \cite[Theorem 2.2]{pozzetta} (though not stated under this form). For the reader's convenience we include some details. By \cite[(18)]{pozzetta}, for every $x_0 \in \Om$ and $0<r<R$ with $\overline B_R(x_0)\subset \Om$,
\begin{multline*}
 \frac{\mu_V(B_r(x_0))}{r} + \frac{1}{r} \int_{B_r(x_0)} H_V(x) \cdot (x-x_0) \, d \mu_V (x)  \leq \frac{\mu_V(B_R(x_0))}{R} \\ + \int_{B_R(x_0)} H_V(x) \cdot \left( \frac{x-x_0}{R} - \frac{x-x_0}{|x-x_0|} {\bf 1}_{B_R(x_0) \setminus B_r(x_0)}  \right) \, d \mu_V (x).
\end{multline*}
 On the one hand, keeping $R>0$ fixed and letting $r \to 0$, since  $H_V \in L^p_{\mu_V}(\Om;\R^2)$ with $p>1$ yields  \eqref{eq:monotonicity-form}.

On the other hand, if $\Om=\R^2$ and $\mu_V(\R^2)<\infty$, we can now keep $r>0$ fixed and let $R \to \infty$. Since $H_V \in L^1_{\mu_V}(\R^2;\R^2)$, we obtain by dominated convergence
\begin{eqnarray*}
 \frac{\mu_V(B_r(x_0))}{r} & \leq & - \frac{1}{r} \int_{B_r(x_0)} H_V(x) \cdot (x-x_0) \, d \mu_V (x) - \int_{\R^2 \setminus B_r (x_0)} H_V(x) \cdot  \frac{x-x_0}{|x-x_0|} \, d \mu_V (x) \\
 & \leq & \int_{\R^2} |H_V|\,d\mu_V,
\end{eqnarray*}
leading to \eqref{eq:monotonicity-formBis}.
\end{proof}

Let us next state the following version of the Sobolev-Michael-Simon inequality (see \cite[Theorems 7.1 and 7.3]{allardfirst}).

\begin{theorem}\label{thm:SMS}
There exists a universal constant $c_1>0$ such that the following property is satisfied: let $\Omega \subset \R^2$ be an open set and $V \in \mathbf V_1(\R^2)$ be a $1$-rectifiable integral varifold with bounded first variation in $L^1_{\mu_V}(\R^2;\R^2)$. Setting
$$\gamma:=\int_{\Omega}|H_V|\, d\mu_V,$$
then, for every compactly supported Lipschitz function $\zeta :\Omega \to \R$.
$$(1-c_1\gamma) \sup_{{\rm Supp}(V)} |\zeta| \leq c_1 \int_{\mathbf G_1(\Omega)} |S \cdot \nabla \zeta(x)|\, dV(x,S).$$
\end{theorem}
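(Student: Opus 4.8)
The plan is to recognize Theorem~\ref{thm:SMS} as the borderline case $m=1$ of the Michael--Simon / Allard Sobolev inequality and to deduce it from \cite[Theorems~7.1 and~7.3]{allardfirst} after an elementary rearrangement. First I would reduce to the case $\zeta\ge 0$: since $\zeta$ is Lipschitz with compact support in $\Omega$, so is $|\zeta|$, with $\nabla|\zeta|=({\rm sgn}\,\zeta)\nabla\zeta$ for $\mathcal L^2$-a.e.\ $x$; as $V$ is rectifiable the tangential gradient along $V$ is $\mu_V$-a.e.\ determined by $\nabla\zeta$, so $S\cdot\nabla|\zeta|$ and $S\cdot\nabla\zeta$ agree up to sign, hence in modulus, for $V$-a.e.\ $(x,S)$. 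It then suffices to establish the supremum estimate
\[
\sup_{{\rm Supp}(V)}\zeta\ \le\ c_1\int_{\mathbf G_1(\Omega)}|S\cdot\nabla\zeta(x)|\,dV(x,S)\ +\ c_1\int_\Omega|H_V|\,\zeta\,d\mu_V ,
\]
which I shall call $(\star)$: its last integral is at most $\gamma\sup_{{\rm Supp}(V)}\zeta$, so moving it to the left-hand side of $(\star)$ yields exactly $(1-c_1\gamma)\sup_{{\rm Supp}(V)}\zeta\le c_1\int_{\mathbf G_1(\Omega)}|S\cdot\nabla\zeta|\,dV$; the asserted inequality is moreover trivial when $\gamma\ge c_1^{-1}$, its left-hand side being then nonpositive.

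For $(\star)$ I would run the classical monotonicity computation, which is short in the $1$-dimensional, codimension-$1$ setting. By the very definition of bounded first variation in $L^1_{\mu_V}(\R^2;\R^2)$ the singular part of $\delta V$ vanishes, so $\int_{\mathbf G_1(\Omega)}DX(x)\cdot S\,dV(x,S)=-\int_\Omega H_V\cdot X\,d\mu_V$ for every $X\in\mathcal C^1_c(\Omega;\R^2)$. I would fix $x_0\in{\rm Supp}(V)$ with $\Theta^1(\mu_V,x_0)\ge 1$ --- such points are dense in ${\rm Supp}(V)$, since $V$ is integral and hence $\Theta^1(\mu_V,\cdot)\ge1$ $\mu_V$-a.e., while $\Theta^1(\mu_V,x_0)$ exists for every $x_0$ by Lemma~\ref{lem:monoton} --- and, $\zeta$ being continuous, it is enough to bound $\zeta(x_0)$. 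Testing the first variation identity with $X(x)=\zeta(x)(x-x_0)\psi(|x-x_0|)$ for a smooth nonincreasing cutoff $\psi\approx{\bf 1}_{[0,\rho]}$, and using the one-dimensional identities ${\rm div}^S(x-x_0)={\rm tr}\,S=1$ and $S\nabla\big(\psi(|x-x_0|)\big)=\psi'(|x-x_0|)\,S\tfrac{x-x_0}{|x-x_0|}$, one computes
\[
DX\cdot S=\zeta\,\psi(|x-x_0|)+\zeta\,\psi'(|x-x_0|)\frac{|S(x-x_0)|^2}{|x-x_0|}+\psi(|x-x_0|)\,\langle S\nabla\zeta,\,x-x_0\rangle .
\]
Letting $\psi\uparrow{\bf 1}_{[0,\rho]}$ turns the first variation identity, after multiplication by $\rho^{-1}$, into a monotonicity-type differential inequality for the quantity $\rho\mapsto\rho^{-1}\int_{B_\rho(x_0)}\zeta\,d\mu_V$, whose error term is bounded --- via $|\langle S\nabla\zeta,x-x_0\rangle|\le|S\cdot\nabla\zeta|\,|x-x_0|$ and $|x-x_0|\le\rho$ on $B_\rho(x_0)$ --- by $\rho^{-1}\int_{B_\rho(x_0)}\bigl(|S\cdot\nabla\zeta|+|H_V|\zeta\bigr)\,d\mu_V$ (all integrals being finite since $\mu_V$ is finite on the compact set ${\rm Supp}(\zeta)$). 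Integrating in $\rho$ and using, on one hand, $\rho^{-1}\int_{B_\rho(x_0)}\zeta\,d\mu_V\to2\Theta^1(\mu_V,x_0)\zeta(x_0)\ge2\zeta(x_0)$ as $\rho\to0$ and, on the other, that this quantity vanishes for $\rho$ large since $\zeta$ has compact support, gives $(\star)$ with a universal constant $c_1$; this is precisely \cite[Theorem~7.3]{allardfirst} specialized to $m=1$, in which the $L^{m/(m-1)}$-norm on the left degenerates into the supremum.

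The step I expect to be the main obstacle is exactly this borderline exponent $m/(m-1)=\infty$ at $m=1$: naively sending $\rho\to\infty$ in the monotonicity inequality leaves a logarithmically divergent weight on the curvature term, so one has to use instead the sharp interpolating cutoffs of Michael--Simon / Allard (or work on a bounded range of radii and exploit the compact support of $\zeta$) to land on a bounded constant. The remaining ingredients --- the reduction to $\zeta\ge0$, the identification of the tangential gradient, the vanishing of the singular part of $\delta V$, and the final rearrangement absorbing $\int_\Omega|H_V|\zeta\,d\mu_V$ into $\gamma\sup_{{\rm Supp}(V)}|\zeta|$ --- are routine.
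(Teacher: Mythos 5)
Your reduction to $\zeta\ge 0$, the computation of ${\rm div}^S X$ for $X=\zeta(x-x_0)\psi(|x-x_0|)$, and the final rearrangement moving $c_1\int|H_V|\zeta\,d\mu_V\le c_1\gamma\sup_{{\rm Supp}(V)}\zeta$ to the left are all correct, and you are right that the statement is Allard's Sobolev inequality at $m=1$, which is what the paper cites without proof. However, the weighted monotonicity argument you sketch does not close, and the fix you suggest (``sharp interpolating cutoffs'' or ``working on a bounded range of radii'') does not repair it. Integrating $-\frac{d}{d\rho}\bigl(\rho^{-1}\int_{B_\rho(x_0)}\zeta\,d\mu_V\bigr)\le \rho^{-1}\int_{B_\rho(x_0)}\bigl(|S\cdot\nabla\zeta|+|H_V|\zeta\bigr)\,d\mu_V$ from $\rho=0$ to $\rho=R$ and applying Fubini produces a weight $\log\frac{R}{|x-x_0|}$ on the right-hand integrand, which is unbounded near $x_0$ and cannot be absorbed into a universal constant times $\int\bigl(|S\cdot\nabla\zeta|+|H_V|\zeta\bigr)\,d\mu_V$; compact support of $\zeta$ only controls the outer endpoint, not the singularity at $x_0$. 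The route that actually works --- and that the citation to both \cite[Theorem 7.1]{allardfirst} \emph{and} \cite[Theorem 7.3]{allardfirst} points to --- is to first derive the $m=1$ isoperimetric inequality (a nontrivial compactly supported integral $1$-varifold $W$ with bounded first variation satisfies $\|\delta W\|(\R^2)\ge 2$, which follows from the unweighted monotonicity formula applied at a density-$\ge 1$ point and sending $r\to\infty$), then apply this to each superlevel-set varifold $V\restr\{\zeta>t\}$ for $t<\sup_{{\rm Supp}(V)}\zeta$, and finally integrate over $t$ using the coarea/slicing formula, which produces $\int|\nabla^V\zeta|\,d\mu_V$ from the slice boundaries and $\int|H_V|\zeta\,d\mu_V$ from the curvature, with no logarithm. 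This is what your direct weighted-monotonicity attempt is missing; the rest of your write-up matches the intended argument.
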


\medskip
We finally recall the main result of  \cite{RogSchat} which will be at the basis of our forthcoming phase-field approximation analysis. Although not stated in this form, a careful reading of \cite[Theorem 4.1 \& Theorem 5.1]{RogSchat} yields the following statement. Let us consider the standard double well potential $W:\R \to \R$ defined by
$$W(z)=(1-z^2)^2 \quad \text{ for } z \in \R,$$ and set
\begin{equation}\label{defco}
 c_0=\int_{-1}^1 \sqrt{2W(z)}\, dz=\frac{4\sqrt 2}{3}.
\end{equation}

\begin{theorem}\label{theo_RogSchat}
Let $d=2, 3$ and $\Om$ be a bounded open subset of $\R^d$. Given $v_\eps \in H^2(\Om)$, we define the measures
$$\mu_\eps=\frac{1}{c_0}\left(\frac{\eps}{2}|\nabla v_\eps|^2 +\frac{1}{\eps} W(v_\eps)\right) \LL^d, \quad \alpha_\eps= \frac{1}{c_0 \eps} \left( -\eps \Delta v_\eps+\frac{1}{\eps} W'(v_\eps)\right)^2 \LL^d,$$
{the function
$$\xi_\eps =\frac{\eps}{2}|\nabla v_\eps|^2 - \frac{1}{\eps} W(v_\eps)$$}
and the varifold $V_\eps$ as
\begin{equation*}
 \int_{\mathbf G_{d-1}(\Om)} \Phi(x,S) \, dV_\eps(x,S)= \int_{\Omega} \Phi\lt(x,{\rm Id}- \nu_\eps \otimes  \nu_\eps \rt) d\mu_\eps, \quad \Phi \in \mathcal C_c(\mathbf G_{d-1}(\Om)).
\end{equation*}
where $\nu_\eps : \Omega \rightarrow \S^{d-1}$ are Borel extensions of $\frac{\nabla v_\eps}{|\nabla v_\eps|}$ on $\{\nabla v_\eps=0\}$.
Assume that 
 $$\sup_{\eps>0}\{\mu_\eps(\Omega)+ \alpha_\eps(\Omega)\}<\infty.$$
 Then, up to a subsequence, there exist $\alpha \in \mathcal M^+(\Om)$  and $V \in \mathbf V_{d-1}(\Om)$ such that  $V_\eps\wto V$ weakly* in $\mathcal M(\mathbf G_{d-1}(\Om))$, $\mu_\eps\wto \mu_V$, $\alpha_\eps\wto \alpha$ weakly* in $\mathcal M(\Om)$ and $\xi_\eps \to 0$ in $L^1_{\rm loc}(\Om)$. Moreover, $V$ is a $(d-1)$-rectifiable integral varifold with bounded first variation in $L^2_{\mu_V}(\Om;\R^d)$ and
 \begin{equation}\label{loweralpha}
|H_V|^2 \mu_V \le \alpha.
 \end{equation}
\end{theorem}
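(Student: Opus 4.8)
The plan is to derive Theorem~\ref{theo_RogSchat} from the compactness and rectifiability analysis of R\"oger--Sch\"atzle \cite{RogSchat}, after reconciling their normalizations with ours. I begin with the \textbf{compactness}: from $\sup_\eps\{\mu_\eps(\Om)+\alpha_\eps(\Om)\}<\infty$ the measures $\mu_\eps$, $\alpha_\eps$ are bounded in $\mathcal M(\Om)$, and since $\mu_{V_\eps}=\mu_\eps$ by the very definition of $V_\eps$, the varifolds $V_\eps$ are bounded in $\mathcal M(\mathbf G_{d-1}(\Om))$. Passing to a subsequence I would obtain $\mu_\eps\wto\mu$, $\alpha_\eps\wto\alpha$ in $\mathcal M(\Om)$ and $V_\eps\wto V$ in $\mathcal M(\mathbf G_{d-1}(\Om))$; testing the last convergence against integrands depending on $x$ alone gives $\mu_{V_\eps}\wto\mu_V$, hence $\mu=\mu_V$ and $\mu_\eps\wto\mu_V$, as asserted.

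Next I would handle the \textbf{first variation}. The first substantial input of \cite{RogSchat} is that the bound on $\alpha_\eps(\Om)$ forces the discrepancy to vanish strongly, $\xi_\eps\to 0$ in $L^1_{\rm loc}(\Om)$. Using $v_\eps\in H^2(\Om)$, the R\"oger--Sch\"atzle integration by parts identity rewrites the first variation of $V_\eps$ in divergence form: with the diffuse mean curvature $\vec H_\eps:=\frac{1}{\eps|\nabla v_\eps|^2}\lt(-\eps\Delta v_\eps+\tfrac1\eps W'(v_\eps)\rt)\nabla v_\eps$, one has for every $\zeta\in\mathcal C^1_c(\Om;\R^d)$
$$\delta V_\eps(\zeta)=-\int_\Om\zeta\cdot\vec H_\eps\,d\mu_\eps+R_\eps(\zeta),$$
the remainder $R_\eps(\zeta)$ being controlled by $\xi_\eps$ and hence negligible in the limit. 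Cauchy--Schwarz, together with the equipartition (which makes $\int_A|\vec H_\eps|^2\,d\mu_\eps$ asymptotically equal to $\alpha_\eps(A)$), then gives the localized estimate $|\delta V_\eps(\zeta)|^2\le\lt(\int_\Om|\zeta|^2\,d\mu_\eps\rt)\alpha_\eps\lt(\ov{\supp\zeta}\rt)+o(1)$. Hence $\delta V_\eps$ is uniformly bounded, $\delta V$ is a Radon measure, and letting $\eps\to0$ gives
$$\abs{\delta V(\zeta)}^2\le\lt(\int_\Om|\zeta|^2\,d\mu_V\rt)\alpha\lt(\ov{\supp\zeta}\rt)$$
for every such $\zeta$; since $\alpha(\Om)<\infty$, this forces $\abs{\delta V}\ll\mu_V$, so $\delta V=-H_V\mu_V$ with no singular part and $H_V\in L^1_{\mu_V}(\Om;\R^d)$.

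The core of the matter — \textbf{rectifiability and integrality of $V$} — is exactly the content of \cite[Theorem~4.1 \& Theorem~5.1]{RogSchat}, which I would invoke as a black box: the density $\Theta^{d-1}(\mu_V,\cdot)$ exists, is positive, and is $\N$-valued $\HH^{d-1}$-a.e., the integrality being obtained there by a blow-up analysis built on the one-dimensional optimal profile of $W$ and the diffuse Willmore bound (this is where $d\in\{2,3\}$ is used). Granted this, \eqref{loweralpha} is elementary: for $\mu_V$-a.e.\ $x_0$, testing the previous display with $\zeta$ supported in $B_r(x_0)$ and nearly equal to a fixed unit vector there, dividing by $\mu_V(B_r(x_0))$ and letting $r\to0$ along the Lebesgue--Besicovitch points of $\alpha$ and of $H_V\mu_V$ with respect to $\mu_V$, one gets $\abs{H_V(x_0)}^2\le\frac{d\alpha}{d\mu_V}(x_0)$, i.e. $\abs{H_V}^2\mu_V\le\alpha$.

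I expect the \textbf{main obstacle} to be entirely outsourced: the strong convergence of the discrepancy, the rectifiability, and above all the integrality of the limit varifold are the substantial results of \cite{RogSchat} and are used here without reproof. The genuine work on our side is bookkeeping — matching sign conventions and the factors $1/c_0$, accommodating the arbitrary Borel choice of $\nu_\eps$ on $\{\nabla v_\eps=0\}$, and verifying that the joint weak-* convergence of $(\mu_\eps,\alpha_\eps,V_\eps)$ together with $\xi_\eps\to0$ in $L^1_{\rm loc}(\Om)$ and the inequality \eqref{loweralpha} come out exactly in the stated form.
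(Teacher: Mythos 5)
Your proposal is correct and follows the same route as the paper, which does not give a proof of this theorem but simply states that ``a careful reading of \cite[Theorem~4.1 \& Theorem~5.1]{RogSchat}'' yields the claim; you have essentially spelled out what that careful reading consists of (compactness, discrepancy vanishing, the diffuse first-variation identity and its limit, rectifiability/integrality as black boxes, and the Besicovitch-point argument for $|H_V|^2\mu_V\le\alpha$). One small slip: you write that $H_V\in L^1_{\mu_V}(\Om;\R^d)$, but the inequality $|H_V|^2\mu_V\le\alpha$ with $\alpha(\Om)<\infty$ in fact gives $H_V\in L^2_{\mu_V}(\Om;\R^d)$ as stated in the theorem, so the apparent weakening is harmless.
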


\subsection{Curves}

Let $\Omega$ be an open subset of $\R^2$. We consider the class of $1$-dimensional sets $\Gamma \subset \Omega$ which are made of a finite union of curves of class $\mathcal C^2$ without self-intersections and that intersect only at their endpoints. Specifically, there exist $\mathcal C^2$ mappings $\gamma_1,\ldots,\gamma_N :[0,1] \to \Omega$ such that $\dot \gamma_i \neq 0$ and, setting $\Gamma_i=\gamma_i([0,1])$,
\begin{equation}\label{eq:curves}
\Gamma=\bigcup_{i=1}^N \Gamma_i.
\end{equation}
 We denote by $\mathcal P_\Gamma=\bigcup_{i=1}^N \{\gamma_i(0),\gamma_i(1)\}$ the set of all end points of the curves $\Gamma_i$. Given $p \in \mathcal P_\Gamma$, there is $I \subset \{1,\ldots,N\}$ with $\#(I) \geq 1$ such that $p \in \bigcap_{i \in I} \Gamma_i$. Up to a change of orientation, we can assume that
$$p=\gamma_i(0) \quad \text{ for }i \in I.$$
For $i \in I$, let $\dot \gamma_i(0)$ be the tangent vector to $\Gamma_i$ at $p$. We assume that (see Remark \ref{rem:point-zero-curv})\footnote{If, for some $i \in I$, $\Gamma_i$ is a loop, i.e. $\gamma_i(0)=\gamma_i(1)$, then $\dot \gamma_i(0)$ should be replaced by $\dot \gamma_i(0)-\dot \gamma_i(1)$ in \eqref{eq:curvature}.}
\begin{equation}\label{eq:curvature}
v_p:=\sum_{i \in I}  \dot \gamma_i(0) \neq 0.
\end{equation}
The class of all sets $\Gamma$ satisfying \eqref{eq:curves} and \eqref{eq:curvature} is denoted by $\mathscr C(\Omega)$.

\medskip

Let $V_\Gamma$ be the varifold associated to such a set $\Gamma \in \mathscr C(\Om)$ (which is a rectifiable set). A classical computation shows that the first variation of $V_\Gamma$ is given by
$$\delta V_{\Gamma}(\zeta)= -\sum_{i =1}^N \int_{\Gamma_i} H_{\Gamma_i} \cdot \zeta\, d\HH^1 - \sum_{p \in \mathcal P_\Gamma} \zeta(p) \cdot v_p\quad \text{ for }\zeta \in \mathcal C^1_c(\Omega;\R^2),$$
where $H_{\Gamma_i}(x)=\frac{d}{dt}\left(\frac{\dot \gamma_i(t)}{|\dot \gamma_i(t)|}\right)$ is the classical curvature of $\Gamma_i$ at the point $x=\gamma_i(t) \in \gamma_i(]0,1[)$. In other words, $V_\Gamma$ has bounded first variation which can be represented by the measure
$$\delta V_\Gamma= -\sum_{i =1}^N H_{\Gamma_i}\HH^1\restr \Gamma_i- \sum_{p \in \mathcal P_\Gamma} v_p \delta_p.$$
Condition \eqref{eq:curvature} can thus be interpreted as a nonzero curvature in the sense of varifolds at all intersection points of the curves $\Gamma_i$'s.

\medskip

The following elementary lemma describes the tangent measures to $\HH^1\restr \Gamma$ for   $\Gamma \in \mathscr C(\Om)$. We omit the proof.

\begin{lemma}\label{lem:blow-up-Gamma}
Let $\Gamma \in \mathscr C(\Om)$. For all $x_0 \in \Omega$ and $\varrho>0$, we define the blow-up measure $\lambda_{x_0,\varrho}$ of $\HH^1\restr \Gamma$ by
$$\int_{\R^2} \zeta\, d\lambda_{x_0,\varrho}:=\frac1\varrho \int_{\Gamma}\zeta\left(\frac{y-x_0}{\varrho}\right)d \HH^1(y) \quad \text{ for }\zeta \in \mathcal C_c(\R^2)$$
and 
$$\Theta(\Gamma,x_0):=\lim_{\varrho \to 0} \frac{\HH^1(\Gamma \cap B_\varrho(x_0))}{2\varrho}.$$
Then, $\lambda_{x_0,\varrho} \wto \lambda_0$ weakly* in $\mathcal M_{\rm loc}(\R^2)$ as $\varrho \to 0$, where
\begin{itemize}
\item[(i)] if $x_0 \in \Om \setminus \Gamma$, then $\lambda_0=0$ and $\Theta(\Gamma,x_0)=0$;
\item[(ii)] if $x_0 \in \gamma_i(]0,1[)$ for some (unique) $i \in \{1,\ldots,N\}$, then $\lambda_0 =\HH^1\restr T$ where $T=\R \dot \gamma(t_0)$ is the tangent line to $\gamma_i(]0,1[)$ at $x_0=\gamma_i(t_0)$, and $\Theta(\Gamma,x_0)=1$;
\item[(iii)]  if $x_0 \in \mathcal P_\Gamma$, then $\lambda_0=\sum_{i \in I} \HH^1\restr L_i$ and $L_i=\R^+ \dot \gamma_i(0)$ is the half-line tangent to $\Gamma_i$ at the point $x_0=\gamma_i(0)$ and $\Theta(\Gamma,x_0)=\frac{\#(I)}{2}$.
\end{itemize}
\end{lemma}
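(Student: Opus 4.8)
The plan is to verify the three cases of Lemma \ref{lem:blow-up-Gamma} by a direct local analysis, using that each $\Gamma_i$ is a $\mathcal C^2$ curve and that the curves meet only at the finitely many points of $\mathcal P_\Gamma$. First I would recall that $\HH^1\restr\Gamma$ is a finite Radon measure on any compact subset of $\Om$, so by weak-* compactness it suffices to identify all possible subsequential limits of $\lambda_{x_0,\varrho}$ and show they agree; I would test against an arbitrary $\zeta\in\mathcal C_c(\R^2)$ and compute directly.

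For case (i), if $x_0\in\Om\setminus\Gamma$ then, since $\Gamma$ is closed, there is $r_0>0$ with $B_{r_0}(x_0)\cap\Gamma=\emptyset$; hence for $\varrho$ small enough the support of $\zeta\big(\tfrac{\cdot-x_0}{\varrho}\big)$ does not meet $\Gamma$, so $\int\zeta\,d\lambda_{x_0,\varrho}=0$ and $\Theta(\Gamma,x_0)=0$. For case (ii), if $x_0=\gamma_i(t_0)$ with $t_0\in(0,1)$ lies on exactly one curve, I pick $r_0$ small enough that $B_{r_0}(x_0)$ meets only $\Gamma_i$ and only along an arc $\gamma_i((t_0-\delta,t_0+\delta))$; the change of variables $s=(t-t_0)/\varrho$ together with the first-order Taylor expansion $\gamma_i(t_0+\varrho s)=x_0+\varrho s\,\dot\gamma_i(t_0)+o(\varrho)$ and $|\dot\gamma_i(t_0+\varrho s)|\to|\dot\gamma_i(t_0)|$ shows that $\tfrac1\varrho\int_{\Gamma_i}\zeta\big(\tfrac{y-x_0}{\varrho}\big)d\HH^1(y)=\int_{\R}\zeta\big(s\,\dot\gamma_i(t_0)/|\dot\gamma_i(t_0)|\,|\dot\gamma_i(t_0)|\big)|\dot\gamma_i(t_0)|\,ds+o(1)$, which after a final rescaling of $s$ is exactly $\int_T\zeta\,d\HH^1$ for $T=\R\dot\gamma_i(t_0)$; taking $\zeta$ an approximation of ${\bf 1}_{B_1}$ gives $\Theta(\Gamma,x_0)=1$. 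Case (iii) is the same computation carried out on each of the finitely many arcs $\Gamma_i$, $i\in I$, emanating from $x_0=\gamma_i(0)$: each contributes a half-line $L_i=\R^+\dot\gamma_i(0)$ (the parameter $t$ runs over $[0,\delta)$, so $s\ge 0$), the pieces are disjoint away from $x_0$ by the smallness of $r_0$, and summing yields $\lambda_0=\sum_{i\in I}\HH^1\restr L_i$ and $\Theta(\Gamma,x_0)=\tfrac12\sum_{i\in I}1=\tfrac{\#(I)}{2}$ (with the footnote adjustment if some $\Gamma_i$ is a loop, in which case that curve contributes two half-lines at $x_0$).

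There is no real obstacle here; the only points requiring minor care are that the approximate tangent picture is genuinely a uniform statement near $x_0$ — which is exactly what $\mathcal C^2$ regularity with $\dot\gamma_i\neq 0$ and the finiteness of $\mathcal P_\Gamma$ guarantee — and that the density limits exist (not merely $\limsup$), which follows since the blow-up limit $\lambda_0$ is unique and $\lambda_0(\partial B_1)=0$ so that $\lambda_{x_0,\varrho}(B_1)\to\lambda_0(B_1)$. This is why the authors can legitimately omit the proof: it is the standard rectifiability/blow-up computation specialized to a finite union of $\mathcal C^2$ arcs.
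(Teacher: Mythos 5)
Your proof is correct and is exactly the elementary blow-up computation the authors have in mind when they write ``We omit the proof'': case (i) is immediate from $\Gamma$ being closed, cases (ii) and (iii) follow from the substitution $t=t_0+\varrho s$ and a first-order Taylor expansion of the $\mathcal C^1$ (indeed $\mathcal C^2$) parametrizations with $\dot\gamma_i\neq 0$, and the passage from weak-$*$ convergence to convergence of $\lambda_{x_0,\varrho}(B_1)$ uses that $\lambda_0(\partial B_1)=0$ since the limit is a finite union of lines or half-lines. Your remark about loops contributing two half-lines also correctly matches the footnote to \eqref{eq:curvature}.
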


The following result is a consequence of Taylor's expansion of the length function.

\begin{lemma}\label{lem:taylor}
For every $\Gamma \in \mathscr C(\Om)$, there exists $\varrho_0=\varrho_0(\Gamma)>0$ such that for every $x_0 \in \mathcal P_\Gamma$ and $0<\varrho \leq \varrho_0$, \begin{equation}\label{eq:C_*}
B_\varrho(x_0) \cap \mathcal P_\Gamma=\{x_0\}, \quad  \HH^1(\Gamma \cap B_\varrho(x_0))\le 2 \varrho \Theta(\Gamma,x_0)+ \varrho^2  \Theta(\Gamma,x_0) \sup_{\Gamma \setminus \mathcal P_\Gamma}|H_{\Gamma}|.
\end{equation}
\end{lemma}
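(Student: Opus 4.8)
The statement is a purely local Taylor-type estimate for each $\mathcal C^2$ curve meeting a fixed endpoint. The plan is to work one point $x_0 \in \mathcal P_\Gamma$ at a time, and for $\varrho$ small only the curves $\Gamma_i$ with $i \in I$ (those passing through $x_0$) contribute to $\Gamma \cap B_\varrho(x_0)$. First I would choose $\varrho_0$ small enough that the two separation properties hold simultaneously for every $x_0 \in \mathcal P_\Gamma$: (a) $B_{\varrho}(x_0) \cap \mathcal P_\Gamma = \{x_0\}$ for all $\varrho \le \varrho_0$ — possible since $\mathcal P_\Gamma$ is finite, so take $\varrho_0$ below half the minimal mutual distance between distinct points of $\mathcal P_\Gamma$; and (b) $\varrho_0$ small enough that for every $i$, the curve $\Gamma_i$ restricted to a neighbourhood of each of its endpoints is an embedded arc whose only intersection with $B_\varrho(x_0)$ is the connected sub-arc emanating from $x_0$ — this uses that the $\gamma_i$ are $\mathcal C^2$ embeddings with $\dot\gamma_i \ne 0$, so each $\Gamma_i$ stays uniformly away from $x_0$ outside a neighbourhood of the parameter value with $\gamma_i = x_0$. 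Both are finite intersections of open conditions over the finitely many pairs $(i,x_0)$, so a single $\varrho_0 = \varrho_0(\Gamma) > 0$ works.

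With this $\varrho_0$ fixed, I would then reduce to estimating the arclength of a single $\mathcal C^2$ arc $\gamma : [0,\delta] \to \Omega$ with $\gamma(0) = x_0$, $|\dot\gamma| \equiv 1$ (reparametrize by arclength), that stays inside $B_\varrho(x_0)$. Writing $\ell(\varrho)$ for the arclength parameter at which $\gamma$ first reaches $\partial B_\varrho(x_0)$, the contribution of this arc to $\HH^1(\Gamma \cap B_\varrho(x_0))$ is $\ell(\varrho)$, and I want $\ell(\varrho) \le \varrho + \tfrac12 \varrho^2 \sup|H|$ — summing over the $\#(I)$ arcs through $x_0$ and recalling $\Theta(\Gamma,x_0) = \#(I)/2$ by Lemma~\ref{lem:blow-up-Gamma}(iii) gives exactly \eqref{eq:C_*}. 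The key computation: since $\gamma$ is unit-speed, $|\gamma(s) - x_0| = s \, (1 + o(1))$, and more precisely $\tfrac{d}{ds}|\gamma(s) - x_0|^2 = 2(\gamma(s)-x_0)\cdot\dot\gamma(s)$, with $(\gamma(s)-x_0)\cdot\dot\gamma(s) = s - \int_0^s (s-\tau)\, \ddot\gamma(\tau)\cdot\dot\gamma(s)\,d\tau$ after one integration by parts; estimating the remainder by $\tfrac12 s^2 \sup|\ddot\gamma| = \tfrac12 s^2 \sup|H_\Gamma|$ gives $|\gamma(s)-x_0| \ge s - \tfrac12 s^2 \sup|H_\Gamma|$ for $s$ in a range (say $s \sup|H_\Gamma| \le 1$, which holds for $\varrho_0$ small). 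Hence if $\gamma(s) \in B_\varrho(x_0)$ then $s \le \varrho + \tfrac12 s^2 \sup|H_\Gamma| \le \varrho + \tfrac12 \varrho^2(1+C\varrho)\sup|H_\Gamma|$; shrinking $\varrho_0$ once more absorbs the harmless $C\varrho$ factor and yields $\ell(\varrho) \le \varrho + \tfrac12\varrho^2\sup|H_\Gamma|$ as desired. (If $\Gamma_i$ is a loop at $x_0$, both ends of $\gamma_i$ emanate from $x_0$ and are treated as two arcs, consistent with the footnote's convention.)

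The only real subtlety — the main obstacle — is bookkeeping the geometry at the endpoint: making sure that for $\varrho \le \varrho_0$ the piece of $\Gamma$ inside $B_\varrho(x_0)$ is \emph{exactly} the union of the initial sub-arcs of the $\#(I)$ curves through $x_0$, with no stray contributions from other curves or from distant portions of the same curves folding back into the ball. This is where the finiteness of $\mathcal P_\Gamma$, the compactness of $[0,1]$, and the $\mathcal C^2$-embeddedness of each $\gamma_i$ are used; once it is in place, the arclength bound above is a one-line Taylor estimate and the sum over $i \in I$ closes the proof.
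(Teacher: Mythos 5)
Your proof follows the same strategy as the paper's: reduce to a single $\mathcal C^2$ arc emanating from $x_0$ (both sides of \eqref{eq:C_*} being additive over the $\#(I)$ arcs through $x_0$, with $\Theta(\Gamma,x_0)=\#(I)/2$ by Lemma~\ref{lem:blow-up-Gamma}), parametrize by arclength, choose $\varrho_0$ so that only the local sub-arcs contribute, and Taylor-expand. The computational route differs: the paper works with the inverse function $\ell(\varrho)=\HH^1(\Gamma\cap B_\varrho)$, differentiating the relation $|\gamma(\ell(\varrho))|^2=\varrho^2$ twice and bounding $\ddot\ell$, whereas you lower-bound $|\gamma(s)-x_0|$ directly and then invert. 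Both are of comparable difficulty; your route is a bit more elementary but has to handle an inversion step.

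Two small corrections. The displayed identity $(\gamma(s)-x_0)\cdot\dot\gamma(s)=s-\int_0^s(s-\tau)\ddot\gamma(\tau)\cdot\dot\gamma(s)\,d\tau$ is not right: integrating $\int_0^s\dot\gamma(\tau)\cdot\dot\gamma(s)\,d\tau$ by parts gives $s-\int_0^s\tau\,\ddot\gamma(\tau)\cdot\dot\gamma(s)\,d\tau$, not the kernel $(s-\tau)$; a cleaner path is $|\gamma(s)-x_0|\ge(\gamma(s)-x_0)\cdot\dot\gamma(0)=\int_0^s\dot\gamma(\tau)\cdot\dot\gamma(0)\,d\tau\ge\int_0^s\bigl(1-\tau\sup|\ddot\gamma|\bigr)\,d\tau=s-\tfrac12 s^2\sup|\ddot\gamma|$, using $\dot\gamma(\tau)\cdot\dot\gamma(0)\ge 1-\tau\sup|\ddot\gamma|$. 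Either way the lower bound you need is correct. Second, the inversion cannot literally deliver the coefficient $\tfrac12$: from $\ell(\varrho)-\tfrac12\kappa\,\ell(\varrho)^2\le\varrho$ (with $\kappa=\sup|\ddot\gamma|$) one gets $\ell(\varrho)\le\frac{1-\sqrt{1-2\kappa\varrho}}{\kappa}=\varrho+\tfrac12\kappa\varrho^2+\tfrac12\kappa^2\varrho^3+\cdots$, which is strictly larger than $\varrho+\tfrac12\kappa\varrho^2$, so ``shrinking $\varrho_0$'' controls but does not remove the higher-order terms. This is immaterial: the paper's own proof establishes only the weaker per-arc bound $\ell(\varrho)\le\varrho+\varrho^2\sup|\ddot\gamma|$ (its equation \eqref{eq:red}), which corresponds to a factor $2\varrho^2\Theta(\Gamma,x_0)$ rather than $\varrho^2\Theta(\Gamma,x_0)$ in \eqref{eq:C_*}, and the downstream application in Lemma~\ref{lem:pointX} is indifferent to this constant. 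Your argument closes just as the paper's does.
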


\begin{proof}
Since both sides of \eqref{eq:C_*} are additive in the number of curves meeting at $x_0$, it is enough to consider the case where $\Gamma$ is a single curve. We may further assume   that $x_0=0$ and that $\gamma$ is an arc-length parametrization of $\Gamma$, i.e. setting $L=\HH^1(\Gamma)$, $\gamma \in \mathcal C^2([0,L];\R^2)$ satisfies $|\dot \gamma|=1$ on $[0,L]$ and $\gamma(0)=0$. We thus need to show the existence of $\varrho_0>0$ such that for every $\varrho \in (0,\varrho_0)$,
\begin{equation}\label{eq:red}
\HH^1(\Gamma \cap B_\varrho)\le  \varrho+\varrho^2   \sup_{t \in (0,L)}|\ddot \gamma(t)|.
\end{equation}

Let $\varrho_0>0$ be small enough so that $\Gamma \cap B_{\varrho_0}$ has one connected component. In particular, for every $0 <\varrho < \varrho_0$, denoting by $\ell(\varrho)= \HH^1(\Gamma \cap B_\varrho)$, we have

$$\Gamma \cap B_\varrho = \gamma([0,\ell(\varrho)), \qquad \Gamma \cap \partial B_\varrho =\{\gamma(\ell(\varrho))\}.$$
Moreover, since  $\gamma \in \mathcal C^2([0,L];\R^2)$, then $\ell$ is an increasing function of class $\mathcal C^2$ on $(0,\varrho_0)$. Deriving the equation $|\gamma(\ell(\varrho))|^2=\varrho^2$ with respect to $\varrho$ yields
\begin{equation}\label{eq:dotell}
\dot \gamma(\ell(\varrho)) \cdot \gamma(\ell(\varrho))\dot \ell(\varrho) =\varrho \quad \text{ for }\varrho \in (0,\varrho_0).
\end{equation}
Recalling that $\gamma$ is an arclength parametrization of $\Gamma$ and that $\ell$ is increasing, we get
$$\varrho \leq |\gamma(\ell(\varrho))| |\dot \gamma(\ell(\varrho))| \dot \ell (\varrho)=\varrho \dot \ell(\varrho),$$
hence $\dot \ell(\varrho) \geq 1$. Moreover, passing to the limit in  \eqref{eq:dotell} as $\varrho \to 0$ yields $|\dot \gamma(0)|^2 \dot \ell(0)^2=1$, hence $\dot \ell(0)=1$. As a consequence, using Taylor expansion, we get that for $\varrho \in (0,\varrho_0)$,
\begin{equation}\label{eq:taylor}
\ell(\varrho)=\ell(0)+\varrho \dot \ell(0)+\int_0^\varrho (\varrho -s)\ddot \ell(s)\, ds = \varrho+\int_0^\varrho (\varrho -s)\ddot \ell(s)\, ds.
\end{equation}
In order to estimate $\ddot \ell$, we derive \eqref{eq:dotell} which leads to
$$1=\dot \ell^2\left( 1+\gamma(\ell) \cdot  \ddot \gamma(\ell)\right) +\gamma(\ell) \cdot \dot \gamma(\ell) \ddot \ell \quad \text{ in }(0,\varrho_0).$$
Using \eqref{eq:dotell} and recalling that $\dot \ell \geq 1$, we obtain for every $s \in (0,\varrho_0)$
\begin{eqnarray*}
\frac{s}{\dot \ell(s)}\ddot \ell(s) & = &\gamma(\ell(s)) \cdot \dot \gamma(\ell(s)) \ddot \ell(s) \leq - \dot \ell(s)^2 \gamma(\ell(s)) \cdot  \ddot \gamma(\ell(s))\\
& \leq & \dot \ell(s)^2 |\gamma(\ell(s)) | | \ddot \gamma(\ell(s))| = \dot \ell(s)^2s | \ddot \gamma(\ell(s))|.
\end{eqnarray*}
Since $\dot \ell(0)=1$, we can further reduce $\varrho_0$ in such a way that $\dot \ell(s)^3 \leq 2$ for every $s \in (0,\varrho_0)$, which implies that
$$\ddot \ell (s) \leq 2| \ddot \gamma(\ell(s))| \quad \text{ for }s \in (0,\varrho_0).$$
Inserting inside \eqref{eq:taylor} yields
$$\ell(\varrho) \leq \varrho + \varrho^2 \sup_{s \in (0,\varrho_0)} |\ddot \gamma(\ell(s))|,$$
which completes the proof of \eqref{eq:red}, hence of the lemma.
 \end{proof}

\subsection{Functions of bounded variation}

Given an open set $\Om \subset \R^d$, the space of {\it functions of bounded variation} is defined by
$$BV(\Om)=\{u \in L^1(\Om) : \; Du \in \mathcal M(\Om;\R^d)\}.$$
We shall also consider the subspace $SBV(\Om)$ of special functions of bounded variation made of functions $u \in BV(\Om)$ whose distributional derivative can be decomposed as
$Du=\nabla u \LL^d + (u^+-u^-)\nu_u \HH^{d-1} \restr J_u$. 
In the previous expression, $\nabla u$ is the Radon-Nikod\'ym derivative of $Du$ with respect to $\LL^d$, and it is called the approximate gradient of $u$. The Borel set $J_u$ is the (approximate) jump set of $u$. It is a countably $\HH^{d-1}$-rectifiable subset of $\Om$ oriented by the (approximate) normal direction of jump $\nu_u :J_u \to \mathbf S^{d-1}$, and $u^\pm$ are the one-sided approximate limits of $u$ on $J_u$ according to $\nu_u$. Finally we define
$$SBV^2(\Om)=\{u \in SBV(\Om) : \; \nabla u \in L^2(\Om;\R^d) \text{ and } \HH^{d-1}(J_u)<\infty\}.$$

In the sequel, we will be interested only in the two-dimensional case $d=2$, and in the smaller class of functions
\begin{equation}\label{defA}\mathcal A(\Om):=\{u \in SBV^2(\Om) : \; J_u \in \mathscr C(\Omega)\}.\end{equation}

\section{Strongly anisotropic perimeters with curvature}\label{sec:percur}

Let $d=2$ or $d=3$ and $\Om \subset \R^d$ be bounded open set. Let us consider an even and continuous anisotropy function $\phi: \S^{d-1}\to \R^+$ satisfying
\begin{equation}\label{eq:bound-phi}
\frac{1}{C} \leq \phi(z) \leq C \quad \text{ for }z \in \S^{d-1},
\end{equation}
for some $C\geq 1$. Since $\phi$ is an even function, the function $\tilde \phi : \mathbf G_{d-1} \to \R^+$ given, for every $S={\rm Id} -e \otimes e  \in \mathbf G_{d-1}$ (with $e \in \S^{d-1}$), by
\begin{equation}\label{tildephiS}\tilde \phi(S):=\phi(e)\end{equation}
is well defined and continuous.

\medskip

For every Lebesgue measurable set $E$ with finite perimeter in $\Om$, we consider the following energy functional
\[
 \F(E)=
 \begin{cases}
 \displaystyle \int_{\Om \cap \partial^* E} \left(\phi(\nu_E) + |H_E|^2\right)  d\HH^{d-1} & \text{if $V_E$ has bounded first variation in $L^2_{\mu_E}(\Om;\R^d)$,}\\
 =\infty & \text{otherwise.}
 \end{cases}
 \]

\subsection{Lower semi-continuity}

We start with the  question of lower semicontinuity for $\F$ with respect to the following (natural) convergence.

\begin{definition}
A sequence $\{E_n\}_{n \in \N}$ of Lebesgue measurable subsets of $\Om$ converges to the Lebesgue measurable set $E \subset \Om$ if ${\bf 1}_{E_n} \to {\bf 1}_E$ strongly in $L^1(\Om)$, or equivalently, if $\LL^d(E_n \triangle E) \to 0$. 
\end{definition}

We now address the

\begin{proof}[Proof of Theorem \ref{BBG0}]

There is no loss of generality to suppose that 
$$\liminf_{n\to \infty} \F(E_n)<\infty.$$
At the expense of extracting a subsequence, we can thus assume that the $\liminf$ above is a limit and that $\sup_n\F(E_n) <\infty$. In particular, for each $n \in \N$, denoting by $V_n:=V_{E_n}$ the varifold associated to $E_n$ and by $\mu_n:=\HH^{d-1}\restr\partial^* E_n$ the weight measure of $V_n$, then $\{V_n\}_{n \in \N}$ is a sequence of  $(d-1)$-rectifiable  integral varifolds in $\Om$ with bounded first variation in $\Om$ satisfying $\delta V_n= -H_{E_n} \mu_n$ for some $H_{E_n} \in L^2_{\mu_{n}}(\Om;\R^d)$. In addition, according to \eqref{eq:bound-phi},
$$\sup_{n \in \N} \int_{\Om \cap \partial^* E_n} \left(1+|H_{E_n}|^2\right)\, d\HH^{d-1} <\infty.$$

By Allard's integral compactness theorem, see e.g. \cite[Theorem 42.7 \& Remark 42.8]{Simon_GMT}, we infer that, up to a subsequence,  $V_n\wto V$ weakly* in $\mathcal M(G_{d-1}(\Om))$ for some $(d-1)$-rectifiable integral varifold $V \in \mathbf V_{d-1}(\Om)$. Thus, there exist a countably $\HH^{d-1}$-rectifiable set $M$ and a positive integer valued $\HH^{d-1}$-summable function $\theta$ such that for every $\Phi \in \mathcal C_c(\mathbf G_{d-1}(\Om))$,
$$\int_{\mathbf G_{d-1}(\Om)} \Phi(x,S)\, dV(x,S)=\int_{\Om \cap M} \theta(x) \Phi(x,T_x M)\, d\HH^{d-1}(x),$$
 where $T_x M$ is the approximate tangent space to $M$ at $x$.  Moreover, as $\mu_n \wto \mu_V$ weakly* in $\mathcal M(\Om)$ and $\delta V_n= -H_{E_n} \mu_n \wto \delta V$ weakly* in $\mathcal M(\Om;\R^d)$, using a standard lower semicontinuity result, see e.g. \cite[Example 2.36]{AFP}, we get that $\delta V=-H_V \mu_V$ for some $H_V \in L^2_{\mu_V}(\Om;\R^d)$ and
$$ \int_\Om |H_{\mu_V}|^2 \, d\mu_V\le \liminf_{n\to \infty} \int_\Om |H_{E_n}|^2 \, d\mu_n.$$
Since ${\bf 1}_{E_n} \to {\bf 1}_E$ strongly in $L^1(\Om)$ and $\mu_n=\HH^{d-1}\restr \partial^* E_n = \left| D {\bf 1}_{E_n} \right|$ we have $\sup_n \left| D {\bf 1}_{E_n} \right|(\Omega) < \infty$ and consequently $D {\bf 1}_{E_n} \wto D {\bf 1}_{E}$ weakly* in $\mathcal M(\Om ; \R^d)$. Moreover, $\mu_n = \left| D {\bf 1}_{E_n} \right| \wto \mu_V$ weakly* in $\mathcal M(\Om)$ and therefore $$\mu_E = \HH^{d-1}\restr \partial^* E = \left| D {\bf 1}_{E} \right| \leq \mu_V.$$
Since in addition $V_E$ has bounded first variation in $L^2_{\mu_E}(\Om;\R^d)$, according to \cite[Theorem 1]{Menne} (see also \cite[Theorem 5.1]{schatzle2004quadratic} and \cite[Theorem 3.1]{SchatzLower} in dimension $d=2$), we infer that  $\partial^* E$ is $\mathcal C^2$-rectifiable. Then \cite[Corollary 4.3]{SchatzLower} implies that, $\HH^{d-1}$-a.e. in $\partial^* E$, we have  $H_V=H_{E}$, hence
\begin{equation}\label{eq:ashscouaired}
\int_{\Om \cap \partial^* E} |H_{E}|^2 \, d\HH^{d-1} \le \int_{\Om} |H_V|^2 d\mu_V\le \liminf_{n\to \infty} \int_{\Om \cap \partial^* E_n} |H_{E_n}|^2 \, d\HH^{d-1}.
\end{equation}

We now consider  the anisotropic perimeter term. Notice first that by \cite[Proposition 2.85]{AFP} we have  the locality of the approximate tangent spaces: $T_x M=T_x (\partial^* E)$ for $\HH^{d-1}$-a.e. $x \in \partial^* E$.  Let now $\zeta \in \mathcal C_c(\Om)$ be such that $0 \leq \zeta \leq 1$ in $\Om$. Then, the function $(x,S) \in \mathbf G_{d-1}(\Om) \mapsto \zeta(x) \tilde \phi(S)$ belongs to $\mathcal C_c(\mathbf G_{d-1}(\Om))$ so that by varifold convergence,
\begin{multline}\label{eq:ineqphi}
 \liminf_{n\to \infty } \int_{\Om \cap \partial^* E_n} \phi(\nu_{E_n}) \, d\HH^{d-1}\geq  \liminf_{n\to \infty} \int_{\mathbf G_{d-1}(\Om)} \zeta(x)\tilde \phi (S)\, dV_n (x,S)\\
 =\int_{\mathbf G_{d-1}(\Om)} \zeta(x)\tilde \phi (S)\, dV (x,S)= \int_{\Om} \zeta(x) \tilde \phi(T_xM) \, d\mu_V(x) \\\ge \int_{\Om \cap \partial^* E} \zeta\, \phi(\nu_E)\, d\HH^{d-1}.
\end{multline}
Passing to the supremum with respect to all $\zeta$ as above and gathering \eqref{eq:ashscouaired} together with \eqref {eq:ineqphi} yields
$$\int_{\Om \cap \partial^* E} \left( \phi(\nu_E)+ |H_{E}|^2\right) d\HH^{d-1} \leq \liminf_{n \to \infty}\int_{\Om \cap \partial^* E_n} \left( \phi(\nu_{E_n})+ |H_{E_n}|^2\right)d\HH^{d-1}.$$
This proves the desired semicontinuity property of $\F$.
\end{proof}

\begin{remark}\label{rem:simplelowercompact}
In dimension $d=2$, the following  elementary argument avoiding geometric measure theory can be used (see e.g. \cite{BellMugnai} or  \cite[Lemma 2.1]{BraiMar}).

We first recall that if $\gamma \in H^2([0,1];\R^2)$ is a closed curve and $\Gamma=\gamma([0,1])$, denoting by $\kappa:=|H|$ the mean curvature of $\Gamma$, we have by the Gauss-Bonnet Theorem,  Cauchy-Schwarz and Young's inequalities
\[
 2\pi \leq \int_\Gamma \kappa\, d\HH^1 \le \HH^1(\Gamma)^{\frac{1}{2}} \lt(\int_\Gamma \kappa^2\, d\HH^1\rt)^{\frac{1}{2}}\leq \frac{1}{2} \int_\Gamma (1 +\kappa^2) \, d\HH^1\stackrel{\eqref{eq:bound-phi}}{\leq}  \frac{C}{2} \int_\Gamma (\phi(\nu) +\kappa^2) \, d\HH^1,
\]
hence
\begin{equation}\label{Gausslower2d}
 \int_\Gamma (\phi(\nu) +\kappa^2) \, d\HH^1\geq \frac{4\pi}{C} \qquad \textrm{and } \qquad \int_\Gamma \kappa^2\, d\HH^1 \geq \frac{ 4\pi^2}{\HH^{1}(\Gamma)}.
\end{equation}

In order to have compactness and lower semicontinuity for the natural topology, we need to extend  a bit the definition of $\F$ as follows. For $(E,\Gamma)$ such that $\partial E\subset \Gamma$ with $\Gamma=\bigcup_{i \in I} \Gamma^i$ and $\Gamma^i$ are  closed loops which can only intersect tangentially, we set
\[
 \F(E,\Gamma)= \sum_{i\in I} \int_{\Gamma^i} (\phi(\nu) +\kappa^2) \, d\HH^1.
\]
Let now $(E_n,\Gamma_n)$ be such that $\sup_n\F(E_n,\Gamma_n)<\infty$. By the first inequality in \eqref{Gausslower2d},
\[
\#(I_n) \frac{4\pi}{C} \leq \sum_{i \in I_n}\int_{\Gamma_n^i} (\phi(\nu) +\kappa^2) \, d\HH^1=\F(E_n,\Gamma_n),
\]
so that $\#(I_n)$ is uniformly bounded with respect to $n$. By the second inequality in \eqref{Gausslower2d} we see that  $\HH^1(\Gamma_n^i)$ are bounded from above and below. Up to translations, we thus find that the parametrizations $\gamma_n^i$ are bounded in $H^2([0,1];\R^2)$ and thus converging weakly in that space (and also strongly in $H^1([0,1];\R^2)$) to some curves $\gamma^i$. We then find
\[
 \liminf_{n\to \infty} \F(E_n,\Gamma_n)\ge \sum_{i \in I} \int_{\Gamma^i} (\phi(\nu) +\kappa^2)\,  d\HH^1.
\]
Moreover, the sequence  $\{{\bf 1}_{E_n}\}_{n \in \N}$ is bounded in $BV(\Om)$ and thus, up to a further subsequence, $E_n$ converges to some set $E$. Since $\HH^1\restr \Gamma^n\geq \HH^1\restr \partial E_n$, by lower semicontinuity of the perimeter we find,
\[
 \HH^1\restr \Gamma \geq \HH^1\restr \partial E.
\]
This proves that $\partial E \subset \Gamma$ and concludes the proof of the compactness and lower semicontinuity.
\end{remark}

\subsection{Phase field approximation}

Given $v \in H^2(\Om)$, we consider the following phase-field approximation of $\F$ based on \cite{Lowen,Lussardi}: for $\eps > 0$ and $\sqrt{\eps} r_\eps\to 0$,
\begin{equation*}
 \F_\eps(v)=\frac{1}{c_0} \int_{\Om}\phi_\eps \lt( \nabla v \rt)\lt( \frac{\eps}{2}|\nabla v|^2 + \frac{1}{\eps} W(v)\rt) dx + \frac{1}{c_0 } \int_{\Omega} \frac1\eps\lt( -\eps \Delta v+\frac{1}{\eps} W'(v)\rt)^2 dx \: ,
\end{equation*}
where $\phi_\eps$ extends $\phi$ from the sphere to $\R^d$ as proposed in \cite[Section 2]{Lussardi} to handle regions of $\Omega$ where $\nabla v = 0$ and $\phi \big( \frac{\nabla v}{| \nabla v|} \big)$ cannot be defined.
More precisely, for $z \in \R^n$,
\begin{equation}\label{defphieps}
\phi_\eps (z) = \overline{\phi} \left( \frac{z}{\sqrt{r_\eps^2 + |z|^2}} \right) \quad \text{and} \quad \overline{\phi} = \xi \phi + (1 - \xi) \frac{\min_{\S^{d-1}} \phi}{4}
\end{equation}
where $0 \leq \xi \leq 1$ is a cut-off function satisfying $\xi = 1$ in $\R^d \setminus B_{1/2}$, $\xi = 0$ in $B_{1/4}$ and $| \nabla \xi | \leq 4$. As proved in \cite{Lussardi}, such definitions ensure that $\overline{\phi}$ is Lipschitz continuous (with Lipschitz constant $L>0$) with
\begin{equation}\label{boundphieps}
 \frac{1}{4C} \le \phi_\eps(z)\le C \qquad \text{ for all } z\in \R^d,
\end{equation}
hence
\begin{equation} \label{eq:muepsBounded}
\int_{\Om}\phi_\eps \lt( \nabla v \rt)\lt( \frac{\eps}{2}|\nabla v|^2 + \frac{1}{\eps} W(v)\rt) dx \geq \frac{1}{4C} \int_{\Om}\lt( \frac{\eps}{2}|\nabla v|^2 + \frac{1}{\eps} W(v)\rt) dx.
\end{equation}
Furthermore, 
\begin{equation} \label{eq:LussardiPhirVsPhi}
 \left|\phi_\eps (z) - \phi \left( \frac{z}{|z|} \right) \right| \leq \frac{L r_\eps}{2 | z |} \quad \text{ for all }z \neq 0.
\end{equation}

\begin{remark}\label{improvreps}
 Let us point out that in \cite{Lussardi} the stronger hypothesis $r_\eps=O(\eps)$ is assumed compared to ours $\sqrt{\eps}r_\eps=o(1)$. From the upper bound construction one could actually conjecture that the sharpest condition should be $\eps r_\eps=o(1)$.
\end{remark}

\subsubsection{Proof Theorem \ref{BBG1}: the lower bound inequality}
We first prove the following intermediate statement.
\begin{lemma}\label{lemma:lowerphasefield}
 Assume that $\sup_{\eps} \F_\eps(v_\eps)<\infty$. Then, in the notation of Theorem \ref{theo_RogSchat}, up to a subsequence, there exist $V \in \mathbf V_{d-1}(\Om)$ such that  $V_\eps\wto V$ weakly* in $\mathcal M(\mathbf G_{d-1}(\Om))$ and  $\mu_\eps\wto \mu_V$. Moreover, $V$ is a $(d-1)$-rectifiable integral varifold with bounded first variation in $L^2_{\mu_V}(\Om;\R^d)$ and
 \begin{equation}\label{lowerphasefield}
  \liminf_{\eps\to 0} \F_\eps(v_\eps)\ge \int_{\Omega} (\phi(\nu_V) +|H_V|^2) \, d\mu_V.
 \end{equation}

\end{lemma}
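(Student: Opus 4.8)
The plan is to reduce everything to Theorem~\ref{theo_RogSchat}, which is, in essence, a reformulation of the present lemma. Extract first a subsequence along which $\F_\eps(v_\eps)$ converges to $L:=\liminf_{\eps\to0}\F_\eps(v_\eps)$ (finite by hypothesis). Since $\mu_\eps(\Om)\le 4C\,\F_\eps(v_\eps)$ by \eqref{eq:muepsBounded} and $\alpha_\eps(\Om)$ is exactly the second term of $\F_\eps(v_\eps)$, we have $\sup_\eps\{\mu_\eps(\Om)+\alpha_\eps(\Om)\}<\infty$, so Theorem~\ref{theo_RogSchat} applies: along a further subsequence we obtain $\alpha\in\mathcal M^+(\Om)$ and a $(d-1)$-rectifiable integral varifold $V=\mathbf v(M,\theta)$ with bounded first variation in $L^2_{\mu_V}(\Om;\R^d)$ such that $V_\eps\wto V$ in $\mathcal M(\mathbf G_{d-1}(\Om))$, $\mu_\eps\wto\mu_V$, $\alpha_\eps\wto\alpha$ in $\mathcal M(\Om)$, $\xi_\eps\to0$ in $L^1_{\rm loc}(\Om)$, and $|H_V|^2\mu_V\le\alpha$. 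This already gives every structural claim of the lemma, and it remains to prove $L\ge\int_\Om(\phi(\nu_V)+|H_V|^2)\,d\mu_V$. Since $\F_\eps$ is the sum of two nonnegative terms, by superadditivity of the $\liminf$ it suffices to bound the $\liminf$ of each. For the second term this is immediate: by weak-$*$ lower semicontinuity of the mass on the open set $\Om$ and $|H_V|^2\mu_V\le\alpha$,
\[
\liminf_{\eps\to0}\frac{1}{c_0}\int_\Om\frac1\eps\Bigl(-\eps\Delta v_\eps+\frac1\eps W'(v_\eps)\Bigr)^2dx=\liminf_{\eps\to0}\alpha_\eps(\Om)\ge\alpha(\Om)\ge\int_\Om|H_V|^2\,d\mu_V.
\]

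For the first term — the anisotropic perimeter — the key observation, already used in the proof of Theorem~\ref{BBG0}, is that once the anisotropy is transferred onto the Gauss map the term becomes a \emph{linear} functional of $V_\eps$, hence passes to the limit, and the rectifiability of $V$ identifies the limit as $\int_\Om\phi(\nu_V)\,d\mu_V$. The only point of care is that $\phi_\eps(\nabla v_\eps)$ need not coincide with $\phi(\nu_\eps)$ everywhere. However, by \eqref{defphieps}, on $G_\eps:=\{|\nabla v_\eps|\ge r_\eps/\sqrt3\}$ one has $\phi_\eps(\nabla v_\eps)=\phi(\nu_\eps)$ \emph{exactly} — there $|\nabla v_\eps|/\sqrt{r_\eps^2+|\nabla v_\eps|^2}\ge\frac12$, so the cut-off $\xi$ equals $1$ — while on the complement $B_\eps:=\Om\setminus G_\eps$ we only use the crude bounds $\phi_\eps(\nabla v_\eps)\ge0$ and $\phi(\nu_\eps)\le C$. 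Thus, for every $\zeta\in\mathcal C_c(\Om)$ with $0\le\zeta\le1$,
\[
\frac{1}{c_0}\int_\Om\phi_\eps(\nabla v_\eps)\Bigl(\frac\eps2|\nabla v_\eps|^2+\frac1\eps W(v_\eps)\Bigr)dx=\int_\Om\phi_\eps(\nabla v_\eps)\,d\mu_\eps\ge\int_\Om\zeta\,\phi(\nu_\eps)\,d\mu_\eps-C\,\mu_\eps\bigl(B_\eps\cap\supp\zeta\bigr).
\]
By the definitions of $V_\eps$ and of $\tilde\phi$ in \eqref{tildephiS}, the first term on the right equals $\int_{\mathbf G_{d-1}(\Om)}\zeta(x)\tilde\phi(S)\,dV_\eps(x,S)$; since $\mathbf G_{d-1}$ is compact, $(x,S)\mapsto\zeta(x)\tilde\phi(S)$ lies in $\mathcal C_c(\mathbf G_{d-1}(\Om))$, and by $V_\eps\wto V$ together with $V=\mathbf v(M,\theta)$ this converges to $\int_{\mathbf G_{d-1}(\Om)}\zeta(x)\tilde\phi(S)\,dV(x,S)=\int_\Om\zeta(x)\,\phi(\nu_V)\,d\mu_V(x)$.

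The step I expect to demand the most care is showing $\mu_\eps(B_\eps\cap\supp\zeta)\to0$. On $B_\eps$ one has $\frac\eps2|\nabla v_\eps|^2\le\frac\eps6r_\eps^2$, and combining this with the equipartition identity $\frac1\eps W(v_\eps)=\frac\eps2|\nabla v_\eps|^2-\xi_\eps$ yields $\frac1\eps W(v_\eps)\le\frac\eps6r_\eps^2+|\xi_\eps|$ on $B_\eps$ as well; hence, choosing an open $A$ with $\supp\zeta\subset A\Subset\Om$,
\[
\mu_\eps\bigl(B_\eps\cap\supp\zeta\bigr)\le\frac{1}{c_0}\Bigl(\frac\eps3r_\eps^2\,|\Om|+\|\xi_\eps\|_{L^1(A)}\Bigr)\xrightarrow[\eps\to0]{}0,
\]
since $\eps r_\eps^2=(\sqrt\eps\,r_\eps)^2\to0$ by the scaling hypothesis and $\xi_\eps\to0$ in $L^1_{\rm loc}(\Om)$. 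Plugging this back and letting $\zeta\uparrow{\bf 1}_\Om$ (monotone convergence, $\mu_V$ being a finite measure and $\phi(\nu_V)\le C$) gives $\liminf_{\eps\to0}\frac1{c_0}\int_\Om\phi_\eps(\nabla v_\eps)\bigl(\frac\eps2|\nabla v_\eps|^2+\frac1\eps W(v_\eps)\bigr)dx\ge\int_\Om\phi(\nu_V)\,d\mu_V$. Together with the second-term estimate, superadditivity of $\liminf$ gives $L\ge\int_\Om(\phi(\nu_V)+|H_V|^2)\,d\mu_V$, which is \eqref{lowerphasefield}. In short, there is no serious difficulty beyond the bad-set estimate — which is precisely where the scaling assumption $\sqrt\eps\,r_\eps\to0$ (through $\eps r_\eps^2\to0$) and the equipartition $\xi_\eps\to0$ furnished by Theorem~\ref{theo_RogSchat} come into play; everything else follows directly from varifold convergence and the rectifiability of the limit.
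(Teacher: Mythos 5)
Your overall scheme follows the paper's quite closely: invoke Theorem~\ref{theo_RogSchat} to produce the limit varifold, then bound each of the two terms of $\F_\eps$ separately via superadditivity of the $\liminf$. The curvature term and the passage $V_\eps\wto V$ for the anisotropic term are handled as in the paper. Where you depart is the treatment of $\phi_\eps(\nabla v_\eps)$ vs.\ $\phi(\nu_\eps)$: the paper first drops $\mu_\eps$ down to $\frac{1}{c_0}|\nabla v_\eps|\sqrt{2W(v_\eps)}\,dx$ by Young's inequality, whereupon \eqref{eq:LussardiPhirVsPhi} gives $\phi_\eps(\nabla v_\eps)|\nabla v_\eps|\ge\phi(\nu_\eps)|\nabla v_\eps|-\tfrac{Lr_\eps}{2}$ with an error controlled by $r_\eps\int_\Om\sqrt{2W(v_\eps)}\,dx\lesssim r_\eps\sqrt{\eps}\to0$, and only then returns from the geometric-mean measure back to $\mu_\eps$ through the discrepancy $\xi_\eps$. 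You instead split into a good set $G_\eps$ and a bad set $B_\eps$, claim exact equality on $G_\eps$, and control $\mu_\eps(B_\eps)$ by the discrepancy.

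There is a genuine gap in the good-set claim. The fact that $\xi=1$ on $\{|w|\ge\tfrac12\}$ only gives $\overline\phi(w)=\phi(w)$ at the argument $w=\nabla v_\eps/\sqrt{r_\eps^2+|\nabla v_\eps|^2}$, and that argument has $|w|<1$ strictly; it is not equal to $\nu_\eps=\nabla v_\eps/|\nabla v_\eps|\in\S^{d-1}$. Thus $\phi_\eps(\nabla v_\eps)=\phi(\nu_\eps)$ on $G_\eps$ is true only if the extension of $\phi$ from $\S^{d-1}$ to $\R^d$ implicit in the definition $\overline\phi=\xi\phi+(1-\xi)\min\phi/4$ is degree-$0$ homogeneous. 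Nothing in the paper asserts this, and the introduction treats $\phi$ as a $1$-homogeneous surface tension; with the $1$-homogeneous extension one gets $\phi_\eps(\nabla v_\eps)=\frac{|\nabla v_\eps|}{\sqrt{r_\eps^2+|\nabla v_\eps|^2}}\,\phi(\nu_\eps)$, which on $G_\eps$ is only bounded between $\tfrac12\phi(\nu_\eps)$ and $\phi(\nu_\eps)$, so the lower bound you need fails. If instead you rely solely on the Lipschitz estimate \eqref{eq:LussardiPhirVsPhi}, the error on $G_\eps$ is $\int_{G_\eps}\frac{Lr_\eps}{2|\nabla v_\eps|}\zeta\,d\mu_\eps$, which with your threshold $r_\eps/\sqrt3$ is only $O(\mu_\eps(\Om))$ — not small. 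The quickest repair in the spirit of your argument is to take $G_\eps^\delta=\{|\nabla v_\eps|\ge r_\eps/\delta\}$ with $\delta>0$ fixed: by \eqref{eq:LussardiPhirVsPhi} one then has $\phi_\eps(\nabla v_\eps)\ge\phi(\nu_\eps)-\tfrac{L\delta}{2}$ on $G_\eps^\delta$, the bad-set mass $\mu_\eps(B_\eps^\delta\cap\supp\zeta)$ still vanishes since $\eps r_\eps^2/\delta^2\to0$, and one sends $\delta\to0$ at the end. Otherwise, just adopt the paper's Young/discrepancy route, which sidesteps the issue altogether.
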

\begin{proof}
 Let $$K = \sup_{\eps >0} \F_\eps(v_\eps)<\infty.$$
We rewrite the energy as (recall the definitions of $\mu_\eps$ and $\alpha_\eps$ from Theorem \ref{theo_RogSchat})
\[
 \F_\eps(v_\eps)= \int_{\Omega}\phi_\eps (\nabla v_\eps ) \,d\mu_\eps +\alpha_\eps(\Om).
\]
According to \eqref{eq:muepsBounded}, we get that the sequence $\{\mu_\eps(\Om)+\alpha_\eps(\Om)\}_{\eps>0}$ is bounded so that Theorem \ref{theo_RogSchat} applies. In particular, up to a subsequence, $V_\eps \wto V$, where $V$ is a $(d-1)$-rectifiable integral varifold, and there exist a countably $\HH^{d-1}$-rectifiable set $M$ and a positive integer valued $\HH^{d-1}$-summable function $\theta$ such that $\mu_V=\theta \HH^{d-1}\restr M$ and
$$\int_{\mathbf G_{d-1}(\Om)} \Phi(x,S)\, dV(x,S)= \int_M \theta(x) \Phi(x,T_x M)\, d\HH^{d-1}(x), \quad \Phi \in \mathcal C_c(\mathbf G_{d-1}(\Om)).$$ 
Moreover,
\begin{equation} \label{eq:DiscrepancyTo0}
 \xi_\eps =\frac{\eps}{2}|\nabla v_\eps|^2 - \frac{1}{\eps} W(v_\eps)\to 0 \quad \text{in } L_{\rm loc}^1(\Om).
\end{equation}
We infer from \eqref{eq:muepsBounded} that for all $\eps > 0$,
\[
K \geq \int_{\Omega }\phi_\eps ( \nabla v_\eps) \lt( \frac{\eps}{2}|\nabla v_\eps|^2 + \frac{1}{\eps} W(v_\eps)\rt) dx \geq \frac{1}{4C \eps}  \int_\Omega W(v_\eps) \: dx \]
hence,  by Hölder inequality, 
\[\int_\Omega \sqrt {W(v_\eps) } \, dx
 \leq 2 \sqrt{ K C |\Omega| \eps }.
\]

Combining \eqref{eq:LussardiPhirVsPhi} and Young inequality we then get
\begin{align*}
\int_{\Omega }\phi_\eps ( \nabla v_\eps) \lt( \frac{\eps}{2}|\nabla v_\eps|^2 + \frac{1}{\eps} W(v_\eps)\rt)dx
& \geq \int_{\Omega}\phi_\eps ( \nabla v_\eps) |\nabla v_\eps| \sqrt{2 W(v_\eps)} \, dx \\
& \geq \int_{\{\nabla v_\eps \neq 0\}} \phi \lt( \frac{\nabla v_\eps}{| \nabla v_\eps |} \rt) |\nabla v_\eps| \sqrt{2 W(v_\eps)} \, dx - \frac{L r_\eps}{2} \int_\Omega \sqrt{2 W(v_\eps)} \, dx,
\end{align*}
and thus
\begin{multline} \label{eq:liminf1}
 \liminf_{\eps\to 0} \int_{\Omega }\phi_\eps (\nabla v_\eps) \lt( \frac{\eps}{2}|\nabla v_\eps|^2 + \frac{1}{\eps} W(v_\eps)\rt)dx \\
 \geq \liminf_{\eps\to 0}  \int_{\{\nabla v_\eps\neq 0\}} \phi \lt( \frac{\nabla v_\eps}{| \nabla v_\eps |} \rt) |\nabla v_\eps| \sqrt{2 W(v_\eps)} \: dx \: .
\end{multline}
Note that $\nu_\eps = \frac{\nabla v_\eps}{| \nabla v_\eps |}$ on $\{|\nabla v_\eps| \neq 0\}$, while $\phi(\nu_\eps) \mu_\eps= \frac{\phi(\nu_\eps)}{\eps c_0} W(v_\eps) \LL^d$ on $\{\nabla v_\eps=0\}$. Recalling \eqref{eq:bound-phi}, we get by \eqref{eq:DiscrepancyTo0} {that for all $\zeta \in \mathcal C_c(\Omega)$ with $0 \leq \zeta \leq 1$,
\begin{multline} \label{eq:liminf2}
\left| \frac{1}{c_0} \int_{\{\nabla v_\eps\neq 0\}} \phi \lt( \frac{\nabla v_\eps}{| \nabla v_\eps |} \rt) |\nabla v_\eps| \sqrt{2 W(v_\eps)}\zeta \: dx - \int_\Omega \phi \lt( \nu_\eps \rt) \zeta\, d \mu_\eps \right|\\
 \leq \frac{C}{c_0} \int_{\Omega} \lt| |\nabla v_\eps| \sqrt{2 W(v_\eps)} -\lt(\frac{\eps}{2}|\nabla v_\eps|^2 + \frac{1}{\eps} W(v_\eps)\rt)\rt|\zeta\, dx \\
= \frac{C}{c_0} \int_{\Omega} \lt( \sqrt{\frac{\eps}{2}} | \nabla v_\eps| - \sqrt{\frac{W(v_\eps)}{\eps}} \rt)^2\zeta\,  dx \leq \frac{C}{c_0} \int_{{{\rm Supp}(\zeta)}} | \xi_\eps | \: dx\to 0 \: ,
\end{multline}}
where we used, in the last inequality, that  for $a$, $b \geq 0$, $|a-b| \leq a + b$ and thus $(a-b)^2 \leq |a-b|(a+b) = |a^2 - b^2|$.
Thanks to \eqref{tildephiS}, \eqref{eq:liminf1}, \eqref{eq:liminf2}, and by convergence of varifolds,
we have 
\begin{multline*}
 \liminf_{\eps\to 0} \frac{1}{c_0}  \int_{\Omega }\phi_\eps (\nabla v_\eps) \lt( \frac{\eps}{2}|\nabla v_\eps|^2 + \frac{1}{\eps} W(v_\eps)\rt)dx \\
 \geq \liminf_{\eps\to 0}  \int_{\Omega} \phi \lt( \nu_\eps \rt){\zeta} \, d \mu_\eps
\geq \lim_{\eps\to 0} \int_{\mathbf G_{d-1}(\Omega)} \zeta(x)\tilde \phi(S) \, dV_\eps(x,S)
 = \int_{\mathbf G_{d-1}(\Omega)} \zeta(x)\tilde \phi(S)\,  dV(x,S) \: .
\end{multline*}
Passing to the supremum with respect to all $\zeta$  yields
\begin{equation*}
\liminf_{\eps\to 0} \frac{1}{c_0} \int_{\Omega }\phi_\eps (\nabla v_\eps) \lt( \frac{\eps}{2}|\nabla v_\eps|^2 + \frac{1}{\eps} W(v_\eps)\rt)dx  \geq \int_{\Om} \tilde \phi(T_x M) \, d \mu_V=\int_{\Om} \phi(\nu_V)\, d\mu_V.
\end{equation*}
The corresponding lower bound on the curvature term is a direct consequence of \eqref{loweralpha}. This concludes the proof of \eqref{lowerphasefield}.
\end{proof}
We now complete the proof of the lower bound in Theorem \ref{BBG1}.
There is no loss of generality in assuming that
$$\liminf_{\eps\to0} \F_\eps(v_\eps)<\infty.$$
Thus, up to a subsequence (not relabeled), we can assume that the $\liminf$ is actually a limit. We may thus apply Lemma \ref{lemma:lowerphasefield}. By the lower bound estimate in the Modica-Mortola Theorem (see \cite{ModMort}), we have
\begin{equation}\label{lowerPFE}
\mu_V = \theta \HH^{d-1}\restr M \ge \HH^{d-1}\restr \partial^* E.
\end{equation}

By the locality of the approximate tangent spaces together with \eqref{lowerPFE} yields
\begin{equation}\label{eq:phiterm1}
\int_{\Om}  \phi(\nu_V) \, d \mu_V\geq  \int_{\Om \cap \partial^* E}  \phi(\nu_E) \, d\HH^{d-1}.
\end{equation}
This shows the lower bound inequality for the anisotropic interfacial term. Since $V_E$ has bounded first variation in $L^2_{\mu_E}(\Om;\R^d)$, according to \cite[Theorem 1]{Menne} (see also \cite[Theorem 5.1]{schatzle2004quadratic} and \cite[Theorem 3.1]{SchatzLower} in dimension $d=2$), we infer that  $\partial^* E$ is $\mathcal C^2$-rectifiable. Thus, \eqref{lowerPFE} and \cite[Corollary 4.3]{SchatzLower} imply that $H_V=H_{E}$ $\HH^{d-1}$-a.e. in $\partial^* E$, hence by \eqref{lowerPFE}, we infer that
\begin{equation}\label{eq:otherterm}
 \int_{\Om \cap \partial^* E} |H_E|^2 \, d\HH^{d-1}=  \int_{\Om \cap \partial^* E} |H_{V}|^2 \, d\HH^{d-1}\leq \int_\Om |H_V|^2 \, d\mu_V.
\end{equation}
Combining \eqref{lowerphasefield}, \eqref{eq:phiterm1} and \eqref{eq:otherterm} leads to
\[
 \liminf_{\eps\to 0} \F_\eps(v_\eps)\ge \int_{\Om \cap \partial^* E} \left(\phi(\nu_E)+ |H_E|^2\right)d\HH^{d-1} = \F(E).
\]
This corresponds to the desired lower bound.
\hfill$\Box$

\subsubsection{Proof of Theorem \ref{BBG1}: the upper bound inequality}\label{sec:ubdMM}

Let $q:t \in \R \mapsto \tanh(t\sqrt{2})$ be the optimal profile, i.e. the unique solution of the ODE
\begin{equation}\label{eq:edo2}
\left\lbrace
\begin{array}{rcl}
 -q''+ W'(q) & = & 0  \text{ in }\R \\
 q(\pm\infty) & = & \pm 1 ,
 \end{array}
 \right.
\end{equation}
which also satisfies
\begin{equation}\label{eq:edo}
q'=\sqrt{2W(q)} \qquad \text{ in }\R.
\end{equation}

 As in \cite[Lemma 4.1]{DoMuRo} (see also \cite{DoMuMa2}) we fix a cut-off function $\zeta\in \mathcal C^\infty_c(\R;[0,1])$ such that
\[
 \zeta(t)=1  \quad \textrm{for } t\in[-1,1], \qquad \zeta(t)=0,  \quad \textrm{for } |t|\ge 2.
\]
Given $\lambda>1$ to be fixed later, we introduce the scaling parameter
\begin{equation}\label{defdeps}\delta_\eps:=\lambda \eps |\log \eps|.\end{equation}
and  set
\begin{equation}\label{defqeps}
 q_\eps(t)=\zeta\left(\frac{2 t}{\delta_\eps}\right) q\left(\frac{t}{\eps}\right) + ({\rm sign \ } t)\left(1-\zeta\left(\frac{2 t}{\delta_\eps}\right)\right), \quad t \in \R.
\end{equation}

We first prove two elementary properties of $q_\eps$ which will be useful in the forthcoming recovery sequence construction.

\begin{lemma}\label{lem:c_0}
For every $\lambda>1$, we have
\begin{equation}\label{eqc0}\lim_{\eps \to 0} \int_{-\delta_\eps}^{\delta_\eps} \frac{\eps}{2}|q_\eps'|^2  dt=\lim_{\eps \to 0} \int_{-\delta_\eps}^{\delta_\eps}\frac{1}{\eps} W(q_\eps)dt=  \frac{c_0}{2}.\end{equation}
Moreover,
\begin{equation}\label{eq:term3.42}
\lim_{\eps\to0}\int_{|t|\ge \delta_\eps/2} \left(\frac{\eps}{2}|q'_\eps|^2 + \frac{W(q_\eps)}{\eps}\right)  dt= 0.
\end{equation}
\end{lemma}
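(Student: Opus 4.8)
The computation splits naturally into the two regimes governed by the cutoff $\zeta(2t/\delta_\eps)$: the inner region $|t|\le\delta_\eps/2$, where $\zeta\equiv 1$ and hence $q_\eps(t)=q(t/\eps)$ exactly, and the transition/outer region $|t|\ge\delta_\eps/2$, where $q_\eps$ interpolates between $q(t/\eps)$ and the constant $\pm1$. For \eqref{eqc0}, I would first handle the inner piece: by the change of variables $s=t/\eps$,
\[
\int_{-\delta_\eps/2}^{\delta_\eps/2}\frac{\eps}{2}|q_\eps'|^2\,dt=\int_{-\delta_\eps/(2\eps)}^{\delta_\eps/(2\eps)}\frac12|q'(s)|^2\,ds,
\]
and since $\delta_\eps/\eps=\lambda|\log\eps|\to\infty$, this converges to $\frac12\int_\R |q'|^2\,ds$; using \eqref{eq:edo} ($q'=\sqrt{2W(q)}$) and $\int_{-1}^1\sqrt{2W(z)}\,dz=c_0$ from \eqref{defco}, the limit is $\frac12\int_\R q'\sqrt{2W(q)}\,ds=\frac12\int_{-1}^1\sqrt{2W(z)}\,dz=\frac{c_0}{2}$. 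The same substitution gives $\int_{-\delta_\eps/2}^{\delta_\eps/2}\frac1\eps W(q_\eps)\,dt=\int \frac12 W(q(s))\cdot 2\,ds\to\frac12\int_\R 2W(q)\,ds=\frac12\int_\R (q')^2\,ds=\frac{c_0}{2}$ as well. So both inner integrals already carry the full limit $c_0/2$, and what remains is to show the contribution of $\delta_\eps/2\le|t|\le\delta_\eps$ vanishes — i.e. \eqref{eq:term3.42} restricted to that annulus, which is contained in \eqref{eq:term3.42} itself.

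Thus the crux is \eqref{eq:term3.42}: controlling $\int_{|t|\ge\delta_\eps/2}\bigl(\frac\eps2|q_\eps'|^2+\frac1\eps W(q_\eps)\bigr)dt$. By symmetry consider $t\ge\delta_\eps/2$ and abbreviate $\sigma_\eps=\zeta(2t/\delta_\eps)$; then $q_\eps=\sigma_\eps q(t/\eps)+(1-\sigma_\eps)$, so $q_\eps-1=\sigma_\eps\bigl(q(t/\eps)-1\bigr)$ and $q_\eps'=\frac{2}{\delta_\eps}\zeta'(2t/\delta_\eps)\bigl(q(t/\eps)-1\bigr)+\frac{\sigma_\eps}{\eps}q'(t/\eps)$. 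The key quantitative input is the exponential decay of the optimal profile: since $q(s)=\tanh(s\sqrt2)$, one has $|q(s)-1|\lesssim e^{-2\sqrt2 s}$ and $|q'(s)|\lesssim e^{-2\sqrt2 s}$ for $s\ge 0$, hence for $t\ge\delta_\eps/2$, with $s=t/\eps\ge\delta_\eps/(2\eps)=\tfrac\lambda2|\log\eps|$, we get $|q(t/\eps)-1|+|q'(t/\eps)|\lesssim e^{-\sqrt2\lambda|\log\eps|}=\eps^{\sqrt2\lambda}$. Plugging this in: the term $\frac\eps2|q_\eps'|^2$ is bounded, up to a constant, by $\eps\delta_\eps^{-2}\eps^{2\sqrt2\lambda}+\eps^{-1}\eps^{2\sqrt2\lambda}$ pointwise on an interval of length $O(\delta_\eps)$, giving a total $\lesssim \delta_\eps^{-1}\eps^{1+2\sqrt2\lambda}+\eps^{-1}\delta_\eps\,\eps^{2\sqrt2\lambda}$. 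Similarly $\frac1\eps W(q_\eps)\lesssim\frac1\eps|q_\eps-1|^2\lesssim\frac1\eps\eps^{2\sqrt2\lambda}$ pointwise, giving $\lesssim \eps^{-1}\delta_\eps\,\eps^{2\sqrt2\lambda}$. Since $\delta_\eps=\lambda\eps|\log\eps|$, all three bounds are $\lesssim |\log\eps|^{\pm1}\eps^{2\sqrt2\lambda}$ up to polynomial-in-$\eps$ and $\log$ factors, and because $\lambda>1$ forces $2\sqrt2\lambda>1$, they tend to $0$ as $\eps\to0$. (Here I also use $W(z)\lesssim|z-1|^2$ near $z=1$, valid since $q_\eps\in[-1,1]$ stays near $1$ for $t$ large.) This establishes \eqref{eq:term3.42}, and combined with the inner computation gives \eqref{eqc0}.

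The main obstacle — really the only nontrivial point — is getting the exponent bookkeeping right so that the outer contribution is genuinely negligible; this is exactly where the choice $\delta_\eps=\lambda\eps|\log\eps|$ with $\lambda>1$ enters, since one needs $\eps^{2\sqrt2\lambda}$ (times at most a power of $|\log\eps|$) to beat the $\eps^{-1}$ coming from the Modica–Mortola scaling, and $\lambda>1>\tfrac{1}{2\sqrt2}$ does the job with room to spare. One should be slightly careful that on the transition region $q_\eps$ genuinely lies in $[-1,1]$ (so that the polynomial bound $W(q_\eps)\lesssim|q_\eps-1|^2$ holds and no spurious growth of $W$ appears): this follows because $q_\eps$ is a convex combination of $q(t/\eps)\in[-1,1)$ and $1$. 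Everything else is the routine change of variables and the standard exponential-decay estimates for $\tanh$, which I would not spell out in detail.
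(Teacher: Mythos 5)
Your proposal is correct and follows essentially the same strategy as the paper: split at $|t|=\delta_\eps/2$, compute the inner contribution exactly by the change of variables $s=t/\eps$ together with the ODE identity $q'=\sqrt{2W(q)}$, and show the outer contribution vanishes via exponential decay of $1-q$ and $q'$ on $[\delta_\eps/2,\delta_\eps]$. The only cosmetic differences are that you record the sharper decay rate $e^{-2\sqrt 2 s}$ (the paper writes a slightly weaker exponent, which still suffices), and that the paper notes $\frac{\eps}{2}|q_\eps'|^2=\frac{1}{\eps}W(q_\eps)$ on the inner region to treat both integrals at once; neither changes the substance.
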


\begin{proof}
Since
\[
 q_\eps(t)= \zeta\Big(\frac{2 t}{\delta_\eps}\Big)\left(q\Big(\frac{t}{\eps}\Big) -1\right) +1 \quad \text{ in }\R^+
\]
we get by  \eqref{eq:edo}
\begin{eqnarray*}
q'_\eps(t) & =& \frac{2}{\delta_\eps} \zeta'\Big(\frac{2 t}{\delta_\eps}\Big) \left(q\Big(\frac{t}{\eps}\Big)-1\right) +\frac{1}{\eps} \zeta\Big(\frac{2 t}{\delta_\eps}\Big) q'\Big(\frac{t}{\eps}\Big)\\
 & = & \left( q\Big(\frac{t}{\eps}\Big) -1\right)\left[\frac{2}{\delta_\eps} \zeta'\Big(\frac{2 t}{\delta_\eps}\Big)-\frac{\sqrt2}{\eps}  \zeta\Big(\frac{2 t}{\delta_\eps}\Big)\left(1+ q\Big(\frac{t}{\eps}\Big) \right)\right].
 \end{eqnarray*}
Using that for $t\in [\delta_\eps/2,\delta_\eps]$,
\begin{equation}\label{eq:exp}
\left|1- q\Big(\frac{t}{\eps}\Big)\right|\leq  2 e^{-\frac{\sqrt{2}}{2} \frac{ \delta_\eps}{\eps}}=2 e^{-\frac{\sqrt{2}}{2}  \lambda|\log \eps|}\leq \eps^{\frac{\sqrt{2}}{2} \lambda},
\end{equation}
we obtain (because $\lambda>1$)
\begin{equation*}
\sup_{\frac{\delta_\eps}{2} \leq t \leq \delta_\eps}  \left(\frac{\eps}{2}|q'_\eps(t)|^2 + \frac{W(q_\eps(t))}{\eps}\right) \leq C \left(\eps \left(\frac{1}{\delta_\eps^2}+\frac{1}{\eps^2}\right) + \frac1\eps\right) \eps^{ \lambda},
\end{equation*}
for some constant $C>0$ independent of $\eps$. A similar estimate holds in $[-\delta_\eps,-\frac{\delta_\eps}{2}]$, and it leads to \eqref{eq:term3.42}.\\

On the other hand, by \eqref{eq:edo}
\begin{eqnarray}
\int_{-\delta_\eps/2}^{\delta_\eps/2}\frac{\eps}{2}|q'_\eps(t)|^2  dt & = & \int_{-\delta_\eps/2}^{\delta_\eps/2}  \frac{W(q_\eps(t))}{\eps} dt\nonumber \\
&=&\frac{1}{2}\int_{-\delta_\eps/2\eps}^{\delta_\eps/2\eps} \sqrt{2W(q(s))}q'(s) ds.\label{eq:term3.3}
\end{eqnarray}
Since $\delta_\eps/2\eps=\lambda |\log \eps|/2\to \infty$, the right-hand side of \eqref{eq:term3.3} tends to $c_0/2$ (recall \eqref{defco}) as $\eps\to 0$. In combination with  \eqref{eq:term3.42} this completes the proof of the lemma.
\end{proof}

\begin{lemma}\label{lem:epsp}
There exists $\eps_0 \in (0,1)$ satisfying: for all $p>1$, there exists $\lambda>1$ such that
$$\int_{-\delta_\eps}^{\delta_\eps}  \left|-\eps q_\eps'' +\frac{1}{\eps} W'(q_\eps)\right| ^2dt \leq \eps^p, \quad \forall \eps \in (0,\eps_0).$$
\end{lemma}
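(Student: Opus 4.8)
The plan is to estimate the "defect" $-\eps q_\eps'' + \frac{1}{\eps} W'(q_\eps)$ on the interval $(-\delta_\eps,\delta_\eps)$ by splitting it according to whether the cutoff $\zeta(2t/\delta_\eps)$ is active. On the core region $|t| \le \delta_\eps/2$ we have $q_\eps(t) = q(t/\eps)$ exactly, so $-\eps q_\eps'' + \frac{1}{\eps}W'(q_\eps) = \frac{1}{\eps}\bigl(-q''(t/\eps) + W'(q(t/\eps))\bigr) = 0$ by the ODE \eqref{eq:edo2}; hence this region contributes nothing. The whole estimate therefore reduces to the transition region $\delta_\eps/2 \le |t| \le \delta_\eps$ (and its mirror), where $q_\eps$ interpolates between $q(t/\eps)$ and the constant $\pm 1$.

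On the transition region I would write $q_\eps(t) = 1 + \zeta(2t/\delta_\eps)\bigl(q(t/\eps)-1\bigr)$ (for $t>0$), differentiate twice, and collect the three types of terms: those with two derivatives on $q$, those with one derivative on $\zeta$ and one on $q$, and those with two derivatives on $\zeta$. The key quantitative input is the exponential decay estimate \eqref{eq:exp}, i.e. $|1 - q(t/\eps)| \le \eps^{\sqrt{2}\lambda/2}$ and similarly $|q'(t/\eps)| = \sqrt{2W(q(t/\eps))} \le C\eps^{\sqrt2\lambda/2}$, $|q''(t/\eps)| \le C\eps^{\sqrt2 \lambda/2}$ for $t \ge \delta_\eps/2$, which follow from $q(s) = \tanh(s\sqrt2)$ and the fact that $q-1$ and its derivatives decay like $e^{-\sqrt2|s|}$. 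Since $\delta_\eps = \lambda\eps|\log\eps|$, the factors $1/\delta_\eps$ and $1/\delta_\eps^2$ produced by differentiating $\zeta(2t/\delta_\eps)$ only cost powers of $\eps^{-1}|\log\eps|^{-1}$, which are absorbed; the factor $1/\eps$ in front of $W'(q_\eps)$ and the $\eps$ in front of $q_\eps''$, combined with at most $\eps^{-2}$ from differentiating $q(t/\eps)$ twice, give an overall algebraic prefactor of order $\eps^{-3}$ (times logs). Meanwhile $W'(q_\eps) = W'(q_\eps) - W'(1)$ is controlled, via Lipschitz continuity of $W'$ on a neighbourhood of $1$, by $|q_\eps - 1| \le |q(t/\eps)-1| \le \eps^{\sqrt2\lambda/2}$. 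Thus pointwise on the transition region,
\[
\Bigl| -\eps q_\eps'' + \tfrac1\eps W'(q_\eps) \Bigr| \le C \eps^{-3} |\log\eps|^{-2}\, \eps^{\sqrt2\lambda/2},
\]
and since the transition region has length comparable to $\delta_\eps = \lambda\eps|\log\eps|$,
\[
\int_{\delta_\eps/2}^{\delta_\eps}\Bigl| -\eps q_\eps'' + \tfrac1\eps W'(q_\eps) \Bigr|^2 dt \le C \eps \cdot \eps^{-6}\,\eps^{\sqrt2\lambda}\, |\log\eps|^{-3} \le \eps^{p}
\]
provided $\sqrt2\lambda - 5$ (a slightly conservative exponent accounting for all the algebraic losses) exceeds $p$, i.e. $\lambda$ is chosen large enough depending only on $p$; the $|\log\eps|$ powers are harmless. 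Choosing $\lambda > (p+6)/\sqrt2$, say, works for all $\eps$ below some absolute $\eps_0 \in (0,1)$. The mirror interval $[-\delta_\eps,-\delta_\eps/2]$ is handled identically by symmetry.

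The main obstacle — really the only delicate bookkeeping — is tracking the exact power of $\eps$ produced by the competition between the negative powers $\eps^{-1}, \eps^{-2}, \eps^{-3}$ coming from the $1/\eps$ prefactor and the two derivatives hitting $q(t/\eps)$, against the exponentially small gain $\eps^{\sqrt2\lambda/2}$ from \eqref{eq:exp}; one must be careful that differentiating the cutoff does not produce a worse-than-algebraic loss, but since $\delta_\eps^{-1}$ and $\delta_\eps^{-2}$ are only $\eps^{-1}|\log\eps|^{-1}$ and $\eps^{-2}|\log\eps|^{-2}$ this is fine, and the logarithmic factors never obstruct absorbing everything into a single power $\eps^p$ for $\lambda$ large. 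I would state the decay bounds for $q-1, q', q''$ as a one-line preliminary observation, then carry out the differentiation and the two-line estimate above.
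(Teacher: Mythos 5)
Your proof is correct and follows the same strategy as the paper: you identify that the defect $-\eps q_\eps'' + \frac1\eps W'(q_\eps)$ vanishes on $\{|t|\le \delta_\eps/2\}$ (and on $\{|t|\ge\delta_\eps\}$) because $\zeta(2t/\delta_\eps)$ is locally constant there and $q$ solves \eqref{eq:edo2}, and you then use the exponential decay \eqref{eq:exp} on the transition annulus $\delta_\eps/2 \le |t| \le \delta_\eps$ to beat the algebraic blow-up coming from differentiating $q(t/\eps)$, $\zeta(2t/\delta_\eps)$ and from the $1/\eps$ prefactor. The only notable difference from the paper's argument is that the paper carries out an explicit algebraic factorization to pull $(1-q(t/\eps))$ out of the entire expression, whereas you instead bound $\frac1\eps W'(q_\eps)$ via Lipschitz continuity of $W'$ near $1$ and estimate each of the terms from differentiating the product separately; both routes yield a pointwise bound of the form (algebraic in $\eps$) $\times\,\eps^{\sqrt2\lambda/2}$, which is all that is needed.

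One minor inaccuracy: the algebraic prefactor you quote, $\eps^{-3}|\log\eps|^{-2}$, is more pessimistic than what the computation actually gives. Since $\eps q_\eps''$ contains an overall factor $\eps$ that compensates the worst $\eps^{-2}$ from two derivatives of $q(t/\eps)$, and $\frac1\eps W'(q_\eps)$ is order $\eps^{-1}$ times the exponential term, the true prefactor is $O(\eps^{-1})$ (up to log factors), matching the paper's $C\eps^{\lambda\sqrt2/2}\bigl(\frac{1}{\lambda^2\eps|\log\eps|^2}+\frac{1}{\lambda\eps|\log\eps|}+\frac1\eps\bigr)$. Your over-estimate $\eps^{-3}$ is harmless (an over-estimate of a quantity bounded above is still an upper bound), and only costs you a slightly larger threshold for $\lambda$; the logic and conclusion are unaffected. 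I also note, for completeness, that a fully careful reading of the quantifier structure (the uniform $\eps_0$ for all $p$) requires a short remark that the threshold can be taken uniform once $\lambda$ is fixed as a function of $p$, but this subtlety is present in the paper's own proof as well and is not a gap specific to yours.
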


\begin{proof}
For all  $t \in\R^+$, we compute thanks to \eqref{eq:edo2}
\begin{eqnarray*}
 q_\eps''(t) & = & \frac{4}{\delta_\eps^2} \zeta''\Big(\frac{2 t}{\delta_\eps}\Big) \left( q\Big(\frac{t}{\eps}\Big)-1\right) +\frac{4}{\delta_\eps \eps}  \zeta' \Big(\frac{2 t}{\delta_\eps}\Big) q'\Big(\frac{t}{\eps}\Big) +\frac{1}{\eps^2}  \zeta\Big(\frac{2 t}{\delta_\eps}\Big)  q''\Big(\frac{t}{\eps}\Big)\\
  & = & \frac{4}{\delta_\eps^2}  \zeta''\Big(\frac{2 t}{\delta_\eps}\Big) \left( q\Big(\frac{t}{\eps}\Big)-1\right) +\frac{4}{\delta_\eps \eps}  \zeta' \Big(\frac{2 t}{\delta_\eps}\Big) q'\Big(\frac{t}{\eps}\Big) +\frac{1}{\eps^2}  \zeta \Big(\frac{2 t}{\delta_\eps}\Big) W'\left( q\Big(\frac{t}{\eps}\Big)\right).
\end{eqnarray*}
Using \eqref{eq:edo}, we find
\begin{multline*}
 -\eps q_\eps''(t) +\frac{1}{\eps} W'(q_\eps(t))= \frac{4\eps}{\delta_\eps^2}  \zeta'' \Big(\frac{2 t}{\delta_\eps}\Big)\left(1- q \Big(\frac{t}{\eps}\Big)\right) -\frac{4\sqrt{2}}{\delta_\eps}  \zeta' \Big(\frac{2 t}{\delta_\eps}\Big)\left(1- q\Big(\frac{t}{\eps}\Big)^2\right) \\
 -\frac{1}{\eps} \lt[ \zeta\Big(\frac{2 t}{\delta_\eps}\Big) W'\left( q\Big(\frac{t}{\eps}\Big)\right)-W'\left( \zeta\Big(\frac{2 t}{\delta_\eps}\Big)\left( q\Big(\frac{t}{\eps}\Big)-1\right) +1\right)\rt].
\end{multline*}
Using the explicit expression of $W'$, we now rewrite the term inside the brackets on the right-hand-side of the previous equality as
\begin{multline*}
 \zeta\Big(\frac{2 t}{\delta_\eps}\Big) W'\left( q\Big(\frac{t}{\eps}\Big)\right)-W'\left( \zeta\Big(\frac{2 t}{\delta_\eps}\Big)\left(q\Big(\frac{t}{\eps}\Big)-1\right) +1\right) \\
=4  \zeta\Big(\frac{2 t}{\delta_\eps}\Big)\left(q\Big(\frac{t}{\eps}\Big)-1\right) \left\{  q\Big(\frac{t}{\eps}\Big)\left(q\Big(\frac{t}{\eps}\Big)+1\right)\right.\\
\left.- \left[ \zeta\Big(\frac{2 t}{\delta_\eps}\Big)\left(q\Big(\frac{t}{\eps}\Big)-1\right)+1\right]\left[  \zeta\Big(\frac{2 t}{\delta_\eps}\Big)\left(q\Big(\frac{t}{\eps}\Big)-1\right)+2\right]\right\}
\end{multline*}
so that
\begin{multline*}
 -\eps q_\eps''(t) +\frac{1}{\eps} W'(q_\eps(t)) =\left(1-q\Big(\frac{t}{\eps}\Big)\right)\lt\{\frac{4\eps}{\delta_\eps^2}  \zeta''\Big(\frac{2 t}{\delta_\eps}\Big) -\frac{4\sqrt{2}}{\delta_\eps}  \zeta'\Big(\frac{2 t}{\delta_\eps}\Big) \left(1+q\Big(\frac{t}{\eps}\Big)\right) \right.\\
 \left.+\frac{4}{\eps} \zeta\Big(\frac{2 t}{\delta_\eps}\Big)\lt[  q\Big(\frac{t}{\eps}\Big)\left(q\Big(\frac{t}{\eps}\Big)+1\right)-\left(  \zeta\Big(\frac{2 t}{\delta_\eps}\Big)\left(q\Big(\frac{t}{\eps}\Big)-1\right)+1\right)\left(  \zeta\Big(\frac{2 t}{\delta_\eps}\Big)\left(q\Big(\frac{t}{\eps}\Big)-1\right)+2\right)\rt] \rt\}.
\end{multline*}
A similar computation holds in $\R^-$. In any case, we see that
\begin{equation}\label{eq:zeroEDP}
 -\eps q_\eps''(t) +\frac{1}{\eps} W'(q_\eps(t))=0 \quad \text{if }|t|\le \delta_\eps/2\text{ or }|t|\ge \delta_\eps,
 \end{equation}
while, recalling \eqref{eq:exp},
$$\sup_{\frac{\delta_\eps}{2} \leq |t| \leq \delta_\eps} \left|-\eps q_\eps''(t) +\frac{1}{\eps} W'(q_\eps(t))\right| \leq C  \eps^{\lambda \frac{\sqrt{2}}{2} } \left(\frac{1}{\lambda^2\eps |\log \eps|^2} +\frac{1}{\lambda\eps|\log \eps|} +\frac{1}{\eps}\right),$$
for some constant $C>0$ independent of $\eps$ and $\lambda$. Hence, for all $p>1$, there exists $\lambda>1$ independent of $\eps$ such that
\begin{equation}\label{smallEDP}
\sup_{\frac{\delta_\eps}{2} \leq |t| \leq \delta_\eps} \left|-\eps q_\eps''(t) +\frac{1}{\eps} W'(q_\eps(t))\right| \leq   \eps^{p/2}
\end{equation}
and the conclusion follows from \eqref{eq:zeroEDP} and \eqref{smallEDP}, after integration over the interval $[-\delta_\eps,\delta_\eps]$.
\end{proof}
Let  $\sdist:={\rm sdist}(\cdot,\partial E)$ be the signed distance to $\partial E$  with the convention that $\sdist\leq 0$ in $E$ and $\sdist\geq 0$ in $\R^d \setminus E$. Since $\partial E$ is of class $\mathcal C^2$, there exists $\delta>0$ such that $\sdist$  is of class $\mathcal C^2$ in $\overline{ U_\delta}$, where $U_\delta = \{x \in \R^d : {\rm dist}(x,\partial E)<\delta\}$. We finally set
\[
\bar v_\eps(x)=q_\eps(\sdist(x)) \quad \text{and} \quad \bar\mu_\eps =\frac{1}{c_0}\left(\frac{\eps}{2}|\nabla \bar v_\eps|^2 + \frac{1}{\eps} W (\bar v_\eps)\right) \LL^d
\]
so that $v_\eps$ is constant equal to $\pm 1$ outside $U_{\delta_\eps}$ and ${\rm Supp} (\bar \mu_\eps) \subset \overline{U_{\delta_\eps}}$.

In particular  we have
\[
 \|\bar v_\eps -({\bf 1}_E-{\bf 1}_{\Om \setminus E})\|_{L^1(\Om)}\le C \LL^d(U_{\delta_\eps})\le C \lambda \eps |\log \eps|
\]
so that $\bar v_\eps \to {\bf 1}_E-{\bf 1}_{\Om \setminus E}$ in $L^1(\Om)$.\\
We now prove the convergence of the energy. Since the  convergence of the curvature type term
$$\frac{1}{ c_0}  \int_{\Omega} \frac1\eps\lt( -\eps \Delta \bar v_\eps+\frac{1}{\eps} W'(\bar v_\eps)\rt)^2dx \to \int_{\Om \cap \partial E}|H_E|^2\, d\HH^{d-1}$$
follows as in e.g. \cite{BellMugnai2,DoMuRo}, we do not include the details. See also {\sf Step 6} of the proof of the upper bound construction for Theorem \ref{BBG2} where similar computations are done. We thus turn to  the proof of
\begin{equation}\label{claimupperBBG1}
 \limsup_{\eps \to 0} \int_{\Omega} \phi_\eps(\nabla \bar{v}_\eps) \, d\bar{\mu}_\eps \le \int_{\partial E} \phi(\nu_E) \, d\HH^{d-1}.
\end{equation}
 For $t\in \R$, let $\Gamma_t= \{\sdist=t\}$ so that $\nabla \sdist=\nu_{\Gamma_t}$ on $\Gamma_t$ for $|t|$ small enough. Since $\nabla \bar v_\eps = q_\eps^\prime(\sdist) \nabla \sdist$ and $|\nabla \sdist | = 1$, we use the co-area formula to infer
 \[
  c_0\int_{\Omega} \phi_\eps(\nabla \bar{v}_\eps) \, d\bar{\mu}_\eps=\int_{-\delta_\eps}^{\delta_\eps} \lt(\frac{\eps}{2}|q'_\eps(t)|^2+\frac{1}{\eps} W(q_\eps(t))\rt)\lt(\int_{\Gamma_t} \phi_\eps(q_\eps'(t)\nu_{\Gamma_t}) \, d\HH^{d-1}\rt) dt.
 \]
Since $\phi_\eps\le C$ and for $|t|\le \delta_\eps$
\begin{equation}\label{H1Gammat}
 \HH^{d-1}(\Gamma_t)\le (1+C|t|) \HH^{d-1}(\partial E)\le C \HH^{d-1}(\partial E)
\end{equation}
we have by \eqref{eq:term3.42}
\begin{multline*}
 \limsup_{\eps\to 0} \int_{|t|\ge \delta_\eps/2} \lt(\frac{\eps}{2}|q'_\eps(t)|^2+\frac{1}{\eps} W(q_\eps(t))\rt)\lt(\int_{\Gamma_t} \phi_\eps(q_\eps'(t)\nu_{\Gamma_t})\,  d\HH^{d-1}\rt) dt\\
 \le C \HH^{d-1}(\partial E)\limsup_{\eps\to 0} \int_{|t| \ge \delta_\eps/2}\left( \frac{\eps}{2}|q'_\eps|^2+\frac{1}{\eps} W(q_\eps)\right) dt=0.
\end{multline*}
Using \eqref{eq:LussardiPhirVsPhi} and \eqref{H1Gammat}, we thus find
\begin{multline*}
 c_0\limsup_{\eps \to 0} \int_{\Omega} \phi_\eps(\nabla \bar{v}_\eps) \, d\bar{\mu}_\eps=\limsup_{\eps \to 0} \int_{-\frac{\delta_\eps}{2}}^{\frac{\delta_\eps}{2}} \lt(\frac{\eps}{2}|q'_\eps(t)|^2+\frac{1}{\eps} W(q_\eps(t))\rt)\lt(\int_{\Gamma_t} \phi_\eps(q_\eps'(t)\nu_{\Gamma_t})\, d\HH^{d-1}\rt) dt\\
  \le \limsup_{\eps \to 0}\int_{-\frac{\delta_\eps}{2}}^{\frac{\delta_\eps}{2}} \lt(\frac{\eps}{2}|q'_\eps(t)|^2+\frac{1}{\eps} W(q_\eps(t))\rt)\lt(\int_{\Gamma_t} \phi(\nu_{\Gamma_t}) \, d\HH^{d-1}\rt) dt\\
  +CL\HH^{d-1}(\partial E)\limsup_{\eps\to 0} r_\eps \int_{-\frac{\delta_\eps}{2}}^{\frac{\delta_\eps}{2}} \lt(\frac{\eps}{2}|q'_\eps|^2+\frac{1}{\eps} W(q_\eps)\rt)\frac{1}{|q_\eps'|} dt.
 \end{multline*}
For the first right-hand side term we have by \eqref{eqc0}
\begin{multline*}
 \limsup_{\eps \to 0}\int_{-\frac{\delta_\eps}{2}}^{\frac{\delta_\eps}{2}} \lt(\frac{\eps}{2}|q'_\eps(t)|^2+\frac{1}{\eps} W(q_\eps(t))\rt)\lt(\int_{\Gamma_t} \phi(\nu_{\Gamma_t}) \, d\HH^{d-1}\rt) dt\\
 \le \limsup_{\eps \to 0} \lt(\sup_{|t|\le \delta_\eps} \int_{\Gamma_t} \phi(\nu_{\Gamma_t}) \, d\HH^{d-1}\rt)\int_{-\frac{\delta_\eps}{2}}^{\frac{\delta_\eps}{2}} \lt(\frac{\eps}{2}|q'_\eps|^2+\frac{1}{\eps} W(q_\eps)\rt) dt 
 =c_0\int_{\partial E} \phi(\nu_E)\, d\HH^{d-1}.
\end{multline*}
For the second right-hand side term, since $|q'_\eps(t)|=\sqrt{2 W(q_\eps(t))}/\eps$  for $|t|\le \delta_\eps/2$,  we find
\[
 r_\eps \int_{-\frac{\delta_\eps}{2}}^{\frac{\delta_\eps}{2}} \lt(\frac{\eps}{2}|q'_\eps|^2+\frac{1}{\eps} W(q_\eps)\rt)\frac{1}{|q_\eps'|} dt=r_\eps \eps \int_{-\frac{\delta_\eps}{2}}^{\frac{\delta_\eps}{2}}  |q'_\eps| \, dt\le 2 \eps r_\eps\to 0.
\]
This concludes the proof of \eqref{claimupperBBG1}.
\hfill$\Box$

\section{Strongly anisotropic Mumford-Shah functional with curvature}

We extend the analysis of the previous section to the approximation of the Mumford-Shah  functional with curvature penalization. In this section we set the dimension $d=2$ and consider $\Om \subset \R^2$ a bounded open set. For $\gamma>0$ and $u \in \mathcal A(\Om)$  we define the anisotropic Mumford-Shah functional with curvature penalization by
\begin{equation}\label{eq:MS}
\mathcal G^{(\gamma)}(u)=\int_{\Om} |\nabla u|^2\, dx + \int_{J_u} (\phi(\nu_u)+ |H_{J_u}|^2)\, d\HH^1+\gamma \HH^0(\mathcal P_{J_u}).
\end{equation}
Here  $\phi: \S^{d-1}\to \R^+$ is a continuous anisotropy function satisfying \eqref{eq:bound-phi} as in Section \ref{sec:percur}. By definition \eqref{eq:MS} of the energy, if $\mathcal G^{(\gamma)}(u)<\infty$ then $u \in SBV^2(\Om)$ and $J_u  \in \mathscr C(\Om)$  is a finite union of $\mathcal C^2$-curves whose intersection points $\mathcal P_{J_u}$ have a nonzero curvature in the sense of varifolds.

\begin{remark}\label{rem:point-zero-curv}
If the jump set $J_u$ is made of three line segments intersecting at a single point, say $0$, with an angle $2\pi/3$ (the so-called triple junction configuration), and $u$ is constant inside each connected components of the complementary of $J_u$ (see Figure \ref{fig:tj}), it is known that the varifold $\mathbf v(J_u,1)$ associated to $J_u$ is stationary, i.e., it has zero curvature in the sense of varifolds. However, it is not clear whether the point $0$ should be counted in the point energy \eqref{eq:MS} as a junction point, or if it should not as a point of zero generalized curvature.

\begin{figure}[htbp]
\begin{tikzpicture}

\draw (0, 0) circle (2cm);

\draw[very thick] (0, 0) -- (0,2) node[above] {$J_u$};
\draw[very thick] (0, 0) -- (-1.732,-1);
\draw[very thick] (0, 0) -- (1.732, -1);

\draw (-0.5,-1) node[right]{$u=0$};
\draw (-1.5,0.7) node[right]{$u=-1$};
\draw (0.5,0.7) node[right]{$u=1$};

\end{tikzpicture}
\caption{\small The triple junction configuration}
\label{fig:tj}

\end{figure}
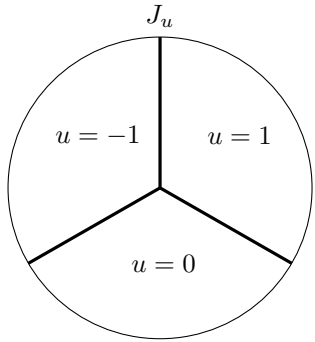

In the context of our phase-field approximation, see Subsection \ref{sec:PFAMS} (with $\Omega=B_1$ and $\varphi\equiv 1$ to simplify), the construction of the upper bound provided in Subsection \ref{sec:upperbound} leads to the existence of  a recovery sequence $\{(\bar u_\eps,\bar v_\eps,\bar w_\eps)\}_{\eps>0}$ such that
$$\mathcal G^{(\gamma)}_\eps(\bar u_\eps,\bar v_\eps,\bar w_\eps) \to 3+\gamma.$$
The question of deciding whether the point $0$ should be counted or not in the point energy is related to the possibility of  improving the upper bound by constructing  a sequence $\{(\hat u_\eps,\hat v_\eps,\hat w_\eps)\}_{\eps>0}$ such that
$$\mathcal G^{(\gamma)}_\eps(\hat u_\eps,\hat v_\eps,\hat w_\eps) \to 3.$$
See \cite[Section 2.3]{bretin2015phase} for a related discussion.
\end{remark}

\medskip

Thanks to the generalized curvature assumption \eqref{eq:curvature}  we can prove the  following density improvement for $1$-rectifiable integral varifolds with square integrable first variation, whose support contains a curve in the class $\mathscr C(\Om)$. Roughly speaking it states that, after a blow-up, if a stationary varifold $V_0$ supported on a  cone $C_0$ with vertex $0$ has a weight measure bounded from below by $\HH^1\restr {\Gamma_0}$, where $\Gamma_0= \bigcup_i L_i \in \mathscr C(\Om)$ is made of finitely many lines  starting at $0$ and with nonzero generalized curvature, then $C_0 \supset \Gamma_0 \cup \ell$ where $\ell$ is an additional half line starting at $0$ or $C_0 = \Gamma_0$ and $V_0$ has multiplicity larger than $2$ on some half line $L_i$. This is an essential ingredient to obtain the lower bound for the energy of points.

\begin{figure}[!h]
\input{ 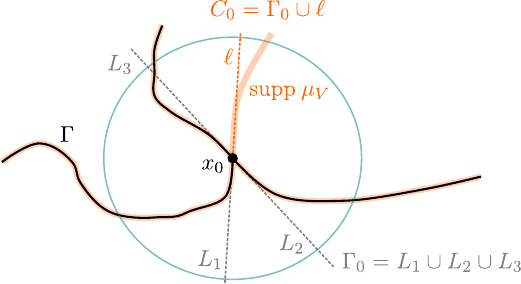_tex}
\caption{Blow up at $x_0 \in \mathcal{P}_\Gamma$.}
\end{figure}

\begin{lemma}\label{lemdenssing}
 Let $\Gamma\in \mathscr C(\Omega)$ and $V$ be a $1$-rectifiable integral varifold in $\Omega\subset \R^2$ with first variation in $L^2_{\mu_V}(\Om;\R^2)$ and such that $\mu_V\ge  \HH^1\restr \Gamma$. Then, for every $x_0\in \mathcal P_{\Gamma}$,
 \begin{equation}\label{eq:densing}
  \Theta^1(\mu_V,x_0)\ge \Theta^1(\Gamma,x_0)+\frac{1}{2}.
 \end{equation}

\end{lemma}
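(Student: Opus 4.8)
The plan is to argue by a blow-up at the junction point $x_0 \in \mathcal{P}_\Gamma$, combined with the monotonicity formula of Lemma \ref{lem:monoton} to upgrade the weak convergence of the rescaled varifolds into a density comparison. First I would set up the blow-up: for $\varrho \to 0$ define the rescaled varifolds $V_{x_0,\varrho}$ obtained from $V$ by the homothety $y \mapsto (y-x_0)/\varrho$, with weight measures $\mu_{x_0,\varrho}$. The monotonicity formula \eqref{eq:monotonicity-form} applied to $V$ (which is a $1$-rectifiable integral varifold with first variation in $L^2_{\mu_V}$, so $p=2>1$) gives uniform bounds $\mu_{x_0,\varrho}(B_R) \lesssim R$ on every fixed ball, and also shows $\Theta^1(\mu_V,x_0)$ exists. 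Moreover the curvature scales as $H_{V_{x_0,\varrho}}(z) = \varrho\, H_V(x_0+\varrho z)$, so $\int_{B_R}|H_{V_{x_0,\varrho}}|^2\,d\mu_{x_0,\varrho} = \varrho \int_{B_{\varrho R}(x_0)}|H_V|^2\,d\mu_V \to 0$. Hence by Allard's compactness theorem (as invoked in the proof of Theorem \ref{BBG0}), along a subsequence $V_{x_0,\varrho} \wto V_0$ for some $1$-rectifiable integral varifold $V_0$ with $\delta V_0 = 0$, i.e. $V_0$ is stationary; its density at the origin satisfies $\Theta^1(\mu_{V_0},0) = \Theta^1(\mu_V,x_0)$ by the monotonicity of the ratio $\mu_{V_0}(B_r)/r$ (which is constant for a stationary varifold).

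Next I would identify the constraint inherited from $\mu_V \ge \HH^1\restr\Gamma$. Rescaling this inequality and passing to the limit along the same subsequence, using Lemma \ref{lem:blow-up-Gamma}(iii), gives $\mu_{V_0} \ge \lambda_0 = \sum_{i\in I}\HH^1\restr L_i$, where the $L_i = \R^+\dot\gamma_i(0)$ are the tangent half-lines at $x_0$ and, crucially by the nonzero-curvature hypothesis \eqref{eq:curvature}, $\sum_{i\in I}\dot\gamma_i(0) \ne 0$. By the structure theorem for $1$-rectifiable integral stationary varifolds in the plane (as recalled in the introduction, cf.\ \cite{AllardAlmgren}), $V_0$ is a finite sum of half-lines and lines through the origin with integer multiplicities that are balanced at $0$: writing $V_0 = \sum_j m_j \mathbf{v}(\ell_j,1)$ over distinct half-lines $\ell_j = \R^+ e_j$ with $m_j \in \N$, stationarity of $V_0$ at the origin is exactly the balancing condition $\sum_j m_j e_j = 0$, and $\Theta^1(\mu_{V_0},0) = \frac12\sum_j m_j$. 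Now the containment $\mu_{V_0} \ge \sum_{i\in I}\HH^1\restr L_i$ forces each $L_i$ to appear among the $\ell_j$ with $m_j \ge 1$; so $\sum_j m_j \ge \#(I)$ with the sum over those $j$ corresponding to the $L_i$'s being exactly $\#(I)$ if no multiplicity exceeds one there and no extra half-line appears.

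The key point is then a parity/balancing argument. If $V_0 = \sum_{i\in I}\mathbf v(L_i,1)$ exactly (no extra half-lines, all multiplicities one), balancing would read $\sum_{i\in I}\dot\gamma_i(0)/|\dot\gamma_i(0)| = 0$ — but this need not contradict \eqref{eq:curvature} directly since \eqref{eq:curvature} involves the possibly-unnormalized $\dot\gamma_i(0)$; however $V_\Gamma$ being built from unit-speed-type tangents, the $L_i$ are half-lines and the balancing of unit vectors along them is $\sum v_i = 0$ with $v_i$ the unit tangents, which contradicts the hypothesis that $\sum_{i\in I}\dot\gamma_i(0)\neq 0$ once we observe the curves can be reparametrized to unit speed without changing $\Gamma$ or the half-lines $L_i$. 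Therefore $V_0 \ne \sum_{i\in I}\mathbf v(L_i,1)$, so either some extra half-line $\ell \notin \{L_i\}$ occurs with multiplicity $\ge 1$, or some $L_i$ occurs with multiplicity $\ge 2$. In either case $\sum_j m_j \ge \#(I) + 1$, whence
\[
\Theta^1(\mu_V,x_0) = \Theta^1(\mu_{V_0},0) = \tfrac12\sum_j m_j \ge \tfrac12\bigl(\#(I)+1\bigr) = \Theta^1(\Gamma,x_0) + \tfrac12,
\]
using $\Theta^1(\Gamma,x_0) = \#(I)/2$ from Lemma \ref{lem:blow-up-Gamma}(iii). This is \eqref{eq:densing}.

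The main obstacle I anticipate is making the balancing/parity step fully rigorous: one must be careful that the stationary blow-up $V_0$ is genuinely a finite sum of half-lines with the stationarity at the origin encoding the vanishing of the sum of multiplicity-weighted unit tangent vectors, and that the tangent half-lines of $\Gamma$ are precisely recovered in the limit with multiplicity at least one (this is where Lemma \ref{lem:blow-up-Gamma}(iii) and the lower bound $\mu_V \ge \HH^1\restr\Gamma$ enter, and where a subtlety arises if two of the $L_i$ coincide as half-lines, which the assumption that the $\Gamma_i$ intersect only at endpoints and have distinct tangents at the junction should rule out). A secondary technical point is ensuring the curvature-energy term of the rescaled varifolds vanishes and that the limit is indeed stationary rather than merely having bounded first variation; this follows from the $L^2$ bound on $H_V$ and the scaling computation above, together with lower semicontinuity of the total variation of the first variation under varifold convergence.
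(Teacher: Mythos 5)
Your proposal is correct and follows essentially the same route as the paper's own proof: blow-up at the junction point, monotonicity plus the $L^2$-bound on $H_V$ to extract a stationary integral limit $V_0$ via Allard's compactness, identification of $V_0$ as a union of half-lines through the origin with integer multiplicities, passage to the limit in the inequality $\mu_V\ge\HH^1\restr\Gamma$ to force each tangent half-line $L_i$ into the cone, and finally the observation that stationarity of $V_0$ (the balancing of unit tangents at the origin) is incompatible with the nonzero-curvature hypothesis \eqref{eq:curvature} unless an extra half-line or higher multiplicity is present. The only cosmetic differences are that the paper invokes the cone property of tangent varifolds from \cite[Theorem 19.3]{Simon_GMT} and then observes that a $1$-rectifiable cone in $\R^2$ is a finite union of rays, rather than appealing to the Allard--Almgren structure theorem as you do, and it phrases the final contradiction via the density identity \eqref{Thetamularge} rather than via an explicit balancing equation; your remark about reparametrizing the $\gamma_i$ to unit speed correctly resolves the same implicit normalization the paper glosses over.
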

\begin{proof}
Without loss of generality we may assume that $x_0=0$. For simplicity we write $\mu:=\mu_V$. We recall that by \cite[Corollary 17.8]{Simon_GMT}, since $V$ is a $1$-rectifiable integral varifold with bounded first variation in $L^2_{\mu}(\Om;\R^2)$, the density function exists and is finite at every $x \in \Omega$ and in particular
$$\Theta(\mu):=\Theta^1(\mu,0)=\lim_{\varrho \to 0} \frac{\mu(B_\varrho)}{2\varrho} < \infty \: .$$
We perform a blow-up at the point $0 \in \mathcal P_\Gamma$. Let $V_{\varrho}$ be the blow-up of $V$ as defined in \cite[Definition 38.1]{Simon_GMT}, i.e.
$$\langle V_{\varrho},\Phi\rangle:=\frac{1}{\varrho}\int_{\mathbf G_1(\R^2)} \Phi\left(\frac{y}{\varrho},S\right)\, dV(y,S) \quad\text{ for }\Phi \in \mathcal C_c(\mathbf G_1(\R^2)).$$
We set $\mu_\varrho:=\mu_{V_\varrho}$. Then $\{V_{\varrho}\}_{\varrho>0}$ is a sequence of $1$-rectifiable integral varifolds with bounded first variation. Since for every $R>0$ and $\varrho>0$;
\[
 \mu_\varrho(B_R)=\frac{\mu(B_{R\varrho})}{\varrho} = 2 R \frac{\mu(B_{R\varrho})}{2 R\varrho},
\]
by definition of $\Theta(\mu)$ we find that  for every $\varrho$ small enough,
\begin{equation}\label{upperbondmuvarho}
 \mu_\varrho(B_R)\le R( 2\Theta(\mu)+1).
\end{equation}
Let us compute the first variation of $V_{\varrho}$. For all $R>0$ and $\zeta \in \mathcal C^1_c(B_R;\R^2)$,
$$\delta V_{\varrho}(\zeta) = \delta V (\zeta_\varrho) = -\int_{B_R} H_V \cdot \zeta_{\varrho}\, d\mu,$$
where $\zeta_{\varrho}(\cdot)=\zeta\left(\cdot/\varrho\right)\in \mathcal C^1_c(B_{R\varrho};\R^2)$. As a consequence of the Cauchy-Schwarz inequality and \eqref{upperbondmuvarho}, we find for $0 < \varrho \leq 1$ small enough,
\begin{eqnarray*}
|\delta V_{\varrho}(\zeta)| & \leq & \|H_V\|_{L^2_\mu(B_R;\R^2)} \|\zeta_{\varrho}\|_{L^2_\mu(B_R;\R^2)}\nonumber\\
& \leq & \|H_V\|_{L^2_\mu(B_R;\R^2)} \|\zeta\|_{L^\infty(B_R;\R^2)} \,\mu(B_{R\varrho})^{1/2} \\
& \leq & C \|H_V\|_{L^2_\mu(B_R;\R^2)} \|\zeta\|_{L^\infty(B_R;\R^2)} \varrho^{1/2},
\end{eqnarray*}
for some constant $C=C(R,\Theta(\mu))>0$.
This yields
\begin{equation}\label{supdelV}
\|\delta V_{\varrho}\|(B_R)  \leq  C \|H_V\|_{L^2_\mu(B_R;\R^2)}\,  \varrho^{1/2}.\end{equation}
Thus, by \eqref{upperbondmuvarho}, \eqref{supdelV} and Allard's compactness Theorem, see \cite[Theorem 42.7 \& Corollary 42.8]{Simon_GMT}, there exists a decreasing subsequence $\{\varrho_j\}_{j \in \N} \searrow 0$ and a $1$-rectifiable integral varifold $V_0$ with locally bounded first variation in $\R^2$ such that $V_{\varrho_j} \wto V_0$ weakly* in $\mathcal M_{\rm loc}(\mathbf G_1(\R^2))$. By \eqref{supdelV}, $\delta V_0=0$, hence $V_0$ is a stationary varifold in $\R^2$. According to \cite[Theorem 19.3]{Simon_GMT}, we deduce that $V_0=\mathbf v(C_0,\psi)$ is a cone (centered at the origin) in the sense that $C_0$ is a countably $1$-rectifiable set invariant under all homotheties $x \mapsto t^{-1}  x$, $t>0$, and $\psi$ is a positive integer valued locally $\HH^1$-integrable function on $C_0$ with $\psi(x)=\psi(t^{-1} x)$, for $x \in C_0$ and $t>0$. Since we are in $\R^2$, this means that $C_0$ is the union of countably many half lines $\{\ell_j\}_{j \in J}$ such that $\psi=\psi_j\in \N \setminus \{0\}$ is constant on $\ell_j$. Notice that since $2\Theta(\mu)=2\Theta^1(C_0,0)=\sum_{j\in J} \psi_j$, then $\#(J) \leq 2\Theta(\mu)<\infty$ and $C_0$ is actually made of finitely many half lines.

Recalling  Lemma \ref{lem:blow-up-Gamma}-(iii) (and notation therein) together with the inequality $\mu \geq \HH^1\restr \Gamma$, we deduce that
$$\sum_{j\in J} \psi_j \HH^1\restr \ell_j=\mu_{V_0} \geq  \HH^1\restr \left(\bigcup_{i \in I} L_i\right).$$
In particular $\bigcup_{i \in I} L_i \subset \bigcup_{j \in J} \ell_j$, hence $I \subset J$ and $L_i=\ell_i$ for $i \in I$. We may write 
\begin{equation}\label{Thetamularge}
 \Theta(\mu)=\Theta^1(\Gamma,0)+\frac{1}{2}\sum_{i\in I} (\psi_i-1) +\frac{1}{2}\sum_{j\in J \setminus I} \psi_j.
\end{equation}
By \eqref{eq:curvature}, the curvature of $\bigcup_{i \in I} L_i$ is the sense of varifolds is not equal to zero and thus we cannot have $\mu=\HH^1\restr \Gamma$. As a consequence, at least one of the two sums at the right-hand side of \eqref{Thetamularge} is not equal to zero. This concludes the proof of \eqref{eq:densing}.
\end{proof}

The following result will be instrumental to derive a lower bound inequality for the point energy of the phase-field approximation of the Mumford-Shah functional with curvature penalization (see Theorem \ref{BBG2}).

\begin{lemma}\label{lem:pointX}
Let $\Om \subset \R^2$ be a bounded open set and $\Gamma \in \mathscr C(\Om)$. There exists $\gamma_0=\gamma_0(\Gamma)>0$ such that the following property holds: if $X \subset \Om$ is a finite set, and $V \in \mathbf V_1(\Om \setminus X$) is a $1$-rectifiable integral varifold with bounded first variation in $L^2_{\mu_V}(\Om \setminus X;\R^2)$ satisfying
\begin{equation}\label{eq:hyp2}
\mu_V \geq \HH^1\restr \Gamma,
\end{equation}
then, for every $\gamma \in (0,\gamma_0)$,
$$\int_{\Omega\backslash X} (\phi(\nu_V)+|H_V|^2) \, d\mu_V\ge \int_{\Gamma} (\phi(\nu_\Gamma)+|H_{\Gamma}|^2)\,  d\HH^1 +\gamma\mathcal H^0(\mathcal P_{\Gamma}\backslash X).$$
\end{lemma}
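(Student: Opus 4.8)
The plan is to reduce, after a harmless truncation, to a local statement near each endpoint in $\mathcal P_\Gamma\setminus X$, and there to run a length‑versus‑Willmore dichotomy powered by the density gap of Lemma \ref{lemdenssing} and the monotonicity inequality \eqref{eq:monotonicity-form}. First, one may assume the left‑hand side is finite, so that $\mu_V(\Om\setminus X)<\infty$ and $H_V\in L^2_{\mu_V}$. Writing $\mu_V=\theta\,\HH^1\restr M$, the hypothesis $\mu_V\ge\HH^1\restr\Gamma$ forces $\Gamma\subset M$ up to an $\HH^1$‑negligible set and $\theta\ge1$ $\HH^1$‑a.e.\ on $\Gamma$; by locality of approximate tangents (\cite[Proposition~2.85]{AFP}) one has $\nu_V=\nu_\Gamma$ $\HH^1$‑a.e.\ on $\Gamma$, and since $\Gamma\setminus\mathcal P_\Gamma$ is $\mathcal C^2$‑rectifiable and meets $X$ in an $\HH^1$‑null set, the locality of the generalized mean curvature (\cite[Corollary~4.3]{SchatzLower}, applied on small balls around $\HH^1$‑a.e.\ point of $\Gamma$ away from $X$) gives $H_V=H_\Gamma$ $\HH^1$‑a.e.\ on $\Gamma$. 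Splitting $M=\Gamma\cup(M\setminus\Gamma)$ and using $\theta\ge1$, $\phi\ge1/C$, one then gets for every open $U\subset\Om\setminus X$ the \emph{base inequality}
\[
\int_U(\phi(\nu_V)+|H_V|^2)\,d\mu_V\ \ge\ \int_{\Gamma\cap U}(\phi(\nu_\Gamma)+|H_\Gamma|^2)\,d\HH^1\ +\ \tfrac1C\bigl(\mu_V(U)-\HH^1(\Gamma\cap U)\bigr),
\]
with a nonnegative bracket; choosing $U=\Om\setminus X$ already yields the asserted inequality without the point term, so the whole point is to extract an extra $\gamma$ per point of $\mathcal P_\Gamma\setminus X$.

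Let $p_1,\dots,p_k$ be the points of $\mathcal P_\Gamma\setminus X$. Since each $p_j\notin X$ and $X$ is finite, $V$ has $L^2$ first variation in a small ball around $p_j$, so Lemma \ref{lemdenssing} applies there and yields the density gap $\Theta^1(\mu_V,p_j)\ge\Theta_j+\tfrac12$, where $\Theta_j:=\Theta^1(\Gamma,p_j)=\#(I_{p_j})/2\le N/2$. Fix a radius $r>0$ depending only on $\Gamma$ (smaller than $\varrho_0(\Gamma)$ of Lemma \ref{lem:taylor}, than half the mutual distances of the points of $\mathcal P_\Gamma$, than $\mathrm{dist}(\mathcal P_\Gamma,\partial\Om)$, and small enough for the estimates below), further shrunk so that $\overline{B_r(p_j)}\cap X=\emptyset$ for every $j$; then the balls $B_r(p_j)$ are pairwise disjoint and compactly contained in $\Om\setminus X$, and Lemma \ref{lem:taylor} gives $\HH^1(\Gamma\cap B_r(p_j))\le 2r\Theta_j+C(\Gamma)r^2$.

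Now comes the dichotomy. Set $m_j:=\int_{B_r(p_j)}|H_V|\,d\mu_V$. Combining $\Theta^1(\mu_V,p_j)\ge\Theta_j+\tfrac12$ with \eqref{eq:monotonicity-form} at $p_j$ and scale $r$ gives $\Theta_j+\tfrac12\le\frac{\mu_V(B_r(p_j))}{2r}+\frac{m_j}{2}$. If $m_j\le\tfrac14$, this forces $\mu_V(B_r(p_j))\ge2r\Theta_j+\tfrac{3r}{4}$, hence the excess mass over $\Gamma$ in $B_r(p_j)$ is $\ge\tfrac r2$ for $r$ small, and by the base inequality with $U=B_r(p_j)$ the excess energy there is $\ge\frac{r}{2C}$. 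If instead $m_j>\tfrac14$, then either $\mu_V(B_r(p_j))>2r(\Theta_j+1)$, in which case the excess mass is $>r$ and the excess energy $>\frac rC$; or $\mu_V(B_r(p_j))\le2r(\Theta_j+1)$, and Cauchy--Schwarz gives $\int_{B_r(p_j)}|H_V|^2\,d\mu_V\ge m_j^2/\mu_V(B_r(p_j))\ge\frac{1}{32\,r\,(\Theta_j+1)}$, which exceeds $\int_{\Gamma\cap B_r(p_j)}|H_\Gamma|^2\,d\HH^1\le C(\Gamma)r$ by at least $\frac{1}{64\,r\,(N/2+1)}$ for $r$ small, so together with the base inequality on $B_r(p_j)$ the excess energy there is again controlled. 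In all cases the excess energy in $B_r(p_j)$ is $\ge\gamma_0:=\min\bigl\{\tfrac{r}{2C},\ \tfrac{1}{64\,r\,(N/2+1)}\bigr\}>0$, a constant depending only on $\Gamma$. Summing over $j$ (disjoint balls) and adding the base inequality on $U=\Om\setminus\bigcup_j\overline{B_r(p_j)}$, one obtains for every $\gamma\in(0,\gamma_0)$
\[
\int_{\Om\setminus X}(\phi(\nu_V)+|H_V|^2)\,d\mu_V\ \ge\ \int_\Gamma(\phi(\nu_\Gamma)+|H_\Gamma|^2)\,d\HH^1\ +\ \gamma\,\HH^0(\mathcal P_\Gamma\setminus X).
\]

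I expect the dichotomy to be the crux: converting the half‑integer density jump of Lemma \ref{lemdenssing} into a quantitative gain of either length or Willmore energy, using only the purely metric monotonicity inequality \eqref{eq:monotonicity-form} together with Cauchy--Schwarz, with the anisotropy entering merely through the two‑sided bound $1/C\le\phi\le C$. A pleasant feature, and the reason this elementary argument is enough, is that here only a \emph{positive} lower bound on the gain is needed, not a sharp one, in contrast with the $4\pi$ threshold of \eqref{F4pi} and Proposition \ref{prop:dichotomie}.
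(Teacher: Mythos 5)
Your proof is correct and uses the same key ingredients as the paper's: Lemma~\ref{lemdenssing} (the density jump of $1/2$ at each $p\in\mathcal P_\Gamma\setminus X$), the monotonicity inequality \eqref{eq:monotonicity-form}, Lemma~\ref{lem:taylor} for the length expansion of $\HH^1(\Gamma\cap B_r(p))$, the locality of the generalized mean curvature (\cite[Corollary 4.3]{SchatzLower}), Cauchy--Schwarz, and the two-sided bound \eqref{eq:bound-phi}; the base inequality you isolate is also used implicitly in the paper. What differs is the organization. The paper works at a $\gamma$-dependent scale $\bar\varrho=4\gamma(\inf\phi)^{-1}$ and argues by contradiction: assuming the energy gain in $B_{\bar\varrho}(p)$ is less than $\gamma$, it extracts from monotonicity that $\int_{B_{\bar\varrho}}|H_V|\,d\mu_V\ge 1/2$, whence Cauchy--Schwarz forces the Willmore energy there to be of order $1/\bar\varrho\to\infty$, contradicting the assumed bound. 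You instead fix a radius $r$ once and for all (depending only on $\Gamma$) and run a direct three-way case split, converting the density jump into either an excess-mass gain or an excess-Willmore gain depending on the sizes of $m_j=\int_{B_r(p_j)}|H_V|\,d\mu_V$ and $\mu_V(B_r(p_j))$. Both routes yield a positive $\gamma_0(\Gamma)$; neither is sharp, and, as you correctly observe, sharpness is not needed here (in contrast with the $4\pi$ threshold of Proposition~\ref{prop:dichotomie}). Your fixed-scale dichotomy is a valid, somewhat more transparent repackaging of the same mechanism; the paper's $\gamma$-scaled radius is a small optimization that does not change the substance.
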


\begin{proof}

We first notice that for $x_0 \in \Gamma \setminus (X \cup \mathcal P_\Gamma)$ and $\varrho>0$ such that $B_\varrho(x_0)\cap (X \cup \mathcal P_\Gamma)=\emptyset$,   the  associated varifold $V_\Gamma$ is integral with bounded first variation given by $\delta V_\Gamma=-H_\Gamma \HH^1 \restr (\Gamma \cap B_\varrho(x_0))$. Thus, using \eqref{eq:hyp2}, \cite[Corollary 4.3]{SchatzLower} ensures that $H_V=H_\Gamma$ $\HH^1$-a.e. on $\Gamma$, hence
\begin{equation}\label{eq:m-surf22}
\int_{B_\varrho(x_0)}(\phi(\nu_V)+ |H_V|^2) \,d\mu_V\ge  \int_{\Gamma \cap B_\varrho(x_0)}(\phi(\nu_\Gamma)+ |H_\Gamma|^2) \,d\HH^1.
\end{equation}

\medskip

Define 
\begin{equation}\label{eq:barrho}
\bar \varrho=4\gamma (\inf \phi)^{-1},
\end{equation}
and let $\gamma_0=\gamma_0(\Gamma)>0$ be small enough so that for every $\gamma \in (0,\gamma_0)$,
$$\bar\varrho \leq \varrho_0, \quad \overline B_{\bar \varrho}(x)\subset \Omega, \quad B_{\bar \varrho}(x)\cap (\mathcal{ P}_{\Gamma}\backslash \{x\})=\emptyset \qquad  \text{ for } x\in \mathcal{ P}_{\Gamma},$$
where $\varrho_0=\varrho_0(\Gamma)>0$ is given by Lemma \ref{lem:taylor}. We claim that for every $x_0\in \mathcal P_{\Gamma}\backslash X$,
\begin{equation}\label{claimlowerfinal22}
\int_{B_{\bar \varrho}(x_0)} (\phi(\nu_V)+|H_V|^2)\, d\mu_V\ge \int_{\Gamma \cap B_{\bar \varrho}(x_0)} (\phi(\nu_\Gamma)+|H_{\Gamma}|^2) \,d\HH^1 +\gamma.
 \end{equation}
Assuming the validity of \eqref{claimlowerfinal22}, a covering argument together with \eqref{eq:m-surf22} leads to the conclusion. 

\medskip

The rest of the proof is devoted to the proof of  \eqref{claimlowerfinal22}. Without loss of generality we may assume that $x_0=0$ and that
\begin{equation}\label{assumeclaimfinal2}
\int_{B_{\bar \varrho}} (\phi(\nu_V)+|H_V|^2) \, d\mu_V < \int_{\Gamma\cap B_{\bar \varrho}} (\phi(\nu_\Gamma)+|H_{\Gamma}|^2) \, d\HH^1 +\gamma
\end{equation}
since otherwise there is nothing to prove. By \eqref{eq:hyp2} in $B_{\bar \varrho}\backslash \{0\}$ and the locality properties of the first variation and the approximate normal, we have
\[
\int_{B_{\bar \varrho}} |H_V|^2 \, d\mu_V\ge \int_{\Gamma \cap B_{\bar \varrho}} |H_{\Gamma}|^2 \, d\HH^1 \quad \textrm{and}\quad \int_{B_{\bar \varrho}} \phi(\nu_V) \, d\mu_V\ge \int_{\Gamma\cap B_{\bar \varrho}} \phi(\nu_\Gamma) \, d\HH^1
\]
so that \eqref{assumeclaimfinal2} implies
\begin{equation}\label{conseqassumfinal3}
\int_{B_{\bar \varrho}} \phi(\nu_V) \, d\mu_V\le \int_{\Gamma \cap B_{\bar \varrho}} \phi(\nu_\Gamma)\, d\HH^1+\gamma
\end{equation}
and
\begin{equation}\label{conseqassumfinal2}
\int_{B_{\bar \varrho}} |H_V|^2 \, d\mu_V\le \int_{\Gamma \cap B_{\bar \varrho}} |H_{\Gamma}|^2\,  d\HH^1 +\gamma.
\end{equation}
Since $V$ is a $1$-rectifiable integral varifold in $\Om \setminus X$, we have $\mu_V=\theta\HH^1\restr M$ for some $\HH^1$-rectifiable set $M \subset \Om$ and some $\HH^1$-integrable function $\theta:M \to \N$. Moreover, by \eqref{eq:hyp2}, $\Gamma\subset M$ and  $\Theta^1(\mu,\cdot)=\theta \ge 1$ $\HH^1$-a.e. on $\Gamma$. Since $\nu_{\Gamma}=\nu_V$ $\HH^1$-a.e. on $\Gamma$,
\begin{multline*}
\int_{B_{\bar \varrho}} \phi(\nu_V) \, d\mu_V-\int_{\Gamma \cap B_{\bar \varrho}} \phi(\nu_\Gamma) \, d\HH^1\\
 =\int_{\Gamma\cap B_{\bar \varrho}} (\theta-1) \phi(\nu_\Gamma)\, d\HH^1 +\int_{(M\backslash \Gamma)\cap B_{\bar \varrho}} \theta \phi(\nu_V) \, d\HH^1\\
  \ge (\inf \phi )\lt(\int_{\Gamma\cap B_{\bar \varrho}} (\theta-1)  d\HH^1 +\int_{(M\backslash \Gamma)\cap B_{\bar \varrho}}\theta \, d\HH^1\rt)\\
  =(\inf \phi)\lt(\mu_V(B_{\bar \varrho})-\HH^1(\Gamma\cap B_{\bar \varrho})\rt).
\end{multline*}
 Thus, \eqref{conseqassumfinal3} implies
\begin{equation}\label{conseqassumfinalpostpros22}
\mu_V(B_{\bar \varrho})-\HH^1(\Gamma\cap B_{\bar \varrho})\le (\inf \phi)^{-1} \gamma.
\end{equation}
Recalling \eqref{eq:hyp2}, we can apply Lemma \ref{lemdenssing} and find 
\[
 \Theta^1(\mu_V,0)\ge \Theta^1(\Gamma,0)+\frac12.
\]
Thanks to the monotonicity formula \eqref{eq:monotonicity-form}, this leads to
\[
\Theta^1(\Gamma,0)+\frac12\le  \Theta^1(\mu_V,0)\le \frac{\mu_V(B_{\bar \varrho})}{2\bar \varrho} +\frac{1}{2}\int_{B_{\bar \varrho}}|H_V|\, d\mu_V,
\]
which we rewrite as
\[
 1\le \frac{\mu_V(B_{\bar \varrho})-\HH^1(\Gamma\cap B_{\bar \varrho})}{\bar \varrho}+ \frac{\HH^1(\Gamma\cap B_{\bar \varrho})-2\bar \varrho \Theta^1(\Gamma,0)}{\bar \varrho}+\int_{B_{\bar \varrho}}|H_V|\, d\mu_V.
\]

Combining \eqref{eq:C_*} and \eqref{conseqassumfinalpostpros22}, this yields
\[
 1\le \frac{\gamma}{\bar \varrho \inf \phi} + \bar \varrho \,\Theta^1(\Gamma,0)\sup_{\Gamma \setminus \mathcal P_{\Gamma}}|H_{\Gamma}| +\int_{B_{\bar \varrho}}|H_V|\, d\mu_V.
\]
Recalling the definition \eqref{eq:barrho} of $\bar \varrho$, we can restrict $\gamma_0>0$ so that for every $\gamma\in (0,\gamma_0)$,
\begin{equation}\label{eq:1102}
 \bar \varrho\, \Theta^1(\Gamma,0)\sup_{\Gamma \setminus \mathcal P_{\Gamma}}|H_{\Gamma}|  \le \frac{1}{4}.
\end{equation}
After simplification and using the Cauchy-Schwarz inequality, we obtain 
\[
\frac12\le \left(\int_{B_{\bar \varrho}} |H_V|^2 \, d\mu_V\right)^{\frac{1}{2}} \mu_V(B_{\bar \varrho})^{\frac{1}{2}}.
\]
By \eqref{conseqassumfinalpostpros22}, \eqref{eq:C_*}, \eqref{eq:1102} and Lemma \ref{lem:blow-up-Gamma}-(iii), we have
\[
 \mu_V(B_{\bar \varrho})\le C_1 \Theta^1(\Gamma,0) \bar \varrho,
\]
for some universal constant $C_1>0$, so that
\[
 \int_{B_{\bar \varrho}} |H_V|^2 \,d\mu_V\ge \frac{1}{4C_1 \Theta^1(\Gamma,0) \bar \varrho}.
\]
On the one hand, recalling  \eqref{conseqassumfinal2}, we find
\[
 \gamma +\int_{\Gamma \cap B_{\bar \varrho}} |H_{\Gamma}|^2 \,d\HH^1\ge \int_{B_{\bar \varrho}} |H_V|^2\,  d\mu_V\ge \frac{1}{4C_1 \Theta^1(\Gamma,0) \bar \varrho}.
\]
On the other hand, using that $\HH^1 \restr \Gamma \leq \mu_V$ and thus
\[
 \int_{\Gamma\cap B_{\bar \varrho}} |H_{\Gamma}|^2 \, d\HH^1\le C_1 \bar \varrho\, \Theta^1(\Gamma,0) \sup_{\Gamma \setminus \mathcal P_{\Gamma}} |H_{\Gamma}|^2,
\]
we get
\begin{equation}\label{eq:1550}
 \gamma+C_1  \bar \varrho\, \Theta^1(\Gamma,0) \sup_{\Gamma \setminus \mathcal P_{\Gamma}} |H_{\Gamma}|^2\ge \frac{1}{4C_1 \bar \varrho\,\Theta^1(\Gamma,0)}.
\end{equation}
Recalling once more our choice $\bar \varrho= 4\gamma(\inf \phi)^{-1}$, we can further reduce $\gamma_0$ in such a way that \eqref{eq:1550} is violated for all $\gamma<\gamma_0$. This concludes the proof of the lemma.
\end{proof}

\subsection{Lower semicontinuity}

We  now prove  a lower semicontinuity property of the Mumford-Shah functional $\mathcal G^{(\gamma)}$ in the presence of curvature penalization. Since a more general result has been already obtained (with a simpler proof, see Remark \ref{rem:simplelowercompact}) in \cite[Theorem 3.2]{BraiMar}, the motivation for including Theorem  \ref{thm:lsc} rather lies in its proof. Indeed, it can be seen as a streamlined  version of the phase-field counterpart in Theorem \ref{BBG2}.

\begin{theorem}\label{thm:lsc}
 For all $u \in \mathcal A(\Om)$, there exists $\gamma_0=\gamma_0(u)>0$ such that for any $\gamma \in (0,\gamma_0)$, the following property holds: for all sequence $\{u_n\}_{n \in \N}$ in $\mathcal A(\Om)$ such that $u_n \to u$ in $L^1(\Om)$,
 $$\mathcal G^{(\gamma)}(u) \leq \liminf_{n \to \infty} \mathcal G^{(\gamma)}(u_n).$$
 \end{theorem}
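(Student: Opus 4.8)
The plan is to mirror, in the simpler sharp-interface setting, the structure of the lower-bound part of Theorem \ref{BBG2}. First I would make the standard reduction: passing to a subsequence I may assume the $\liminf$ is a limit. Since $u_n\in\mathcal A(\Om)$, $\mathcal G^{(\gamma)}(u_n)<\infty$, and I may further assume $\sup_n\mathcal G^{(\gamma)}(u_n)<\infty$ (otherwise there is nothing to prove). By \eqref{eq:bound-phi} this gives uniform bounds on $\HH^1(J_{u_n})$, on $\int_{J_{u_n}}|H_{J_{u_n}}|^2\,d\HH^1$, on $\int_\Om|\nabla u_n|^2\,dx$, and on $\HH^0(\mathcal P_{J_{u_n}})$; after a further subsequence $\HH^0(\mathcal P_{J_{u_n}})\equiv N_0$ is constant and the (uniformly bounded in number) points of $\mathcal P_{J_{u_n}}$ converge in $\overline\Om$. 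I let $X\subset\Om$ be the finite set of their limits lying in $\Om$, so $\HH^0(X)\le N_0$. The bulk term is handled by the classical lower semicontinuity in $SBV$ (see \cite{AFP}): $\int_\Om|\nabla u|^2\,dx\le\liminf_n\int_\Om|\nabla u_n|^2\,dx$ (recall that $u\in\mathcal A(\Om)$ is given, so no closure statement is needed).

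Next I would extract the limit varifold away from $X$. Writing $V_n:=V_{J_{u_n}}=\mathbf v(J_{u_n},1)$ and $\mu_n=\HH^1\restr J_{u_n}$, for any compact $K$ with $K\subset\Om\setminus X$ one has $\mathcal P_{J_{u_n}}\cap K=\emptyset$ for $n$ large, hence $\delta V_n\restr K=-H_{J_{u_n}}\mu_n\restr K$ with a uniform $L^2$ bound (Cauchy--Schwarz together with the length and curvature bounds). By Allard's integral compactness theorem \cite[Theorem 42.7 \& Corollary 42.8]{Simon_GMT}, up to a subsequence $V_n\restr(\Om\setminus X)\wto V$ for some $1$-rectifiable integral varifold $V$ in $\Om\setminus X$ with first variation in $L^2_{\mu_V}(\Om\setminus X;\R^2)$, and $\mu_n\restr(\Om\setminus X)\wto\mu_V$. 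The point to check is then $\mu_V\ge\HH^1\restr J_u$ (equivalently $\HH^1\restr(J_u\setminus X)$, since $X$ is finite): for $\HH^1$-a.e. $x_0\in J_u\setminus X$ the density $\Theta^1(\mu_V,x_0)$ exists by \cite[Corollary 17.8]{Simon_GMT}, and combining the portmanteau inequality $\mu_V(\overline B_r(x_0))\ge\limsup_n\mu_n(\overline B_r(x_0))$ with the localized $SBV$ lower semicontinuity $\HH^1(J_u\cap B_r(x_0))\le\liminf_n\HH^1(J_{u_n}\cap B_r(x_0))$ yields $\Theta^1(\mu_V,x_0)\ge1$; since $\mu_V$ is $1$-rectifiable, a density comparison argument upgrades this to $\mu_V\ge\HH^1\restr J_u$.

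With $\mu_V\ge\HH^1\restr J_u$ in hand, I would apply Lemma \ref{lem:pointX} with $\Gamma=J_u$ and the finite set $X$, which fixes $\gamma_0=\gamma_0(J_u)=\gamma_0(u)$: for $\gamma\in(0,\gamma_0)$,
$$\int_{\Om\setminus X}\bigl(\phi(\nu_V)+|H_V|^2\bigr)\,d\mu_V\ \ge\ \int_{J_u}\bigl(\phi(\nu_u)+|H_{J_u}|^2\bigr)\,d\HH^1+\gamma\,\HH^0(\mathcal P_{J_u}\setminus X).$$
On the other hand, the left-hand side is lower semicontinuous along $V_n$: for the anisotropic term, testing $\zeta(x)\tilde\phi(S)$ with $\zeta\in\mathcal C_c(\Om\setminus X;[0,1])$ and taking the supremum over $\zeta$ (exactly as in the proof of Theorem \ref{BBG0}) gives $\int_{\Om\setminus X}\phi(\nu_V)\,d\mu_V\le\liminf_n\int_{J_{u_n}}\phi(\nu_{u_n})\,d\HH^1$; for the curvature term, exhausting $\Om\setminus X$ by open sets on which eventually $\delta V_n=-H_{J_{u_n}}\mu_n$ and invoking the joint lower semicontinuity of $(\rho,\sigma)\mapsto\int|\mathrm d\rho/\mathrm d\sigma|^2\,\mathrm d\sigma$ \cite[Example 2.36]{AFP} gives $\int_{\Om\setminus X}|H_V|^2\,d\mu_V\le\liminf_n\int_{J_{u_n}}|H_{J_{u_n}}|^2\,d\HH^1$. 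Finally $\gamma\,\HH^0(\mathcal P_{J_{u_n}})=\gamma N_0\ge\gamma\,\HH^0(X)\ge\gamma\,\HH^0(\mathcal P_{J_u}\cap X)$. Adding the bulk bound and using superadditivity of $\liminf$, the contributions reassemble into $\mathcal G^{(\gamma)}(u)\le\liminf_n\mathcal G^{(\gamma)}(u_n)$.

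I expect the main obstacle to be the bookkeeping at the finitely many limit points $X$ of the crack-tips and junctions of the $u_n$'s. Near such a point the limit varifold $V$ carries extra density and curvature (quantified by Lemma \ref{lemdenssing} and Lemma \ref{lem:pointX}), and one must ensure simultaneously that the point energy $\gamma N_0$ of the $u_n$'s covers the genuine point energy of $u$ at points of $\mathcal P_{J_u}\cap X$, and that the surface-plus-curvature energy of the $u_n$'s covers the spurious density created at points of $\mathcal P_{J_u}\setminus X$ and of $X\setminus\mathcal P_{J_u}$; this is precisely where the smallness of $\gamma$ enters, through Lemma \ref{lem:pointX}. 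A secondary technical point is the derivation of $\mu_V\ge\HH^1\restr J_u$, i.e. relating the jump set of the merely $L^1$-limit $u$ to the weight of the varifold limit of the sets $J_{u_n}$.
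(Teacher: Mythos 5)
Your proposal is correct and follows essentially the same route as the paper's proof: uniform bounds from the energy, Ambrosio's compactness and lower semicontinuity for the bulk and length terms, extraction of the finite set $X$ of limit crack-tips and junctions, Allard's compactness away from $X$, varifold lower semicontinuity of the anisotropic and Willmore terms on $\Om\setminus X$, and finally Lemma~\ref{lem:pointX} together with $\gamma\,\HH^0(\mathcal P_{J_{u_n}})\ge\gamma\,\HH^0(\mathcal P_{J_u}\cap X)$ to recover the full point energy. The only minor cosmetic differences (defining the limit varifold on $\Om$ rather than on $\Om\setminus X$, and your explicit density/portmanteau route to $\mu_V\ge\HH^1\restr J_u$ where the paper simply invokes Ambrosio's semicontinuity) do not change the substance of the argument.
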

 
 \begin{proof}
Assume without loss of generality that $\liminf_n \mathcal G^{(\gamma)}(u_n)<\infty$, and extract a subsequence (not relabeled) such that the liminf is a limit. In particular, according to \eqref{eq:bound-phi},
$$\sup_{n \in \N} \left\{\int_\Om |\nabla u_n|^2\, dx + \HH^1(J_{u_n})\right\} <\infty.$$
By Ambrosio's compactness and lower semi-continuity Theorem, \cite[Theorems 4.7 \& 4.8]{AFP}, $\nabla u_n \wto \nabla u$ weakly in $L^2(\Om;\R^2)$,
\begin{equation}\label{eq:bulk1}
\liminf_{n\to \infty} \int_\Om |\nabla u_n|^2\, dx \geq \int_\Om |\nabla u|^2\, dx
\end{equation}
and 
\begin{equation}\label{eq:Amb}
\liminf_{n\to \infty} \HH^1(J_{u_n}) \geq \HH^1(J_u).
\end{equation}

Let $V_n=\mathbf v(J_{u_n},1)\in \mathbf V_1(\Om)$ be the varifold associated to $J_{u_n}$. Up to a further subsequence, we have $V_n \wto V$ weakly* in $\mathcal M(\mathbf G_1(\Om))$ for some $V \in \mathbf V_1(\Om)$. Since $\mu_{V_n}=\HH^1\restr J_{u_n}$, we get from \eqref{eq:Amb} that
\begin{equation}\label{muVJ}
\mu_V \geq \HH^1\restr J_u.
\end{equation}
Since $\sup_n\HH^0(\mathcal P_{J_{u_n}}) <\infty$, up to a further subsequence, we can suppose that $\HH^0(\mathcal P_{J_{u_n}})=m$ is independent of $n$, and thus
$$\mathcal P_{J_{u_n}}=\{z_1^n,\ldots,z_m^n\}.$$
Up to a new subsequence, still not relabeled, there exist $z_1,\ldots,z_m \in \overline \Om$ (not necessarily pairwise distinct) such that $z_i^n \to z_i$ for all $1 \leq i \leq m$. Let $X=\{z_1,\ldots,z_m\} \cap \Om$ and $\varrho>0$ small enough so that
$$B_\varrho(z) \subset \Om, \quad B_\varrho(z) \cap X =\{z\} \quad \text{ for }z \in X.$$
 Set
$$A_\varrho:=\Om \setminus \bigcup_{z \in X} \overline B_\varrho(z).$$
Note that $\{V_n\}_{n \in \N}$ is a sequence of $1$-rectifiable integral varifolds in $A_\varrho$ with bounded first variation in $L^2_{\mu_{V_n}}(A_\varrho;\R^2)$ satisfying, by \eqref{eq:bound-phi},
$$\mu_{V_n}(A_\varrho) + \int_{A_\varrho} |H_{V_n}|^2\, d\mu_{V_n} \leq \HH^1(J_{u_n}) + \int_{J_{u_n}} |H_{J_{u_n}}|^2\, d\HH^1 \leq \mathcal G^{(\gamma)}(u_n).$$
 We obtain from Allard's compactness Theorem (see \cite[Theorem 42.7 \& Corollary 42.8]{Simon_GMT}) that $V$ is a $1$-rectifiable integral varifold in $A_\varrho$ with bounded first variation in $L^2_{\mu_V}(A_\varrho;\R^2)$. Since $\varrho$ is arbitrary, we deduce that $V$ is a $1$-rectifiable integral varifold in $\Om \setminus X$ with bounded first variation in $L^2_{\mu_V}(\Om \setminus X;\R^2)$. Thus, by varifold convergence, we have for all $\zeta \in \mathcal C_c(A_\varrho)$ with $0 \leq \zeta \leq 1$,
\begin{eqnarray*}
\liminf_{n \to \infty} \int_\Om \varphi(\nu_{u_n})\, d\HH^1& \geq  & \liminf_{n \to \infty} \int_{A_\varrho}\tilde \varphi(S)\zeta(x)\, dV_n(x,S)\\ 
& = &  \int_{A_\varrho}\tilde \varphi(S)\zeta(x)\, dV(x,S)\\
& = &  \int_{A_\varrho} \varphi(\nu_V)\zeta(x)\, d\mu_V(x).
\end{eqnarray*}
Passing to the supremum with respect to all $\zeta \in \mathcal C_c(A_\varrho;[0,1])$ yields
\begin{equation}\label{eq:surf1}
\liminf_{n \to \infty} \int_\Om \varphi(\nu_{u_n})\, d\HH^1 \geq  \int_{A_\varrho} \varphi(\nu_V)\, d\mu_V.
\end{equation}
Concerning the curvature term, since $\mu_{V_n} \wto \mu_V$ weakly* in $\mathcal M(A_\varrho)$ and $\delta V_n= -H_{J_{u_n}} \mu_n \wto \delta V=-H_V \mu_V$ weakly* in $\mathcal M(A_\varrho;\R^2)$, using a standard lower semicontinuity result (see e.g. \cite[Example 2.36]{AFP}), we get that
\begin{equation}\label{eq:curv1}
\int_{A_\varrho} |H_{\mu_V}|^2 \, d\mu_V\le \liminf_{n\to \infty} \int_{A_\varrho} |H_{V_n}|^2 \, d\mu_{V_n}.
\end{equation}
Recalling that $\HH^0(\mathcal P_{J_{u_n}})=m\geq \HH^0(X)$, we deduce from \eqref{eq:bulk1}, \eqref{eq:surf1} and \eqref{eq:curv1} that
$$\liminf_{n\to \infty} \mathcal G^{(\gamma)}(u_n) \geq \int_\Om |\nabla u|^2\, dx + \int_{A_\varrho} \left(\varphi(\nu_V) + |H_V|^2\right) d\mu_V + \gamma \HH^0(X).$$
Letting $\varrho \to 0$,
$$\liminf_{n\to \infty} \mathcal G^{(\gamma)}(u_n) \geq \int_\Om |\nabla u|^2\, dx + \int_{\Om \setminus X} \left(\varphi(\nu_V) + |H_V|^2\right) d\mu_V + \gamma \HH^0(X).$$
The conclusion then follows from \eqref{muVJ} and  Lemma \ref{lem:pointX}.
\end{proof}
 
\subsection{Phase field approximation}\label{sec:PFAMS}

Following \cite{BraiMar}, we consider infinitesimal parameters $\eta_\eps\to 0$ and $\beta_\eps \to 0$ such that 
\begin{equation}\label{eq:scaling}
 \frac{\eps|\log \eps|}{\beta_\eps}\to 0, \qquad \frac{\beta_\eps}{\eta_\eps}\to 0.
\end{equation}

Let $\eps>0$ and $\beta>0$. For $w \in H^2(\Om)$ and Borel sets $A \subset \Om$, we define (recall the definition \eqref{defco} of $c_0$)
\[
 G_{\eps,\beta}(w,A)=\frac{1}{c_0 \beta}\int_{A}\lt(\frac{\eps}{2}|\nabla w|^2 + \frac{1}{\eps} W(w)\rt)dx + \frac{\beta}{c_0 \eps}\int_{A} \left(-\eps \Delta w+\frac{1}{\eps} W'(w)\right)^2dx.
\]
For all Borel set $A \subset \Om$ and all $(u,v,w) \in H^1(\Om) \times [H^2(\Om)]^2$, we define (recall the definition \eqref{defphieps} of $\phi_\eps$)
\begin{multline*}
 \G^{(\gamma)}_\eps(u,v,w,A)=\frac14\int_A (1+v)^2 |\nabla u|^2 \, dx \\
 +\frac{1}{2c_0}\int_A  \phi_\eps\lt(\nabla v\rt)\lt(\frac{\eps}{2}|\nabla v|^2 + \frac{1}{\eps} W(v)\rt)dx+\frac{1}{8c_0}\int_A \frac{1}{\eps} \left( -\eps \Delta v+\frac{1}{\eps} W'(v)\right)^2 (1+w)^2\,dx \\
 + \frac{\gamma}{4\pi}G_{\eps,\beta_\eps}(w,A)+\frac{1}{\eta_\eps}\int_A (1-v)^2\, dx + \frac{1}{\eta_\eps}\int_A (1-w)^2\, dx.
\end{multline*}
and
$$ \G^{(\gamma)}_\eps(u,v,w)= \G^{(\gamma)}_\eps(u,v,w,\Om).$$

The rest of this subsection is devoted to show the $\Gamma$-convergence type result, Theorem \ref{BBG2}.

\subsubsection{Proof of Theorem \ref{BBG2}: the lower bound}

Let $\{(u_\eps,v_\eps,w_\eps)\}_{\eps>0} \subset H^1(\Om) \times [H^2(\Om)]^2$ be a sequence such that  $(u_\eps,v_\eps,w_\eps) \to (u,1,1)$ in $[L^1(\Om)]^3$ where $u \in \mathcal A(\Om)$. We can further assume that
$$\liminf_{\eps\to 0}\mathcal G^{(\gamma)}_\eps(u_\eps,v_\eps,w_\eps)<\infty$$
otherwise, there is nothing to prove. Up to extraction of a (not relabeled) subsequence, we can suppose that the liminf above is a limit and that
\begin{equation}\label{eq:nrj-bd}
\sup_{\eps>0} \mathcal G^{(\gamma)}_\eps(u_\eps,v_\eps,w_\eps) \leq \Lambda,
\end{equation}
for some constant $\Lambda>0$. As in Theorem \ref{theo_RogSchat}, we define the measures
$$\mu_\eps=\frac{1}{c_0}\left(\frac{\eps}{2}|\nabla v_\eps|^2 +\frac{1}{\eps} W(v_\eps)\right) \LL^2, $$
and for $\nu_\eps$ a Borel extension of $\nabla v_\eps/|\nabla v_\eps|$ on $\{\nabla v_\eps=0\}$, the varifolds $V_\eps$ as
$$ \int_{\mathbf G_{1}(\Om)} \Phi(x,S) \, dV_\eps(x,S)= \int_{\Omega} \Phi\lt(x,{\rm Id}- \nu_\eps\otimes  \nu_\eps\rt) d\mu_\eps \qquad \textrm{for } \Phi \in \mathcal C_c(\mathbf G_{1}(\Om)).$$
 We have that $\mu_\eps=\mu_{V_\eps}$ and, by the energy bound \eqref{eq:nrj-bd} together with \eqref{eq:bound-phi},
$$  \mu_\eps(\Om)\leq 2C \Lambda.$$
For later use, we also define 
\begin{equation}\label{eq:def_feps}
f_\eps:=-\eps \Delta v_\eps +\frac1\eps W'(v_\eps) \qquad \textrm{and } \qquad  \alpha_\eps:=\frac{1}{c_0 \eps} f_\eps^2 \LL^2.
\end{equation}
By weak* compactness of Radon measures, up to a further (not relabeled) subsequence, we deduce that $\mathcal G_\eps(u_\eps,v_\eps,w_\eps,\cdot) \wto m$ and  $\mu_\eps \wto \mu$ weakly* in $\mathcal M(\Om)$  some nonnegative Radon measures $m$, $\mu \in \mathcal M^+(\Om)$, and $V_\eps \wto V$ weakly* in $\mathcal M(\mathbf G_1(\Om))$ for some $V \in \mathbf V_1(\Om)$. Moreover, $\mu_V=\mu$ and by lower semicontinuity, we have
\begin{equation}\label{eq:mOmega}
\liminf_{\eps\to 0}\mathcal G^{(\gamma)}_\eps(u_\eps,v_\eps,w_\eps)\geq m(\Om).
\end{equation}
It is thus enough to show that
$$ m(\Om) \geq  \int_{\Om} |\nabla u|^2\, dx + \int_{J_u} (\phi(\nu_u)+ |H_{J_u}|^2)\, d\HH^1+\gamma\HH^0(\mathcal P_{J_u}).$$

\medskip

We first prove the following alternative which states that, either $w_\eps$ is uniformly close to $1$ in a ball $B$, or the energy  $G_{\eps,\beta_\eps}(w_\eps,B)$ is bounded away from zero by a specific positive constant related to the Gauss-Bonnet formula (see Theorem \ref{thm:Gauss-Bonnet} in the Appendix). It will be instrumental in the forthcoming analysis since it will lead to the point energy at all junction and end points of the jump set. Notice that a crucial point is to show that after blow-up the limit varifold is not trivial. This is obtained by combining some of the estimates from \cite{RogSchat} in order to argue as in \cite{pozzetta} but for the diffuse interface model.

\begin{proposition}\label{prop:dichotomie}
Let $x_0 \in \Om$ and $r>0$ be such that $B_r(x_0) \subset \Om$ and  let $\{w_\eps\}_{\eps>0} \subset H^2(\Om)$ be a sequence satisfying $w_\eps\to 1$ in $L^1(\Om)$. Then either
\begin{equation}\label{quantmass}
\liminf_{\eps\to 0} G_{\eps,\beta_\eps}(w_\eps,B_r(x_0))\ge 4\pi
\end{equation}
or 
\begin{equation*}
 \|w_\eps-1\|_{L^\infty(B_{r/2}(x_0))}\to 0.
\end{equation*}
\end{proposition}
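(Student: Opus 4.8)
The plan is to argue by contradiction: suppose that $\|w_\eps - 1\|_{L^\infty(B_{r/2}(x_0))}$ does \emph{not} tend to $0$, and produce from this a nontrivial limit varifold whose mean curvature integral is at least $4\pi$, forcing \eqref{quantmass}. First I would extract a subsequence along which $\|w_\eps-1\|_{L^\infty(B_{r/2}(x_0))}\ge \delta_0>0$, so that there exist points $x_\eps \in \overline{B_{r/2}(x_0)}$ with $|w_\eps(x_\eps)-1|\ge \delta_0$; after a further extraction $x_\eps\to \bar x \in \overline{B_{r/2}(x_0)}$. If $\liminf_\eps G_{\eps,\beta_\eps}(w_\eps,B_r(x_0))=+\infty$ there is nothing to prove, so assume this liminf is finite; passing to a subsequence it is a finite limit and in particular $\sup_\eps\{\mu_\eps^{w}(B_r(x_0)) + \alpha_\eps^{w}(B_r(x_0))\}<\infty$, where $\mu_\eps^w$ and $\alpha_\eps^w$ denote the diffuse perimeter and Willmore measures of $w_\eps$ rescaled by $\beta_\eps$ as in Theorem \ref{theo_RogSchat} (with $\eps$ replaced by the pair $(\eps,\beta_\eps)$ via the scaling $w_\eps(x) \mapsto w_\eps(\beta_\eps\, \cdot\,)$, using $\eps|\log\eps|/\beta_\eps\to 0$ so that the effective small parameter $\eps/\beta_\eps$ still goes to $0$). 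One then applies Theorem \ref{theo_RogSchat} on the ball: up to a subsequence the associated varifolds converge to a $1$-rectifiable integral varifold $V$ in $\R^2$ with first variation in $L^2_{\mu_V}$, $\mu_\eps^w \wto \mu_V$, and $|H_V|^2\mu_V \le \alpha$.

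The heart of the argument — and the step I expect to be the main obstacle — is \textsf{Step 2}: showing that $V$ is \emph{nontrivial}. The difficulty is that a priori the diffuse transition layers of $w_\eps$ near $\bar x$ could have vanishing mass in the limit (the discrepancy measures $\xi_\eps^w$ go to $0$ only in $L^1_{\rm loc}$, which does not by itself prevent mass from escaping to infinity after blow-up, nor does it prevent the profile from degenerating). Following the argument of \cite{pozzetta} transplanted to the $\eps$-level, I would exploit the fact that $|w_\eps(x_\eps)-1|\ge\delta_0$ together with the Modica--Mortola structure to get a quantitative lower bound on $\mu_\eps^w$ of a small ball around $x_\eps$: roughly, a ball on which $w_\eps$ moves away from $1$ by a definite amount must carry a definite amount of the $\frac{\eps}{2}|\nabla w_\eps|^2 + \frac1\eps W(w_\eps)$ energy — this is the diffuse analogue of the fact that a varifold passing through a point has density $\ge 1$ there. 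Combined with the Sobolev--Michael--Simon-type bound (Theorem \ref{thm:SMS}) and the uniform $\alpha_\eps^w$ bound, one rules out total mass loss and concludes $\mu_V \ne 0$; moreover one can arrange that $\bar x \in \mathrm{Supp}(V)$, or at least that $V$ has positive mass in $\R^2$.

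Once $V\ne 0$ is established, I would close the argument as follows. By lower semicontinuity of the diffuse Willmore energy (the second displayed inequality of Theorem \ref{theo_RogSchat}, namely $|H_V|^2\mu_V\le\alpha$) together with the weak* convergence $\alpha_\eps^w\wto\alpha$ and Cauchy--Schwarz, one obtains
\[
 \liminf_{\eps\to 0} G_{\eps,\beta_\eps}(w_\eps,B_r(x_0)) \ \ge\ 2\int_{\R^2}|H_V|\, d\mu_V,
\]
which is exactly the diffuse version of the first inequality in \eqref{F4pi}: indeed $\frac{1}{\beta}\mu_\eps^w(\cdot) + \frac{\beta}{c_0\eps}\int f^2 \ge 2\left(\frac{1}{c_0}\mu_\eps^w \cdot \frac{1}{c_0\eps}\int f^2\right)^{1/2}$-type manipulation passes to the limit via $2\sqrt{\mu_V(\R^2)\cdot\alpha(\R^2)}\ge 2\int|H_V|d\mu_V$ after an appropriate localization. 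Then I would invoke the generalized Gauss--Bonnet inequality for nontrivial $1$-rectifiable integral varifolds in $\R^2$ with $L^2$ first variation (Theorem \ref{thm:Gauss-Bonnet} in the Appendix), which gives $\int_{\R^2}|H_V|\,d\mu_V\ge 2\pi$; hence $\liminf_\eps G_{\eps,\beta_\eps}(w_\eps,B_r(x_0))\ge 4\pi$, contradicting the negation of \eqref{quantmass} only if we were in the first branch — more precisely, this shows that whenever the sup-norm convergence fails, \eqref{quantmass} must hold, which is exactly the dichotomy. A minor technical point to handle carefully is that $\bar x$ may lie on $\partial B_{r/2}(x_0)$, so that after blow-up only a portion of $V$ is captured inside $B_r(x_0)$; this is harmless since one may instead center the blow-up at $x_\eps$ and work in the fixed ball $B_{r/4}(x_\eps)\subset B_r(x_0)$, and since $2\pi$ is already the bound for \emph{any} nontrivial such varifold (one does not need the full $V$, only that it is nonzero and complete enough for Gauss--Bonnet to apply, which is why the blow-up to all of $\R^2$ is taken).
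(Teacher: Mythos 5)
Your overall architecture matches the paper's: argue by contradiction assuming the sup-norm does not tend to $0$, pick points $x_\eps$ with $|w_\eps(x_\eps)-1|\ge\tau$, rescale by $\beta_\eps$ so the effective Modica--Mortola parameter becomes $\delta_\eps=\eps/\beta_\eps\to 0$, apply Theorem~\ref{theo_RogSchat} to extract a $1$-rectifiable integral limit varifold $\hat V$ on $\R^2$ with $|H_{\hat V}|^2\mu_{\hat V}\le\hat\alpha$, and then conclude via Theorem~\ref{thm:Gauss-Bonnet} that $\liminf_\eps G_{\eps,\beta_\eps}(w_\eps,B)\ge\int(1+|H_{\hat V}|^2)\,d\mu_{\hat V}\ge 2\int|H_{\hat V}|\,d\mu_{\hat V}\ge 4\pi$. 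Your Step~1 (a quantitative lower bound $\hat\mu_\eps(B_{\delta_\eps})\ge\theta_0\delta_\eps$ obtained from $|\hat w_\eps(0)-1|\ge\tau$, a second blow-up to scale $\delta_\eps$, elliptic regularity and Sobolev embedding) and your Step~3 are essentially identical to the paper's.

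However, the nontriviality step has a genuine gap. You correctly flag it as ``the main obstacle,'' but your proposed resolution --- invoking the Sobolev--Michael--Simon inequality (Theorem~\ref{thm:SMS}) together with the uniform bound on $\hat\alpha_\eps$ to ``rule out total mass loss'' --- does not close it. Theorem~\ref{thm:SMS} is stated for limit varifolds, and the whole difficulty is that the candidate limit $\hat V$ could be the zero varifold: the lower density bound from Step~1 lives at the vanishing scale $\delta_\eps$, so without further work $\hat\mu_\eps(B_R)$ for any \emph{fixed} $R$ could go to $0$. To transfer the small-scale density bound to a fixed scale, the paper does not use SMS at all; instead it runs the diffuse monotonicity inequality of \cite[Lemma 4.2]{RogSchat} (not the varifold monotonicity of Lemma~\ref{lem:monoton}), controls the discrepancy term via \cite[Propositions 3.6 and 4.6]{RogSchat}, and argues as in \cite[Theorem 2.2]{pozzetta} \emph{at the $\eps$-level}, introducing the detection radius $\varrho_\eps=\sup\{\varrho\in(\delta_\eps,1):\varrho^{-1}\hat\mu_\eps(B_\varrho)\ge\theta_0\}$ and deriving a contradiction from the assumption $\hat\mu_\eps(B_1)\to 0$. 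This is a substantial quantitative argument, and replacing it by a one-line appeal to SMS would not work: you would need a diffuse version of the monotonicity estimate, and that is precisely the content you are skipping. In short, the skeleton is right but the load-bearing step is missing its proof.
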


\begin{proof}
We assume without loss of generality that $x_0=0$, $r=1$ and we set $B=B_1$. If $\liminf_\eps G_{\eps,\beta_\eps}(w_\eps,B)=\infty$, there is nothing to prove. If $\liminf_\eps G_{\eps,\beta_\eps}(w_\eps,B)<\infty$, we can assume, up to a subsequence (not relabeled), that the liminf is a limit and that the sequence $\{G_{\eps,\beta_\eps}(w_\eps,B)\}_{\eps>0}$ is bounded. We use the notation
$$\bar f_\eps=-\eps \Delta w_\eps+\frac{1}{\eps}W'(w_\eps), \quad  \bar \mu_\eps=\frac{1}{c_0}\lt(\frac{\eps}{2}|\nabla w_\eps|^2 + \frac{1}{\eps} W(w_\eps)\rt)\LL^2\restr B \quad \textrm{and } \quad \bar \alpha_\eps=\frac{1}{c_0 \eps} \bar f_\eps^2\LL^2\restr B$$
so that for some $\bar \Lambda>0$,
$$G_{\eps,\beta_\eps}(w_\eps,B)=\frac{1}{ \beta_\eps}\bar \mu_\eps(B)+ \beta_\eps \bar\alpha_\eps(B)\le \bar \Lambda.$$
In particular we have
\begin{equation}\label{estimfbar}
 \int_B \bar f_\eps^2 dx=c_0\eps \bar\alpha_\eps(B)\le c_0 \frac{\eps}{\beta_\eps} G_{\eps,\beta_\eps}(w_\eps,B) \qquad \textrm{and } \qquad \bar{\mu}_\eps(B)\le \beta_\eps G_{\eps,\beta_\eps}(w_\eps,B).
\end{equation}

Let us suppose that
$$\limsup_{\eps \to 0} \|w_\eps -1\|_{L^\infty(B_{1/2})} >0.$$
Then, there exist $\tau\in (0,1)$, a (not relabeled) subsequence  and points $x_\eps \in B_{1/2}$ such that
$$|w_{\eps}(x_\eps)-1|\ge \tau \quad \text{ for } \eps>0.$$
Let $\delta_{\eps}=\eps/\beta_\eps\to0$ (by \eqref{eq:scaling}) and define, for all $y \in B_{1/2\beta_\eps}$,
$$\hat{w}_{{\eps}}(y)=w_\eps(x_\eps+\beta_\eps y), \quad \hat f_{\eps}(y)=\beta_\eps \bar f_\eps(x_\eps+\beta_\eps y).$$
We also set
\[\hat{\mu}_{\eps}:=\frac{1}{c_0}\lt(\frac{\delta_{\eps}}{2} |\nabla \hat{w}_{{\eps}}|^2 + \frac{1}{\delta_{\eps}}W(\hat{w}_{{\eps}})\rt) \LL^2\restr B_{1/2\beta_\eps},\]
\[ \hat{\alpha}_{{\eps}}:= \frac{1}{\delta_{\eps} c_0} \lt( -\delta_\eps \Delta \hat{w}_{\eps} +\frac{1}{\delta_\eps} W'(\hat{w}_{\eps})\rt)^2 \LL^2\restr B_{1/2\beta_\eps},\]
and
{\[\hat{\xi}_{\hat{\eps}}:=\frac{1}{c_0}\lt(\frac{\delta_{\eps}}{2} |\nabla \hat{w}_{{\eps}}|^2 - \frac{1}{\delta_{\eps}}W(\hat{w}_{{\eps}})\rt).
\]}
Then $\hat f_\eps=-\delta_\eps \Delta \hat{w}_{\eps} +\frac{1}{\delta_\eps} W'(\hat{w}_{\eps})$ and
 \begin{equation}\label{hyplower}
\hat \mu_{ \eps}(\R^2)+\hat \alpha_{ \eps}(\R^2)= G_{\delta_{\eps},1}(\hat{w}_{{\eps}},B_{1/2\beta_\eps})\leq  G_{\eps,\beta_\eps}(w_\eps,B)\leq \bar \Lambda.
 \end{equation}
\medskip
 
{\sf Step 1.} Let us show that there exists a constant $\theta_0 \in (0,1)$ (only depending on $\bar \Lambda$ and $\tau$) such that
\begin{equation}\label{claimmueps}
\hat\mu_\eps(B_{\delta_\eps}) \geq \theta_0 \delta_\eps.
\end{equation}
We may assume that $\hat\mu_\eps(B_{\delta_\eps}) \le \delta_\eps$ otherwise there is nothing to prove.  We perform a second blow-up by setting for all $z \in B_{1/2\eps}$,
\[
\begin{cases}
\displaystyle \tilde{w}_{\eps}(z)= \hat{w}_{\eps}(\delta_{\eps} z)=w_{\eps}(x_\eps+\eps z), \\
\displaystyle \tilde{f}_{\eps}(z)=\delta_{\eps} \hat{f}_{\eps}(\delta_{\eps} z)=\eps \bar f_{\eps}(x_\eps+\eps z).
  \end{cases}
\]
Changing variables and using \eqref{estimfbar}, we get that
\[
 \int_{B_1} \tilde{f}_{\eps}^2\, dz= \int_{B_{\eps}(x_\eps)}\bar{f}_{ \eps}^2\, dx\le c_0\bar\Lambda\delta_\eps \to 0
\]
and, by the  assumption  $\hat\mu_\eps(B_{\delta_\eps}) \le \delta_\eps$,
$$\int_{B_1} \left(\frac12|\nabla \tilde w_{\eps}|^2 + W(\tilde w_{\eps})\right) dz=c_0 \frac{\hat \mu_{\eps}(B_{\delta_{\eps}})}{\delta_{\eps}} \leq c_0.$$
By Sobolev embedding, this entails that the sequence $\{\tilde w_{\eps}\}_{\eps>0}$ is bounded in $L^6(B_1)$. Using that
\[- \Delta \tilde{w}_{\eps}= \tilde{f}_{\eps} - W'(\tilde{w}_{\eps}) \quad \text{ in }B_1,\]
we have by elliptic regularity together with the Sobolev embedding that there is a universal constant $C>0$ such that
\[
\| {\tilde w}_{\eps}\|_{\mathcal C^{0,1/2}(B_{1/2})} \leq C\| \tilde{w}_{\eps}\|_{H^2(B_{1/2})}\leq L,
\]
where $L>0$ is a constant depending on $\bar \Lambda$. Since by hypothesis $| \tilde{w}_{\eps}(0)-1|\ge \tau$ we find that for $c=\min\{\frac{1}{4L^2},\frac12\}$
\[
 |\tilde{w}_{\eps}-1|\ge \tau/2 \qquad \textrm{ in } B_{c \tau^2 }.  
\]
This implies
\[
 \delta_{\eps}^{-1}\hat\mu_{\eps}(B_{\delta_{\eps}})\ge\frac{1}{c_0\delta_{\eps}^2} \int_{B_{c \tau^2 \delta_{\eps}}} W(\hat{w}_{\eps})\, dz =  \frac{1}{c_0} \int_{B_{c \tau^2}} W(\tilde {w}_\eps)\, dy \geq \frac{\pi c^2\tau^{6}}{4c_0},
\]
concluding the proof of \eqref{claimmueps} for the choice $\theta_0 \leq \min\{\frac{\pi c^2\tau^{6}}{4 c_0},1\}$.

\medskip

 {\sf Step 2.} We next show that
\begin{equation}\label{claimmulower}
\limsup_{\eps \to 0}\hat \mu_\eps(B_1)>0.
\end{equation}
We argue  by contradiction by assuming that 
\begin{equation}\label{eq:hypmueps}
\hat\mu_\eps(B_1)\to 0.
\end{equation}
Using the monotonicity formula \cite[Lemma 4.2]{RogSchat} we can argue as in \cite[Theorem 2.2]{pozzetta} but at the $\eps$-level. We have for $0<\varrho<R<1/(2\beta_\eps)$,
\begin{eqnarray*}
 R^{-1}\hat \mu_\eps(B_R)-\varrho^{-1}\hat \mu_\eps(B_\varrho)& =& -\int_{\varrho}^R s^{-2}{\int_{B_s}\hat\xi_\eps\, dy} \, ds +\frac{1}{c_0}\int_{\varrho}^R \frac{1}{s^3} \int_{\partial B_s}
 \delta_\eps (y\cdot \nabla \hat w_\eps(y))^2\, d\HH^1(y) \, ds\\
 && +\frac{1}{c_0}\int_{\varrho}^{R} \frac{1}{s^2} \int_{B_s} \hat f_\eps(y)(y\cdot \nabla \hat w_\eps(y))\, dy\, ds\\
& \ge & -\int_{\varrho}^R s^{-2}{\int_{B_s}\hat\xi_\eps\, dy} \, ds -\frac{1}{c_0}\int_{\varrho}^{R} \frac{1}{s^2} \int_{B_s} |\hat f_\eps(y)| |y| |\nabla \hat w_\eps(y)|\, dy \, ds.
\end{eqnarray*}
Arguing as in \cite[Proposition 4.3, pp. 695]{RogSchat}, we infer that
\begin{eqnarray*}
 \int_0^{R} \frac{1}{s^2} \int_{B_s} |\hat f_\eps(y)| |y \cdot \nabla \hat w_\eps(y)|\, dy\, ds & \le & \int_{B_R} \frac{|\hat f_\eps(y)| |y \cdot \nabla \hat w_\eps(y)|}{|y|}\, dy\\
& \leq & \int_{B_R} |\hat f_\eps| | \nabla \hat w_\eps|\, dy\\
  & \le & \lt(\frac{1}{\delta_\eps}\int_{B_R} |\hat f_\eps|^2 \, dy\rt)^{\frac{1}{2}}\lt(\int_{B_R} \delta_\eps |\nabla \hat w_\eps|^2\, dy\rt)^{\frac{1}{2}}\\
 & \le & \sqrt 2 c_0 \hat \alpha_\eps(B_R)^{\frac{1}{2}}\hat \mu_\eps(B_R)^{\frac{1}{2}}.
 \end{eqnarray*}
This yields 
\[
 R^{-1}\hat \mu_\eps(B_R)-\varrho^{-1}\hat \mu_\eps(B_\varrho)+\sqrt 2  \hat \alpha_\eps(B_R)^{\frac{1}{2}}\hat \mu_\eps(B_R)^{\frac{1}{2}} \ge-\int_{\varrho}^R s^{-2}{\int_{B_s}\hat\xi_\eps\, dy}\, ds.
\]

We may now invoke \cite[Proposition 4.6, $(54)$]{RogSchat} and \cite[Proposition 3.6]{RogSchat} to get the existence of some $M>2$ and $C>0$ such that for all $\gamma \in (0,1/M)$ and all $\delta_\eps\le \varrho \le R\le 1$,
\begin{eqnarray*}
 \int_{\varrho}^R s^{-2}{\int_{B_s}\hat\xi_\eps\, dy}\, ds & \le & \int_{\varrho}^R s^{-2}{\int_{B_s}\hat\xi_{\eps,+}\, dy}\, ds\\
 & \leq & C\left\{\int_\varrho^R s^{-2+\gamma} \hat \mu_\eps(B_{2s}) \, ds+\delta_\eps^2 \hat \alpha_\eps(B_3)\int_\varrho^R s^{-M\gamma-2}\, ds +\delta_\eps\int_\varrho^R s^{\gamma-2}\, ds\right\}\\
&  \leq &C\left\{\int_\varrho^R s^{-2+\gamma} \hat \mu_\eps(B_{2s})\, ds+ \delta_\eps^2 \hat \alpha_\eps(B_3) \varrho^{-M\gamma-1}+\delta_\eps \varrho^{\gamma-1}\right\}\\
& \leq & C\left\{\int_\varrho^R s^{-2+\gamma} \hat \mu_\eps(B_{2s})\, ds +\delta_\eps^{1-M\gamma} \hat \alpha_\eps(B_3)+\delta_\eps^\gamma\right\}.
\end{eqnarray*}
In conclusion, we have for every $0<\delta_\eps\le \varrho \le R\le 1$
\begin{multline}\label{conseqmonotoneupgrade}
 R^{-1}\hat \mu_\eps(B_R)-\varrho^{-1}\hat \mu_\eps(B_\varrho)+ \sqrt{2} \hat \alpha_\eps(B_R)^{\frac{1}{2}}\hat \mu_\eps(B_R)^{\frac{1}{2}}\\
 \geq -C\lt( \int_\varrho^R s^{-2+\gamma} \hat\mu_\eps(B_{2s})\, ds +\delta_\eps^{1-M\gamma} \hat \alpha_\eps(B_3)+\delta_\eps^\gamma\rt).
\end{multline}

We then define
\[
\varrho_\eps=\sup\{ \varrho\in (\delta_\eps,1) \ : \varrho^{-1}\hat \mu_\eps(B_\varrho)\ge \theta_0\}.
\]
By \eqref{claimmueps} and \eqref{eq:hypmueps}, we have $\varrho_\eps\to 0$. Setting $\varrho=\varrho_\eps$ in  \eqref{conseqmonotoneupgrade} and using \eqref{hyplower}, we find that for every $R \in (\varrho_\eps,1]$,
\begin{multline*}
 R^{-1}\hat \mu_\eps(B_R)
 \ge \theta_0-\sqrt{2}\bar\Lambda^{\frac{1}{2}}\hat \mu_\eps(B_R)^{\frac{1}{2}} -C\lt(2 \theta_0 \int_{\varrho_\eps}^R s^{-1+\gamma}\, ds +\bar\Lambda\delta_\eps^{1-M\gamma} +\delta_\eps^\gamma\rt)\\
 \ge \theta_0-\sqrt{2}\bar\Lambda^{\frac{1}{2}}\hat \mu_\eps(B_1)^{\frac{1}{2}} -C\lt( R^\gamma +\bar\Lambda\delta_\eps^{1-M\gamma} +\delta_\eps^\gamma\rt).
\end{multline*}
Recalling \eqref{eq:hypmueps}, we can pass to the limit as $\eps \to 0$ to get that $\theta_0 \leq CR^\gamma$ which leads to a contradiction provided we choose $R=R(\theta_0)>0$ small enough. We have thus reached the desired contradiction which proves that \eqref{claimmulower} holds.

\medskip

{\sf Step 3.} We may now apply Theorem \ref{theo_RogSchat}. We thus find that $\hat{\mu}_{\eps}\rightharpoonup \mu_{\hat V}$  and $ \hat{\alpha}_{{\eps}} \rightharpoonup \hat \alpha$ weakly* in $\mathcal M_{\rm loc}(\R^2)$ where $\hat V \in \mathbf V_1(\R^2)$ is a $1$-rectifiable integral varifold in $\R^2$ with bounded first variation in $L^2_{\mu_{\hat V}}(\R^2;\R^2)$. Moreover, by Step 2 and in particular \eqref{claimmulower}, we have
$$\int_{\R^2} (1+ |H_{\hat V}|^2)\, d\mu_{\hat V} \geq \mu_{\hat V}(\R^2)>0,$$
while, using $|H_{\hat V}|^2 \mu_{\hat V} \le \hat \alpha$ together with \eqref{hyplower}
\begin{multline*}
\int_{\R^2} (1+ |H_{\hat V}|^2)\, d\mu_{\hat V} \leq   \liminf_{\eps\to 0} \left\{\hat{\mu}_{\eps} (B_{1/2\beta_\eps})+\hat{\alpha}_{\eps} (B_{1/2\beta_\eps)} \right\}\\
 \leq \liminf_{\eps\to 0} G_{\delta_{\eps},1}(\hat{w}_{{\eps}},B_{1/2\beta_\eps}) \leq \liminf_{\eps\to 0} G_{\eps,\beta_{\eps}}(w_{{\eps}},B) \le \bar \Lambda.
\end{multline*}
We can thus apply the generalized Gauss-Bonnet formula, Theorem \ref{thm:Gauss-Bonnet}, which yields
\[
\int_{\R^2} (1+ |H_{\hat{V}}|^2) \,d\mu_{\hat V}\ge 2\int_{\R^2} |H_{\hat{V}}| \,d\mu_{\hat V}\ge 4\pi
\]
and proves \eqref{quantmass}.
\end{proof}

Let us denote by $X$ the set of all points $x \in \Om$ such that there exists a sequence $\{r_k\}_{k \in \N} \searrow 0^+$ with 
$$\liminf_{\eps\to 0} G_{\eps,\beta_\eps}(w_\eps,B_{r_k}(x))\ge 4\pi \quad \text{ for }k \in \N.$$
Note that if $x \in X$, then $m(\overline B_{r_k}(x)) \geq \gamma$ for all $k \in \N$, hence letting $k \to \infty$, $m(\{x\})\geq \gamma$. As a consequence, $X$ is a finite (and thus $\HH^1$-negligible) set. 

\medskip

We next  establish that $V$ is  $1$-rectifiable and integral away from $X$.

\begin{lemma}\label{lem:unif_conv}
The varifold $V$ is a  $1$-rectifiable integral varifold in $\Om \setminus X$ with bounded first variation in $L^2_\mu(\Omega\backslash X;\R^2)$ and, for all open set $A \subset \Om$,
\begin{equation}\label{claimmOm}
 m(A \setminus X)\ge \frac{1}{2}\int_{A\backslash X} (\phi(\nu)+|H_V|^2) \, d\mu.
\end{equation}
In particular, $V=\mathbf v(M,\theta)$, where $M={\rm Supp}(\mu)$ is a relatively closed $1$-rectifiable subset in $\Omega \setminus X$, $\theta=\Theta^1(\mu,\cdot)$ and $\mu=\theta\HH^1 \restr M$.
\end{lemma}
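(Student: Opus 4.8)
The plan is to recover the structure of $V$ and the estimate \eqref{claimmOm} by a localization argument: away from $X$ the variable $w_\eps$ is uniformly close to $1$ on small balls (this is where Proposition \ref{prop:dichotomie} enters), and on such balls the $w_\eps$-weighted curvature term of $\mathcal G^{(\gamma)}_\eps$ controls the full curvature measure $\alpha_\eps:=\frac{1}{c_0\eps}f_\eps^2\,\LL^2$, so that both Theorem \ref{theo_RogSchat} and the computation from the proof of Lemma \ref{lemma:lowerphasefield} become available. The main obstacle is precisely that $\alpha_\eps(\Om)$ need not stay bounded — only the term $\int\frac1\eps f_\eps^2(1+w_\eps)^2$ does — so Theorem \ref{theo_RogSchat} cannot be applied globally, and all the needed information has to be produced ball by ball and then patched together.

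First I would fix $x\in\Om\setminus X$ and observe that, since $r\mapsto G_{\eps,\beta_\eps}(w_\eps,B_r(x))$ is nondecreasing, so is $r\mapsto\liminf_{\eps\to 0}G_{\eps,\beta_\eps}(w_\eps,B_r(x))$; as $x\notin X$ this quantity is $<4\pi$ for some $r_0>0$ with $B_{r_0}(x)\subset\Om$, so the first alternative in Proposition \ref{prop:dichotomie} fails and the second holds, i.e. $\|w_\eps-1\|_{L^\infty(B_{r_0/2}(x))}\to 0$. Choosing $\rho_x\in(0,r_0/2]$ small enough that $\overline{B_{\rho_x}(x)}\subset\Om\setminus X$ (possible since $X$ is finite) and setting $B_x:=B_{\rho_x}(x)$, one has $w_\eps\ge 1/2$ on $B_x$ for $\eps$ small, hence $\int_{B_x}\frac1\eps f_\eps^2\,dx\le\frac49\int_{B_x}\frac1\eps f_\eps^2(1+w_\eps)^2\,dx\le C\Lambda$, i.e. $\sup_\eps\alpha_\eps(B_x)<\infty$; of course also $\sup_\eps\mu_\eps(B_x)<\infty$. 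I would then cover $\Om\setminus X$ by countably many such balls $\{B_{x_j}\}_{j}$ and, by a diagonal extraction, assume along the subsequence already fixed in the main proof that $\alpha_\eps\restr B_{x_j}\wto\alpha^{(j)}$ weakly-$*$ for every $j$ and that the conclusions of Theorem \ref{theo_RogSchat} hold on each $B_{x_j}$ (recall $v_\eps\in H^2(\Om)$). Since $V_\eps\restr\mathbf G_1(B_{x_j})\wto V\restr\mathbf G_1(B_{x_j})$, uniqueness of weak-$*$ limits identifies the RogSchat limit on $B_{x_j}$ with $V\restr\mathbf G_1(B_{x_j})$; thus $V\restr\mathbf G_1(B_{x_j})$ is a $1$-rectifiable integral varifold with bounded first variation in $L^2_{\mu}(B_{x_j};\R^2)$, $\xi_\eps\to 0$ in $L^1_{\rm loc}(B_{x_j})$, and $|H_V|^2\mu\le\alpha^{(j)}$ on $B_{x_j}$. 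Because rectifiability, integrality and the $L^2$-character (and the vanishing of the singular part) of the first variation are local, $V$ is a $1$-rectifiable integral varifold in $\Om\setminus X$ with locally bounded first variation in $L^2_\mu$; the global bound $\int_{\Om\setminus X}|H_V|^2\,d\mu<\infty$ will come out of \eqref{claimmOm} itself.

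To prove \eqref{claimmOm}, I would write $\mathcal G^{(\gamma)}_\eps(u_\eps,v_\eps,w_\eps,\cdot)$ as a sum of four nonnegative measures; passing each to a weak-$*$ limit $\sigma^i$ along the subsequence gives $m=\sigma^1+\sigma^2+\sigma^3+\sigma^4\ge\sigma^2+\sigma^3$, where $\sigma^2$ is the limit of $\frac{1}{2c_0}\phi_\eps(\nabla v_\eps)\big(\frac\eps2|\nabla v_\eps|^2+\frac1\eps W(v_\eps)\big)\LL^2$ and $\sigma^3$ that of $\frac{1}{8c_0\eps}f_\eps^2(1+w_\eps)^2\LL^2$. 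On each $B_{x_j}$: since $(1+w_\eps)^2\to 4$ uniformly there while $\alpha_\eps(B_{x_j})$ stays bounded, the measures $\frac{1}{8c_0\eps}f_\eps^2(1+w_\eps)^2\LL^2\restr B_{x_j}$ and $\frac12\alpha_\eps\restr B_{x_j}$ have the same weak-$*$ limit, so $\sigma^3\restr B_{x_j}=\frac12\alpha^{(j)}\ge\frac12|H_V|^2\mu$; and rerunning the argument of the proof of Lemma \ref{lemma:lowerphasefield} on $B_{x_j}$ (using $\xi_\eps\to 0$ there, \eqref{eq:LussardiPhirVsPhi}, $\int_{B_{x_j}}\sqrt{W(v_\eps)}\to 0$, \eqref{tildephiS} and $V_\eps\wto V$) yields $\int\zeta\,d\sigma^2\ge\frac12\int\zeta\,\phi(\nu_V)\,d\mu$ for all $\zeta\in\mathcal C_c(B_{x_j})$ with $0\le\zeta\le 1$, hence $\sigma^2\restr B_{x_j}\ge\frac12\phi(\nu_V)\mu$. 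Disjointifying the cover and summing the local inequalities gives $\sigma^2+\sigma^3\ge\frac12(\phi(\nu_V)+|H_V|^2)\mu$ on $\Om\setminus X$, which is \eqref{claimmOm} for every open (hence every Borel) $A$. Taking $A=\Om$ and using $m(\Om)\le\liminf_\eps\mathcal G^{(\gamma)}_\eps(u_\eps,v_\eps,w_\eps)\le\Lambda$ then gives $\int_{\Om\setminus X}|H_V|^2\,d\mu\le 2\Lambda<\infty$, completing the proof that $V$ has bounded first variation in $L^2_\mu(\Om\setminus X;\R^2)$.

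Finally, for the last assertion: by the monotonicity formula (Lemma \ref{lem:monoton}, applicable since $V$ restricted to $\Om\setminus X$ is $1$-rectifiable integral with first variation in $L^p_\mu$, $p=2>1$), $\theta:=\Theta^1(\mu,\cdot)$ exists at every point of $\Om\setminus X$; being the density of the rectifiable measure $\mu=\mu_V$, it coincides $\HH^1$-a.e. with the multiplicity $\theta'$ in any rectifiable representation $V=\mathbf v(M',\theta')$ and is positive-integer-valued and $\HH^1$-integrable. Setting $M:=\supp\mu$ (relatively closed in $\Om\setminus X$) and noting that $\HH^1$-a.e. point of $M'$ lies in $M$ (since $\theta'\ge 1$ there), one gets $\HH^1(M\triangle M')=\HH^1(M\setminus M')=0$, so $M$ is countably $1$-rectifiable and $\mu=\theta'\HH^1\restr M'=\theta\HH^1\restr M$; that is, $V=\mathbf v(M,\theta)$, as claimed.
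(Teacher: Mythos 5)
Your argument is correct and follows essentially the same plan as the paper's proof: localize on balls $B$ with $\overline B\subset A\setminus X$, use Proposition~\ref{prop:dichotomie} to get $w_\eps\to 1$ uniformly there so that the $(1+w_\eps)^2$-weighted curvature term controls $\alpha_\eps(B)$, then invoke (or, in your case, re-derive in place) Lemma~\ref{lemma:lowerphasefield}, and patch the local inequalities together by a covering argument. The paper is more compact — it applies Lemma~\ref{lemma:lowerphasefield} directly to the localized bound $\tfrac12\int_B\phi_\eps(\nabla v_\eps)\,d\mu_\eps+\tfrac{(2-\delta)^2}{8}\alpha_\eps(B)\le\Lambda$ and then lets $\delta\to 0$, using a Besicovitch cover rather than an explicit disjointification, and cites \cite[Remark 17.9]{Simon_GMT} for the last structural claim instead of rederiving it — but these are bookkeeping choices, not a different method.
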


\begin{proof}

Let $B$ be an open ball such that $\overline B \subset A \setminus X$.  By definition of $X$ and Proposition \ref{prop:dichotomie}, we have $w_\eps \to 1$ uniformly in $B$ and thus, given $\delta>0$
$$\inf_{B} w_\eps \geq 1-\delta$$
for $\eps>0$ small enough.  We recall that $\alpha_\eps$ is the `curvature' measure defined in \eqref{eq:def_feps}. According to the energy bound \eqref{eq:nrj-bd},  we get
\begin{equation*}
\frac{1}{2}\int_{B} \phi_\eps(\nabla v_\eps)\, d\mu_\eps+\frac{(2-\delta)^2}{8} \alpha_\eps(B) \le 
 \mathcal G^{(\gamma)}_\eps(u_\eps,v_\eps,w_\eps,B) \leq \Lambda.
 \end{equation*}
We are thus in position to apply Lemma \ref{lemma:lowerphasefield} which implies that  $V$ is a $1$-rectifiable integral varifold in $B$ with bounded first variation in $L^2_\mu(B;\R^2)$. Writing $\mu=\theta \HH^1 \restr M$ for some $1$-rectifiable set $M \subset B$ and some integer-valued $\HH^1$-integrable function $\theta$, by \eqref{lowerphasefield},
\[
m(\overline B)  \geq  \liminf_{\eps \to 0} \mathcal G^{(\gamma)}_\eps(u_\eps,v_\eps,w_\eps,B)\\
  \geq  \frac{1}{2}  \int_{B}  \phi(\nu_V)\, d\mu +\frac{(2-\delta)^2}{8}\int_B |H_V|^2\, d\mu.
\]
Letting $\delta \to 0$ leads to
$$m(\overline B) \geq \frac{1}{2}\int_{B} \left(\phi(\nu_V)+|H_V|^2\right) d\mu.$$

According to the Besicovitch covering Theorem, $\mu$-almost all of  the open set $A \setminus X$ can be covered by countably many pairwise disjoint such closed balls $\{\overline B_j\}_{j \in \N}$. We then have
\begin{multline*}
m(A \setminus X) \geq m\left(\bigcup_{j \in \N} \overline B_j\right) = \sum_{j \in \N} m(\overline B_j) \\
\geq \sum_{j \in \N}\frac{1}{2}\int_{B_j} \left(\phi(\nu_V)+|H_V|^2\right) d\mu=\frac12\int_{A \setminus X}\left(\phi(\nu_V)+|H_V|^2\right) d\mu.
\end{multline*}
Taking $A=\Om$, we then obtain that $V$ is a $1$-rectifiable integral varifold in $\Om \setminus X$ with bounded first variation in $L^2_\mu(\Om \setminus X;\R^2)$. We infer from \cite[Remark 17.9]{Simon_GMT} that $V=\mathbf v(M,\theta)$, where $M={\rm Supp}(\mu)$ is a relatively closed $1$-rectifiable subset in $\Omega \setminus X$ and $\theta=\Theta^1(\mu,\cdot)$. In particular we may write (globally) that $\mu=\theta\HH^1 \restr M$.
\end{proof}

In order to establish the lower bound  for the point energy, we need to analyze  more carefully the limit varifold $V$ inside balls where $w_\eps$ is uniformly close to $1$. The following result states that the phase-field variable $v_\eps$ converges locally uniformly to $1$ away from the support of $V$ (see \cite[Proof of Theorem 1]{HutTone} in the setting of the Allen-Cahn equation).

\begin{lemma}\label{lem:unif}
We have $v_\eps \to 1$ locally uniformly in $\Om \setminus (X \cup {\rm Supp}(\mu))$.
\end{lemma}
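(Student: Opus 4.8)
The plan is to prove local uniform convergence $v_\eps\to 1$ on compact subsets of the open set $\Om\setminus(X\cup\Spt\mu)$ by exploiting that, away from $X$, the measure $\mu_\eps=\mu_{V_\eps}$ carries all the interfacial energy and its limit $\mu_V=\mu$ is supported on $M=\Spt\mu$. First I would fix a compact set $K\subset\Om\setminus(X\cup\Spt\mu)$ and a slightly larger open ball $B=B_{2r}(x_0)$ with $\overline B\subset\Om\setminus(X\cup\Spt\mu)$. Since $\overline B\cap X=\emptyset$, Proposition \ref{prop:dichotomie} gives $w_\eps\to 1$ uniformly on $B$; and since $\overline B\cap\Spt\mu=\emptyset$, weak-$*$ convergence $\mu_\eps\wto\mu$ yields $\mu_\eps(B)\to\mu(B)=0$. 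In particular $\int_B\bigl(\tfrac\eps2|\nb v_\eps|^2+\tfrac1\eps W(v_\eps)\bigr)dx\to 0$, so both $\int_B \tfrac1\eps W(v_\eps)\,dx\to 0$ and $\int_B\eps|\nb v_\eps|^2\,dx\to 0$.

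The core of the argument is then a standard clearing-out / maximum-principle estimate for the Allen--Cahn type equation, analogous to the one in \cite[Proof of Theorem 1]{HutTone} cited in the statement. The function $v_\eps$ solves (in the weak $H^2$ sense, coming from the Euler--Lagrange structure, or more simply we work directly with the energy density) and satisfies $-\eps\Delta v_\eps+\tfrac1\eps W'(v_\eps)=f_\eps$ with $f_\eps$ controlled in $L^2$ via $\alpha_\eps$; indeed from \eqref{eq:nrj-bd} and \eqref{eq:def_feps}, $\int_\Om f_\eps^2\,dx=c_0\eps\,\alpha_\eps(\Om)\le c_0\eps\,\Lambda\to 0$. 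Rescaling at scale $\eps$ around an arbitrary point $x\in K$ via $\tilde v_\eps(z)=v_\eps(x+\eps z)$ on $B_\rho$ (with $\rho$ fixed), the rescaled function satisfies $-\Delta\tilde v_\eps+W'(\tilde v_\eps)=\tilde f_\eps$ with $\int_{B_\rho}\tilde f_\eps^2\,dz=\int_{B_{\eps\rho}(x)}f_\eps^2\,dx\to 0$ and $\int_{B_\rho}\bigl(\tfrac12|\nb\tilde v_\eps|^2+W(\tilde v_\eps)\bigr)dz=c_0\,\mu_\eps(B_{\eps\rho}(x))/\eps$. The subtle point is that this last quantity is not automatically small pointwise in $x$; to handle it I would instead argue by contradiction: if uniform convergence fails, there are points $x_\eps\in K$ with $|v_\eps(x_\eps)-1|\ge\tau>0$, and, exactly as in {\sf Step 1} of the proof of Proposition \ref{prop:dichotomie}, elliptic regularity plus the Sobolev embedding forces a definite amount of $W(v_\eps)$-mass, namely $\int_{B_{c\tau^2\eps}(x_\eps)}\tfrac1\eps W(v_\eps)\,dx\ge \kappa(\tau)\eps>0$ uniformly; but since $x_\eps$ stays in a fixed compact ball $B$ on which $\int_B\tfrac1\eps W(v_\eps)\,dx\to 0$, this is impossible for $\eps$ small. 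This yields $\sup_K|v_\eps-1|\to 0$.

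For the elliptic regularity step one needs local $H^2$ (hence $\mathcal C^{0,1/2}$ in $d=2$) bounds on the rescaled functions uniformly in $\eps$ and in the base point: from $-\Delta\tilde v_\eps+W'(\tilde v_\eps)=\tilde f_\eps$, the bound $\int_{B_\rho}W(\tilde v_\eps)\le c_0\,\mu_\eps(B_{\eps\rho}(x))/\eps$ together with the local uniform bound $\mu_\eps(B_{\eps\rho}(x))/\eps\le C$ (which follows from $\eps$-monotonicity, e.g. \cite[Lemma 4.2]{RogSchat}, or more cheaply from an a priori $L^\infty$-bound $|v_\eps|\le 1$ and interior $H^2$ estimates as used in {\sf Step 1} of Proposition \ref{prop:dichotomie}) gives a bound on $\|\tilde v_\eps\|_{L^6(B_\rho)}$, then bootstrapping as in {\sf Step 1} of Proposition \ref{prop:dichotomie} gives $\|\tilde v_\eps\|_{\mathcal C^{0,1/2}(B_{\rho/2})}\le L$ with $L$ depending only on $\Lambda$. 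The main obstacle is thus purely this uniform-in-basepoint a priori estimate: one must ensure the local energy density $\mu_\eps(B_{\eps\rho}(x))/\eps$ is uniformly bounded on $K$, which is precisely where the monotonicity formula of \cite{RogSchat} (or the truncation $|v_\eps|\le 1$) is invoked; once that is in hand the contradiction argument is a verbatim repetition of the blow-up computation already carried out in {\sf Step 1} of the proof of Proposition \ref{prop:dichotomie}, and the conclusion follows. \hfill$\Box$
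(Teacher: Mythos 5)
Your strategy (contradiction, points $x_\eps$ with $|v_\eps(x_\eps)-1|\ge\tau$, $\eps$-scale blow-up, elliptic regularity) is the right one and matches the paper's, but the contradiction you claim at the end of the argument does not actually follow. After the blow-up and Sobolev-embedding step you correctly obtain a density-type lower bound
\[
\frac{\mu_\eps(B_{c\tau^2\eps}(x_\eps))}{\eps}\geq \theta_0>0,
\quad\text{i.e.}\quad
\int_{B_{c\tau^2\eps}(x_\eps)}\frac{1}{\eps}W(v_\eps)\,dx\geq \kappa(\tau)\,\eps ,
\]
but the right-hand side tends to $0$ as $\eps\to 0$, exactly like the quantity $\int_B\frac1\eps W(v_\eps)\,dx$ against which you are trying to create a contradiction. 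A lower bound at scale $\eps$ that itself vanishes as $\eps\to0$ is perfectly compatible with $\mu_\eps(B)\to 0$; Step 1 alone is therefore not enough to conclude.

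What is missing is the analogue of {\sf Step 2} of Proposition \ref{prop:dichotomie}, which is precisely what propagates the $\eps$-scale density lower bound up to macroscopic scales. There one assumes $\hat\mu_\eps(B_1)\to 0$, sets $\varrho_\eps=\sup\{\varrho:\ \varrho^{-1}\hat\mu_\eps(B_\varrho)\geq\theta_0\}\to 0$, and derives a contradiction from the almost-monotonicity in $\varrho$ of $\varrho^{-1}\hat\mu_\eps(B_\varrho)$ obtained from the $\eps$-level monotonicity formula of \cite{RogSchat}. The paper's one-line proof of Lemma \ref{lem:unif} refers to "arguing as in the proof of Proposition \ref{prop:dichotomie}" with $v_\eps$ in place of $\hat w_\eps$ and $\eps$ in place of $\delta_\eps$, which encompasses \emph{both} Steps 1 and 2; the outcome is $\limsup_\eps\mu_\eps(B_{r/2}(x_0))>0$, hence $x_0\in{\rm Supp}(\mu)$, a contradiction. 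Note also that running Step 2 requires control of the curvature measure $\alpha_\eps$ on the ball; this is available exactly because $w_\eps\to 1$ uniformly there (by Proposition \ref{prop:dichotomie}, since the ball avoids $X$), so the factor $(1+w_\eps)^2$ in the curvature term of $\mathcal G^{(\gamma)}_\eps$ is bounded away from zero. You invoked the monotonicity formula only for the a priori \emph{upper} bound on the local density needed in the elliptic regularity step; the lower-bound propagation is the crucial additional ingredient you must add.
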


\begin{proof}
Let $x_0 \in \Om \setminus (X \cup {\rm Supp}(\mu))$ and $r>0$ be such that $\overline B_{r}(x_0) \subset \Om \setminus (X \cup {\rm Supp}(\mu))$. In particular, by definition of $X$ and Proposition \ref{prop:dichotomie}, we have $w_\eps \to 1$ uniformly in $B_{r}(x_0)$. If
$$\limsup_{\eps \to 0} \|v_\eps -1\|_{L^\infty(B_{r/2}(x_0))} >0,$$
arguing as in the proof of Proposition \ref{prop:dichotomie} (with $\eps$ instead of $\delta_\eps$, and $v_\eps$ instead of $\hat w_\eps$), we obtain that 
$$\limsup_{\eps \to 0} \mu_\eps(B_{r/2}(x_0))>0,$$
hence $x_0 \in {\rm Supp}(\mu)$ which is impossible. Therefore $v_\eps \to 1$ uniformly in $B_{r/2}(x_0)$. Now if $K$ is a compact subset of $\Om \setminus (X \cup {\rm Supp}(\mu))$, it can be covered with a finite number of such balls, hence showing that $v_\eps \to 1$ uniformly in $K$.
\end{proof}

As a consequence, we get that the (integer) multiplicity of $V$ is actually even. While this result might be well-known to experts in the field, we could not find a proof in the literature and thus decided to include it. The proof is based on a careful inspection of \cite[Proposition 5.2]{RogSchat}, see also \cite[Theorem 1]{HutTone}.
\begin{proposition}\label{prop:even}
The multiplicity function satisfies $\theta \in 2\N^*$ $\HH^1$-a.e. on ${\rm Supp}(\mu)$.
\end{proposition}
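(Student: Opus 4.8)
The plan is to argue pointwise at $\HH^1$-a.e.\ point of $M$. By Lemma \ref{lem:unif_conv} we already have $\mu=\theta\,\HH^1\restr M$ with $M=\mathrm{Supp}(\mu)$ and $\theta=\Theta^1(\mu,\cdot)\in\N^*$ $\HH^1$-a.e.\ on $M$, so it only remains to upgrade ``positive integer'' to ``even positive integer''. I would do this by a blow-up of $v_\eps$ around a generic $x_0\in M$ at the interface scale $\eps$, in the spirit of \cite[Proposition 5.2]{RogSchat} and \cite[Theorem 1]{HutTone}. The single new ingredient compared with those references is that here $v_\eps\to1$ rather than to a genuine phase indicator, which will force the limiting one-dimensional transition profile to equal $+1$ at \emph{both} ends, hence to contain an even number of transitions.

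First I would fix a ``good'' point $x_0\in M\setminus X$: away from an $\HH^1$-negligible exceptional set, $x_0$ is a point at which the approximate tangent line $T:=T_{x_0}M$ exists, at which $\theta$ is approximately continuous with value $\theta_0:=\theta(x_0)\in\N^*$, at which $\xi_\eps\to0$ in $L^1$ on a neighbourhood (Theorem \ref{theo_RogSchat}), and at which the rescalings of $\mu$ converge to $\theta_0\,\HH^1\restr T$. After a rigid motion assume $x_0=0$ and $T=\R e_1$. Running the one-dimensional analysis of \cite[Proposition 5.2]{RogSchat} (slicing in the $e_2$-direction and using that the discrepancy vanishes), one produces, along a diagonal sequence of scales $r_\eps\to0$ with $\eps/r_\eps\to0$, a profile $\Psi\colon\R\to[-1,1]$, constant equal to one of the wells $\pm1$ outside a compact interval, which is a concatenation of $\theta_0$ copies of $\pm q(\pm(\cdot-a_i))$ — that is, $\theta_0$ transitions, each carrying energy $c_0$ — and accounts for the entire blown-up energy $\theta_0 c_0$; this is precisely the reformulation of ``$\Theta^1(\mu,x_0)=\theta_0$'' in terms of the phase field.

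The decisive step is then to identify the end values $\Psi(\pm\infty)$, and I claim they both equal $+1$. Since $x_0$ is a regular point of $M=\mathrm{Supp}(\mu)$, for every small $r>0$ the set $M\cap\overline B_{2r}$ lies in a thin slab $\{|x_2|<\sigma(r)\,r\}$ with $\sigma(r)\to0$, so $\mu$ (hence, in the limit, $\mu_\eps$) puts no mass on $\overline B_r\cap\{|x_2|\ge\sigma'(r)\,r\}$; a clearing-out argument in the spirit of \cite{HutTone,RogSchat} then shows that $v_\eps$ is, as $\eps\to0$, $o(1)$-close to a single well on $\overline B_{r/2}\cap\{|x_2|\in[\sigma'(r)\,r,\,r/2]\}$, and Lemma \ref{lem:unif}, applied at a fixed point on the $+e_2$-axis away from $\mathrm{Supp}(\mu)$, identifies that well as $+1$; the same holds on the $-e_2$ side. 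Consequently all transitions of $\Psi$ are confined to arbitrarily thin slabs around $T$ and $\Psi(-\infty)=\Psi(+\infty)=+1$. A concatenation of optimal transitions between the wells that starts and ends in the well $+1$ must contain an even number of them, so $\theta_0\in2\N^*$; since this holds at $\HH^1$-a.e.\ $x_0\in M$, the proposition follows.

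The step I expect to be the main obstacle is this last one — establishing $\Psi(\pm\infty)=+1$ — because Lemma \ref{lem:unif} controls $v_\eps$ uniformly only on compact sets away from $\mathrm{Supp}(\mu)$, whereas here one must control it on sets that shrink onto the point $x_0\in\mathrm{Supp}(\mu)$, i.e.\ at all intermediate scales between $\eps$ and $1$. Bridging this gap requires a small-energy clearing-out estimate based on the monotonicity formula, which is available in the diffuse setting from \cite{RogSchat,HutTone}; the (possible) junction points of $M$, where the profile need not return to $+1$, are harmless because they form an $\HH^1$-negligible set, while the crack-tip behaviour has already been excised through the finite set $X$.
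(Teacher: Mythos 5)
Your proposal follows the same route as the paper: blow up at a generic rectifiability point of $M$, invoke the transition-counting machinery of \cite[Proposition 5.2]{RogSchat}, use Lemma \ref{lem:unif} to pin the values on both sides of the tangent line at the well $+1$, and deduce that the number of transitions is even, hence $\theta_0\in 2\N^*$. One remark on implementation: the paper does not construct a one-dimensional limiting profile $\Psi$ (which would require compactness of the slices that the energy bound does not readily give, and which at the tangent-line scale $\varrho_j\gg\eps_j$ would in any case be a step function rather than a concatenation of $q$-profiles); instead it works directly at the $\eps$-level with the set $A_j$ of transition points, the signs $s_k\in\{\pm1\}$ from \eqref{closekink}, and an Egorov/Fubini selection of slices with exactly $\theta_0$ transitions, then counts the alternating signs $s_1=+1$, $s_K=-1$. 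The obstacle you flag — that Lemma \ref{lem:unif} controls $v_\eps$ only away from $\mathrm{Supp}(\mu)$ at fixed scale, whereas one needs control after blow-up — is handled in the paper by the diagonal choice of $\varrho_j$ in Step~1, so that $\tilde v_j\to1$ locally uniformly on $B^\pm$; this replaces the separate clearing-out estimate you propose.
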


\begin{proof}
{\sf Step 1.} We start by performing a blow-up at all rectifiability points of $M={\rm Supp}(\mu)\backslash X$ as in \cite[Section 5]{RogSchat}. Let $x_0 \in M$ such that $M$ admits an approximate tangent line at $M$, i.e.,
$$\lim_{\varrho \to 0} \frac{1}{\varrho}\int_{\R^2} \Phi \left(\frac{y-x_0}{\varrho}\right)d\mu = \theta_0 \int_{T} \Phi\, d\HH^1\quad \text{for } \Phi \in \mathcal C_c(\R^2),$$
where, by Lemma \ref{lem:unif_conv}, $\theta_0=\theta(x_0) \in \mathbf N$ and $T \in \mathbf G_1$. Note that $\HH^1$ almost all points of $M$ satisfy this property. Our aim is to prove that $\theta_0$ is an even number. In the sequel, to simplify notation, we assume that $T=e_2^\perp$, where $e_1=(1,0)$ and $e_2=(0,1)$. For  $(x,y) \in \R^2$, we denote by $\Pi(x,y):=(x,0)$ the orthogonal projection onto $T$.  Let
$$B^\pm=\{ x=(x_1,x_2) \in B_1 : \; \pm x_2>0\}.$$

 Fix $R>0$ such that $B_{2R}(x_0) \subset \Om \setminus X$. Since $\mu_\eps \rightharpoonup \mu$ and $\alpha_\eps \rightharpoonup \alpha$ weakly* in $\mathcal M(\Om)$, using the metrizability of  the weak* topology of $\mathcal M_{\rm loc}(\R^2)$ on bounded sets, we can find sequences $\eps_j \to 0$ and $\varrho_j \to 0$ such that, setting
$$\tilde \eps_j:=\frac{\eps_j}{\varrho_j},\quad \tilde v_j(y):=v_{\eps_j}(x_0+\varrho_j y), \quad \tilde f_j(y):=\varrho_j f_{\eps_j}(x_0+\varrho_j y)\quad\text{ for }y \in B_{R/\varrho_j,}$$
and 
$$
\begin{cases}
\tilde \mu_j:=\frac{1}{c_0}\left(\frac{\tilde \eps_j}{2}|\nabla \tilde v_j|^2+\frac{1}{\tilde \eps_j}W(\tilde v_j)\right)\LL^2\restr B_{R/\varrho_j},\\
\tilde \alpha_j:=\frac{1}{c_0\tilde \eps_j}\tilde f_j^2\, \LL^2\restr B_{R/\varrho_j},\\
{\tilde \xi_j:=\frac{\tilde \eps_j}{2}|\nabla \tilde v_j|^2-\frac{1}{\tilde \eps_j}W(\tilde v_j),}
\end{cases}
$$
then
$$-\tilde \eps_j \Delta \tilde v_j +\frac{1}{\tilde \eps_j}W'(\tilde v_j)=\tilde f_j \quad \text{ in }B_{R/\varrho_j},$$
and
$\tilde \eps_j \to 0$,  $\tilde v_j \to 1$ in $L^1(B_2)$ and locally uniformly on $B^\pm$ by Lemma \ref{lem:unif},  $\tilde \mu_j \rightharpoonup \mu_0:=\theta_0 \HH^1\restr T$ weakly* in $\mathcal M(B_2)$, {$\tilde \xi_j \to 0$ strongly in $L^1(B_2)$} and $\tilde \alpha_j \to 0$ strongly in $\mathcal M(B_2)$. Moreover, according to  \cite[Propositions 3.6 \& 4.7]{RogSchat}, there exists
$\tilde \Lambda>0$ such that
\begin{equation}\label{eq:bd-mu}
\tilde \mu_j(B_\varrho(x)) \leq \tilde \Lambda \varrho
\end{equation}
for all $j \in \N$, all $x \in B_1$ and all $\tilde \eps_j \leq \varrho \leq 1$.

Let $\tilde{\nu}_j: \Omega\to \S^{d-1}$ be a Borel  extension of $\nabla \tilde v_j/|\nabla \tilde v_j|$ on $\{\nabla \tilde v_j=0\}$ and $\tilde V_j \in \mathbf V_1(B_1)$ be defined as
$$ \int_{\mathbf G_{1}(\Om)} \Phi(x,S) \, d\tilde V_j(x,S)= \int_{B_1} \Phi\lt(x,{\rm Id}- \tilde{\nu}_j\otimes \tilde{\nu}_j  \rt) d\tilde{\mu}_j \quad \textrm{for } \Phi \in \mathcal C_c(\mathbf G_{1}(B_1)).$$
 According to Theorem \ref{theo_RogSchat} and the above convergences, we infer that $\tilde V_j \rightharpoonup V_0$ weakly* in $\mathcal M(\mathbf G_1(B_1))$ for some stationary $1$-rectifiable integral varifold $V_0$ in $B_1$ with $\mu_{V_0}=\mu_0=\theta_0\HH^1\restr T$. In particular, since $\mu_0(\partial B_1)=\theta_0 \HH^1(T \cap \partial B_1)=0$,
$$\tilde V_j(\mathbf G_1(B_1))=\tilde \mu_j(B_1) \to \mu_0( B_1)=V_0(\mathbf G_1(B_1)),$$
and we obtain that $\tilde V_j \rightharpoonup V_0$ weakly* in $[\mathcal C_b(\mathbf G_1(B_1))]'$. In the sequel, we omit $\sim$ in order to simplify notation.

\medskip

{\sf Step 2.} We next analyze more accurately  the proof of \cite[Proposition 5.2]{RogSchat} to describe the multiplicity $\theta_0$.  Given $\tau \in (0,1)$, $\delta>0$ and $\Lambda>0$ as in \eqref{eq:bd-mu}, let $\omega=\omega(\delta,\tau,\Lambda)>0$ and $L=L(\delta,\tau)>1$ be given by \cite[Proposition 5.5]{RogSchat}. As in \cite{HutTone,RogSchat}, we introduce the set\footnote{Note that our definition of $A_j$ differs from that in \cite{RogSchat}, since we do not require the additional condition $\alpha_j(B_\varrho(z)) \leq \omega \varrho^{\beta_0}$. It turns out that this condition is only useful  in dimension $3$ and is irrelevant in dimension 2 so that $A_j$ reduces to the same definition as in  \cite{HutTone} (see Formulas (69) and (81) in \cite{RogSchat}).}
\begin{multline*}
A_j:=\Big\{z \in B_1 : \, |v_j(z)| \leq 1-\tau \text{ and }\\
|\xi_j|(B_\varrho(z)) + \int_{B_\varrho(z)}  \eps_j |\nabla v_j|^2 \sqrt{1-(\nu_j)_2^2} \, dx\leq \omega\varrho\\
 \text{ for } \eps_j \leq \varrho \text{ with }B_\varrho(z) \subset B_1\Big\}.
\end{multline*}
By a careful inspection of the proof of  \cite[(88), Proposition 5.5]{RogSchat}, we see that  for every $z=(x,\bar y)\in A_j$, there exists $s=s(z)\in \{\pm 1\}$ such that
\begin{multline}\label{closekink}
 s(x,\bar y) v_j(x,\bar y+ y)\ge 1-\frac{\tau}{2} \quad \textrm{for }  y\in [-3L\eps_j, -L\eps_j] \qquad \textrm{and}\\
 s(x,\bar y) v_j(x,\bar y+ y)\le -1+\frac{\tau}{2} \quad \textrm{for }  y\in [L\eps_j, 3L\eps_j].
\end{multline}

According to \cite[Formula (96)]{RogSchat}, we have
\begin{equation}\label{eq:mukAk}
\mu_j(B_1 \setminus A_j ) \to 0.
\end{equation}
Moreover, as in \cite[Formula (5.9)]{HutTone}, we additionally have
\begin{equation}\label{eq:LebAk}
\HH^1\Big(\Pi\big(B_1 \cap \{|v_j| \leq 1-\tau\} \setminus A_j\big)\Big) \to 0.
\end{equation}

We proceed as in \cite[Proposition 5.2]{RogSchat}. Given $x \in T \cap B_1$, we consider a maximal subset  $X=\{x\} \times \{y_1,\ldots,y_K\}$ of  $A_j \cap \Pi^{-1}(\{x\})$ with $|z-z'|\geq 3L\eps_j$ for $z \neq z' \in X$. We observe that $K=K_j(x)$. Morever, by \cite[pp. 713]{RogSchat},
$$K_j(x) \leq \theta_0, \quad A_j \cap \Pi^{-1}(\{x\}) \subset \{x\} \times \bigcup_{k=1}^K (y_k-L\eps_j,y_k+L\eps_j)$$
and
$$\frac{1}{\eps_j} \int_{\{x\} \times (y_k-L\eps_j,y_k+L\eps_j)} W(v_j)\, d\HH^1\leq \frac{c_0}{2}+\delta \quad \text{ for }k \in \{1,\ldots,K\}.$$
Summing up with respect to $k$ yields
$$\frac{1}{\eps_j}\int_{\Pi^{-1}(\{x\}) \cap A_j} W(v_j)\, d\HH^1 \leq \frac{c_0}{2}K_j(x) + \theta_0\delta.$$

We introduce the set
$$G_j:=\Pi(A_j) \setminus \Pi\big(B_1 \cap \{|v_j| \leq 1-\tau\} \setminus A_j\big).$$
On the one hand, by Fubini,
\begin{eqnarray}\label{eq:26071}
\frac{1}{\eps_j} \int_{A_j  \cap  (G_j \times \R)} W(v_j)\, dx& =& \frac{1}{\eps_j} \int_{G_j} \left(\int_{\Pi^{-1}(\{x\}) \cap A_j} W(v_j)\, d\HH^1\right) d\HH^1\nonumber\\
& \leq &\frac{c_0}{2}\int_{G_j}K_j(x)\, d\HH^1(x) +C \delta \theta_0.
\end{eqnarray}
On the other hand,
\begin{eqnarray}\label{eq:26072}
\frac{1}{\eps_j} \int_{A_j\setminus  (G_j \times \R)} W(v_j)\, dx& =& \frac{1}{\eps_j} \int_{\Pi(A_j) \setminus G_j} \left(\int_{\Pi^{-1}(\{x\}) \cap A_j} W(v_j)\, d\HH^1\right) d\HH^1\nonumber\\
& \leq &\left(\frac{\theta_0 c_0}{2} +\theta_0 \delta\right) \HH^1(\Pi(A_j) \setminus G_j)\\
&\le & C\theta_0  \HH^1(\Pi(A_j) \setminus G_j)\le C\theta_0 \HH^1\Big(\Pi\big(B_1 \cap \{|v_j| \leq 1-\tau\} \setminus A_j\big)\Big) \nonumber.
\end{eqnarray}
Gathering \eqref{eq:26071} and \eqref{eq:26072},  we obtain
\begin{multline*}
\mu_j(A_j)\leq \frac{2}{c_0 \eps_j} \int_{A_j } W(v_j)\, dx +\frac{1}{c_0}|\xi_j|(B_1)\\
 \leq \int_{G_j}K_j(x)\, d\HH^1(x) +C\theta_0\left( \delta +\HH^1\Big(\Pi\big(B_1 \cap \{|v_j| \leq 1-\tau\} \setminus A_j\big)\Big)+ |\xi_j|(B_1)\right).
\end{multline*}
Recalling that $K_j(x) \leq \theta_0$ and using \eqref{eq:mukAk}--\eqref{eq:LebAk}, we obtain (recall that $\xi_j$ converges strongly to $0$ in $L^1(B_1)$)
\begin{multline*}
2\theta_0=\theta_0\HH^1(T \cap B_1)=\mu_0(B_1) \leq \liminf_{j \to \infty} \mu_j(B_1) =\liminf_{j \to \infty} \mu_j(A_j) \\
\leq \liminf_{j \to \infty} \int_{G_j}K_j(x)\, d\HH^1(x)+ C\theta_0 \delta \leq  \limsup_{j \to \infty} \int_{G_j}K_j(x)\, d\HH^1(x) + C\theta_0 \delta \\
\leq \theta_0 \limsup_{j \to \infty} \HH^1(G_j) + C\theta_0 \delta\leq 2\theta_0 + C\theta_0 \delta.
\end{multline*}
By a diagonal argument we may find a sequence $\delta_j\to 0$ such that
$$\int_{G_j}K_j(x)\, d\HH^1(x) \to 2\theta_0$$
and
\begin{equation}\label{eq:Gj}
\HH^1(B_1 \cap T \setminus G_j) \to 0.
\end{equation}
As a consequence 
$$\int_{G_j} |\theta_0 -K_j(x)| \, d\HH^1(x) \to 0.$$
Thus, up to a subsequence, $(\theta_0 -K_j(x)){\bf 1}_{G_j}(x)  \to 0$ for $\HH^1$-a.e. $x \in T \cap B_1$. By \eqref{eq:Gj} and since both $\theta_0$ and $K_j$ are integer, Egorov's Theorem ensures the existence of a closed set $C_0 \subset T \cap B_1$ and $j_0 \in \N$ such that $\HH^1(C_0 \cap G_j)>0$ and $K_j \equiv \theta_0$ on $C_0 \cap G_j$ for all $j \geq j_0$.

\medskip

{\sf Step 3.} In this final step we prove that for $j$ large enough, $K_j\in 2\N$ in $G_j\cap C_0$. By the previous step this would conclude the proof of $\theta_0\in 2 \N$. Since $v_j \to 1$ locally uniformly on $B^\pm$, for all $\eta>0$, there exists $j_1=j_1(\eta) \in \N$ such that for all $j \geq j_1$,
\begin{equation}\label{vjlarge}
v_j(x,y) > 1-\tau \quad \text{ for }(x,y) \in B_1 \text{ with }|y|>\eta.
\end{equation}
By Step 2 and using slicing properties of Sobolev functions, for all $j \geq j_0$ and for $\HH^1$ a.e.  $x \in C_0 \cap G_j$, we have  $K_j(x)=\theta_0$ and
$$y \mapsto v_j(x,y) \in H^2(B_1 \cap \Pi^{-1}(\{x\})).$$

Let $j \geq \max(j_0,j_1)$ and fix such a point $x_j$. Set $K=K_j(x_j)$ and, to simplify notation, let
$$\bar{v}_j(y)=v_j(x_j,y), \quad y \in \R$$
which is continuous. By \eqref{closekink}, the intermediate value Theorem and since $x_j \in G_j$,
\begin{equation}\label{422}
\emptyset \neq  \{|\bar{v}_j| \leq 1-\tau\} =\Pi^{-1}(\{x_j\}) \cap A_j \subset \bigcup_{k=1}^{K} (y_k-L\eps_j,y_k+L\eps_j),
\end{equation}
where we recall that  $X=\{x_j\} \times \{y_1,\ldots,y_{K}\}\subset \{|\bar{v}_j| \leq 1-\tau\} $ is the maximal set introduced in Step 2. 

For $k=1,\ldots, K$, let $s_k\in \{\pm 1\}$ be given by \eqref{closekink}. By \eqref{vjlarge} and \eqref{422}, we see that $s_1=1$ while $s_{K}=-1$.  To conclude the proof let us show that for $k=1,\ldots K-1$, $s_{k+1}=-s_k$. Since the proof is the same for every $k$, it is enough to check it for $k=1$. We claim that $s_2=-1$. Since $s_1=1$ we have by \eqref{closekink} that  $\bar{v}_j(y)\le -1+\tau/2$ for $y \in [y_1+L\eps_j,y_1+3L \eps_j]$. If $s_2=1$, by \eqref{closekink} we would have $\bar{v}_j(y)\ge 1 - \tau/2$ for $y\in [y_2-3L\eps_j,y_2-L \eps_j]$. Since $\bar{v}_j$ is continuous, this would imply that $y_1+3L\eps_j < y_2-3L\eps_j$ as well as the existence of $\bar y \in [y_1+3L \eps_j,y_2-3L\eps_j]$ with $\bar{v}_j(\bar y)=0$. As $\bar y \in  \{|\bar{v}_j| \leq 1-\tau\}$ but $\bar y \notin\bigcup_{k=1}^{K} (y_k-L\eps_j,y_k+L\eps_j)$ we reached a contradiction with \eqref{422}. 

Finally an elementary counting argument shows that $K$ is an even number as claimed.
\end{proof}

Together with \eqref{eq:mOmega}, the following result completes the proof of the lower bound.

\begin{lemma}\label{lem:partial-lwbd}
There holds
$$m(\Om) \geq \int_\Om |\nabla u|^2\, dx + \int_{J_u}(\phi(\nu_u)+|H_{J_u}|^2)\, d\HH^1+\gamma \HH^0(\mathcal P_{J_u}).$$
\end{lemma}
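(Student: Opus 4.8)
The plan is to assemble the pieces already established—Lemma~\ref{lem:unif_conv} (rectifiability of $V$ away from $X$ together with the estimate \eqref{claimmOm}), Proposition~\ref{prop:even} (evenness of the multiplicity $\theta$), Ambrosio's lower semicontinuity for the bulk term, and Lemma~\ref{lem:pointX} (the point-energy lower bound)—and to combine them exactly as in the proof of Theorem~\ref{thm:lsc}. First I would treat the bulk term: since $(1+v_\eps)^2|\nabla u_\eps|^2 \ge 0$ and $v_\eps \to 1$ in $L^1$, one extracts (using the energy bound \eqref{eq:nrj-bd} and the classical Ambrosio-Tortorelli analysis) a subsequence with $\nabla u_\eps \wto \nabla u$ weakly in $L^2(\Om;\R^2)$, and by lower semicontinuity of the $L^2$-norm together with the local uniform convergence $v_\eps\to 1$ away from $\mathrm{Supp}(\mu)$ (Lemma~\ref{lem:unif}) one gets $\liminf_\eps \frac14\int_\Om(1+v_\eps)^2|\nabla u_\eps|^2\,dx \ge \int_\Om|\nabla u|^2\,dx$; more precisely $m(A)\ge \int_A|\nabla u|^2\,dx$ for $A$ open with $\overline A\cap(X\cup\mathrm{Supp}\,\mu)=\emptyset$. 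Since $u\in\mathcal A(\Om)$ we have $J_u\in\mathscr C(\Om)$, and from the Modica-Mortola lower bound applied to $v_\eps$ (as in \eqref{lowerPFE}) together with $v_\eps\to{\bf 1}_{\{u=\cdot\}}$-type slicing one obtains $\mu \ge \HH^1\restr J_u$, so in particular $J_u\subset\mathrm{Supp}(\mu)=M$.

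Next I would handle the interfacial and curvature terms on $\Om\setminus X$. By Lemma~\ref{lem:unif_conv}, $V=\mathbf v(M,\theta)$ is a $1$-rectifiable integral varifold in $\Om\setminus X$ with first variation in $L^2_\mu$, and \eqref{claimmOm} gives $m(A\setminus X)\ge \frac12\int_{A\setminus X}(\phi(\nu)+|H_V|^2)\,d\mu$. The factor $\tfrac12$ is compensated by Proposition~\ref{prop:even}: since $\theta\in 2\N^*$ $\HH^1$-a.e.\ on $M$, writing $\mu=\theta\HH^1\restr M$ and $\widehat V:=\mathbf v(M,\theta/2)$ we get $\mu_{\widehat V}=\tfrac12\mu$, $\nu_{\widehat V}=\nu_V$, $H_{\widehat V}=2H_V$ $\mu_{\widehat V}$-a.e.\ (the first variation scales: $\delta\widehat V=-H_V\mu = -(2H_V)\,\mu_{\widehat V}$), so $\widehat V$ is again a $1$-rectifiable integral varifold with $L^2$ first variation, and moreover $\mu_{\widehat V}\ge \HH^1\restr J_u$ because $\theta/2\ge 1$ on $M\supset J_u$. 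Then $\frac12\int_{A\setminus X}(\phi(\nu_V)+|H_V|^2)\,d\mu \ge \int_{A\setminus X}(\phi(\nu_{\widehat V})+|H_{\widehat V}|^2)\,d\mu_{\widehat V}$, so combining with the bulk estimate,
\[
 m(A\setminus X)\ge \int_{A}|\nabla u|^2\,dx + \int_{A\setminus X}\big(\phi(\nu_{\widehat V})+|H_{\widehat V}|^2\big)\,d\mu_{\widehat V},
\]
and taking $A=\Om$ and using $m(\Om)\ge m(\Om\setminus X)+\sum_{x\in X}m(\{x\})\ge m(\Om\setminus X)+\gamma\HH^0(X)$ (recall $m(\{x\})\ge\gamma$ for $x\in X$, established right before Lemma~\ref{lem:unif_conv}), one reduces matters to a varifold statement.

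Finally I would invoke Lemma~\ref{lem:pointX} applied to the varifold $\widehat V \in \mathbf V_1(\Om\setminus X)$ and the curve $\Gamma:=J_u\in\mathscr C(\Om)$, which is legitimate precisely because $\mu_{\widehat V}\ge\HH^1\restr J_u$: it provides $\gamma_0=\gamma_0(J_u)>0$ such that for $\gamma\in(0,\gamma_0)$,
\[
 \int_{\Om\setminus X}\big(\phi(\nu_{\widehat V})+|H_{\widehat V}|^2\big)\,d\mu_{\widehat V} \ge \int_{J_u}\big(\phi(\nu_{J_u})+|H_{J_u}|^2\big)\,d\HH^1 + \gamma\HH^0(\mathcal P_{J_u}\setminus X).
\]
Adding the $\gamma\HH^0(X)$ contribution from the points of $X$ and noting $\HH^0(\mathcal P_{J_u}\setminus X)+\HH^0(X)\ge\HH^0(\mathcal P_{J_u})$ (every endpoint of $J_u$ is either in $X$ or not) yields $m(\Om)\ge \int_\Om|\nabla u|^2\,dx + \int_{J_u}(\phi(\nu_u)+|H_{J_u}|^2)\,d\HH^1 + \gamma\HH^0(\mathcal P_{J_u})$, which is the claim; here $\gamma_0(u):=\gamma_0(J_u)$. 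The main obstacle, and the only genuinely delicate point, is verifying that $\widehat V$ is honestly admissible for Lemma~\ref{lem:pointX}—i.e.\ that halving an even-multiplicity integral varifold and doubling its curvature is consistent with the first-variation identity and preserves integrality—and that the bound $\mu\ge\HH^1\restr J_u$ descends to $\mu_{\widehat V}\ge\HH^1\restr J_u$ via $\theta/2\ge1$; everything else is bookkeeping of the already-proven estimates together with the elementary inequality on $\HH^0$ of the point sets.
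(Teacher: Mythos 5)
Your overall strategy coincides with the paper's: use Proposition~\ref{prop:even} to pass to the halved-multiplicity varifold $\widehat V=\mathbf v(M,\theta/2)$, feed it to Lemma~\ref{lem:pointX}, and recover the point energy from $m(\{x\})\ge\gamma$ on $X$ plus $\HH^0(\mathcal P_{J_u}\setminus X)+\HH^0(X)\ge\HH^0(\mathcal P_{J_u})$. However, there is a concrete error in your scaling of the generalized mean curvature, and as written it falsifies your key intermediate inequality.

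You assert $\delta\widehat V=-H_V\mu$ and deduce $H_{\widehat V}=2H_V$. This is wrong: the first variation is linear in the varifold as a measure on $\mathbf G_1(\Omega)$, so halving the multiplicity halves $\delta V$. Explicitly,
\begin{equation*}
\delta\widehat V(\zeta)=\int_M D\zeta:\Pi_{T_xM}\,\tfrac{\theta}{2}\,d\HH^1=\tfrac12\,\delta V(\zeta)=-\tfrac12\int_M H_V\cdot\zeta\,\theta\,d\HH^1=-\int_M H_V\cdot\zeta\,d\mu_{\widehat V},
\end{equation*}
so $H_{\widehat V}=H_V$, $\mu_{\widehat V}$-a.e. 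With your (erroneous) $H_{\widehat V}=2H_V$, the right-hand side of your inequality
\begin{equation*}
\frac12\int_{A\setminus X}\bigl(\phi(\nu_V)+|H_V|^2\bigr)\,d\mu\;\ge\;\int_{A\setminus X}\bigl(\phi(\nu_{\widehat V})+|H_{\widehat V}|^2\bigr)\,d\mu_{\widehat V}
\end{equation*}
becomes $\tfrac12\int\phi\,d\mu+2\int|H_V|^2\,d\mu$, which is strictly \emph{larger} than the left-hand side whenever $H_V\not\equiv0$, so the inequality is false. With the correct $H_{\widehat V}=H_V$ the two sides are equal, and the rest of your argument (including $\mu_{\widehat V}\ge\HH^1\restr J_u$ via $J_u\subset M$ and $\theta/2\ge1$ on $M$) goes through exactly as in the paper. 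Secondarily, the passage ``so combining with the bulk estimate, $m(A\setminus X)\ge\int_A|\nabla u|^2\,dx+\dots$'' is not justified for a general open $A$ from the local bulk bound on balls avoiding $X\cup{\rm Supp}\,\mu$; the clean way to glue the three contributions is, as the paper does, to use that $\LL^2$, $\mu\restr(\Om\setminus X)$, and $\HH^0\restr X$ are mutually singular and to prove the three separate Radon--Nikod\'ym density lower bounds $\tfrac{dm}{d\LL^2}\ge|\nabla u|^2$, $\tfrac{dm}{d\mu}\ge\tfrac12(\phi(\nu_V)+|H_V|^2)$ on $\Om\setminus X$, and $\tfrac{dm}{d\HH^0\restr X}\ge\gamma$.
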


\begin{proof}
Arguing as in \cite{BraiChamSol} (see also \cite{BM}), we have for all open set $A \subset \Om$,
$$2\HH^1(J_u \cap A) \leq \liminf_{\eps \to 0} \int_A \frac{1}{c_0}\left(\frac{\eps}{2}|\nabla v_\eps|^2 +\frac{1}{\eps} W(v_\eps)\right)dx=\liminf_{\eps \to 0} \mu_\eps(A),$$
and thus
\begin{equation*}
\mu \geq 2\HH^1 \restr J_u.
\end{equation*}
By Proposition \ref{prop:even}, the $1$-rectifiable integral varifold $V$ has even multiplicity. Thus, owing to Lemma \ref{lem:pointX}, it is enough to show that 
$$m(\Om) \geq \int_\Om |\nabla u|^2\, dx + \frac12 \int_{\Om \setminus X}\left(\phi(\nu_V)+|H_V|^2\right) d\mu+\gamma \HH^0(X).$$

Since the Radon measures $\LL^2$, $\mu\restr (\Om \setminus X)$ and $\HH^0 \restr X$ are mutually singular, it suffices to establish the following lower bounds for the Radon-Nikod\'ym derivatives
\begin{eqnarray}
\frac{dm}{d\LL^2} & \geq & |\nabla u|^2 \quad \LL^2\text{-a.e. in }\Om,\label{eq:L2bis}\\
\frac{dm}{d\mu} & \geq & \frac{1}{2}\left(\phi(\nu_V)+|H_{V}|^2\right)\quad \mu\text{-a.e. in } \Om \setminus X,\label{eq:H1bis}\\
\frac{dm}{d\HH^0\restr X} & \geq & \gamma\quad \text{ on }X.\label{eq:H0bis}
\end{eqnarray}
By definition of $X$, \eqref{eq:H0bis} is immediate, while \eqref{eq:H1bis} results from \eqref{claimmOm}. Concerning the bulk part \eqref{eq:L2bis}, let us consider a Lebesgue point $x_0 \in \Om$ of $\nabla u$ such that $\frac{dm}{d\LL^2}(x_0)$ exists and is finite. Note that $\LL^2$ almost all points in $\Om$ satisfy these properties. Let $\{\varrho_j\}_{j \in \N} \searrow 0^+$ be such that $m(\partial B_{\varrho_j}(x_0))=0$ for all $j \in \N$. Arguing as in \cite{BraiChamSol} (see also \cite{BM}), we infer that 
\begin{equation*}
m(B_{\varrho_j}(x_0))= \lim_{\eps\to 0} \frac14 \int_{B_{\varrho_j}(x_0)} (v_\eps+1)^2|\nabla u_\eps|^2\, dx \ge \int_{B_{\varrho_j}(x_0)} |\nabla u|^2\, dx.
\end{equation*}
Dividing the previous inequality by $\pi\varrho_j^2$ and letting $j \to \infty$ yields
$$\frac{dm}{d\LL^2}(x_0) =\lim_{j\to \infty} \frac{m(B_{\varrho_j}(x_0))}{\pi\varrho_j^2} \geq \lim_{j\to \infty}\frac{1}{\pi\varrho_j^2}\int_{B_{\varrho_j}(x_0)} |\nabla u|^2\, dx=|\nabla u(x_0)|^2.$$
The proof of Lemma \ref{lem:partial-lwbd} is now complete.
\end{proof}

\subsubsection{Proof of Theorem \ref{BBG2}: the upper bound}\label{sec:upperbound}

In this section we discuss the $\Gamma$-$\limsup$ inequality following the construction of \cite[Lemma 4.1]{DoMuRo}. We use the same notation as in Section \ref{sec:ubdMM}. Recall in particular the definition of $q_\eps$ in \eqref{defqeps}.

\medskip

Since we assume that $J_u \in \mathscr C(\Om)$, then $J_u=\bigcup_{i=1}^N \Gamma_i$ where $\Gamma_i=\gamma_i([0,1])$ for some $\gamma_i \in \mathcal C^{2}([0,1];\R^2)$. Let $\{p_1,\ldots,p_M\}=\mathcal P_{J_u}$ and $R>0$ be such that for $i \in \{1,\ldots,M\}$, $B_R(p_i) \cap \mathcal P_{J_u}=\{p_i\}$.  For  $t >0$, we denote the $t$-neighborhood of $J_u$ by
\[U_t=\{x \in \R^2:\;  {\rm dist}(x, J_u)<t\},\]
and we consider the signed distance to $\partial U_{2\delta_\eps}$
$$d_\eps(\cdot)={\rm sdist}(\cdot,\partial U_{2\delta_\eps})={\rm dist}(\cdot,J_u)-2\delta_\eps$$
with the convention that $d_\eps <0$ in $U_{2\delta_\eps}$ and $d_\eps>0$ in $\R^2 \setminus \overline U_{2\delta_\eps}$. Note in particular that $-2 \delta_\eps \leq d_\eps <0$ in $U_{2\delta_\eps}$, and that $d_\eps \to {\rm dist}(\cdot, J_u) \geq 0$ uniformly in $\R^2$.

\medskip

\begin{figure}[!h]
\def\svgwidth{210pt}
\input{ 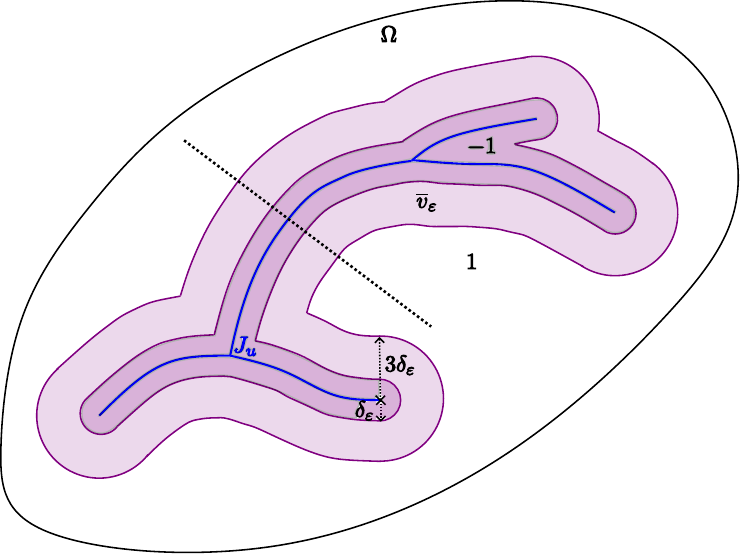_tex}
\def\svgwidth{210pt}
\input{ 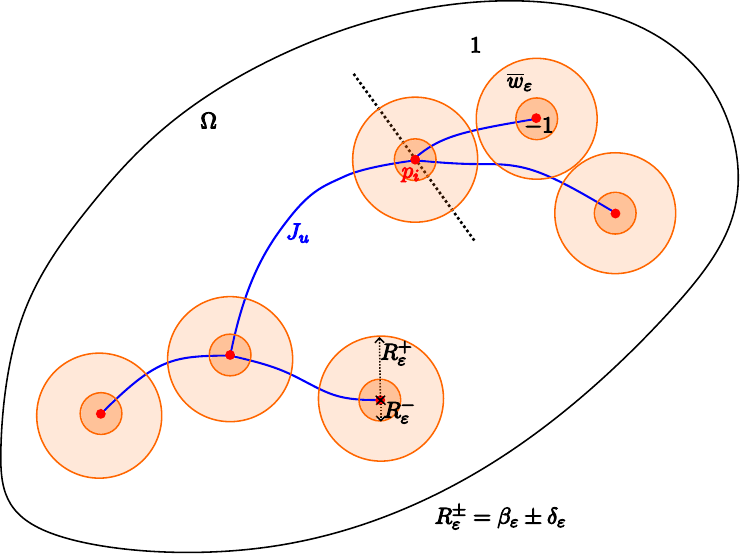_tex}
\caption{Definition of $\overline{v}_\epsilon$ (left) and $\overline{w}_\epsilon$ (right) according to offsets of $J_u$ and $\mathcal{P}_{J_u}$.}
\label{figOffsetsRecovery}
\end{figure}

\begin{figure}[!h]
\def\svgwidth{210pt}
\input{ ProfileVeps.pdf_tex}
\def\svgwidth{210pt}
\input{ ProfileWeps.pdf_tex}
\caption{Profile of $\overline{v}_\epsilon$ (left) and $\overline{w}_\epsilon$ (right) accross the respective dotted lines in Figure~\ref{figOffsetsRecovery}.}
\label{figProfilesRecovery}
\end{figure}

We now define our candidate recovery sequence by setting
$$\bar u_\eps(x)=
\begin{cases}
0 & \text{ if } d_\eps (x) \leq  -\frac{3\delta_\eps}{2},\\
\displaystyle \frac{2d_\eps(x)+3\delta_\eps}{\delta_\eps} u(x)  & \text{ if } -\frac{3\delta_\eps}{2} <d_\eps(x)<-\delta_\eps,\\
u(x) & \text{ if } d_\eps(x)\geq-\delta_\eps.
\end{cases}$$
Using that $J_u=\{d_\eps=-2\delta_\eps\}$, we get that $\bar u_\eps \in H^1(\Om)$. In addition, since
$$\{d_\eps<-\delta_\eps\} \subset \{d_\eps<\delta_\eps\} \subset \{x \in \R^2 : \; {\rm dist}(x,J_u)<3\delta_\eps\}=U_{3\delta_\eps}$$
whose Lebesgue measure tends to zero, we get that $\bar u_\eps \to u$ in $L^1(\Om)$. 

\medskip

We next define
\[
\bar v_\eps(x)=q_\eps(d_\eps(x)) \quad \text{ for } x \in \R^2,
\]
which satisfies $|\hat v_\eps| \leq 1$ in $\Om$, $\bar v_\eps \in H^1(\Om)$ and $\hat v_\eps \to 1$ in $L^1(\Om)$. Notice that in general we do not have  $\bar v_\eps\in H^2(\Om)$ because of the junction points $\mathcal P_{J_u}$.  A simple mollification argument allows to approximate $\bar v_\eps$ by smooth functions so that we will ignore this issue.

\medskip

In order to define the last phase-field function $\bar w_\eps$ related to the junction points, we set
$$d^{\beta_\eps}=|\cdot |-\beta_\eps={\rm sdist}(\cdot,\partial B_{\beta_\eps}).$$
We then define for $x \in \R^2$,
\[
\bar w_\eps(x)=\sum_{i=1}^M q_\eps ( d^{\beta_\eps}(x-p_i)).
\]
We immediately check that $ |\bar w_\eps| \leq 1$ in $\Om$, $\bar w_\eps \in H^2(\Om)$ and $\bar w_\eps \to 1$ in $L^1(\Om)$.

\bigskip

We now evaluate separately each terms of the energy $\mathcal G_\eps^\gamma$.

\medskip

\noindent {\sf Step 1.} First of all, since $\bar v_\eps=1$ on $\{d_\eps >\delta_\eps\}$, we have
\[
\frac{1}{\eta_\eps} \int_\Om (1-\bar v_\eps)^2\, dx =  \frac{1}{\eta_\eps} \int_{\{d_\eps \leq \delta_\eps\}} (1-\bar v_\eps)^2\, dx
 \leq \frac{1}{\eta_\eps}  \mathcal L^2(\{d_\eps \leq \delta_\eps\}) \leq \frac{1}{\eta_\eps} \mathcal L^2(U_{3\delta_\eps}).
 \]
Since $J_u \in \mathscr C(\Om)$, its $\HH^1$ measure coincides with its $1$-Minkowski content. Thus
$$\frac{\LL^2(U_t)}{2t} \to \HH^1(J_u) \quad \text{ as }t \to 0,$$
and using \eqref{eq:scaling} together with the definition \eqref{defdeps} of $\delta_\eps$ yields
\begin{equation}\label{eq:term1}
\frac{1}{\eta_\eps} \int_\Om (1-\bar v_\eps)^2\, dx \leq  \frac{6\lambda \eps |\log \eps|}{\eta_\eps}\frac{|U_{3\delta_\eps}|}{6\delta_\eps} \to 0.
\end{equation}

\medskip

\noindent {\sf Step 2.} We introduce the radii 
$$R_\eps^\pm=\beta_\eps\pm \delta_\eps$$ which satisfy, according to \eqref{eq:scaling}, 
$$\frac{R_\eps^\pm}{\delta_\eps} \to \infty$$
so that we can assume that for $\eps>0$ small enough
\begin{equation*}\label{eq:Reps}
R \geq 2\beta_\eps \geq R_\eps^+ \geq R_\eps^- \geq 4 \delta_\eps.
\end{equation*}
 Since $d^{\beta_\eps}(x-p_i)=|x-p_i|-\beta_\eps > \delta_\eps$ for all $x \in \R^2 \setminus \bigcup_{i=1}^M B_{R^+_\eps}(p_i)$, we deduce that  $\bar w_\eps=1$ on $\R^2 \setminus \bigcup_{i=1}^M B_{R^+_\eps}(p_i)$. We thus obtain thanks  to \eqref{eq:scaling},
\begin{equation}\label{eq:term1bis}
 \frac{1}{\eta_\eps}\int_{\Om} (1-\bar w_\eps)^2\, dx \le \sum_{i=1}^M\frac{|B_{R_\eps^+}(p_i)|}{\eta_\eps}\leq \pi M \frac{(\beta_\eps+\lambda \eps |\log\eps|)^2}{\eta_\eps}\to 0.
\end{equation}

\medskip

\noindent {\sf Step 3.}  Next, using that $\bar v_\eps=-1$ on $\{d_\eps<-\delta_\eps\}$, we infer that
\begin{equation}\label{eq:term2}
\int_\Om (\bar v_\eps+1)^2 |\nabla \bar u_\eps|^2 \, dx  \leq  4\int_{\{d_\eps\geq -\delta_\eps\} }|\nabla u|^2\, dx  \leq  4 \int_\Om |\nabla u|^2\, dx.
 \end{equation}

\medskip

\noindent {\sf Step 4.} We now address the point energy. Since $d^{\beta_\eps}(x-p_i)=|x-p_i|-\beta_\eps > \delta_\eps$ for all $x \in \R^2 \setminus \bigcup_{i=1}^M B_{R^+_\eps}(p_i)$ and  $d^{\beta_\eps}(x-p_i)=|x-p_i|-\beta_\eps <- \delta_\eps$ for all $x \in  \bigcup_{i=1}^M B_{R^-_\eps}(p_i)$, we deduce that $\bar w_\eps=1$ in $\R^2 \setminus \bigcup_{i=1}^M B_{R^+_\eps}(p_i)$ and $\bar w_\eps=-1$ in $\bigcup_{i=1}^M B_{R^-_\eps}(p_i)$. It is  thus enough to show that
\begin{equation}\label{convpoint}
\limsup_{\eps \to 0} G_{\eps,\beta_\eps}\left(\bar w_\eps,B_{R^+_\eps}(p_i) \setminus B_{R^-_\eps}(p_i)\right)\leq 4\pi  \quad \text{ for }1 \leq i \leq M.
\end{equation}

Without loss of generality we may assume that $p_i=0$. We now compute separately the two terms in $ G_{\eps,\beta_\eps}\big(\bar w_\eps,B_{R^+_\eps} \setminus B_{R^-_\eps}\big)$. Note that
$$\nabla \bar w_\eps(x)= q'_\eps ( |x|-\beta_\eps) \frac{x}{|x|} \quad \text{ for }x \in B_{R^+_\eps} \setminus B_{R^-_\eps}$$
so that, using polar coordinates  we have
\begin{multline}\label{eq:1246}
 \frac{1}{\beta_\eps} \int_{B_{R_\eps^+}\setminus B_{R_\eps^-}} \left(\frac{\eps}{2}|\nabla \bar w_\eps|^2 + \frac{1}{\eps} W(\bar w_\eps)\right)dx\\
 =\frac{2\pi}{\beta_\eps} \int_{R_\eps^-}^{R_\eps^+}\lt[\frac{\eps}{2}| q_\eps'(r-\beta_\eps)|^2 + \frac{1}{\eps} W( q_\eps(r-\beta_\eps))\rt]r\, dr\\
  \le2\pi \left(1+\frac{\delta_\eps}{\beta_\eps}\right) \int_{-\delta_\eps}^{\delta_\eps} \lt[\frac{\eps}{2}|q_\eps'(r)|^2 + \frac{1}{\eps} W(q_\eps(r))\rt]dr\to 2\pi c_0,
  \end{multline}
where we used the scaling assumption \eqref{eq:scaling} and Lemma \ref{lem:c_0}.

For the other term we  notice that since $\nabla d^{\beta_\eps}(x)=\frac{x}{|x|}$  for all $x \in B_{R_\eps^+}\backslash B_{R_\eps^-}$, we have
$$ -\eps \Delta \bar w_\eps +\frac{1}{\eps} W'(\bar w_\eps)=-\eps q_\eps''(d^{\beta_\eps}) +\frac{1}{\eps} W'(q_{\eps}(d^{\beta_\eps}))- \eps q'_\eps(d^{\beta_\eps}) \Delta d^{\beta_\eps} \quad \text{ in }B_{R_\eps^+}\backslash B_{R_\eps^-}.$$
Moreover, using that $\Delta d^{\beta_\eps}(x)= \frac{1}{|x|}$ for   $x \in B_{R_\eps^+}\backslash B_{R_\eps^-}$, we obtain
\begin{multline*}
 \frac{\beta_\eps}{\eps} \int_{B_{R_\eps^+}\backslash B_{R_\eps^-}} \left(-\eps \Delta \bar w_\eps + \frac{1}{\eps} W'(\bar w_\eps)\right)^2dx\\
 =\frac{ 2\pi\beta_\eps}{\eps} \int_{R_\eps^-}^{R_\eps^+} \left|-\eps q''_\eps(r-\beta_\eps)+\frac{1}{\eps}W'(q_\eps(r-\beta_\eps)) -\eps\frac{ q_\eps'(r-\beta_\eps)}{r}\right|^2 r\, dr\\
  =\frac{ 2\pi\beta_\eps}{\eps} \int_{-\delta_\eps}^{\delta_\eps} \left|-\eps q''_\eps(r)+\frac{1}{\eps}W'(q_\eps(r)) -\eps\frac{ q_\eps'(r)}{r+\beta_\eps}\right|^2 (r+\beta_\eps)\, dr.
\end{multline*}
By Lemma \ref{lem:epsp},
$$\frac{\beta_\eps}{\eps} \int_{-\delta_\eps}^{\delta_\eps} \left|-\eps q''_\eps(r)+\frac{1}{\eps}W'(q_\eps(r))\right|^2 (r+\beta_\eps)\, dr \leq  \frac{\beta_\eps  (\delta_\eps+\beta_\eps)\eps^p}{\eps} \to 0,$$
hence 
\begin{multline*}\label{3}
\limsup_{\eps \to 0} \frac{\beta_\eps}{\eps} \int_{B_{R_\eps^+}\backslash B_{R_\eps^-}} \left(-\eps \Delta \bar w_\eps + \frac{1}{\eps} W'(\bar w_\eps)\right)^2dx\\
=\limsup_{\eps \to 0} \frac{ 2\pi\beta_\eps}{\eps} \int_{-\delta_\eps}^{\delta_\eps} \left|\eps\frac{ q_\eps'(r)}{r+\beta_\eps}\right|^2 (r+\beta_\eps)\, dr\leq \limsup_{\eps \to 0} \frac{ 2\pi\eps\beta_\eps}{\beta_\eps-\delta_\eps} \int_{-\delta_\eps}^{\delta_\eps} | q_\eps'(r)|^2 \, dr.
\end{multline*}
Finally by \eqref{eqc0} together with the scaling laws \eqref{eq:scaling}, we obtain that
$$\limsup_{\eps \to 0} \frac{ 2\pi\eps\beta_\eps}{\beta_\eps-\delta_\eps} \int_{-\delta_\eps}^{\delta_\eps} | q_\eps'(r)|^2 \, dr = \limsup_{\eps \to 0}  \frac{ 2\pi}{1-\frac{\delta_\eps}{\beta_\eps}} \int_{-\delta_\eps/2}^{\delta_\eps/2} \eps | q_\eps'(r)|^2 \, dr=  2\pi c_0.$$
This yields
\begin{equation}\label{eq:1249}
 \limsup_{\eps \to 0}\frac{\beta_\eps}{\eps} \int_{B_{R_\eps^+}\backslash B_{R_\eps^-}} \left(-\eps \Delta \bar w_\eps + \frac{1}{\eps} W'(\bar w_\eps)\right)^2dx \le 2\pi c_0,
 \end{equation}
and gathering \eqref{eq:1246} and \eqref{eq:1249}  concludes the proof of \eqref{convpoint}.

\medskip

\noindent {\sf Step 5.}  We now address the jump term. Since $\nabla \bar v_\eps=q'_\eps(d_\eps) \nabla d_\eps$ and $|\nabla d_\eps|=1$  a.e. in $\Om$, we infer that
\[
\int_{\Om } \phi_\eps\left(\nabla \bar v_\eps\right) \left(\frac{\eps}{2}|\nabla \bar v_\eps|^2 + \frac{W(\bar v_\eps)}{\eps}\right) dx
=  \int_{\{|d_\eps|\leq \delta_\eps\}} \phi_\eps( q_\eps'(d_\eps)\nabla d_\eps)\left(\frac{\eps}{2}|q'_\eps(d_\eps)|^2 + \frac{W(q_\eps(d_\eps))}{\eps}\right)dx.
\]
We proceed as in the proof of \eqref{claimupperBBG1}. Indeed, using the co-area formula, \eqref{boundphieps}, \eqref{eq:term3.42}, $\HH^1(\{d_\eps=t\})\le C \HH^1(J_u)$ for $|t|\le \delta_\eps$ and the fact that $\frac{\eps}{2}|q_\eps'|^2=\frac{1}{\eps} W(q_\eps)$ on $[- \delta_\eps/2,\delta_\eps/2]$, we find that
\begin{multline}\label{eq:term3.11}
\limsup_{\eps\to 0} \int_{\Om } \phi_\eps\left(\nabla \bar v_\eps\right) \left(\frac{\eps}{2}|\nabla \bar v_\eps|^2 + \frac{W(\bar v_\eps)}{\eps}\right) dx\\
=\limsup_{\eps\to 0} \int_{- \frac{\delta_\eps}{2}}^{\frac{\delta_\eps}{2}}
\eps|q'_\eps(t)|^2 \left(\int_{\{d_\eps=t\}} \phi_\eps(q_\eps'(t)\nabla d_\eps)\, d\HH^1\right) dt
\nonumber.
\end{multline}
Now for $|t|\le \delta_\eps/2$ we have by \eqref{eq:LussardiPhirVsPhi},
\begin{eqnarray*}
 \int_{\{d_\eps=t\}} \phi_\eps(q_\eps'(t)\nabla d_\eps)\, d\HH^1& \le & \int_{\{d_\eps=t\}} \phi(\nabla d_\eps)\, d\HH^1+\frac{L r_\eps}{2|q_\eps'(t)|} \HH^1(\{d_\eps=t\})\\
&  \le & \int_{\{d_\eps=t\}} \phi(\nabla d_\eps)\, d\HH^1+C\frac{L r_\eps}{|q_\eps'(t)|} \HH^{1}(J_u).
\end{eqnarray*}
As
\[
 \limsup_{\eps\to 0}  r_\eps \int_{- \frac{\delta_\eps}{2}}^{\frac{\delta_\eps}{2}} \eps |q_\eps'|dt\leq 2\limsup_{\eps\to 0}  r_\eps \eps=0,
\]
we obtain from \eqref{eqc0}
\begin{multline*}
 \limsup_{\eps\to 0} \int_{\Om } \phi_\eps\left(\nabla \bar v_\eps\right) \left(\frac{\eps}{2}|\nabla \bar v_\eps|^2 + \frac{W(\bar v_\eps)}{\eps}\right) dx
=\limsup_{\eps\to 0}\int_{- \frac{\delta_\eps}{2}}^{\frac{\delta_\eps}{2}}
\eps|q'_\eps(t)|^2 \left(\int_{\{d_\eps=t\}} \phi(\nabla d_\eps)\, d\HH^1\right) dt\\
\le \limsup_{\eps\to 0}\lt(\int_{- \frac{\delta_\eps}{2}}^{\frac{\delta_\eps}{2}}
\eps|q'_\eps(t)|^2 dt\rt) \sup_{|t|\le \delta_\eps}\int_{\{d_\eps=t\}} \phi(\nabla d_\eps)\, d\HH^1\\
\le c_0 \limsup_{\eps\to 0}\sup_{|t|\le \delta_\eps}\int_{\{d_\eps=t\}} \phi(\nu_{\{d_\eps=t\}})\, d\HH^1.
\end{multline*}
In the last line we used that $\nabla d_\eps= \nu_{\{d_\eps=t\}}$ on $\{d_\eps=t\}$. Writing $\{d_\eps=t\}$ as a double graph on $J_u$ away from $\mathcal{P}_{J_u}$ we see that
\[
 \limsup_{\eps\to 0}\sup_{|t|\le \delta_\eps}\int_{\{d_\eps=t\}} \phi(\nu_{\{d_\eps=t\}})\, d\HH^1=2 \int_{J_u} \phi(\nu_u)d\HH^1
\]
so that finally
\begin{equation}\label{eq:term3}
\limsup_{\eps \to 0}\int_{\Om } \phi_\eps\left(\nabla \bar v_\eps\right) \left(\frac{\eps}{2}|\nabla \bar v_\eps|^2 + \frac{W(\bar v_\eps)}{\eps}\right) dx \leq 2c_0 \int_{J_u} \phi(\nu_u)\, d\HH^1.
\end{equation}

\medskip

\noindent {\sf Step 6.}  It remains to address the curvature term. Let $\Omega_\eps= \Omega\backslash \bigcup_{i=1}^M B_{R_\eps^-}(p_i)$. We first notice that $\bar w_\eps=-1$ in $\bigcup_{i=1}^M B_{R_\eps^-}(p_i)$ and that $\bar v_\eps$ is locally constant and equal to $\pm 1$ in $\Omega_\eps\cap\{|d_\eps|>\delta_\eps\}$. Therefore
\[
 \frac{1}{\eps}\int_\Om  \left(-\eps \Delta \bar v_\eps +\frac{1}{\eps} W'(\bar v_\eps)\right)^2 (1+\bar w_\eps)^2\, dx\le \frac{4 }{\eps}\int_{\Om_\eps\cap\{|d_\eps|\le \delta_\eps\}}  \left(-\eps \Delta \bar v_\eps +\frac{1}{\eps} W'(\bar v_\eps)\right)^2 \, dx.
\]
Moreover, we recall that we assumed that $J_u\in \mathscr C(\Om)$ and thus $J_u=\bigcup_{i=1}^N \Gamma_i$ where $\Gamma_i=\gamma_i([0,1])$ for some $\gamma_i \in \mathcal C^{2}([0,1];\R^2)$. In particular for every $\eps$ small enough we may write
\[
 \Om_\eps\cap\{|d_\eps|\le \delta_\eps\}=\bigcup_{i=1}^N \Omega_\eps^i
\]
with,
\[
 d_\eps(x)= {\rm dist}(x,\Gamma_i)-2\delta_\eps \qquad \textrm{for } x\in \Omega_\eps^i.
\]
Moreover, in each $\Omega_\eps^i$ the orthogonal projection $\Pi_{J_u}$ on $J_u$ coincides with the orthogonal projection on $\Gamma_i$ and is uniquely defined. We may thus decompose the integral as
\[
 \frac{4 }{\eps}\int_{\Om_\eps\cap\{|d_\eps|\le \delta_\eps\}}  \left(-\eps \Delta \bar v_\eps +\frac{1}{\eps} W'(\bar v_\eps)\right)^2 \, dx=\sum_{i=1}^N \frac{4 }{\eps}\int_{\Om_\eps^i}  \left(-\eps \Delta \bar v_\eps +\frac{1}{\eps} W'(\bar v_\eps)\right)^2 \, dx.
\]
We claim that for every $i=1,\dots,N$,
\begin{equation}\label{claimcurvMS}
 \limsup_{\eps\to 0} \frac{4 }{\eps}\int_{\Om_\eps^i}  \left(-\eps \Delta \bar v_\eps +\frac{1}{\eps} W'(\bar v_\eps)\right)^2 \, dx\le 8 c_0 \int_{\Gamma_i} |H_{\Gamma_i}|^2 d\HH^1.
\end{equation}
After summation over $i$ this would yield
\begin{equation}\label{eq:term4}
\limsup_{\eps \to 0} \int_\Om  \frac{1}{\eps} \left(-\eps \Delta\bar v_\eps +\frac{1}{\eps} W'(\bar v_\eps)\right)^2 (1+\bar w_\eps)^2 dx\leq 8 c_0 \int_{J_u} |H_{J_u}|^2\, d\HH^1.
\end{equation}
We now prove \eqref{claimcurvMS}. In $\Om_\eps^i$ we have $|\nabla d_\eps|=1$ and
\begin{equation*}
 -\eps \Delta \bar v_\eps +\frac{1}{\eps} W'(\bar v_\eps)=-\eps q_\eps''(d_\eps) +\frac{1}{\eps} W'(q_\eps(d_\eps))- \eps q'_\eps(d_\eps) \Delta d_\eps
\end{equation*}
so that
\[
  \frac{4 }{\eps}\int_{\Om_\eps^i}  \left(-\eps \Delta \bar v_\eps +\frac{1}{\eps} W'(\bar v_\eps)\right)^2 \, dx=  \frac{4}{\eps}\int_{\Om_\eps^i} \left(-\eps q_\eps''(d_\eps) +\frac{1}{\eps} W'(q_\eps(d_\eps))- \eps q'_\eps(d_\eps) \Delta d_\eps\right)^2dx.
\]
 We now notice that by Lemma \ref{lem:epsp}, and the co-area formula,
\begin{multline*}\frac{1}{\eps}\int_{\Om_\eps^i} \left(-\eps q_\eps''(d_\eps) +\frac{1}{\eps} W'(q_\eps(d_\eps))\right)^2dx\\=\frac{1}{\eps}\int_{-\delta_\eps}^{\delta_\eps}  \left(-\eps q_\eps''(t) +\frac{1}{\eps} W'(q_\eps(t))\right)^2 \HH^1(\{d_\eps=t\}\cap \Om_\eps^i) \, dt \leq C\eps^{p-1}\to 0,
\end{multline*}
hence
\begin{multline*}
\limsup_{\eps \to 0} \frac{1}{\eps}\int_{\Om_\eps^i} \left(-\eps q_\eps''(d_\eps) +\frac{1}{\eps} W'(q_\eps(d_\eps))- \eps q'_\eps(d_\eps) \Delta d_\eps\right)^2dx
=\limsup_{\eps \to 0} \eps \int_{ \Om_\eps^i} \left(q'_\eps(d_\eps) \Delta d_\eps\right)^2\, dx.
\end{multline*}
Using the co-area formula again we find
\begin{eqnarray*}
 \eps \int_{ \Om_\eps^i} \left(q'_\eps(d_\eps) \Delta d_\eps\right)^2\, dx &=&\int_{-\delta_\eps}^{\delta_\eps} \eps |q_\eps'(t)|^2 \int_{\Om_\eps^i\cap \{d_\eps=t\}} |\Delta d_\eps|^2 \, d\HH^1\,  dt\nonumber\\
 & \le  &\lt( \int_{-\delta_\eps}^{\delta_\eps} \eps |q_\eps'(t)|^2 dt\rt)\lt(\sup_{|t|\le \delta_\eps}  \int_{\Om_\eps^i\cap \{d_\eps=t\}} |\Delta d_\eps|^2 \, d\HH^1\rt).\label{eq:term4.2}
\end{eqnarray*}
Thanks to \eqref{eqc0}, in order to conclude the proof of \eqref{claimcurvMS}, we are left with the proof of
\begin{equation}\label{eq:term4.3}
 \limsup_{\eps\to 0} \sup_{|t|\le \delta_\eps}  \int_{\Om_\eps^i\cap \{d_\eps=t\}} |\Delta d_\eps|^2 \, d\HH^1\le 2 \int_{\Gamma_i} |H_{\Gamma_i}|^2 \, d\HH^1.
\end{equation}
For this we notice (see \cite[Appendix B]{Giusti}) that for $x\in  \Om_\eps^i\cap \{d_\eps=t\}$,
\[
 |\Delta d_\eps(x)| \le \frac{|H_{\Gamma_i}(\Pi_{J_u}(x))|}{1 \pm (2\delta_\eps+t)|H_{\Gamma_i}(\Pi_{J_u}(x))|}\le (1+C\delta_\eps)|H_{\Gamma_i}(\Pi_{J_u}(x))|.
\]
Writing $\Om_\eps^i\cap \{d_\eps=t\}$ as a double normal graph over $\Gamma_i$ then leads to \eqref{eq:term4.3}.

\medskip

\noindent {\sf Step 7.}  Gathering finally \eqref{eq:term1}, \eqref{eq:term1bis}, \eqref{eq:term2}, \eqref{convpoint}, \eqref{eq:term3} and \eqref{eq:term4}, we infer that
$$\limsup_{\eps \to 0} \mathcal G_\eps^{(\gamma)}(\bar u_\eps,\bar v_\eps,\bar w_\eps) \to \int_\Om |\nabla u|^2\, dx + \int_{J_u}(1+|H_{J_u}|^2)\, d\HH^1+\gamma\HH^0(\mathcal P_{J_u})=\mathcal G^{(\gamma)}(u),$$
which completes the proof of the upper bound.
\hfill$\Box$

\section{Appendix: the Gauss-Bonnet formula for varifolds}

The aim of this Appendix is to give a self contained proof of a generalized Gauss-Bonnet formula for $1$-rectifiable integral varifolds with square integrable bounded first variation. The proof below follows the lines of the arguments in \cite[Theorem 24.1]{menne2023sharp}, in a simplified setting.

\begin{theorem}\label{thm:Gauss-Bonnet}
Let $V\in \mathbf V_1(\R^2)$ be a $1$-rectifiable integral varifold in $\R^2$ with bounded first variation in $L^2_{\mu_V}(\R^2;\R^2)$ and such that
\begin{equation}\label{eq:assumpt}
0<\int_{\R^2}(1+|H_V|^2)\, d\mu_V<\infty.
\end{equation}
Then,
\begin{equation}\label{GBVarifold}
|\delta V|(\R^2)= \int_{\R^2} |H_V| \, d\mu_V \ge 2\pi.
\end{equation}
\end{theorem}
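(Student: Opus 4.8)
\emph{Plan of proof.} The identity $|\delta V|(\R^2)=\int_{\R^2}|H_V|\,d\mu_V$ is immediate: the assumption that $V$ has bounded first variation in $L^2_{\mu_V}(\R^2;\R^2)$ means exactly that the singular part $\sigma$ in $\delta V=-H_V\mu_V+\sigma$ vanishes, so $|\delta V|=|H_V|\mu_V$, a finite measure since $\int_{\R^2}|H_V|\,d\mu_V\le\bigl(\int|H_V|^2\,d\mu_V\bigr)^{1/2}\mu_V(\R^2)^{1/2}<\infty$ by \eqref{eq:assumpt}. Write $\mu=\mu_V$, $M={\rm Supp}(\mu)$, $\Gamma=\int_{\R^2}|H_V|\,d\mu$; we may assume $\Gamma<2\pi$. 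I would first record two preliminaries. By the monotonicity formula \eqref{eq:monotonicity-form}, $\Theta^1(\mu,\cdot)$ exists everywhere and is $\ge1$ $\HH^1$-a.e.\ on $M$; if $M$ were unbounded one could pick $x_n\in M$ with the balls $B_1(x_n)$ pairwise disjoint, and since $\sum_n\int_{B_1(x_n)}|H_V|\,d\mu\le\Gamma$ one would get $\mu(B_1(x_n))\ge 2\Theta^1(\mu,x_n)-\int_{B_1(x_n)}|H_V|\,d\mu\ge1$ for $n$ large, contradicting $\mu(\R^2)<\infty$; hence $M$ is compact. Second, since $\|\delta V_{x,r}\|(B_R)\le\|H_V\|_{L^2_\mu(B_{Rr}(x))}\,\mu(B_{Rr}(x))^{1/2}\to0$ (where $V_{x,r}$ is the rescaling of $V$ by $1/r$ around $x$), every tangent varifold to $V$ at a point $x$ is a stationary integral $1$-cone, i.e.\ a finite union $\sum_j m_j\,\HH^1\restr(\R^+e_j)$ with $m_j\in\N^*$, $e_j\in\S^1$, obeying the balancing condition $\sum_j m_j e_j=0$.

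The core of the argument is a Lusin-type property of the Gauss map, following \cite{menne2023sharp}. Define $\mu$-a.e.\ the doubled tangent-angle map $\Phi\colon M\to\S^1$ by $\Phi(x)=2\theta(x)$, where $T_xM=\R(\cos\theta(x),\sin\theta(x))$; this is globally single-valued because it only records the tangent \emph{line}, and along $\mathcal C^2$ pieces of $M$ parametrized by arclength one has $\tfrac{d}{ds}\Phi=2\kappa$ with $\kappa$ the classical curvature and $|H_V|=|\kappa|$. Proposition~\ref{prop:lusin} asserts that for every $\delta>0$ there exist countably many embedded $\mathcal C^2$ arcs $\{\Sigma_k\}$ with $\mu\bigl(M\setminus\bigcup_k\Sigma_k\bigr)<\delta$ on which $\Phi$ agrees with the classical (doubled) Gauss map, and moreover the set $B$ of points of $M$ lacking a second-order (quadratic) approximation is $\mu$-negligible and its image under the Gauss map is $\HH^1$-negligible --- the Lusin $(N)$ property. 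The proof is a Whitney--Federer $\mathcal C^2$-approximation of rectifiable sets carrying an $L^2$ generalized curvature, the integrability of $|H_V|^2$ being used through a coarea inequality to bound the Gauss image of the non-$\mathcal C^2$-approximable part. I expect this to be the main difficulty: it is exactly here that the sharp geometric constant enters, the monotonicity formula alone yielding only $\Gamma\ge2$ as in Lemma~\ref{lem:monoton}.

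Granting Proposition~\ref{prop:lusin}, apply the area formula to each $\mathcal C^1$ map $\Phi|_{\Sigma_k}\colon\Sigma_k\to\S^1$ (with the integer weight $\theta$), sum over $k$, and use that $B$ contributes nothing to the curvature integral ($\mu(B)=0$) and that $\HH^1(\Phi(B))=0$, to obtain
\[
2\Gamma=\int_M 2\theta\,|\kappa|\,d\HH^1=\int_{\S^1}N(\omega)\,d\HH^1(\omega),\qquad N(\omega):=\!\!\sum_{x\in M\setminus B,\ \Phi(x)=\omega}\!\!\theta(x).
\]
It remains to check $N(\omega)\ge2$ for $\HH^1$-a.e.\ $\omega$. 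Fix $\omega$, let $d=d(\omega)\in\R\mathbf P^1$ be the line direction with doubled angle $\omega$, and choose $w\in\S^1$ with $w\perp d$. The function $g(x)=x\cdot w$ attains its maximum and minimum over the compact set $M$ at points $x^+,x^-$, with $x^+\ne x^-$ unless $M$ lies in a line parallel to $d$ (then $N\equiv\infty$). At $x^+$, every blow-up of $M$ lies in the half-space $\{y\cdot w\le0\}$, so the (balanced) tangent cone at $x^+$ has all its half-lines in $\{y\cdot w=0\}=\R d$; hence this cone equals $m\,\HH^1\restr\R d$ for some $m\in\N^*$, and likewise at $x^-$. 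Consequently the approximate tangent line at $x^\pm$ is $\R d$, i.e.\ $\Phi(x^\pm)=\omega$, as long as $x^\pm\notin B$; but $\{\omega:\ x^+(\omega)\in B\text{ or }x^-(\omega)\in B\}$ is contained in the Gauss image of $B$, hence $\HH^1$-null by the Lusin $(N)$ property. Therefore $N(\omega)\ge2$ for a.e.\ $\omega$, so $2\Gamma\ge2\,\HH^1(\S^1)=4\pi$, i.e.\ $\int_{\R^2}|H_V|\,d\mu_V=\Gamma\ge2\pi$, which is \eqref{GBVarifold}.
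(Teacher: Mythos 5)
Your proposal follows essentially the same route as the paper: recast the curvature integral via an (anisotropic) Gauss map, establish a coarea/area formula for it, invoke a Lusin $(N)$ property to control the contribution of a null set, and then lower-bound the fibre multiplicity by producing supporting points on the compact set $M$ via a blow-up to a balanced stationary $1$-cone. The bookkeeping differs slightly — you use the doubled-angle map $\Phi\colon M\to\S^1$ (a genuine single-valued map, with Jacobian $2|\kappa|$), while the paper works with the subset $B\subset M\times\S^1$ of \emph{supporting} pairs $(x,\nu)$ and the projection $\Pi_{\S^1}$ — but these are the same thing up to a double cover of $\S^1$, and your extremal-point argument for $N(\omega)\ge 2$ is precisely the paper's Step 2 (the observation that the tangent cone at the minimiser/maximiser of $x\mapsto x\cdot w$ must be contained in a line orthogonal to $w$, combined with the constancy theorem to rule out $M$ being a full line).

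The one place where you should be more careful is the statement and use of the Lusin property, since that is where the sharp constant comes from. You assert a Lusin $(N)$ property for the \emph{full} Gauss map on the $\mu$-null bad set $B$, i.e.\ $\HH^1(\Phi(B))=0$. The paper's Proposition \ref{prop:lusin} is a more restricted statement: for any $\HH^1$-null $N\subset M$, it is the set of \emph{supporting} directions at points of $N$ (the set $\Pi_{\S^1}((N\times\S^1)\cap B)$, with $B$ the supporting pairs) that has $\HH^1$-null image. Its proof leans heavily on the supporting condition (through the monotonicity formula, the Michael--Simon inequality, and a Harnack-type estimate applied to the nonnegative function $y\mapsto (y-x)\cdot\nu$), and it is not clear that the unrestricted version you write down follows from the $L^2$-curvature bound. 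Luckily your application of the Lusin property is exactly at the supporting points $x^\pm(\omega)$, where $\mp w(\omega)$ is automatically an outer normal of a supporting half-plane; so the paper's weaker version does suffice. Concretely, what you actually need and prove is the inequality $2\Gamma=\int_{\S^1}N_\Sigma(\omega)\,d\HH^1(\omega)$ (with $N_\Sigma$ counting $\theta$-weighted preimages in the $\mathcal C^2$ covering) together with $N_\Sigma(\omega)\ge 2$ for a.e.\ $\omega$, the Lusin property entering only to guarantee that $x^\pm(\omega)$ lands in the $\mathcal C^2$ covering for a.e.\ $\omega$. With that clarification the argument is correct and matches the paper's.
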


To simplify notation, we write in the sequel $\mu=\mu_V$ and $H=H_V$. By \cite[Remark 17.9]{Simon_GMT}, we can write $V=\mathbf v(M,\theta)$, where $M={\rm Supp}(\mu)$ is a closed $\HH^1$-rectifiable set and $\theta :M \to \N \setminus \{0\}$ is an $\HH^1$-integrable function. In particular
$$\mu=\theta \HH^1\restr M.$$
For later use, we define the measure $\psi=|\delta V|=|H|\mu$ and we also set
\begin{equation}\label{eq:alpha}
\alpha=\psi(\R^2)=\int_{\R^2}|H|\, d\mu.
\end{equation}

\begin{remark}
It is not enough to assume that $V$ has bounded first variation since e.g. for a square, $V=\mathbf v(\partial (0,1)^2,1)$, we have $\mu_V=\HH^1\restr \partial (0,1)^2$ and $$\delta V=  (e_1+e_2) \delta_{(0,0)}+(e_2-e_1)\delta_{(1,0)}+(e_1-e_2)\delta_{(0,1)}-(e_1+e_2)\delta_{(1,1)},$$
where $e_1=(1,0)$ and $e_2=(0,1)$, hence $|\delta V|(\R^2)= 4 \sqrt{2}< 2\pi$.
\end{remark}

The proof of Theorem \ref{thm:Gauss-Bonnet} is based  on a Lusin type property for the Gauss map (see Proposition \ref{prop:lusin} below). This result, which is the two-dimensional version of \cite[Theorem 21.16]{menne2023sharp}, will be proven in Subsection \ref{sec:lusin}.

\medskip

Let us fix some notation. Given a vector $\nu=(\nu_1,\nu_2) \in \S^1$, we write $\nu^\perp=(-\nu_2,\nu_1)$ and $\langle \nu^\perp\rangle={\rm Vect}(\nu^\perp)$. For all $(x,\nu) \in \R^2 \times \S^1$, we denote by $\Pi_{\S^1}(x,\nu)=\nu$ (resp. $\Pi_{\R^2}(x,\nu)=x$) the projections onto $\S^1$ (resp. $\R^2$).

\begin{proposition}\label{prop:lusin}
Let $N \subset M$ be such that $\HH^1(N)=0$ and
\begin{equation}\label{eq:B}
B=\Big\{(x,\nu)\in M \times \S^1  :\;  M \subset \{y \in \R^2  : \;(y-x)\cdot \nu\ge 0\}\Big\}.
\end{equation}
Then
$$\HH^1\left(\Pi_{\S^1}\left((N \times \S^1) \cap B \right)\right)=0.$$ 
\end{proposition}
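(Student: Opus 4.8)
The plan is to reduce the statement to a Lusin--type property for the Gauss map of the convex hull $K:=\mathrm{conv}(\ov M)$ of $M$. First I would note that $(x,\nu)\in B$ forces $x\in M\cap\partial K$ and $\nu$ to lie in the normal cone of $K$ at $x$, and that $B$ is cut out by a monotone relation: if $(x_1,\nu_1),(x_2,\nu_2)\in B$ then $(x_2-x_1)\cdot\nu_1\ge0$ and $(x_1-x_2)\cdot\nu_2\ge0$, hence $(x_1-x_2)\cdot(\nu_1-\nu_2)\le0$. The key structural input is that, because $V$ has first variation in $L^2_\mu$, the set $M$ has no corners: if $x_0\in M$ and the normal cone of $K$ at $x_0$ were strictly larger than a single ray, then $M$ would lie in a convex wedge of opening $<\pi$ with vertex $x_0$, so every subsequential blow-up of $V$ at $x_0$ — which exists and is a stationary integral $1$-cone by the monotonicity machinery used in the proof of Lemma \ref{lemdenssing} — would be supported in that wedge; writing such a cone as a finite sum over half-lines $\R^+e_j$ with integer multiplicities $m_j$, stationarity gives $\sum_j m_j e_j=0$ while all the $e_j$ lie in an open half-plane, and the cone is non-trivial since $\Theta^1(\mu,x_0)\ge1$ on $\mathrm{supp}\,\mu$; this is a contradiction. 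Hence $B$ is the graph of a single-valued partial map $\mathbf{g}\colon M\cap\partial K\to\S^1$, and it suffices to prove $\HH^1(\mathbf{g}(N\cap\partial K))=0$.

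Next I would use the $\mathcal C^2$-rectifiability of $M=\mathrm{supp}\,\mu$, which holds because $V$ is integral with $H\in L^2_\mu$ — this is \cite[Theorem 1]{Menne} (see also \cite[Theorem 3.1]{SchatzLower}) — to write $M=M_0\cup\bigcup_{i\ge1}M_i$ with $\HH^1(M_0)=0$ and the $M_i$ pairwise disjoint embedded curves of class $\mathcal C^2$. If $x\in M_i$ and $(x,\nu)\in B$, then $M\supset M_i$ lies on one side of a line through $x$, which must therefore be tangent to $M_i$ at $x$; consequently $\mathbf{g}(x)=\pm n_i(x)$, where $n_i\colon M_i\to\S^1$ is the locally Lipschitz unit-normal field of $M_i$. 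Since locally Lipschitz maps preserve $\HH^1$-nullity and $\HH^1(N\cap M_i)=0$, the set $\mathbf{g}(N\cap M_i)\subset n_i(N\cap M_i)\cup(-n_i(N\cap M_i))$ is $\HH^1$-null, and summing over the countably many indices disposes of $N\cap\bigcup_iM_i$.

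It remains to control $\HH^1(\mathbf{g}(N_0))$ for $N_0:=N\cap M_0$, and this is the main obstacle: $N_0$ is $\HH^1$-null but lies in the $\mathcal C^2$-irregular part of $M$, so one cannot invoke a fixed Lipschitz chart. Here the monotonicity formula (Lemma \ref{lem:monoton}) enters, exactly as in the reduction step of \cite[Theorem 21.16]{menne2023sharp}: at every $x_0\in M\cap\partial K$ one has $\mu(B_\varrho(x_0))\ge\varrho$ for all small $\varrho$ (from \eqref{eq:monotonicity-form} together with $\Theta^1\ge1$), while $M$ is pinned against the supporting line, which makes $\mathbf{g}$ quantitatively expanding at definite scales near $x_0$. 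I would then cover $N_0$ by a Besicovitch family of such balls and, combining this with the absolute continuity in arc length of the Gauss map along the $\mathcal C^2$ pieces, estimate $\HH^1(\mathbf{g}(N_0))\lesssim\int_{N_0}|H|\,d\HH^1=0$, the integral vanishing since $\HH^1(N_0)=0$. Carrying out this covering argument cleanly — the convex-hull facts and the blow-up classification above being routine — is the technical heart of the proof, which I would organise by following and simplifying the one-dimensional case of Menne's argument. Putting the three steps together yields $\HH^1\big(\Pi_{\S^1}((N\times\S^1)\cap B)\big)=\HH^1(\mathbf{g}(N\cap\partial K))=0$, as claimed.
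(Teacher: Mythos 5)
Your step 2 — peeling off the $\mathcal{C}^2$-regular part of $M$ and dispatching $N\cap M_i$ via the Lipschitz normal field $n_i$ of each chart — is a genuine and clean observation that the paper does \emph{not} use (the paper treats all of $N$ uniformly with a single covering argument). Your step 1 is close to the paper's Lemma~\ref{lem:CR}, but your wedge/blow-up argument does not cover the antipodal case: if the normal cone at $x_0$ contains both $\nu$ and $-\nu$, then $M$ lies in a \emph{line}, the dual wedge has opening $\pi$, and nontrivial stationary integral cones do exist on a line. The paper rules this out globally via the Constancy Theorem together with the finite-mass hypothesis \eqref{eq:assumpt} (this is Step~2 in the proof of Theorem~\ref{thm:Gauss-Bonnet}); for the Lusin statement itself this gap is harmless, since a $\pm\nu$ ambiguity only costs a factor of $2$.

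The real gap is step 3, which you rightly call the technical heart and then leave as a reference to Menne. The estimate that actually carries the proposition is Lemma~\ref{lem:centralLusin}, a ball-to-ball bound $\HH^1_\infty\bigl(B[B_{2^{-5}R}(x_0)]\bigr)\le c\,\psi(B_R(x_0))$, and it does \emph{not} follow from the monotonicity formula alone. It requires the Sobolev--Michael--Simon inequality (Theorem~\ref{thm:SMS}), a Poincar\'e-type estimate for Lipschitz functions vanishing on a large set (Proposition~\ref{prop:Sob}), a mass-moving lemma (Lemma~\ref{lem:movemass}), and a Harnack-type control of the oscillation of the normal (Lemma~\ref{21.8}); the monotonicity formula only supplies the density bounds \eqref{loweruppermu}. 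Moreover, the written estimate $\HH^1(\mathbf{g}(N_0))\lesssim\int_{N_0}|H|\,d\HH^1$ is not a usable intermediate step: one cannot directly compare the Gauss image of a null set with the $\psi$-mass of that set. The paper covers an open enlargement $U\supset N$ by a Vitali family of balls, applies the ball-to-ball estimate, and lets $(\mu+\psi)(U)\to(\mu+\psi)(N)=0$ by outer regularity; your sketch omits both the enlargement $U$ and the analytic inputs above. As it stands, the proposal is a plausible plan but does not prove the proposition.
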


Before proving Theorem~\ref{thm:Gauss-Bonnet}, let us check the following property satisfied by the set $B$:

\begin{figure}[!h]
\input{ 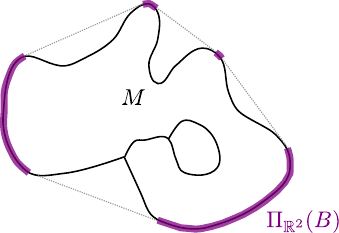_tex}
\caption{}
\end{figure}

\begin{lemma}\label{lem:CR}
Let us consider the set $B$ defined in \eqref{eq:B}, then
\begin{equation}\label{eq:1418}
B \subset  \Big\{ (x,\nu)\in M \times \S^1  :\ T_x M \text{ exists and } \ T_x M =\langle \nu^\perp\rangle \Big\}.
\end{equation}
\end{lemma}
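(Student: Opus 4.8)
The plan is a blow-up argument at $x$, patterned on the proof of Lemma~\ref{lemdenssing}. After a rotation and a translation we may assume $x=0$ and $\nu=e_2$, so that membership $(0,e_2)\in B$ reads $M\subset\{y=(y_1,y_2)\in\R^2:y_2\ge0\}$; since $\nu^\perp=-e_1$, the assertion to prove becomes: the approximate tangent line $T_0M$ exists and equals $\langle e_1\rangle$. First I would note that $\Theta:=\Theta^1(\mu,0)$ exists and is finite by \cite[Corollary~17.8]{Simon_GMT}, and that $\Theta>0$. For the latter, since $0\in M=\mathrm{Supp}\,\mu$ and $\mu(\{0\})=0$, we may pick $r>0$ small enough that $\int_{B_r(0)}|H|\,d\mu<\tfrac1{2c_1}$ and apply the Sobolev--Michael--Simon inequality (Theorem~\ref{thm:SMS}) on $B_r(0)$ to the hat function $\zeta(y)=(1-2|y|/r)_+$; since $\zeta(0)=1$ this gives $\tfrac12\le c_1\tfrac2r\,\mu(B_{r/2}(0))$, hence $\Theta\ge\tfrac1{8c_1}>0$.

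Next, exactly as in Lemma~\ref{lemdenssing}, I would blow up: let $V_\varrho$ be the rescaling of $V$ at $0$ and $\mu_\varrho=\mu_{V_\varrho}$. Then $\mu_\varrho(B_R)\le R(2\Theta+1)$ for small $\varrho$, and $\|\delta V_\varrho\|(B_R)\le C\,\|H\|_{L^2_\mu(B_{R\varrho}(0))}\,\varrho^{1/2}\to0$ because $\|H\|_{L^2_\mu(B_s(0))}\to0$ as $s\to0$. By Allard's compactness theorem \cite[Theorem~42.7 \& Corollary~42.8]{Simon_GMT} there is $\varrho_j\downarrow0$ with $V_{\varrho_j}\wto V_0$, a stationary integral $1$-varifold in $\R^2$; by \cite[Theorem~19.3]{Simon_GMT}, $V_0=\mathbf v(C_0,\psi_0)$ is a cone, $C_0=\bigcup_{i=1}^k\R^+u_i$ a finite union of half-lines, $\psi_0\equiv m_i\in\N\setminus\{0\}$ on $\R^+u_i$, and $C_0\neq\emptyset$ since comparison of masses of balls gives $\mu_{V_0}(B_R)=2R\Theta>0$ for a.e.\ $R$. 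Because each $\mu_\varrho$ is supported in the closed half-plane $\{y_2\ge0\}$ and $\{y_2<0\}$ is open, weak-$*$ lower semicontinuity forces $\mu_{V_0}(\{y_2<0\})=0$, hence $(u_i)_2\ge0$ for all $i$. Testing stationarity $\delta V_0=0$ against vector fields that are constant near the origin and using that a finite union of half-lines carries no interior curvature yields the balancing condition $\sum_{i=1}^k m_iu_i=0$ (with the sign convention of the \emph{Curves} subsection). Reading off the $e_2$-component, $\sum_i m_i(u_i)_2=0$ with all summands $\ge0$ and $m_i>0$, so $(u_i)_2=0$, i.e.\ $u_i\in\{\pm e_1\}$; a single half-line is impossible (balancing would give $\pm m\,e_1=0$), so $C_0=\langle e_1\rangle$ with multiplicities $m_\pm$ on the two rays, and $m_+=m_-=:m$ by balancing. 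Thus $\mu_{V_0}=m\,\HH^1\restr\langle e_1\rangle$, and $m=\Theta$ by mass comparison. Since this limit is independent of the subsequence, $\mu_\varrho\wto\Theta\,\HH^1\restr\langle e_1\rangle$ as $\varrho\to0$.

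It remains to pass from $\mu$ to the set $M$. Because $\theta\ge1$, $\HH^1\restr M\le\mu$, so $\HH^1(M\cap B_\varrho(0))\le\mu(B_\varrho(0))=(2\Theta+o(1))\varrho$ and, testing $\mu_\varrho\wto\Theta\HH^1\restr\langle e_1\rangle$ against bumps supported away from $\langle e_1\rangle$, $\varrho^{-1}\HH^1\big(M\cap B_\varrho(0)\setminus\{\,\mathrm{dist}(\cdot,\langle e_1\rangle)\le\delta|\cdot|\,\}\big)\to0$ for every $\delta>0$; hence every weak-$*$ subsequential limit of $\varrho^{-1}(\HH^1\restr M)_{0,\varrho}$ is concentrated on $\langle e_1\rangle$ and $\le\Theta\,\HH^1\restr\langle e_1\rangle$. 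The delicate point — which I expect to be the main obstacle — is to upgrade this to the statement that the blow-up of $\HH^1\restr M$ is exactly $\Theta^1(M,0)\,\HH^1\restr\langle e_1\rangle$ with a single constant multiplicity, i.e.\ that $T_0M$ exists and equals $\langle e_1\rangle$: one must use that $\delta V$ has no singular part at $0$, so that the integer multiplicity $\theta$ cannot jump across $0$ in a way that would make the rescalings of $\HH^1\restr M$ oscillate; concretely, I would argue that $\mathrm{Supp}(\mu_{V_{\varrho_j}})=M/\varrho_j$ converges to $\langle e_1\rangle$ in local Hausdorff distance and that the multiplicities stabilize. Once $T_0M=\langle e_1\rangle$ is in hand, \eqref{eq:1418} follows at once. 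I would also remark a posteriori that $\langle e_1\rangle$ is the \emph{only} line through $0$ contained in $\{y_2\ge0\}$, so the sole real content of the lemma is the \emph{existence} of the tangent line, for which the stationarity of the blow-up — ruling out ``V-shaped'' limits — is indispensable.
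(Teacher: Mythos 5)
Your argument up to the final paragraph essentially reproduces the paper's own proof: blow up at the point $x$, use the decay $\|\delta V_\varrho\|(B_R)\lesssim\varrho^{1/2}$ to apply Allard's compactness theorem, identify the limit as a stationary integral cone, use the half-space constraint coming from $(x,\nu)\in B$ to force the cone to be a sum of half-lines $\R^+u_i$ with $u_i\cdot\nu\ge0$, and then invoke stationarity (balancing $\sum_i m_iu_i=0$) to conclude $C_0=\langle\nu^\perp\rangle$ with a single multiplicity $m_+=m_-$. Two cosmetic differences from the paper: you establish $\Theta>0$ via the Sobolev--Michael--Simon inequality, whereas the paper gets $\Theta\ge1$ more directly from the monotonicity formula \eqref{eq:monotonicity-form} and upper semicontinuity of the density (this is exactly how \eqref{loweruppermu} is derived); and you extract the full vector balance $\sum_i m_iu_i=0$, whereas the paper only tests $\delta V_0$ against $\zeta\nu$ and reads off the $\nu$-component. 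Both are correct.

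The issue is your last paragraph. You declare ``the delicate point — which I expect to be the main obstacle — is to upgrade this to the statement that the blow-up of $\HH^1\restr M$ is exactly $\Theta^1(M,0)\,\HH^1\restr\langle e_1\rangle$,'' and then only gesture at an argument (``I would argue that $M/\varrho_j$ converges in local Hausdorff distance and the multiplicities stabilize''). This is a gap in your write-up, but it is a gap opened by a misreading of what ``$T_xM$ exists'' means here. In the convention the paper uses (see \cite[Definition 2.79 and Theorem 2.83]{AFP}, which the paper cites in its preliminaries), the approximate tangent space of the rectifiable set $M$ — equivalently, of the rectifiable varifold $V=\mathbf v(M,\theta)$ — at a point $x$ is said to exist, and to equal the line $\pi$, precisely when $\mu_{x,\varrho}/\varrho\to\theta_0\,\HH^1\restr\pi$ locally weakly$^*$ for some $\theta_0>0$; by \cite[Remark 2.84, Proposition 2.85]{AFP} this notion is attached to the set $M$ and does not depend on the particular multiplicity $\theta$. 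You have already established exactly this: $\mu_{0,\varrho}/\varrho\wto\Theta\,\HH^1\restr\langle e_1\rangle$, with $\Theta=\Theta^1(\mu,0)>0$, and the limit is independent of the subsequence since $\Theta$ exists. That \emph{is} the statement ``$T_0M$ exists and $T_0M=\langle e_1\rangle$.'' No further passage to $\HH^1\restr M$ is required, and the paper's own proof terminates at precisely this point. If you delete the final paragraph and instead observe that the convergence of $\mu_{0,\varrho}/\varrho$ you have proved is exactly the definition of the approximate tangent line, your proof is complete and coincides with the paper's.
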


\begin{proof}
Let $(x,\nu) \in B$ and assume without loss of generality that $x=0$. As in the proof of Lemma \ref{lemdenssing}, we perform a blow-up at the point $0$. Given $\varrho > 0$, let $V_{\varrho} \in \mathcal M_{\rm loc}(\mathbf G_1(\R^2))$ be the $1$--varifold defined by 
\begin{equation*}
\langle V_{\varrho},\Phi\rangle:=\frac{1}{\varrho}\int_{\mathbf G_1(\R^2)} \Phi\left(\frac{y}{\varrho},S\right)\, dV(y,S) \quad\text{ for }\Phi \in \mathcal C_c(\mathbf G_1(\R^2)).
\end{equation*}
Up to a subsequence $\{\varrho_j\}_{j \in \N} \searrow 0$, there exists a stationary $1$-rectifiable integral varifold $V_0$ such that $V_{\varrho_j} \wto V_0$ weakly* in $\mathcal M_{\rm loc}(\mathbf G_1(\R^2))$. Moreover, by \cite[Theorem 19.3]{Simon_GMT}, $V_0=\mathbf v(C,\psi)$ where $C$ is an $\HH^1$-rectifiable cone and $\psi:C \to \N\setminus \{0\}$ is a locally $\HH^1$-integrable function with $\psi(x)=\psi(t x)$, for all $x \in C$ and $t>0$. 
We furthermore claim that 
\begin{equation}\label{eq:claimC}
C={\rm Supp}(\mu_{V_0}) =\langle \nu^\perp\rangle.
\end{equation}
Indeed, on the  one hand $(0,\nu) \in B$ so that for every $\varrho > 0$, ${\rm Supp}(\mu_{V_\varrho})=\varrho^{-1} M \subset \{y \in \R^2: \ y \cdot \nu \geq 0\}$ and we deduce that $C={\rm Supp}(\mu_{V_0}) \subset \{y \in \R^2: \ y \cdot \nu \geq 0\}$ as well.
On the other hand, the set $C$ being an $\HH^1$-rectifiable cone in $\R^2$, there exists a finite collection of half-lines $\{\R^+ u_i\}_{i=1}^I$ with $I\le 2 \Theta^1(\mu,0)$ such that
$$C=\bigcup_{i=1}^I \R^+ u_i ,$$
where $\psi|_{\R^+ u_i}=\psi_i \in \N\setminus \{0\}$ is constant and $u_i \in \R^2$ satisfy $u_i \cdot \nu \geq 0$. Let $\zeta \in \mathcal C^1_c(\R^2)$ be a scalar function satisfying $\zeta(0)=1$ then
$$\delta V_0 (\zeta \nu) = \int_C {\rm div}^{C} (\nu \zeta) \psi \, d\HH^1= \sum_{i =1}^I \psi_i \nu\cdot u_i \int_{L_i} u_i \cdot \nabla \zeta\, d\HH^1= - \sum_{i =1}^I \psi_i u_i \cdot \nu .$$
Recalling that $V_0$ is stationary, $\psi_i \geq 1$ and $u_i \cdot \nu\geq 0$, we deduce that $u_i\cdot \nu=0$ for all $i \in \N$. Therefore,
$$C \subset \langle \nu^\perp\rangle \quad \text{and} \quad \mu_{V_0}=\psi^+ \HH^1 \restr (\R^+ \nu^\perp) + \psi^- \HH^1 \restr (\R^- \nu^\perp)$$
for some $\psi^\pm \in \N\setminus \{0\}$. Using again the stationarity of $V_0$, we have
$
0 = \delta V_0 = (\psi^- - \psi^+) \nu^\perp
$
which actually yields $\psi^+=\psi^-$. Thus $\mu_{V_0}=\psi \HH^1 \restr \langle \nu^\perp\rangle$ for some $\psi \in \N \setminus \{0\}$, hence $C=\langle \nu^\perp \rangle$, which shows \eqref{eq:claimC}. 
Therefore, $M$ admits an approximate tangent line at $0$ given by $T_0 M=\langle \nu^\perp \rangle$, and it implies the desired inclusion \eqref{eq:1418}.
\end{proof}

\begin{proof}[Proof of Theorem~\ref{thm:Gauss-Bonnet}]
 We  first record  that, as a consequence of Lemma \ref{lem:monoton}, and Young inequality,   there exists $R_0 > 0$ such that for all $x \in M$,
\begin{equation} \label{loweruppermu}
\begin{cases}
 \varrho \leq \mu(B_\varrho(x)) &  \text{ for all } \varrho \in (0, R_0] ,\\
  \mu(B_\varrho(x)) \leq \alpha \varrho  & \text{ for all } \varrho > 0,
  \end{cases}
\end{equation}
where we recall that $\alpha$ is defined in \eqref{eq:alpha}. 
As a consequence of the lower density estimate in \eqref{loweruppermu} we obtain the compactness of $M$.
Indeed, let us consider a maximal set $M'=\{x_i\}_{i \in I} \subset M$ such that $|x_i-x_j|\geq 2R_0$ for all $i$, $j \in I$ with $i \neq j$. By maximality, we must have that
\begin{equation}\label{eq:cov}
M \subset \bigcup_{i \in I} B_{2R_0}(x_i).
\end{equation}
Moreover, $B_{R_0}(x_i) \cap B_{R_0}(x_j) = \emptyset$ for all $i$, $j \in I$ with $i \neq j$. Thus, by \eqref{eq:assumpt} and \eqref{loweruppermu},
$$R_0 \#(I) \leq \sum_{i \in I} \mu( B_{R_0}(x_i)) =\mu\left( \bigcup_{i \in I} B_{R_0}(x_i)\right) \leq \mu(M) = \mu(\R^2)<\infty,$$
which shows that $I$ is a finite set. Thus, by \eqref{eq:cov}, the closed set $M$ is bounded hence compact.

\medskip

\noindent {\sf Step 1.} We now prove a coarea formula, see \eqref{eq:coarea}.
Since $H \in L_{\mu}^2(\R^2;\R^2)$, by \cite[Theorem 5.1]{schatzle2004quadratic} combined with \cite[Theorem 3.1]{SchatzLower} (see also \cite[Theorem 1]{Menne}) we have that $M$ is $\mathcal C^2$-rectifiable so that there exists a countable collection $\{M_i\}_{i \in \N}$ of $\mathcal C^2$ curves with $\HH^{1}(M\backslash \bigcup_i M_i)=0$. Moreover, by \cite[Corollary 4.2]{SchatzLower}, see also \cite[Theorem 2]{AmbMas} or  \cite[Theorem 1]{Menne}, we have the following locality property of the mean curvature
\begin{equation}\label{localH}
 H=H_{M_i} \qquad \HH^1\text{-a.e. on } M \cap M_i.
\end{equation}
Fix $i \in \N$ and let $\gamma_i:[0,L_i] \to \R^2$ be an arc-length parametrization of $M_i$. Arguing as in \cite[Lemma 21.3]{menne2023sharp}, for every Borel set $A \subset \{ (x,\nu)\in M\times \S^1  : \; T_x M =\langle \nu^\perp\rangle\}$, we have
\begin{multline*}
 \int_{M_i} \HH^0(\{\nu \in \S^1 : \ (x,\nu)\in A\}) |H_{M_i}(x)|\, d\HH^1(x)\\
=  \int_{0}^{L_i} \HH^0(\{\nu \in \S^1 : \ (\gamma_i(t),\nu)\in A\}) |\ddot \gamma_i(t)|\, dt\\
 =  \int_0^{L_i} {\bf 1}_A(\gamma_i(t), \dot{\gamma_i}(t)^\perp)|\ddot{\gamma_i}(t)|\, dt+\int_0^{L_i} {\bf 1}_A(\gamma_i(t), -\dot{\gamma_i}(t)^\perp)|\ddot{\gamma_i}(t)|\, dt.
 \end{multline*}
Moreover, by the generalized area formula, see e.g. \cite[Theorem 2.91]{AFP}, 
$$\int_0^{L_i} {\bf 1}_A(\gamma_i(t), \pm\dot{\gamma_i}(t)^\perp)|\ddot{\gamma_i}(t)|\, dt=\int_{\S^1}\left[\int_{\{t \in [0,L_i] : \, \dot \gamma_i(t)=\nu\}} {\bf 1}_A(\gamma_i(t),\pm \dot \gamma_i(t)^\perp)\, d\HH^0(t)\right]d\HH^1(\nu),$$
hence
\begin{equation*}
 \int_{M_i} \HH^0(\{\nu \in \S^1 : \ (x,\nu)\in A\}) |H_{M_i}(x)|\, d\HH^1(x)
  = \int_{\S^1} \HH^0(\{x \in M_i : \;  (x,\nu)\in A \})\,  d\HH^1(\nu).
\end{equation*}

Let us decompose the $\HH^1$-rectifiable set $M$ as $M=N \cup \bigcup_i A_i$ where $N \subset M$ is $\HH^1$-negligible, and $A_i \subset M_i$ for all $i \in \N$ with $A_i \cap A_j=\emptyset$ if $i \neq j$. Using the locality of the mean curvature \eqref{localH}, we get that 
\begin{align*}
& \int_{A_i} \HH^0(\{\nu \in \S^1 : \ (x,\nu)\in A  \}) |H(x)|\, d\HH^1(x) \\
 = & \int_{M_i} \HH^0(\{\nu \in \S^1 : \ (x,\nu)\in A \cap (A_i \times \S^1) \}) |H_{M_i}(x)|\, d\HH^1(x)\\
  = & \int_{\S^1} \HH^0(\{x \in M_i : \;  (x,\nu)\in A \cap (A_i \times \S^1)\})\,  d\HH^1(\nu)\\
  =  & \int_{\S^1} \HH^0(\{x \in M : \;  (x,\nu)\in A \cap (A_i \times \S^1)\})\,  d\HH^1(\nu).
\end{align*}
Summing up the previous equality and using that $N=M \setminus \bigcup_i A_i$ is $\HH^1$-negligible yields for every Borel set $A \subset \{ (x,\nu)\in M \times \S^1  : \; T_x M =\langle \nu^\perp\rangle\}$,
\begin{multline}\label{eq:coarea}
 \int_{M} \HH^0(\{\nu \in \S^1 : \ (x,\nu)\in A  \}) |H(x)|\, d\HH^1(x) \\
  =   \int_{\S^1} \HH^0(\{x \in M : \;  (x,\nu)\in A \})\,  d\HH^1(\nu) \\
  - \int_{\S^1} \HH^0(\{x \in M : \;  (x,\nu)\in A \cap (N \times \S^1) \})\,  d\HH^1(\nu).
\end{multline}
 Specifying \eqref{eq:coarea} with $A=B$ defined in \eqref{eq:B}, and using Proposition \ref{prop:lusin}, we get that
\begin{equation}\label{coareaV}
 \int_M \HH^0(\{\nu\in \S^1  : \ (x,\nu)\in B\})\, |H(x)|\, d\HH^1(x) = \int_{\S^1} \HH^0(\{x\in M  : \  (x,\nu)\in B\})\,  d\HH^1(\nu).
\end{equation}

\medskip

\noindent {\sf Step 2.} We first prove that for every $\nu\in \S^1$, 
\begin{equation}\label{eq:1353}
\HH^0(\{x\in M  :\ (x,\nu)\in B\})\ge 1.
\end{equation}
Indeed, let $\nu \in \S^1$. Since $M$ is compact, the continuous function $y \mapsto y \cdot \nu$ reaches its minimum at some $\bar x \in M$ that is, 
for all $y \in M$, $(y-\bar x) \cdot \nu \geq 0$, which shows that $(\bar x,\nu) \in B$, and establishes \eqref{eq:1353}. We then show that for all $x \in M$,
\begin{equation}\label{eq:1352}
\HH^0(\{\nu\in \S^1  : \ (x,\nu)\in B\})\le 1.
\end{equation}
Indeed, let $x \in M$. According to \eqref{eq:1418}, we always have that $\HH^0(\{\nu\in \S^1 : \ (x,\nu)\in B\})\le 2$. If $\HH^0(\{\nu\in \S^1  : \ (x,\nu)\in B\})= 2$, it implies by definition of $B$ that $M\subset x+ \langle\nu^\perp\rangle=:L$. By locality of the mean curvature (see \cite[Theorem 2]{AmbMas}), we get that $H=H_{L}=0$ $\HH^1$-a.e. on $M$ 
so that $\delta V = - H \mu = 0$ and $V$ is a stationary varifold in $\R^2$.  Applying the Constancy Theorem (see \cite[Theorem 41.1]{Simon_GMT}), we infer that $M=L$ and $\theta$ is constant on $M$, which implies that $\mu(\R^2)=\theta \HH^1(L)=\infty$. We thus reach a contradiction with \eqref{eq:assumpt} and deduce the validity of \eqref{eq:1352}.

\medskip

Finally, gathering \eqref{coareaV},  \eqref{eq:1353} and \eqref{eq:1352}, we obtain 
\begin{multline*}
 \int_{\R^2} |H| \, d\mu \ge \int_{M} |H| \, d\HH^1\ge \int_M \HH^0(\{\nu\in \S^1  : \ (x,\nu)\in B\}) |H(x)|\, d\HH^1(x)\\
 = \int_{\S^1} \HH^0(\{x\in M  : \  (x,\nu)\in B\}) \, d\HH^1(\nu)
 \ge  \HH^1(\S^1)=2\pi,
\end{multline*}
concluding the proof of \eqref{GBVarifold}.
\end{proof}

\subsection{Lusin property of the Gauss map}\label{sec:lusin}
The aim of this section is to prove  Proposition \ref{prop:lusin}. Let us start by introducing some notation. Given sets $C\subset \R^2\times \S^1$ and $A\subset \R^2$, we define
\[
   C|A=(A\times \S^1) \cap C\subset \R^2\times \S^1, \qquad   C[A]=\Pi_{\S^1}(C|A) \subset \S^1.
\]
We recall the definition \eqref{eq:B} of the set $B$.

\medskip

\noindent {\it Proof of Proposition~\ref{prop:lusin}.}
With the previous notation, our aim is to show that if $N \subset M$ is such that $\HH^1(N)=0$, then $\HH^1(B[N])=0$. By \cite[Remark 2.54]{AFP}, it is enough to prove that $\HH^1_\infty(B[N])=0$, where $\HH^1_\infty$ is the infinite pre-Hausdorff measure, see \cite[Definition 2.46]{AFP}.\\
Since $\HH^1(N)=0$, then $\mu(N)=\psi(N)=0$. By outer regularity, for each $\eps>0$, there exists an open set $U \supset N$ such that $\mu(U)+\psi(U)\le \eps$.
Let $\lambda=2^{-5}5^{-1}$,
we claim that there exists a constant $c>0$ such that the family
\begin{multline*}
 \mathcal{F}=\Big\{ B_\varrho(x) :\  x\in \Pi_{\R^2}(B|N), \ 0< \varrho \le \lambda  R_0, \ B_{\varrho/\lambda}(x)\subset U \\
 \textrm{ and } \ \HH^1_\infty(B[B_{5\varrho}(x)])\le c \left(\mu(B_\varrho(x)+\psi(B_\varrho(x)\right)\Big\}
\end{multline*}
is a covering of $\Pi_{\R^2}(B|N)$, i.e.
\begin{equation}\label{coverPiCn}
 \Pi_{\R^2}(B|N)\subset \bigcup_{B_\varrho(x)\in \mathcal{F}} B_\varrho(x).
\end{equation}
Indeed, let $x\in \Pi_{\R^2}(B|N)$ so that, in particular, $x\in N \subset U$. As $U$ is open, we have $B_{\varrho/\lambda}(x)\subset U$ for $\varrho \in (0,\lambda R_0)$ small enough. Applying Lemma \ref{lem:aux} below to the function $r \mapsto g(r)=\psi(B_r(x))$, we may possibly decrease $\varrho \in (0,\lambda R_0)$ in such a way that
\begin{equation}\label{eq:changelamr}
 \psi(B_{\varrho/\lambda}(x))\leq \frac{2}{\lambda}(  \varrho +\psi(B_\varrho(x))) \leq  \frac{2}{\lambda}( \mu(B_\varrho(x)) +\psi(B_\varrho(x))) ,
\end{equation}
where the last inequality follows from \eqref{loweruppermu}.
By Lemma \ref{lem:centralLusin} below, \eqref{loweruppermu} and \eqref{eq:changelamr}, there exists a constant $c >0$ such that
\begin{equation*}
 \HH^1_\infty(B[B_{5\varrho}(x)])=\HH^1_\infty(B[B_{2^{-5}\frac{\varrho}{\lambda}}(x)]) \leq c \psi(B_{\frac{\varrho}{\lambda}}(x)) \leq \frac{2c}{\lambda}(\mu(B_\varrho(x))+ \psi(B_\varrho(x))),
\end{equation*}
which concludes the proof of \eqref{coverPiCn}.

By Vitali's covering Theorem, there exists a countable subfamily $\mathcal{G}=\{B_{\varrho_i}(x_i)\}_{i \in \N}\subset \mathcal{F}$ such that the elements of $\mathcal{G}$ are pairwise disjoint and
$$\Pi_{\R^2}(B|N)\subset \bigcup_{i \in \N} B_{5\varrho_i}(x_i).$$
This implies that
$$B|N \subset \bigcup_{i \in \N} B|B_{5\varrho_i}(x_i),$$
and thus,
$$B[N] \subset \bigcup_{i\in \N} B[B_{5\varrho_i}(x_i)].$$
Then, we have
\[
 \HH^1_\infty(B[N])\le \sum_{i \in \N} \HH^1_\infty(B[B_{5\varrho_i}(x_i)])\leq c \sum_{i \in \N} (\mu+ \psi)(B_{\varrho_i}(x_i))\le c(\mu+\psi)(U)\le c \eps.
\]
Since $\eps$ is arbitrary, it yields $\HH^1_\infty(B[N])=0$ as claimed.

\begin{remark}
In the proof above, we avoided the analog of \cite[Lemma 21.13, Lemma 21.14, Lemma 21.15]{menne2023sharp}. Moreover, compared to \cite[Theorem 21.16]{menne2023sharp}, we have at our disposal the monotonicity formula \eqref{loweruppermu} which shows that the density $\Theta^1(\mu,\cdot)$ is uniformly bounded. In particular, it avoids a truncation argument as well as the analog of \cite[Section 22]{menne2023sharp}.
\end{remark}

We now adapt the proof of \cite[Lemma 21.12]{menne2023sharp} to show the following measure estimate used in the proof above. We recall that $c_1>0$ is the constant of the  Sobolev-Michael-Simon inequality (see Theorem \ref{thm:SMS}) and $\alpha$ is defined by \eqref{eq:alpha}.

\begin{lemma}\label{lem:centralLusin}
There exists a constant $c=c(c_1,\alpha)>0$ such that for all $x_0 \in \Pi_{\R^2}(B|N)$ and all $0<R \leq R_0$,
\begin{equation}\label{statement:centralLusin}
\HH^1_\infty(B[B_{2^{-5}R}(x_0)])\leq c  \psi(B_R(x_0)) \: .
\end{equation}
\end{lemma}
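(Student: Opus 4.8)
The plan is to adapt \cite[Lemma~21.12]{menne2023sharp} to the one–dimensional situation at hand, exploiting — as observed in the Remark above — the density bounds \eqref{loweruppermu} to keep the argument short. Write $\gamma := \psi(B_R(x_0))$. Since $B[B_{2^{-5}R}(x_0)] \subset \S^1$ we always have $\HH^1_\infty(B[B_{2^{-5}R}(x_0)]) \le \HH^1_\infty(\S^1) \le 2$, so whenever $\gamma \ge (2c_1)^{-1}$ the conclusion holds with $c := 4c_1$; I may thus assume $\gamma < (2c_1)^{-1}$. Under this assumption the Sobolev–Michael–Simon inequality of Theorem~\ref{thm:SMS}, used with the open set $B_R(x_0)$ in place of $\Omega$ (so that the total curvature appearing there is $\gamma$), is \emph{effective}: $1 - c_1\gamma > \tfrac12$ and $\sup_M |\zeta| \le 2c_1 \int_{\mathbf G_1(B_R(x_0))} |S\cdot\nabla\zeta|\, dV$ for every Lipschitz $\zeta$ with compact support in $B_R(x_0)$.

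After translating so that $x_0 = 0$, set $\Sigma := \Pi_{\R^2}(B|B_{2^{-5}R})$. By Lemma~\ref{lem:CR} every $x \in \Sigma$ has a tangent line $T_x M = \langle \nu(x)^\perp\rangle$; moreover the supporting direction is unique, since if $(x,\nu),(x,\nu')\in B$ with $\nu\ne\nu'$ then $\langle\nu^\perp\rangle = T_xM = \langle(\nu')^\perp\rangle$ forces $\nu' = -\nu$ and $M \subset x + T_xM$, whence (locality of the mean curvature and the Constancy Theorem) $\mu(\R^2) = \infty$, contradicting \eqref{eq:assumpt} — this is exactly the reasoning behind \eqref{eq:1352}. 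Thus $B[B_{2^{-5}R}(x_0)] = \{\nu(x) : x \in \Sigma\}$. For a parameter $\delta > 0$ comparable to $\gamma$, extract a maximal $\delta$–separated subset $\{\nu_1,\dots,\nu_k\}$ of this image, realised at points $a_1,\dots,a_k \in M \cap B_{2^{-5}R}$ with $M \subset \{(y-a_j)\cdot\nu_j \ge 0\}$. By maximality, the image is covered by the $\delta$–balls of $\S^1$ about the $\nu_j$, so $\HH^1_\infty(B[B_{2^{-5}R}(x_0)]) \le 2\delta k$, and it remains to prove $\delta k \le C\gamma$ with $C$ absolute.

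The core is to attach to each separated direction $\nu_j$ a definite share of the curvature measure $\psi = |H|\mu$ inside $B_R(0)$, with bounded overlap. The convex–geometric structure of $B$ enters through the monotonicity relation $(a_i-a_j)\cdot(\nu_i-\nu_j) \le 0$ for $(a_i,\nu_i),(a_j,\nu_j)\in B$ (add $(a_j-a_i)\cdot\nu_i \ge 0$ and $(a_i-a_j)\cdot\nu_j \ge 0$). Combining this with the two–sided mass bounds $\varrho \le \mu(B_\varrho(a_j)) \le \alpha\varrho$ from \eqref{loweruppermu}, and testing the Sobolev–Michael–Simon inequality on radial–type cut–offs adapted to the slabs $\{0 \le (y-a_j)\cdot\nu_j \le c'\delta R\}$, one produces — following the scheme of \cite[Lemma~21.12]{menne2023sharp} — Borel sets $E_1,\dots,E_k \subset B_R(0)$ with $\psi(E_j) \gtrsim \delta$ for each $j$ and $\sum_j \mathbf 1_{E_j} \le C$; it is the $\delta$–separation of the $\nu_j$ together with the thinness of the slabs that force the $E_j$ to overlap only boundedly. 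Summation gives $\delta k \lesssim \sum_j \psi(E_j) \lesssim \psi(B_R(0)) = \gamma$, which is \eqref{statement:centralLusin} with $c = c(c_1,\alpha)$.

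The step I expect to be the main obstacle is this last one: extracting from the Sobolev–Michael–Simon inequality, with the help of the density bounds, a quantitatively controlled piece of the curvature attached to each separated supporting direction, and organising these pieces so that they overlap only boundedly. This is the technical heart of \cite[Lemma~21.12]{menne2023sharp}; as the Remark above stresses, the one–dimensionality of $V$ and the uniform density estimates \eqref{loweruppermu} — absent in the generality of \cite{menne2023sharp}, where a truncation and the analogue of \cite[Section~22]{menne2023sharp} are needed — lighten the cut–off constructions and the overlap count substantially, so that the whole argument can be carried out directly inside $B_R(0)$.
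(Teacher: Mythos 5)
Your proposal and the paper's proof diverge sharply in strategy, and the step you yourself single out as the obstacle is in fact a genuine gap, because the paper's proof does not follow the curvature-attribution scheme you expect from \cite[Lemma~21.12]{menne2023sharp}.

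You take a maximal $\delta$-separated subset of $B[B_{2^{-5}R}(x_0)]$ with $\delta\sim\gamma$, aim for $\HH^1_\infty \le 2\delta k$, and then must show $\delta k\lesssim\gamma$ by attributing $\gtrsim\delta$ of the curvature $\psi$ to each separated direction $\nu_j$, with bounded overlap of the sets $E_j$. This is exactly the hard part: it requires making rigorous a Gauss--Bonnet-type heuristic (``between two supporting normals that differ by $\delta$ the curve must accumulate $\gtrsim\delta$ of total curvature'') for a $1$-rectifiable integral varifold with possible multiplicity $\theta\ge 2$, singular junctions, and only $L^2$ mean curvature. The monotonicity relation $(a_i-a_j)\cdot(\nu_i-\nu_j)\le 0$ and the density bounds \eqref{loweruppermu} alone do not deliver this; one would still have to construct the slabs, run Sobolev--Michael--Simon with the right cutoffs, prove the lower bound $\psi(E_j)\gtrsim\delta$, and control overlap. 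None of that is carried out, and it is not a routine adaptation.

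The paper avoids the curvature-attribution step entirely. Instead of a $\delta$-separated net (whose cardinality $k$ is a priori unbounded as $\gamma\to 0$), it takes a maximal $1$-separated subset $A\subset\S^1$, which has \emph{universal} cardinality $\#(A)\le\kappa$ regardless of $\gamma$. The quantitative content is then a \emph{diameter} bound: within each cap $B_1(v)$, $v\in A$, any two supporting normals differ by at most $O(\gamma)$. This follows from Lemma~\ref{21.8}, which is a quantitative flatness statement: for any $(x,\nu)\in B|B_{2^{-5}R}(x_0)$, the set $M$ reaches a definite tangential distance $\sim R$ inside the thin normal slab $\{0\le(z-x)\cdot\nu\le\Delta\gamma R\}$. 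A short geometric computation (writing $z-x$ in the $(v,v^\perp)$ basis and using $(z-x)\cdot\nu\ge 0$) then yields $|v^\perp\cdot\nu|\lesssim\gamma$, hence $|v-\nu|\lesssim\gamma$. Summing diameters over the $\kappa$ caps gives $\HH^1_\infty\le 2\kappa c\Delta\gamma$. This is both simpler and more robust than localizing curvature to directions: it only ever compares \emph{two} supporting pairs at a time and never needs a lower bound on $\psi$ restricted to a region, nor a bounded-overlap decomposition. Your proposal would need, in effect, to rebuild what Lemma~\ref{21.8} gives for free through a different mechanism, and as written that mechanism is missing.
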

\begin{proof}
We introduce the following notation
\begin{equation} \label{eq:defGamma}
 \gamma = \psi (B_R(x_0)) =  \int_{B_R(x_0)} |H| \, d \mu \: .
\end{equation}

Using that $\HH^1_\infty(B[B_{2^{-5}R}(x_0)]) \leq \HH^{1}_\infty(\S^1)\leq 2\pi$, there is no loss of generality in assuming that $\gamma$ satisfies condition \eqref{eq:SMSineq} so that Lemma \ref{21.8} below applies. Let $\Delta>0$ be the constant given by that Lemma.

Let us consider a maximal subset $A\subset B[B_{2^{-5}R}(x_0)]\subset \S^1$ such that $|v-v^\prime|\geq 1$ for every $v$, $v^\prime \in A$ with $v\neq v^\prime$. We then have
\begin{equation} \label{coverA}
B[B_{2^{-5}R}(x_0)] \subset \bigcup_{v\in A} B_{1}(v), \qquad \#(A)\leq \kappa ,
\end{equation}
for some universal constant $\kappa>0$. We claim that for every $v\in A$, if $(\bar x,v)\in B|B_{2^{-5}R}(x_0)$ and  $(x,\nu)\in B|B_{2^{-5}R}(x_0)$ with $|v-\nu|<1$ then
\begin{equation}\label{claimsmalloscillnormals}
 |v-\nu|\le c \Delta \gamma,
 \end{equation}
for some universal constant $c>0$.

\begin{figure}[!h]
\def\svgwidth{425pt}
\input{ 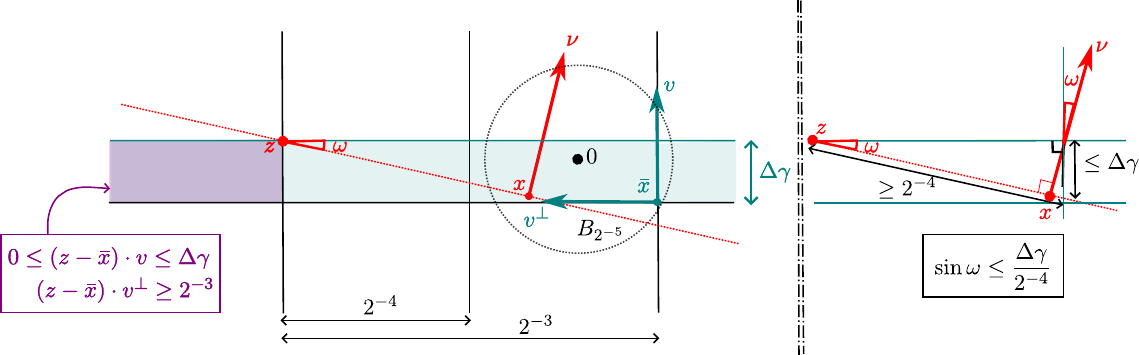_tex}
\caption{Geometric illustration of $| v^\perp \cdot \nu | \leq 2^4 \Delta \gamma$, with $x_0 = 0$ and $R = 1$}
\label{fig:v_nu}
\end{figure}

Since  $|v-\nu|<1$, we have in particular that $\nu \cdot v>0$.
Up to an axial symmetry, we may assume without loss of generality that $\nu\cdot v^\perp\leq 0$.
We first check that $M$ intersects the purple region region in Figure~\ref{fig:v_nu}. More precisely, thanks to Lemma \ref{21.8} below related to $(\bar x,v)$, we may find a point $z\in M \cap B_{R/2}(\bar x)$ such that
$$(z-\bar x)\cdot v^\perp \ge 2^{-3}R, \qquad  (z-\bar x)\cdot v \le \Delta \gamma R.$$
Moreover, by definition of $B$ and since $x$ and $z \in M$, we also have
$$  (x-\bar x)\cdot v \ge 0 \quad \text{and} \quad (z-x)\cdot \nu\ge 0 \: .$$
It remains to check that the existence of such a $z \in M$ allows one to bound the angle between $\nu$ and $v$ as suggested in Figure~\ref{fig:v_nu}, where the maximal angle $\omega$ between $\nu$ and $v$ corresponds to the case $z$ in the top right corner of the purple region and $(z-x) \cdot \nu = 0$.
From the above properties of $z$ and recalling that $x$, $\bar x \in B_{2^{-5}R}(x_0)$ so that $| \bar x - x| \leq 2^{-4}R$, we infer
\[
 (z-x) \cdot v^\perp = (z-\bar x) \cdot v^\perp + (\bar x -x) \cdot v^\perp \geq 2^{-3}R - | \bar x - x| \geq 2^{-3}R - 2^{-4}R = 2^{-4}R \: ,
\]
and
\[
 (z-x) \cdot v = (z-\bar x) \cdot v + (\bar x -x) \cdot v \leq \Delta \gamma R \: .
\]
As $\nu\cdot v^\perp \le 0$ and $0 \leq \nu \cdot v \leq 1$, we consequently have
\begin{eqnarray*}
 0 & \leq & (z-x)\cdot \nu = ((z-x)\cdot v) (v \cdot \nu) + ((z-x)\cdot v^\perp) (v^\perp \cdot \nu)\\
 &  \leq & \Delta \gamma R + 2^{-4}R (v^\perp \cdot \nu),
\end{eqnarray*}
hence
$$| v^\perp \cdot \nu | \leq 2^4 \Delta \gamma \: .$$
Using again that $\nu\cdot v \geq 0$,
\[
 |\nu-v|^2=2(1-\nu\cdot v)=2\frac{1-(\nu\cdot v)^2}{1+\nu\cdot v}=2 \frac{(\nu\cdot v^\perp)^2}{1+\nu\cdot v}\le 2(\nu\cdot v^\perp)^2\le (c\Delta \gamma)^2
\]
where $c>0$ is a universal constant, and we conclude the proof of \eqref{claimsmalloscillnormals}. As a consequence of \eqref{claimsmalloscillnormals}, we find that for every $v \in A$ we have 
$${\rm diam} (B[B_{2^{-5}R}(x_0)] \cap B_1(v))\leq 2c \Delta \gamma.$$
Recalling \eqref{coverA}, this concludes the proof of \eqref{statement:centralLusin}. 
\end{proof}

The proof of Lemma \ref{lem:centralLusin} relies on the following result, which is an adaptation of \cite[Lemma 21.8]{menne2023sharp}. The idea is to find a level set of the  function $y \mapsto (y-x)\cdot \nu$ (where $(x,\nu) \in B$) corresponding to a height of order at most $\gamma R$ which intersects a `large' portion $M$, and such that the first variation of the restricted varifold to this level set does not have too much mass. Thanks to Lemma \ref{lem:movemass} below this would imply that the same level set intersects the balls $B_{2^{-3}R}(x\pm 2^{-2} R \nu^\perp)$. Let $\eps_0=\eps_0(\alpha,2^{-4})>0$ be given by Proposition \ref{prop:Sob} with $\tau=2^{-4}$ and  $\hat c=\hat c(c_1,\alpha)>0$ be the constant appearing in \eqref{hatc} below.

\begin{lemma}\label{21.8}
There exists $\Delta=\Delta(c_1,\alpha)>0$ such that for all $x_0 \in \Pi_{\R^2}(B|N)$ and all $0<R \leq R_0$, if $\gamma > 0$ (defined in \eqref{eq:defGamma}) satisfies
 \begin{equation}\label{eq:SMSineq}
\gamma \leq \min\left\{\frac{1}{2c_1},\eps_0,\frac{\eps_0}{4\hat c}\right\},
 \end{equation}
then the following property holds: for $(x,\nu) \in B|B_{2^{-5}R}(x_0)$,  there exist $z^\pm \in M\cap B_{R/2}(x)$ such that
  \[
\pm (z^\pm-x)\cdot \nu^\perp \ge 2^{-3}R \qquad \textrm{ and } \qquad  (z^\pm -x)\cdot \nu \le \Delta \gamma R.
  \]
\end{lemma}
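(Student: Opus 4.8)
The plan is to establish the existence of $z^+$; the point $z^-$ is produced by the same argument after the reflection $\nu^\perp\mapsto-\nu^\perp$. Fix $x_0\in\Pi_{\R^2}(B|N)$, $0<R\le R_0$ and $(x,\nu)\in B|B_{2^{-5}R}(x_0)$, and set $u(y):=(y-x)\cdot\nu$. Since $(x,\nu)\in B$ we have $u\ge 0$ on $M$ and $u(x)=0$, so the line $\{u=0\}$ supports $M$ at $x$; moreover, by Lemma~\ref{lem:CR} the approximate tangent line at $x$ exists and equals $\langle\nu^\perp\rangle$. The idea is to peel off the thin slab $S_h:=\{0\le u<h\}$ for a well-chosen height $h\le\Delta\gamma R$, to verify that the restricted varifold $W:=V\restr\big(S_h\cap B_r(x)\big)$ for a suitable radius $r\simeq R/2$ has small total first variation, and then to invoke the mass-spreading Lemma~\ref{lem:movemass} to force ${\rm Supp}(\mu_W)\subset M$ to meet the ball $B_{2^{-3}R}(x+2^{-2}R\,\nu^\perp)$. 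Any point $z^+$ in this intersection then lies in $M\cap B_{R/2}(x)$ (because $|z^+-x|\le 2^{-2}R+2^{-3}R<R/2$), satisfies $(z^+-x)\cdot\nu^\perp\ge 2^{-2}R-2^{-3}R=2^{-3}R$, and $(z^+-x)\cdot\nu=u(z^+)\le h\le\Delta\gamma R$, which is exactly the asserted conclusion.

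For the selection of $h$ I would argue as follows. For a.e. $h>0$ the varifold $V\restr\{u<h\}$ has bounded first variation, and $|\delta(V\restr\{u<h\})|(\R^2)\le\int_{\{u<h\}}|H|\,d\mu+\sum_{y\in M\cap\{u=h\}}\theta(y)$, the first term being at most $\psi(B_R(x))\le\gamma$ (after slightly enlarging the reference ball around $x_0$, or simply by working inside $B_{R/2}(x)\subset B_R(x_0)$). The conormal boundary term is handled through the coarea formula on the rectifiable set $M$, which gives the exact identity $\int_0^{t}\big(\sum_{y\in M\cap\{u=h\}}\theta(y)\big)\,dh=\mu(\{0\le u<t\})$; after the analogous localization to $B_{R/2}(x)$ and after discarding the at most countably many levels carrying an atom of $\HH^1\restr M$, the Chebyshev-type Lemma~\ref{lem:aux} produces a height $h\in(0,\Delta\gamma R)$ with $\sum_{y\in M\cap\{u=h\}\cap B_{R/2}(x)}\theta(y)\lesssim\tfrac{1}{\Delta\gamma R}\,\mu\big(B_{R/2}(x)\cap\{0\le u<\Delta\gamma R\}\big)$, while a good radius $r\simeq R/2$ is chosen by the same mechanism so that the boundary term carried by $\partial B_r(x)\cap\{u<h\}$ is also controlled. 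The remaining point is to bound $\mu\big(B_{R/2}(x)\cap\{0\le u<\Delta\gamma R\}\big)$ by a multiple of $\gamma\cdot\Delta\gamma R$: this is where the Sobolev-type confinement of Proposition~\ref{prop:Sob} (used with $\tau=2^{-4}$) enters, since under the smallness hypothesis~\eqref{eq:SMSineq} it forces ${\rm Supp}(\mu_V)$ to stay in a $2^{-4}R$-neighbourhood of the supporting line over the relevant scales and thereby pins down how much $\mu$-mass can live in the thin slab. Tuning $\Delta=\Delta(c_1,\alpha)$ accordingly makes $|\delta W|(\R^2)\le\hat c^{-1}\eps_0$, which is precisely the threshold demanded by Lemma~\ref{lem:movemass}; the three conditions in~\eqref{eq:SMSineq} correspond respectively to the $(1-c_1\gamma)$ factor in the Sobolev--Michael--Simon inequality of Theorem~\ref{thm:SMS}, to the applicability of Proposition~\ref{prop:Sob}, and to the constant $\hat c$ of~\eqref{hatc}.

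It then remains to check that $x\in{\rm Supp}(\mu_W)$: since $u$ is continuous and $u(x)=0<h$, a relative neighbourhood of $x$ in $M$ is contained in $\{u<h\}\cap B_r(x)$, so $\mu_W$ coincides with $\mu$ near $x$ and in particular $\Theta^1(\mu_W,x)=\theta(x)\ge 1$; combined with the monotonicity formula of Lemma~\ref{lem:monoton} applied to $W$ (whose total first variation is small), this also yields $\mu_W(B_{cR}(x))\gtrsim R$, the ``large portion of $M$'' of the heuristic. Feeding $x\in{\rm Supp}(\mu_W)$, the confinement ${\rm Supp}(\mu_W)\subset\{0\le u\le h\}$ with $h\le\Delta\gamma R$, and $|\delta W|(\R^2)\le\hat c^{-1}\eps_0$ into Lemma~\ref{lem:movemass} produces the required intersection with $B_{2^{-3}R}(x+2^{-2}R\nu^\perp)$, and the proof is complete. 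The step I expect to be the main obstacle is the middle one: extracting a level $h\le\Delta\gamma R$ at which the conormal boundary term of the restricted varifold is genuinely of order $\gamma$, which forces one to combine the exact coarea identity for the slice masses, the atom-avoiding selection of $h$ and of the localization radius $r$, and the Sobolev confinement of $W$ near the supporting line, all the while keeping every constant dependent only on $c_1$ and $\alpha$.
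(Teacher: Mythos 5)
Your high-level strategy is the right one: restrict the varifold to a thin slab $\{0 \le (y-x)\cdot\nu < h\}$ with $h\le\Delta\gamma R$, check that the restriction has small first variation and small tilt excess, and then let Lemma~\ref{lem:movemass} propagate mass from a neighbourhood of $x$ to the ball $B_{2^{-3}R}(x+2^{-2}R\nu^\perp)$. That is indeed the geometric content of the proof. However, the way you select the slab height $h$ has a genuine gap, and it is precisely the place you flag as the ``main obstacle.''

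Your Chebyshev-type argument is performed in linear $f$-scale (where $f(y)=\max\{(y-x)\cdot\nu,0\}$), and it requires the key unproved estimate
\[
\mu\big(B_{R/2}(x)\cap\{0\le f<\Delta\gamma R\}\big)\ \lesssim\ \gamma\cdot\Delta\gamma R .
\]
This bound is false in general. Since $(x,\nu)\in B$, the supporting hyperplane $\{f=0\}$ is tangent to $M$ at $x$ (Lemma~\ref{lem:CR}), so even for a smooth curve the slab of height $h$ picks up an arc of $M$ of length $\sim\sqrt{h}$, and the mass in the slab scales like $\sqrt{\Delta\gamma}\,R$, not like $\gamma\cdot\Delta\gamma R$; worse, a flat piece of $M$ lying exactly on $\{f=0\}$ contributes mass of order $R$ to the slab for every $h>0$. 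Consequently a linear-scale Chebyshev over a range of length $\Delta\gamma R$ produces a slice with conormal boundary term of order $R/(\Delta\gamma R)=1/(\Delta\gamma)$, which is far too large, rather than the order $\gamma+\eps_0$ you need to feed into Lemma~\ref{lem:movemass}. The missing idea, which the paper uses, is the logarithmic change of variables $g=\log(f+\gamma R)$. The estimate $\int_{B_{4R/5}}|\nabla^M g|\,d\mu\le\bar c$ (obtained from an integration by parts plus the Sobolev--Michael--Simon inequality; this is the first smallness condition in \eqref{eq:SMSineq}) is a \emph{uniformly bounded} quantity, independent of how the mass distributes in the slab, and the Chebyshev in $s=g$-scale over the fixed interval $(s_1,s_1+\bar c/\eps_0)$ then yields a slice $\hat E_{s_*}$ with $|\delta V_{s_*}|(B_{4R/5})\le\gamma+\eps_0$, while the Harnack-type bound $t_0\le\underline c\,\gamma R$ (Step~2 of the paper, which is where Proposition~\ref{prop:Sob} actually enters, applied to $\hat g=(\log(t_0+\gamma R)-g)\mathbf 1_{E_{t_0}}$) guarantees that $\hat E_{s_*}\subset\{f\le\Delta\gamma R\}$. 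Your description of Proposition~\ref{prop:Sob} as a ``confinement of the support'' result is a misreading; it is a Poincar\'e--Sobolev inequality for Lipschitz functions vanishing outside a set of small mass, and it is used to bound the level~$t_0$ at which the sub-level mass first reaches $\eps_0 R$.

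Two further issues worth noting. First, the lower bound on the mass of the restricted varifold should not come from Lemma~\ref{lem:monoton} applied to $W$: that monotonicity formula requires the first variation of $W$ to be absolutely continuous with an $L^p$, $p>1$, density, whereas your restricted varifold carries a singular boundary contribution on $M\cap\partial(S_h\cap B_r(x))$, so the lemma does not apply as stated. The paper sidesteps this by defining $t_0$ as the threshold level at which $\mu(E_s\cap B_{2^{-4}R})$ crosses $\eps_0 R$, so the lower bound $\mu_{s_*}(B_{2^{-4}R})\ge\eps_0 R$ is automatic for $s_*>t_0$ and no monotonicity is needed. Second, the identity $\int_0^t\big(\sum_{y\in M\cap\{u=h\}}\theta(y)\big)\,dh=\mu(\{0\le u<t\})$ is only an inequality (with $\le$), since $|\nabla^M u|\le 1$ but is typically small where $T_yM$ is close to $\langle\nu^\perp\rangle$; this inequality still runs in the direction your Chebyshev argument needs, so it is a minor slip rather than a fatal one, but it reflects the same blindspot toward the fact that $u$ is nearly constant along $M$ near $x$.
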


\begin{proof}
Let $(x,\nu)\in B|B_{2^{-5}R}(x_0)$, and assume without loss of generality that $(x,\nu)=(0,e_2)$. Let $f: B_R(x_0)  \to \R^+$ be the nonnegative Lipschitz function defined by 
 \[
  f(z)=\max\{z_2,0\}.
 \]
By definition of $B$, for all $z \in M$, $f(z)=z_2 = (z-x) \cdot \nu \geq 0$ and 
\begin{equation}\label{eq:nablaMf}
\nabla^M f(z)=\Pi_{T_z M} Df(z)=(\tau(z)\cdot e_2)\tau(z),
\end{equation}
where $\tau(z) \in T_z M \cap \S^1$.

\medskip

{\sf Step 1.} We first note that $B_{5R/6} \subset B_R(x_0)$. We now claim that
\begin{equation}\label{20.6}
 \int_{B_{4R/5}} \frac{1}{f +\gamma R} |\nabla^M f| \,d\mu \leq \bar{c},
\end{equation}
for some constant $\bar{c}= \bar{c}(\alpha,c_1)>0$. Indeed,
adapting \cite[Lemma 20.6]{menne2023sharp}, we consider a cut-off function $\eta \in \mathcal C_c^\infty(\R^2;[0,1])$ such that $\eta=1 $ in $B_{4R/5}$, ${\rm Supp}(\eta) \subset B_{5R/6}$ and $|\nabla \eta| \leq c/R$.
Since $f + \gamma R> 0$, we can define 
\[
 u=\frac{1}{f+\gamma R}, \quad \phi=\eta^2 u, \quad g=\log(f+\gamma R) \quad \text{ in }B_R(x_0).
\]
These functions are Lipschitz continuous in $B_R(x_0)$ and we have
\[
 \nabla^M \phi= 2\eta u\nabla^M \eta + \eta^2\nabla^M u, \quad \nabla^M u=- \frac{1}{(f+\gamma R)^2}\nabla^M f, \quad \nabla^M g=\frac{1}{f+\gamma R}\nabla^M f.
\]
In particular, $|\nabla^M g|^2=-\nabla^M u\cdot \nabla^M f$ so that $\nabla^M \phi\cdot \nabla^M f=2\eta u\nabla^M \eta\cdot \nabla^M f -\eta^2 |\nabla^M g|^2$. Therefore, rearranging terms, we find
\begin{equation}\label{eq:1538}
 \int_{B_{5R/6}} \eta^2 |\nabla^M g|^2 \, d\mu= \int_{B_{5R/6}} 2 \eta u \nabla^M \eta\cdot \nabla^M f \, d\mu - \int_{B_{5R/6}} \nabla^M\phi\cdot \nabla^M f \, d\mu.
\end{equation}
Regarding the first term at the right-hand side of \eqref{eq:1538}, we estimate it using that $u|\nabla^M f|\le |\nabla^M g|$, $|\nabla^M \eta|\leq c/R$ and $\mu(B_{5R/6})\leq \alpha R$ (thanks to  \eqref{loweruppermu} together with $x=0 \in M$). Thus, Young's inequality shows that
\begin{equation}\label{eq:15382}
\int_{B_{5R/6}} 2 \eta u \nabla^M \eta\cdot \nabla^M f \, d\mu\leq \frac{c}{R}+\frac{1}{4} \int_{B_{5R/6}} \eta^2 |\nabla^M g|^2 \, d\mu,
\end{equation}
where $c=c(\alpha)>0$ only depends on $\alpha$. For the second term at the right-hand side of \eqref{eq:1538}, we  recall \eqref{eq:nablaMf} and using that $\mu$ is supported in $M$,
\begin{multline}\label{eq:15383}
 - \int_{B_{5R/6}} \nabla^M\phi\cdot \nabla^M f\,  d\mu=-\int_{B_{5R/6}} (\nabla \phi\cdot \tau) (\tau \cdot e_2) \, d\mu\\
 =-\int_{B_{5R/6}} {\rm div}^M (\phi e_2 ) \, d\mu=\int_{B_{5R/6}} H\cdot e_2 \phi \, d\mu \le \gamma\|\varphi\|_{L^\infty_\mu(B_{5R/6})}.
\end{multline}
Next applying the Sobolev-Michael-Simon inequality, Theorem \ref{thm:SMS}, to the Lipschitz function $\phi$ which is compactly supported in $B_{5R/6}$, we infer 
$$ \|\varphi\|_{L^\infty_\mu(B_{5R/6})}  \leq 2(1-c_1\gamma) \|\varphi\|_{L^\infty_\mu(B_{5R/6})} \leq 2 c_1 \int_{B_{5R/6}} |\nabla^M\phi|\, d\mu,$$
where we used that $\gamma \leq 1/(2c_1)$ thanks to assumption \eqref{eq:SMSineq}. Using that $\gamma R u\le 1$ and  $\gamma R|\nabla^M u|\le |\nabla^M g|$, we find $ \gamma R |\nabla^M \phi|\le  2 \eta |\nabla^M \eta|+\eta^2 |\nabla^M g|$. Hence, since $\eta\le 1$, $|\nabla^M \eta|\leq c/R$ and $2 c_1 \gamma \le 1$, we find again thanks to Young's inequality and $\mu(B_{5R/6})\leq \alpha R$,
\begin{equation}\label{eq:15384}
2c_1  \gamma R  \int_{B_{5R/6}} |\nabla^M\phi|\, d\mu \leq c+ \int_{B_{5R/6}} \eta |\nabla^M g| \, d\mu\leq c +\frac{R}{4} \int_{B_{5R/6}} \eta^2 |\nabla^M g|^2 \, d\mu,
\end{equation}
where $c=c(c_1,\alpha)>0$. Collecting estimates \eqref{eq:1538}, \eqref{eq:15382}, \eqref{eq:15383} and \eqref{eq:15384}, we deduce that
\begin{equation*}
\int_{B_{4R/5}} |\nabla^M g|^2 \, d\mu\le \int_{B_{5R/6}} \eta^2 |\nabla^M g|^2 \, d\mu\leq  \frac{c}{R},
\end{equation*}
where $c=c(c_1,\alpha)>0$. Using H\"older inequality and once again $\mu(B_{5R/6})\leq \alpha R$, this proves
\begin{equation}\label{20.6bis}
 \int_{B_{4R/5}}|\nabla^M g|\, d\mu \leq  \bar c,
\end{equation}
for some constant $\bar c=\bar c(c_1,\alpha)>0$, and thus also \eqref{20.6}.

\medskip

{\sf Step 2.} Let $E_s=B_R(x_0) \cap M \cap \{f<s\}$ and let $\eps_0>0$ given by Proposition \ref{prop:Sob} (with $\tau=2^{-4}$) below in \eqref{eq:epsilon0} so that in particular, $ \varepsilon_0 R<  2^{-4}R \leq\mu (B_{2^{-4}R})$ by \eqref{loweruppermu}.
We then define
\[
 t_0=\sup\lt\{ s>0  : \ \mu(E_s\cap B_{2^{-4}R})\le \eps_0 R\rt\} < \infty \: ,
\]
so that for all $t > t_0$,
\begin{equation} \label{massEt}
 \mu(E_{t_0} \cap B_{2^{-4}R})\le \eps_0 R \le \mu(E_{t}\cap B_{2^{-4}R}).
\end{equation} 
As in \cite[Theorem 20.9]{menne2023sharp}, we prove that
\begin{equation}\label{estimtHarnack}
 t_0 \leq \underline{c}\gamma R,
\end{equation}
for some constant $\underline{c}=\underline{c}(c_1,\alpha)>0$.
To this aim, we consider the Lipschitz function $\hat g:B_R(x_0)\to \R^+$ defined by
$$\hat{g}=\max\{\log(t_0+\gamma R)-\log(f+\gamma R),0\}=(\log(t_0+\gamma R)-g){\bf 1}_{E_{t_0}} \geq 0$$ 
so that $|\nabla^M \hat{g}|=|\nabla^M g|$ in $E_{t_0}$ and $\nabla \hat g=0$ in $B_R(x_0) \setminus E_{t_0}$. Since $B_{2^{-4}R} \subset B_R(x_0)$, by \eqref{eq:SMSineq} together with \eqref{massEt}, we have
$$\mu(E_{t_0} \cap  B_{2^{-4}R}) \leq \eps_0 R, \quad \int_{E_{t_0} \cap B_{2^{-4}R}} |H|\, d\mu \leq \eps_0.$$
We can thus apply Proposition \ref{prop:Sob} to the function $\hat g$, the set $W=E_{t_0} \cap B_{2^{-4}R}$ and $\tau=2^{-4}$, and get that
\[
\sup_{B_{2^{-5}R}\cap M} \hat{g} \leq \Lambda  \int_{B_{2^{-4}R}} |\nabla^M \hat{g}| \, d\mu.
\]
On the other hand, by \eqref{20.6bis},
\[
 \int_{B_{2^{-4}R}} |\nabla^M \hat{g}|\, d\mu\le \bar c.
\]
In particular we have $\hat{g} \leq \Lambda \bar c$ on $M\cap E_{t_0}\cap B_{2^{-5}R}$ i.e.
\[
 \log (f+\gamma R)\ge \log(t_0+\gamma R)-\Lambda \bar c \quad \text{ on }M\cap E_{t_0}\cap B_{2^{-5}R},
\]
and passing to the exponential yields
\[
 f+\gamma R\ge e^{-\Lambda \bar c}(t_0+\gamma R)\ge  e^{-\Lambda \bar c} t_0\quad \text{ on }M\cap E_{t_0}\cap B_{2^{-5}R}.
\]
Finally, since $f \geq 0$ in $M$, $0 \in M$ and $f(0) = 0$, then $\inf_{M\cap E_{t_0}\cap B_{2^{-5}R}} f= 0$ and 
\[
\gamma R \geq  \inf_{M\cap E_{t_0} \cap B_{2^{-5}R}} (f +\gamma R)\ge t_0 e^{-\Lambda \bar c}.
\]
This concludes the proof of \eqref{estimtHarnack} with $\underline{c}=e^{\Lambda \bar c}$.

\medskip

{\sf Step 3.} We now set $s_1=\log(t_0+\gamma R)$ and $s_2=\log(t_0+\gamma R)+\frac{\bar c}{\eps_0}$ with $\bar c>0$ given by \eqref{20.6bis}. 
For $s \geq s_1$, we introduce
\begin{equation*}
\hat{E}_s={B_R(x_0)\cap M}\cap \{g<s\}=E_{e^s-\gamma} \:, \quad V_s=V \restr (\hat E_s \times \mathbf G_1) \quad \text{and} \quad \mu_s=\mu_{V_s}= \mu \restr \hat E_s \: ,
\end{equation*}
so that  $V_s$ is a $1$-rectifiable integral varifold in $B_1$.
In particular, we have $\hat{E}_{s_1}=E_{t_0}$ and
for every $s\in(s_1,s_2)$, using \eqref{massEt} with $t = e^s - \gamma R> e^{s_1} - \gamma R= t_0$, we obtain 
\begin{equation}\label{massmus}
 \mu_s(B_{2^{-4}R}) = \mu (E_t \cap B_{2^{-4}R}) \ge \eps_0 R .
\end{equation}
Regarding the first variation $\delta V_s$, $g$ is smooth and we can apply the slicing Theorem for varifolds, \cite[Theorem 2, Section 4.10]{allardfirst}, together with \eqref{20.6bis} to infer that $\delta V_s$ is a Radon measure and moreover,
\begin{multline*}
 \int_{s_1}^{s_2} |\delta V_s|(B_{4R/5}) \, ds\le \int_{s_1}^{s_2} |\delta V |(B_{4R/5} \cap \hat E_s) \,ds +\int_{B_{4R/5}} |\nabla^M g| \, d\mu\\
 \le (s_2-s_1) |\delta V|(B_{4R/5})+ \int_{B_{4R/5}} |\nabla^M g| \,d\mu\leq  (s_2-s_1) \gamma + \bar c,
\end{multline*}
where we used that $B_{4R/5} \subset B_R(x_0)$ and the definition \eqref{eq:defGamma} of $\gamma$ in the last inequality. We recall that $s_2 - s_1 = \frac{\bar c}{\varepsilon_0}$. Therefore, by the mean value formula there exists $s_\ast \in(s_1,s_2)$ such that
\begin{equation}\label{estimdeltaVs}
 |\delta V_{s_\ast}|(B_{4R/5})\le \frac{1}{s_2-s_1}  \int_{s_1}^{s_2} |\delta V_s|(B_{4R/5})\,  ds\leq \gamma + \eps_0.
\end{equation}
According to \eqref{estimtHarnack}, we have 
\begin{equation}\label{eq:fbound}
f+\gamma R \leq e^{s_\ast} \leq e^{s_2} = (t_0+\gamma R)e^{\frac{\bar c}{\eps_0}} \leq \Delta \gamma R \quad \text{ in }\hat{E}_{s_\ast},
\end{equation}
where $\Delta=(\underline{c}+1)e^{\frac{\bar c}{\eps_0}}$ only depends on $\alpha$ and $c_1$, so that by \eqref{20.6},
\begin{equation}\label{estimnabfEs}
\int_{\hat{E}_{s_\ast}\cap B_{4R/5}} |\nabla^M f|\, d\mu\leq \Delta \gamma R \int_{\hat{E}_{s_\ast}\cap B_{4R/5}} \frac{1}{f+\gamma R} |\nabla^M f |\,d\mu\leq \overline{c} \Delta\gamma R.
\end{equation}
Notice that from \eqref{eq:nablaMf}, for $x \in M$ and $\tau \in T_x M \cap \S^1$, we have
$$|\Pi_{T_xM} -\Pi_{\R e_1}|^2=2 (\tau\cdot e_2)^2= |\nabla^M f(x)|^2,$$
and thus \eqref{estimnabfEs} yields
\begin{equation} \label{eq:gradfBound}
 \int_{B_{4R/5}}|\Pi_{T_xM}-\Pi_{\R e_1}| \, d\mu_{s_\ast}= \int_{B_{4R/5}}|\nabla^M f|\, d\mu_{s_\ast} \leq \overline{c} \Delta\gamma R.
\end{equation}
Applying Lemma \ref{lem:movemass} with $R_1=2^{-4}R$, $R_2=2^{-3}R$ and $L=2^{-2}R$, and noticing that
$$K=\bigcup_{t\in(0,2^{-2}R)} \overline{B_{2^{-3}R}(te_1)} \subset B_{4R/5},$$
we get using  \eqref{eq:SMSineq}, \eqref{massmus}, \eqref{estimdeltaVs} and \eqref{eq:gradfBound},
\begin{multline}\label{hatc}
 \mu_{s_\ast}(B_{2^{-3}R}(2^{-2}R e_1))\geq \mu_{s_\ast}(B_{2^{-4}R}) -\frac{R}{4} \lt(|\delta V_{s_\ast}|(B_{4R/5})+\frac{2^5}{R}\int_{B_{4R/5}}|\Pi_{T_x M}-\Pi_{\R e_1}| \, d\mu_{s_\ast} \rt)\\
\geq  \eps_0 R-\frac{R}{4}(\gamma+\eps_0)- 8\overline{c}\Delta \gamma R \geq \frac{\eps_0}{2}R -\hat c\gamma R,
\end{multline}
for some constant $\hat c=\hat c(c_1,\alpha)>0$. According to our choice of $\gamma$ in \eqref{eq:SMSineq}, we have $\gamma \leq  \frac{\eps_0}{4\hat c}$ so that \[
 \mu_{s_\ast}\lt(B_{2^{-3}R}(2^{-2} Re_1)\rt)\ge \frac{\eps_0}{4}R>0.
\]
Since by \eqref{eq:fbound}, $\hat E_{s_\ast} \subset \{ g < s_\ast \} \subset \{ f \leq \Delta \gamma R\}$
we infer that
\[
\mu \lt( \{f \leq \Delta \gamma R\} \cap B_{2^{-3}R}(2^{-2} Re_1) \rt) \geq \mu \lt(\hat E_{s_\ast} \cap B_{2^{-3}R}(2^{-2} Re_1) \rt) = \mu_{s_\ast}\lt(B_{2^{-3}R}(2^{-2} Re_1)\rt)  >0
\]
and eventually, any element $z\in  \{f \leq \Delta \gamma R\} \cap B_{2^{-3}R}(2^{-2} R e_1) \cap {\rm Supp}(\mu)\neq \emptyset$ satisfies 
$$z \in M \cap B_{R/2}, \quad z_1 \geq 2^{-3} R\quad \text{and} \quad z_2 = f(z) \leq \Delta \gamma R,$$
which completes the proof of the lemma. 
\end{proof}

\subsection{Some technical results}

The following result is a simplified variant of \cite[Lemma 21.1]{menne2023sharp}.
\begin{lemma}\label{lem:aux}
 Let $g:\R\to \R^+$. Then, for every $t>0$ and every $\lambda\in(0,1)$, there exists $0<r\le t$ such that
\[
 g(r/\lambda)\le 2 \lambda^{-1}(r+g(r)).
\]
\end{lemma}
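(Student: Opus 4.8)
The plan is to argue by contradiction via a geometric iteration along the dyadic-type sequence of radii $r_n:=\lambda^n t$, $n\ge 0$. Suppose that, for the given $t>0$ and $\lambda\in(0,1)$, one has $g(r/\lambda)>2\lambda^{-1}(r+g(r))$ for \emph{every} $r\in(0,t]$. Since $0<\lambda<1$, for each $n\ge 1$ the radius $r_n$ satisfies $0<r_n\le \lambda t<t$, so the negated conclusion may be applied with $r=r_n$; using $r_n/\lambda=r_{n-1}$ it gives
\[
g(r_{n-1})=g(r_n/\lambda)>2\lambda^{-1}\bigl(r_n+g(r_n)\bigr)\qquad\text{for all }n\ge 1.
\]

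Next I would set $u_n:=r_n+g(r_n)\ge 0$ and rewrite the displayed inequality as $u_n<\tfrac{\lambda}{2}\,g(r_{n-1})\le \tfrac{\lambda}{2}\,u_{n-1}$, where the last step uses $g(r_{n-1})\le r_{n-1}+g(r_{n-1})=u_{n-1}$. Iterating yields $u_n<(\lambda/2)^n u_0$ for all $n\ge 0$. On the other hand $u_n\ge r_n=\lambda^n t$, so $\lambda^n t<(\lambda/2)^n u_0=\lambda^n 2^{-n}u_0$, and dividing by $\lambda^n>0$ gives $t<2^{-n}\bigl(t+g(t)\bigr)$ for every $n\ge 1$. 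Letting $n\to\infty$ forces $t\le 0$, contradicting $t>0$. Hence there must exist some $r\in(0,t]$ with $g(r/\lambda)\le 2\lambda^{-1}(r+g(r))$, which is the assertion.

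There is essentially no serious obstacle here; the only points requiring (minimal) care are verifying that each $r_n$ with $n\ge 1$ stays in the admissible interval $(0,t]$ so that the negated hypothesis can legitimately be invoked at every step, and—more importantly—running the telescoping estimate with the combined quantity $u_n=r_n+g(r_n)$ rather than with $g(r_n)$ alone: it is precisely the term $r_n=\lambda^n t$, which decays no faster than $\lambda^n$, that produces the contradiction once it is pitted against the $(\lambda/2)^n$ decay coming from the iteration.
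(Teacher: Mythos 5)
Your proof is correct and follows essentially the same route as the paper's: both argue by contradiction along the geometric sequence $r_n=\lambda^n t$, introduce the auxiliary quantity $r_n+g(r_n)$ (which the paper calls $f(r_n)$ and you call $u_n$), derive the geometric decay $u_n<(\lambda/2)^n u_0$, and reach a contradiction with the lower bound $u_n\ge r_n=\lambda^n t$. (The only cosmetic slip is the phrase ``for all $n\ge 0$'' for the strict inequality $u_n<(\lambda/2)^n u_0$, which is vacuous at $n=0$; it holds for $n\ge1$, which is all you use.)
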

\begin{proof}
Set $f(r)=r+g(r)$ for $r>0$. Assume that the conclusion does not hold for some $t>0$ and $\lambda \in (0,1)$ so that for all $r \in (0,t]$, 
\[
 f(r) =r + g(r) < \frac{\lambda}{2} g(r/\lambda) \: .
\]
For each $n\ge 0$, we define $r_n=\lambda^n t \leq t$ 1so that,
 \[
  f(r_{n+1})< \frac{\lambda}{2} g(r_n) \leq  \frac{\lambda}{2} f(r_n),
 \] 
 and thus, by induction, 
\[f(r_n)  \le \lt(\frac{\lambda}{2}\rt)^{n} f(t)= \frac{r_n}{t 2^n} f(t).
 \]
Let $n$ be large enough so that $f(t)2^{-n} < \lambda t$, we get a contradiction with the fact that $f(r_{n})\ge r_{n}$.
\end{proof}

The following result states a Poincar\'e-Sobolev  type inequality on varifolds, for Lipschitz functions vanishing on a large part of the domain. Notice that it also follows from \cite[Theorem 9.1]{menne2023sharp} (or \cite[Theorem 10.1]{menneIndiana}) which we partially reproved in our situation for the reader's convenience.

\begin{proposition}\label{prop:Sob}
For every $\tau\in (0,1)$, there exists $\eps_0=\eps_0(\alpha,\tau)>0$ such that the following property holds:  let $x \in M$, $r \in (0,R_0]$, $g:B_r(x) \to \R^+$ 
and $W \subset B_{\tau r}(x)$ be a Borel set. Suppose that 
\begin{equation*}
g=0 \text{ on }M \cap B_{\tau r}(x) \setminus W,\quad  \mu(W) \leq \eps_0 r, \quad \int_W |H|\, d\mu \leq \eps_0.
\end{equation*}
Then 
\begin{equation*}
\sup_{M \cap B_{\frac{\tau r}{2}}(x)} g \leq \Lambda \int_{B_{\tau r}(x)}|\nabla^M g|\, d\mu,
\end{equation*}
for some constant $\Lambda>0$ only depending on $\alpha$.
\end{proposition}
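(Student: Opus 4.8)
The plan is to prove Proposition~\ref{prop:Sob} as a Poincaré-Sobolev inequality on the varifold $V$ for Lipschitz functions that vanish on a large portion of the ball. The natural tool is the Sobolev-Michael-Simon inequality (Theorem~\ref{thm:SMS}), but it applies to compactly supported functions, so the first step is to localize $g$ by multiplying it by a suitable cut-off. Concretely, after rescaling and translating so that $x=0$ and working in $B_{\tau r}$, I would fix a cut-off $\eta\in\mathcal C_c^\infty(B_{\tau r};[0,1])$ with $\eta\equiv 1$ on $B_{\tau r/2}$, $\mathrm{Supp}(\eta)\subset B_{3\tau r/4}$ and $|\nabla\eta|\le C/(\tau r)$, and then apply Theorem~\ref{thm:SMS} to the Lipschitz function $\varphi=\eta g$ (which is compactly supported in $B_{\tau r}$). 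This gives, for $\gamma_W:=\int_W|H|\,d\mu$ and provided $c_1\gamma_W\le\tfrac12$,
\[
\tfrac12\sup_{M\cap B_{\tau r/2}}g\le (1-c_1\gamma_W)\sup_{\mathrm{Supp}(V)}|\varphi|\le c_1\int_{\mathbf G_1(B_{\tau r})}|S\cdot\nabla\varphi(x)|\,dV(x,S)\le c_1\int_{B_{\tau r}}\big(|\nabla^M g|+|\nabla\eta|\,g\big)\,d\mu.
\]
The smallness condition $c_1\gamma_W\le\tfrac12$ is ensured by the hypothesis $\int_W|H|\,d\mu\le\eps_0$ upon choosing $\eps_0\le 1/(2c_1)$, noting that $g$ vanishes outside $W$ so $\gamma_W\le\eps_0$.

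The remaining task is to absorb the term $\int_{B_{\tau r}}|\nabla\eta|\,g\,d\mu$ into the left-hand side, and this is the main obstacle. The point is that $g$ is supported (in $M$) on the small set $W\subset B_{\tau r/2}$, while $\nabla\eta$ is supported on the annulus $B_{3\tau r/4}\setminus B_{\tau r/2}$; hence on the support of $\nabla\eta$ the function $g$ already vanishes $\mu$-a.e. on $M$, so this cross term is in fact zero. Wait — more carefully, $g=0$ on $M\cap B_{\tau r}\setminus W$ with $W\subset B_{\tau r/2}$, so on the annulus where $\nabla\eta\ne 0$ we indeed have $g=0$ $\mu$-a.e., and the cross term drops out entirely. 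If one does not want to rely on the support condition being exactly inside $B_{\tau r/2}$, the alternative is to bound $g$ pointwise on $W$ by its own supremum and use $\mu(W)\le\eps_0 r$ together with $|\nabla\eta|\le C/(\tau r)$ to get $\int|\nabla\eta|g\,d\mu\le (C\eps_0/\tau)\sup_{M\cap B_{\tau r}}g$; combined with a covering/chaining argument linking $\sup_{B_{\tau r}}g$ to $\sup_{B_{\tau r/2}}g$, this can be absorbed for $\eps_0$ small depending only on $\tau$ (and $\alpha$, which controls the density ratios via \eqref{loweruppermu}). Either way, one arrives at
\[
\sup_{M\cap B_{\tau r/2}}g\le \Lambda\int_{B_{\tau r}}|\nabla^M g|\,d\mu,
\]
with $\Lambda=\Lambda(\alpha,c_1)$; since $c_1$ is universal this reads $\Lambda=\Lambda(\alpha)$ as stated.

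A few technical points need care. First, Theorem~\ref{thm:SMS} requires the varifold to be defined on $\R^2$ with bounded first variation in $L^1_{\mu_V}$; here $V$ is such a varifold globally, and restricting attention to $B_{\tau r}$ is harmless since we only use the inequality with $\varphi$ supported in that ball, so the relevant $\gamma$ in the theorem is $\int_{B_{\tau r}}|H_V|\,d\mu_V$, which is at most $\int_W|H|\,d\mu\le\eps_0$ because $\nabla\varphi$ vanishes $\mu$-a.e. outside $W$. Second, one should record that $\mu(B_r)\le\alpha r$ by \eqref{loweruppermu} to control any integral of a bounded quantity over $B_{\tau r}$; this is where the dependence of $\eps_0$ and $\Lambda$ on $\alpha$ enters. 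Third, the rescaling $y\mapsto y/r$ reduces to the case $r=1$, under which the statement is scale invariant (both sides scale like the common factor), so it suffices to prove it for $r=1$ and all $0<r\le R_0$ follows. With these observations the proof is essentially the displayed application of the Sobolev-Michael-Simon inequality plus the vanishing of the cross term; I do not anticipate further difficulties.
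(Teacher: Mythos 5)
There is a genuine gap in your argument, and it stems from a misreading of the hypothesis. The statement only assumes $W\subset B_{\tau r}(x)$, \emph{not} $W\subset B_{\tau r/2}(x)$. Consequently, the cross term $\int|\nabla\eta|\,g\,d\mu$ does \emph{not} vanish: the annulus where $\nabla\eta\neq 0$ can perfectly well intersect $M\cap W$, and $g$ may be large there. Your fallback, to bound the cross term by $(C\eps_0/\tau)\sup_{W}g$, produces a supremum over $W$, which can be attained anywhere in $B_{\tau r}(x)$, whereas the left-hand side of the Sobolev--Michael--Simon inequality only controls $\sup_{M\cap B_{\tau r/2}(x)}g$. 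There is no way to compare these two suprema by a simple ``covering/chaining'' device: a chaining argument would implicitly be a Poincar\'e inequality for $g$ on the varifold, and this is precisely the type of statement that fails for general integral varifolds (which need not be connected). The lower density bound in \eqref{loweruppermu} guarantees that any small ball around a point of $W$ also meets $\{g=0\}$, but it does not guarantee a path inside $M$ connecting them. So the absorption you hope for cannot be closed by the elementary tools you invoke, and the constant you seek (depending only on $\alpha$) does not emerge from the SMS inequality alone.

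The paper's proof follows a fundamentally different mechanism that bypasses this obstruction. Instead of applying the Sobolev--Michael--Simon inequality to a single cut-off of $g$, it slices $g$ by its superlevel sets $F_s=\{g>s\}\subset W$, notes that whenever $s<\sup_{M\cap B_{\tau r/2}(x)}g$ the set $F_s$ has positive $\mu$-measure in $B_{\tau r/2}(x)$ (using the monotonicity lower bound on $\mu$), and then applies a relative isoperimetric inequality for varifolds from \cite[Corollary 7.11]{menneIndiana}. That inequality exploits the first-variation structure of $V$: under the smallness conditions $\mu(W)\leq\eps_0 r$ and $\int_W|H|\,d\mu\leq\eps_0$, it forces a uniform lower bound $\Lambda\,|V\partial F_s|(B_{\tau r}(x))\geq 1$. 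Integrating this over $s\in[0,\sup_{M\cap B_{\tau r/2}(x)}g)$ and invoking a coarea formula $\int_0^\infty|V\partial F_s|(B_{\tau r}(x))\,ds=\int_{B_{\tau r}(x)}|\nabla^M g|\,d\mu$ gives exactly the stated estimate. The isoperimetric route quantitatively encodes the fact that $g$ cannot jump from $0$ to a large value over a set of small measure without paying in the first variation; this is precisely the information that the SMS inequality, used as you propose, fails to capture in the present geometric setting.
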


\begin{proof}
Set $Q=1$, $M=2$ and let $\Lambda=\Lambda(2)>0$ be the constant given by \cite[Corollary 7.11]{menneIndiana}. Fix
\begin{equation}\label{eq:epsilon0}
\eps_0 < \min\left\{\frac{\tau}{2}, \frac{\Lambda^{-1} \tau}{2},\Lambda^{-1}\right\} .
\end{equation}

We define the (vector) Radon measure $T\in \mathcal M(B_r(x) \times \R;\R^2)$ by
\begin{equation*}
 \langle T,\zeta \rangle
 =\int_{B_{r}(x)} \zeta(y,g(y))\cdot \nabla^Mg(y)\, d\mu(y)\quad \text{ for all }\zeta \in \mathcal C_c(B_{r} (x)\times \R;\R^2).
\end{equation*}
We then have that
\begin{equation}\label{eq:T}
|T|(B_{r}(x) \times \R)= |T|(B_{r}(x) \times \R_+) =\int_{B_{r}(x)}|\nabla^Mg|\, d\mu .
\end{equation}

Next, for $s\in \R_+$, we introduce the Borel set $F_s=\{g>s\} \subset W$ and, as in \cite[Definition 5.1]{menneIndiana}, we define the distributional $V$--boundary of $F_s$ as
$$V \partial F_s:=\delta V \restr F_s - \delta (V\restr (F_s \times \mathbf G_1)) \in \mathcal D'(B_{r}(x);\R^2),$$
i.e. 
\[
V \partial F_s(\zeta)=\int_{F_s} H \cdot \zeta\, d\mu  -\int_{F_s} {\rm div}^M \zeta \, d\mu \quad \text{ for }\zeta \in \mathcal C^\infty_c(B_{r}(x);\R^2).
\]
Then, by \cite[Theorem 8.30]{menneIndiana} (see also \cite[Theorem 2, Section 4.10]{allardfirst}) and \eqref{eq:T}, $V \partial F_s$ is a Radon measure for a.e. $s \in \R_+$ and the following coarea formula holds:
\begin{equation} \label{eq:coareaGradg}
 \int_{B_{\tau r}(x)} |\nabla^M g|\: d\mu=|T|(B_{\tau r}(x) \times \R_+) =\int_{0}^{+\infty} |V \partial F_s|( B_{\tau r}(x)) \: ds \: .
\end{equation}
Let $s \in \R_+$ such that $V \partial F_s$ is a Radon measure and furthermore assume that $s < \sup_{M \cap B_{\frac{\tau r}{2}}(x)} g$. Then, $M \cap F_s \cap B_{\frac{\tau r}{2}}(x) \neq \emptyset$ and by continuity of $g$, there exists a small ball $B_\varrho (z) \subset F_s \cap B_{\frac{\tau r}{2}}(x)$ with $z \in M \cap F_s \cap B_{\frac{\tau r}{2}}(x)$ and $\varrho\leq R_0$. By \eqref{loweruppermu}, $\mu(B_\varrho(z)) \geq \varrho > 0$ and consequently
\begin{equation} \label{eq:muMassNonZero}
 \mu (F_s \cap B_{\frac{\tau r}{2}}(x)) > 0 \: .
\end{equation}
We now apply the relative isoperimetric inequality, \cite[Corollary 7.11]{menneIndiana}, with $U= B_{\tau r}(x)$, $B=\partial B_{\tau r}(x)$, $E = F_s$, $Q=1$ and $M=2$. 
By our choice of $\eps_0$ in \eqref{eq:epsilon0}, as $F_s \subset W$ then in particular
\[
 \mu (F_s \cap B_{\tau r}(x)) \leq \varepsilon_0 r \leq \min \left\lbrace \frac{\tau r}{2} , \Lambda^{-1} \frac{\tau r}{2} \right\rbrace \quad \text{and} \quad \int_{F_s} |H| \: d \mu \leq \varepsilon_0  \leq \Lambda^{-1}
\]
so that the assumptions of \cite[Corollary 7.11]{menneIndiana} are met and we get thanks to \eqref{eq:muMassNonZero}
\[
\Lambda |V \partial F_s| (B_{\tau r}(x))\geq \mu (F_s \cap B_{\frac{\tau r}{2}}(x) )^0 = 1.
\]
Therefore, integrating over $s$ from $0$ to $\sup_{M \cap B_{\frac{\tau r}{2}}(x)}g$ and applying \eqref{eq:coareaGradg}, we obtain
\[
\Lambda \int_{B_{\tau r}(x)} |\nabla^M g| \, d\mu\geq  \sup_{M \cap B_{\frac{\tau r}{2}}(x)}g,
\]
and the proof is complete.
\end{proof}

The next result is a specific version of \cite[Lemma 21.6]{menne2023sharp}. 

\begin{lemma}\label{lem:movemass}
Let $W = v(M,\theta) \in \mathbf V_1(\R^2)$ be a $1$-rectifiable varifold with bounded first variation.
For every $R_2>R_1>0$ and every $L>0$, let 
\[
 K=\bigcup_{t\in(0,L)} \overline{B_{R_2}(te_1)}.
\]
Then,
$$ \mu_W(B_{R_2}(Le_1))- \mu_W(B_{R_1})\geq  - L\left(|\delta W|(K)+ \frac{2}{R_2-R_1}\int_{K} |\Pi_{\R e_1} -\Pi_{T_x M}| \,d\mu_W\right),$$
where $\Pi_{T_x M}$ denotes the projection onto the line $T_x M$.
\end{lemma}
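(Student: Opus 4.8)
The plan is to transport the $\mu_W$-mass contained in $B_{R_1}$ toward $B_{R_2}(Le_1)$ along the segment $[0,Le_1]$ by testing the first variation of $W$ against a moving family of cut-off vector fields pointing in the direction $e_1$, keeping track of the error coming from the fact that the approximate tangent lines $T_xM$ need not be parallel to $\R e_1$. This is the standard ``push by a translation'' trick for varifolds.

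Concretely, I would fix $\eps\in(0,R_2-R_1)$ and a cut-off $\chi\in\mathcal{C}^1([0,\infty);[0,1])$ with $\chi\equiv1$ on $[0,R_1]$, $\chi\equiv0$ on $[R_2-\eps,\infty)$ and $|\chi'|\le \frac{2}{R_2-R_1-\eps}$. For $t\in[0,L]$ set
$$\eta_t(x)=\chi(|x-te_1|),\qquad \zeta_t=\eta_t\,e_1,\qquad m(t)=\int_{\R^2}\eta_t\,d\mu_W.$$
Two elementary observations are immediate: since $\chi\equiv1$ on $[0,R_1]$ and $\chi\ge0$ one has $m(0)\ge\mu_W(B_{R_1})$, and since $0\le\chi\le{\bf 1}_{[0,R_2)}$ one has $m(L)\le\mu_W(B_{R_2}(Le_1))$. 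Moreover a short geometric check gives $\overline{B_{R_2-\eps}(te_1)}\subset K$ for every $t\in[0,L]$ (obvious for $t$ interior; at the endpoints one uses $L>0$ to absorb any point within distance $R_2-\eps$ of $0$ or of $Le_1$ into a nearby ball of $K$), so $\supp\zeta_t\subset K$ throughout.

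Next I would differentiate $m$. The integrand is uniformly Lipschitz in $t$ and equals the constant $1$ near $x=te_1$, so $m$ is Lipschitz and differentiation under the integral sign is legitimate, giving $m'(t)=\int\partial_t\eta_t\,d\mu_W$ with $\partial_t\eta_t=-e_1\cdot\nabla_x\eta_t$. On the other hand, using the representation of the first variation of a rectifiable varifold, $\delta W(\zeta_t)=\int{\rm div}^M\zeta_t\,d\mu_W=\int e_1\cdot(\Pi_{T_xM}\nabla\eta_t)\,d\mu_W$. Subtracting and using $e_1-\Pi_{T_xM}e_1=(\Pi_{\R e_1}-\Pi_{T_xM})e_1$,
$$m'(t)=-\,\delta W(\zeta_t)+\int_{\R^2}\big((\Pi_{T_xM}-\Pi_{\R e_1})e_1\big)\cdot\nabla\eta_t\,d\mu_W.$$
Then I would bound $|\delta W(\zeta_t)|\le\|\eta_t\|_\infty\,|\delta W|(\supp\zeta_t)\le|\delta W|(K)$ and, since $\supp\nabla\eta_t\subset K$, $|\nabla\eta_t|\le\frac{2}{R_2-R_1-\eps}$ and $|(\Pi_{T_xM}-\Pi_{\R e_1})e_1|\le|\Pi_{T_xM}-\Pi_{\R e_1}|$,
$$m'(t)\ge-|\delta W|(K)-\frac{2}{R_2-R_1-\eps}\int_K|\Pi_{\R e_1}-\Pi_{T_xM}|\,d\mu_W.$$
Integrating this over $t\in[0,L]$ and using $m(L)\le\mu_W(B_{R_2}(Le_1))$, $m(0)\ge\mu_W(B_{R_1})$ yields the inequality with $\frac{2}{R_2-R_1-\eps}$ in place of $\frac{2}{R_2-R_1}$; letting $\eps\to0$ concludes (the $|\delta W|(K)$ term requires no passage to the limit).

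There is no serious obstacle here; the proof is essentially a computation. The only points requiring a little care are the justification of differentiating $m$ under the integral sign (handled by the uniform Lipschitz bound and the local constancy of $\eta_t$ near the moving centre) and the fact that $\overline{B_{R_2}(0)}\not\subset K$, which is exactly why the support of $\chi$ is shrunk by $\eps$ and the limit $\eps\to0$ is taken at the end. I also note that the factor $2$ in the statement is not optimal: using a piecewise-affine Lipschitz cut-off (admissible since $\delta W$ extends to Lipschitz test fields) gives the sharper constant $1$.
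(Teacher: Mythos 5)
Your proof is correct and follows essentially the same strategy as the paper: translate a radial cut-off along $e_1$, differentiate the resulting mass $m(t)$ in $t$, split $m'(t)$ into a tangential part controlled by $\delta W$ and a normal part controlled by the tilt $|\Pi_{\R e_1}-\Pi_{T_x M}|$, and integrate over $(0,L)$. The $\eps$-shrinking of the support is not needed: since the fundamental theorem of calculus only requires the derivative bound for $t$ in the open interval $(0,L)$, the inclusion $B_{R_2}(te_1)\subset K$, which holds for every $t\in(0,L)$ directly from the definition of $K$, already suffices and no limit $\eps\to 0$ is required.
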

\begin{proof}
 Let $\chi\in \mathcal C_c^\infty(\R^2;[0,1])$ be a cut-off function with ${\bf 1}_{B_{R_1}}\le \chi \le {\bf 1}_{B_{R_2}}$, ${\rm Supp}(\chi) \subset B_{R_2}$ and $|\nabla \chi|\leq 2/r$ with $r=R_2-R_1$. For $t \in (0,L)$, we define
 \[
  \phi(t)=\int_{\R^2} \chi(x-te_1)\, d\mu_W(x).
 \]
We compute for $t \in (0,L)$
\[
 \phi'(t)=-\int_{\R^2} \nabla \chi(x-t e_1)\cdot e_1 \,d\mu_W(x).
\]

For $x\in M$, let $\tau \in \S^1 \cap  T_x M$ and $\nu\in \S^1$ such that $\nu\cdot \tau=0$, then
\[
 -\phi'(t)=\int_{\R^2} (\nabla \chi(x-t e_1)\cdot \tau)(\tau\cdot e_1) \, d\mu_W(x)+\int_{\R^2} (\nabla \chi(x-t e_1)\cdot \nu)(e_1\cdot \nu) \, d\mu_W(x).
\]
If $\xi= \chi(\cdot -te_1) e_1$ we get that for $x \in \R^2$,
\[{\rm div}^{M} \xi(x)  = (D\xi(x) \tau)\cdot \tau=-(\nabla \chi(x-te_1) \cdot \tau)(\tau\cdot e_1)\]
so that, by definition of the first variation of $V$, 
$$ \lt|\int_{\R^2} (\nabla \chi(x-t e_1)\cdot \tau)(\tau\cdot e_1) \,d\mu_W(x) \rt|=\left| \int_{\R^2} {\rm div}^{M} \xi\, d\mu_W\right|
 \leq |\delta W|(K),$$
where we used that ${\rm Supp}(\xi) \subset B_{R_2}(te_1) \subset K$ and $|\xi| \leq 1$.

For the second term we notice that since $e_1 = \Pi_{\R e_1} (e_1)$ we have
\begin{equation*}
 (e_1 \cdot \nu) \nu = e_1 - \Pi_{T_x M}(e_1) = ( \Pi_{\R e_1} - \Pi_{T_x M})(e_1)
\end{equation*}
and consequently
\begin{multline*}
 \lt|\int_{\R^2} (\nabla \chi(x-t e_1)\cdot \nu)(e_1\cdot \nu) d\mu_W(x) \rt|\le \frac{2}{r} \int_K |(e_1 \cdot \nu) \nu |\, d\mu_W \leq \frac{2}{r}  \int_{K} |\Pi_{\R e_1} -\Pi_{T_x M}| \, d\mu_W.
\end{multline*}
In conclusion, we obtain that for $t \in (0,L)$,
\[
 |\phi'(t)|\leq  |\delta W|(K)+ \frac{2}{r} \int_{K} |\Pi_{\R e_1} -\Pi_{T_x M}|\, d\mu_W.
\]
Since $\phi(0)\ge \mu_W(B_{R_1})$ and $\phi(L)\le \mu_W(B_{R_2}(Le_1))$, the conclusion follows.
\end{proof}

\section*{Acknowledgments}
This research has been partially supported by the ANR STOIQUES and GeMfaceT. This work was supported by a public grant from the Fondation Mathématique Jacques Hadamard.

\bibliographystyle{acm}
\bibliography{Anisotropic}
\end{document}